\documentclass[12pt]{extarticle}

\usepackage{amsmath, amsthm, amssymb, color}
\usepackage[colorlinks=true,linkcolor=blue,urlcolor=blue]{hyperref}
\usepackage{graphicx}
\usepackage{caption}
\usepackage{mathtools}
\usepackage{hyperref}
\usepackage{enumerate}
 \usepackage{comment}
\usepackage{wrapfig}

\usepackage{cleveref}
\usepackage{verbatim}

\usepackage{tikz,tikz-cd,tikz-3dplot}
\usetikzlibrary{decorations.markings}
\usetikzlibrary{shapes.geometric}
\usetikzlibrary{matrix}
\usetikzlibrary{arrows}

\usepackage{amssymb}
\usetikzlibrary{matrix}
\usetikzlibrary{arrows}
\usepackage{algorithm}
\usepackage[noend]{algpseudocode}
\usepackage{caption}
\usepackage[normalem]{ulem}
\usepackage{subcaption}
\usepackage{multicol}
\oddsidemargin \evensidemargin
\usepackage{makecell}
\usepackage{array}
\usepackage{enumitem}

 \newtheorem{theorem}{Theorem}[section]

\newtheorem{proposition}[theorem]{Proposition}
\newtheorem{lemma}[theorem]{Lemma}
\newtheorem{corollary}[theorem]{Corollary}

\newtheorem{conjecture}[theorem]{Conjecture}

\theoremstyle{definition}

\newtheorem{remark}[theorem]{Remark}

\newtheorem{example}[theorem]{Example}

\newcommand{\PP}{\mathbb{P}}
\newcommand{\RR}{\mathbb{R}}
\newcommand{\QQ}{\mathbb{Q}}

\newcommand{\ZZ}{\mathbb{Z}}
\newcommand{\NN}{\mathbb{N}}
\newcommand{\Gr}{{\rm Gr}}
\newcommand{\Mat}{{\rm Mat}}

\title{\bf Landau Analysis in the Grassmannian}

\author{Benjamin Hollering, Elia Mazzucchelli, \\
Matteo Parisi, and Bernd Sturmfels}

\date{}
\begin{document}

\maketitle

\begin{abstract} \noindent
Momentum twistors for scattering amplitudes in particle physics are lines in three-space.
We develop Landau analysis for Feynman integrals in this setting. The resulting discriminants and resultants are identified with Hurwitz and Chow forms of incidence varieties in products of Grassmannians. We study their degrees and factorizations, and the kinematic regimes in which the fibers of the Landau map are rational or real. Identifying this map with the amplituhedron map on positroid varieties, and the associated recursions with promotion maps, yields a geometric mechanism for the emergence of positivity and cluster structures in planar $\mathcal{N}=4$ super Yang–Mills theory.

\end{abstract}

\makebox[0pt][l]{\hspace*{14cm}\raisebox{11.5cm}[0pt][0pt]{\small MPP-2026-49}}

\section{Introduction}
\label{sec:intro}
In perturbative quantum field theory, the computation of scattering amplitudes requires the evaluation of Feynman integrals. After integration, the resulting amplitude is a meromorphic, multivalued function of the external kinematic data. In general, the class of functions that can arise is not known \emph{a priori}: amplitudes may involve multiple polylogarithms, elliptic integrals, 
Calabi-Yau periods, etc.
Even in the absence of a complete classification of the relevant function space, one may still ask a fundamental structural question: where are the singularities of the amplitude, such as poles and branch cuts, located in kinematic space? 

At the level of the integrand, the situation is considerably different. Before integration, one has a rational function of the external and loop kinematics. Its singularities are poles. \emph{Landau analysis} provides a systematic framework, rooted in algebraic geometry, for studying these poles and how they give rise to singularities of the integrated amplitude, thereby creating a bridge between integrands and integrals~\cite{Landau1960}. The outcome is a collection of discriminants
in kinematic space containing the singularities of the corresponding integral.

The explicit form of Feynman integrals depends on the choice of kinematic variables~\cite{Weinzierl}. Recent work of Fevola, Mizera and Telen \cite{FMT1, MT} develops
algebraic geometry for Landau analysis in the framework of Gel'fand, Kapranov and Zelevinsky \cite{GKZ}.
They use the Lee-Pomeransky representation of Feynman integrals \cite{Weinzierl}.
There, the integrand is governed by the graph polynomial $\mathcal{U} + \mathcal{F}$. 
Its coefficients are linear functions in the kinematic parameters.
One seeks special parameter values for which the
hypersurface $\{\mathcal{U} + \mathcal{F} = 0\}$ is~singular.
The principal Landau determinant in \cite{FMT1} is a variant of
the principal $A$-determinant in \cite{GKZ}. It factors into
irreducible discriminants which identify the leading
singularities of the integral.

Our approach rests on the momentum representation of Feynman integrals, formulated with momentum twistors~\cite{hodges2013eliminating} and the Grassmannian ${\rm Gr}(2,4)$. This formalism provides a natural compactification of spacetime, in which \emph{conformal symmetries} of complexified Minkowski spacetime are realized by the linear action of ${\rm SL}(4,\mathbb{C})$~\cite{Drummond:2010qh, Mason:2009qx}.
Writing
 $N$ and $D$ as polynomials in the Pl\"ucker coordinates of
        $d+\ell$ lines in $\PP^3$, our integrals take the form
     \begin{equation}
     \label{eq:LMintegral}
        \mathcal{I}(\mathbf{M})\,\,=\int_\Gamma \frac{N(\mathbf{L};\mathbf{M})}{D(\mathbf{L};\mathbf{M})} {\rm d}\mu(\mathbf{L}) \, .
    \end{equation}
   
        The Feynman integral~\eqref{eq:LMintegral} is a function of the
            $d$ external lines    $\mathbf{M}=(M_1,\ldots,M_d) \in {\rm Gr}(2,4)^d$.
            One integrates over
            $\ell$ internal lines
             $\mathbf{L}=(L_1,\ldots,L_\ell) \in {\rm Gr}(2,4)^\ell$,
             for some cycle
    $\Gamma$ and measure ${\rm d}\mu$ on ${\rm Gr}(2,4)^\ell$. The denominator $D$ is a product over pairwise line incidences, corresponding to propagator conditions of Feynman integrals,  written as in \cite[Eq. (2)]{HMPS1}:
    \begin{equation} 
    \label{eq:Denom}
        D(\mathbf{L};\mathbf{M})\,\,:=\,\,\prod_{a<b} \langle L_a L_b \rangle \prod_{a,i}  \langle L_a M_i \rangle.
    \end{equation}

A proper treatment of Landau analysis for the integral~\eqref{eq:LMintegral} requires the stratification of the \textit{on-shell space}, which is the vanishing locus of~\eqref{eq:Denom} in the space of internal and external kinematics~\cite{Hannesdottir:2022iterated,helmer2024landau,Landau1960}. The \textit{Landau variety} is the branch locus of the \textit{Landau map}, the projection from the on-shell space to the space of external data $\mathbf{M}$~\cite{Landau1960}; it contains the locus where~\eqref{eq:LMintegral} develops singularities, namely poles or branch cuts. In this work we focus on \textit{leading} (and \textit{super-leading}) Landau singularities, arising from strata for which the generic fiber of the Landau map is zero-dimensional. The finitely many points in these fibers are the \textit{leading singularities}. They also correspond to maximal residues of the integrand in~\eqref{eq:LMintegral}, maximal iterated discontinuities and algebraic prefactors in expressions of the integral~\cite{ArkaniHamed:2009dn, Cachazo:2008vp}.

In this setting, we study the following properties of the fibers of the Landau map:
\begin{itemize}
    \item the \emph{generic size} of the fiber corresponds to the \emph{LS degree};
    \item a \emph{change} in the size of the fiber is detected by \emph{LS discriminants} and \emph{SLS resultants};
    \item \emph{reality}:
    all points in the fiber are real when
    the graph is outerplanar and ${\bf M}$
    lies in the positive region $\mathcal{M}^{>0}$. This implies positivity of
    LS discriminants and supports the expectation that amplitudes in planar $\mathcal{N}=4$ SYM are regular on $\mathcal{M}^{>0}$;
    \item \emph{rationality}: special fibers whose points are rational functions of the external kinematics ${\bf M}$.
    The specialized LS discriminants and SLS resultants factor into cluster variables for
    a higher Grassmannian. This perspective sheds light on the emergence of cluster structures in scattering amplitudes in planar $\mathcal{N}=4$ SYM.
\end{itemize}

\begin{center}
\begin{tabular}{c c}
\hline
\emph{Property of the fiber} & \emph{Geometric/physical interpretation} \\
\hline
Generic size & LS degree \\
Change of size & LS discriminant and SLS resultant \\
Reality of points & Positivity, regularity on $\mathcal{M}^{>0}$ \\
Rationality of points & Cluster structure \\
\hline
\end{tabular}
\end{center}

\smallskip

Momentum twistor variables are well-suited for planar dual conformal invariant integrals~\cite{Drummond:2010qh}. Our results therefore have direct applications to scattering amplitudes in planar $\mathcal{N}=4$ super Yang--Mills (SYM) theory, where additional remarkable structures arise, notably \emph{positive geometries} and \emph{cluster algebras}~\cite{amplituhdr,GGSVV}. At fixed multiplicity and loop order, the full color-ordered amplitude can be written as a single integral of the form~\eqref{eq:LMintegral}, whose integrand is the \textit{canonical form} of the loop amplituhedron~\cite{amplituhdr}. The on-shell varieties considered in this work arise naturally in the boundary stratification of loop amplituhedra. This perspective also suggests refinements of Landau analysis involving the amplituhedron boundary and its adjoint hypersurface, which encodes the numerator $N$ in~\eqref{eq:LMintegral}~\cite{chicherin2026geometric,dennen2017landau, Ranestad:2024adjoints}. Related geometric approaches in momentum-twistors include the work of Song He \emph{et al.}~\cite{HJLY}.

Another remarkable structure of planar $\mathcal{N}=4$ SYM is the appearance of cluster algebras in the singularities of amplitudes, first observed by Golden \emph{et al.}~\cite{GGSVV}. This was followed by the discovery of \emph{cluster adjacencies}~\cite{DFG} and related structures~\cite{GP23}, culminating in the cluster bootstrap program~\cite{CHDDDFGvHMP}. 
The recent work~\cite{EZLPTSW} explains the emergence of
cluster structures at tree level in terms of the positive geometry of the amplituhedron. The key idea  is to formulate BCFW recursion in terms of maps between cluster algebras, the \emph{BCFW promotion maps}. 

These cluster structures are closely tied to striking positivity phenomena in planar $\mathcal{N}=4$ SYM. A central expectation is that singularities of amplitudes do not intersect  \textit{positive kinematic space} $\mathcal{M}^{>0}$, namely the quotient of a positive Grassmannian by the torus action.
Amplitudes are regular on $\mathcal{M}^{>0}$. From this perspective, Landau analysis provides a natural arena in which to study both positivity and the cluster structure of amplitude singularities.

What has remained missing in the literature is a first-principles explanation for the emergence of positivity and cluster structures in planar $\mathcal{N}=4$ SYM, together with a systematic algebraic-geometric framework for Landau analysis in momentum-twistor space. We provide such a framework by treating the fibers of the Landau map as primary objects and studying their size, degeneration, reality, and rationality. The main question we address is how these fibers can be studied intrinsically in terms of line-incidence geometry on ${\rm Gr}(2,4)$, and how their geometry controls (leading, super-leading, and next-to-leading) Landau singularities. 

Our answer combines algebraic geometry, combinatorics, and computation: we interpret LS discriminants and SLS resultants as multigraded  Hurwitz forms and  Chow forms, develop recursive factorization formulas for them, and relate their positive and rational regimes to positroid geometry, promotion maps, and amplituhedra. This yields a mechanism for the emergence of positivity and cluster algebraic structures in scattering amplitudes of planar $\mathcal{N}=4$ SYM, while providing effective tools for computing Landau singularities.

\smallskip

We now discuss the structure and main contributions of this paper.
In Section \ref{sec:feynman} we review the representation in (\ref{eq:LMintegral}) from Feynman integrals in momentum space. 
This translation arises because (complexified) Minkowski space
is naturally compactified by the Grassmannian ${\rm Gr}(2,4)$.
This explains why lines in $\PP^3$ and their incidences make up 
   the denominator (\ref{eq:Denom}) of the integrand,
 and serves as the foundation for our
  approach to   Landau analysis.

Section \ref{sec:line-incidence} starts with a review,
 based on \cite{HMPS1}, of the incidence variety
 $V_G$ of a Feynman graph $G$.
 This leads us to  the Landau map $\psi_u$ in (\ref{eq:LandauMap}).
 The fibers of $\psi_u$ are the main objects of study in this article.
 We focus on planar Landau diagrams arising from outerplanar graphs $G$. In our language $G$ is the subgraph of the dual to a Landau diagram, whose vertices correspond to loop variables.
Theorem \ref{thm:outerplanar} characterizes the
irreducible decomposition of their incidence varieties.
In Section \ref{sec:discriminants} we introduce
discriminants for leading singularities (LS)
and resultants for super-leading singularities (SLS).
We define these as multigraded Chow forms \cite{OT} and Hurwitz forms
\cite{PSS} of line incidence varieties. This ensures that they are irreducible.
We derive explicit formulas for small graphs 
in Proposition \ref{prop:eins} and Theorem~\ref{thm:16168}.

In Section \ref{sec:hurwitz}
we determine the degree of the SLS resultant  (Theorem \ref{thm:osserman}) and
the expected degree of the LS discriminant (Proposition \ref{prop:bbexpected}).
Example \ref{ex:5cycle} shows how these degrees are found with
 the software from \cite[Section 6]{PSS}.
Section \ref{sec:nls} is concerned with
next-to-leading singularities (NLS). Here  the general
fiber of the Landau map $\psi_u$ is a curve.
The double box diagram yields an elliptic curve
(see Theorem \ref{thm:NLSdoublebox}).
The fiber becomes a singular curve when the NLS discriminant vanishes.
Higher genus cases are featured in Examples
\ref{ex:multigenera} and \ref{ex:NLSthreelines}.

The classical Poisson formula \cite[\S 13.1.A]{GKZ}
computes the resultant of several polynomials by
evaluating one polynomial at the common zeros of the others.
This leads to  recursive formulas for resultants.
In Section \ref{sec:recursive} we develop an analogous theory
for LS discriminants and SLS resultants. The main results in that section
are the factorization formulas in Theorems
\ref{thm:discr_fact} and~\ref{thm:res_fact}. In~\cite{Caron-Huot:2025recursive} the authors explored recursive structures in Landau analysis relying on the optical theorem for scattering amplitudes.
By contrast, our approach is  rooted in algebraic geometry,
and it arises naturally in the language of discriminants and resultants~\cite{GKZ}.

In Section \ref{sec:rationality}
we assume that the boundary lines in ${\bf M}$ satisfy
certain prescribed incidences. Physically, when two external lines intersect, the corresponding external momentum at that vertex of the dual graph is on shell; otherwise it is off shell, while the propagators remain massless.
This degenerations allow for geometric constructions 
for all points in the fiber of the Landau map.
These constructions translate into rational formulas,
like those in Example \ref{ex:rat_trian}.
Theorems \ref{thm:trivalenttree} and \ref{thm:rat_tr_of_l_gon}
describe this scenario for  trees and triangulated $\ell$-gons.
Rational fibers yield factorizations of the LS discriminant,
as shown in Theorem~\ref{thm:tr_rat}.

Section \ref{sec:positivity}
centers around the Reality Conjecture  \ref{conj:reality_conjecture},
which states that the fiber of the Landau map
is fully real whenever the boundary data ${\bf M}$ 
come from the positive Grassmannian.
 Affirmative results for special cases 
appear in Proposition \ref{prop:copositivity_resultant}
and Theorem \ref{thm:copositive}.
We also present computational validations of 
 our conjecture
using tools from numerical algebraic geometry \cite{BT}.
These imply that the LS discriminant is Grassmann copositive (Theorem 
\ref{thm:copositive}). 

Positroids and amplituhedra
have become important for particle physics 
in recent years. We connect these 
with line incidence varieties in Section \ref{sec:positroid}.
Our Landau map $\psi_u$ equals the
amplituhedron map on a positroid variety.
This rests on
plabic graphs, Grassmann graphs and vector-relation
configurations~\cite{m4tiling, VRC}.
Theorems  \ref{thm:positroid} and \ref{thm:positroid2} identify 
LS discriminants and SLS resultants with 
Chow-Lam forms and Hurwitz-Lam forms~\cite{PS}.

In Section \ref{sec:promotion_maps} we show that the promotion maps introduced in~\cite{VRC} match our recursive line constructions, in which solutions to Schubert problems are substituted into discriminants for smaller graphs. This provides the geometric mechanism underlying the recursive factorization of Landau singularities and connects rational Landau diagrams with rational positroids. Section \ref{sec:cluster} is devoted to the Cluster Factorization Conjecture \ref{conj:CFC}, which states that the irreducible factors of rational LS discriminants are cluster variables. This explains the emergence of cluster structures in Landau singularities from first principles. There we prove this conjecture for trees in Theorem \ref{th:cluster_fact_trees} and present further examples beyond the tree case. In Section \ref{sec:future} we conclude with a discussion of directions for future research.


\section{From Feynman Graphs to Lines in 3-Space}
\label{sec:feynman}
Lines and the coordinates that represent them correspond to {\em momentum twistors} in physics.
The denominator $D({\bf L};{\bf M})$ seen in (\ref{eq:Denom}) is a function of
$\ell+d$ momentum twistors $L_i$ and $M_j$. This function is a product of {\em propagators}.
The resulting polynomial is a pairwise product of momentum twistors.
A factor vanishes if and only if the two lines of a pair intersect in $\PP^3$.

In what follows we derive this representation~(\ref{eq:LMintegral}) for a 
Feynman integral,
and we explain where the lines in $\PP^3$ come from.
This section is expository, and it is intended primarily for
 mathematicians to whom the many different lives
of a Feynman integral are unfamiliar.
Our point of departure is the momentum representation \cite[Section 2.5.1]{Weinzierl}.
In what follows, we assume that the given  Feynman graph is planar
and that spacetime is four-dimensional.

We start with $d$ external momenta $p_1,\ldots,p_d$ and
$\ell$ internal momenta $k_1,\ldots,k_\ell$.
Each of these is a vector in Minkowski space $\RR^{1,3}$.
In coordinates, we write
$p_i = (p_{i0},p_{i1},p_{i2},p_{i3})$ for $i \in [d] $ 
and $k_j = (k_{j0},k_{j1},k_{j2},k_{j3})$ and $j \in [\ell]$.
Here we abbreviate $[\ell] = \{1,\ldots,\ell\}$. Momentum vectors
are multiplied using the Lorentzian inner product on $\RR^{1,3}$. For example,
 \begin{equation}\label{eq:Lorentzian_product}
     p_i \cdot k_j \,=\, p_{i0} k_{j0} - p_{i1} k_{j1} - p_{i2} k_{j2} - p_{i3} k_{j3}  \quad
{\rm and} \quad
k_j^2 \,= \, k_j \cdot k_j \,= \,k_{j0}^2 - k_{j1}^2 - k_{j2}^2 - k_{j3}^2 \, .
 \end{equation} 
We draw  the given Feynman graph as a planar diagram.
The diagram has $\ell$ loops and $d$ legs (pendant edges).
Let $e$ be the number of internal edges.
There are $e+1-\ell$ internal vertices.
We direct the $d+e$ edges arbitrarily. Each directed edge is labeled
by a momentum vector as follows. The  $d$ legs are labeled by
$p_1,\ldots,p_d$. For each loop we select one of its edges,
and we label these edges by $k_1,\ldots,k_\ell$. The other
$e-\ell$ internal edges are labeled so that momentum conversation
holds at every internal vertex.
This means that the sum of the labels of the incoming edges
equals the sum of the labels of the outgoing edges.
With this rule, every edge has a unique label,
which is an integer linear combination of the $p_i$ and $k_j$.

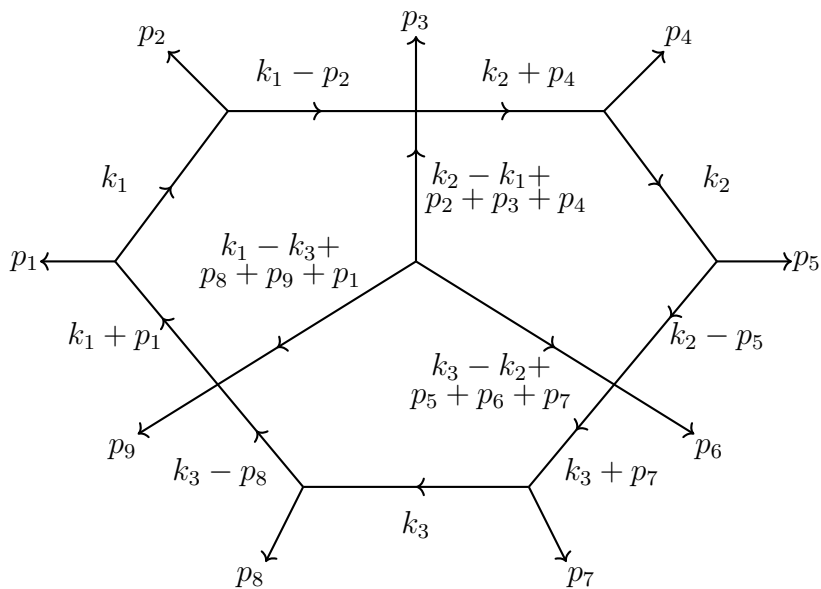
\begin{figure}[h]
\centering
\begin{tikzpicture}

    \begin{scope}[very thick,decoration={markings,mark=at position 0.25 with {\arrow{>}}, mark = at position .75 with {\arrow{>}}}] 
        \draw[postaction={decorate}, black, thick] (-2.5, 2)--(2.5, 2);
        \draw[postaction={decorate}, black, thick] (4, 0)--(1.5, -3);
        \draw[postaction={decorate}, black, thick] (-1.5, -3)--(-4, 0);
    \end{scope}

    \begin{scope}[very thick,decoration={markings,mark=at position 0.5 with {\arrow{>}}}] 
    
        \draw[postaction={decorate}, ->, black, thick] (0,0)--(0,3);
        \draw[postaction={decorate}, ->, black, thick] (0,0)--(3.7,-2.3);
        \draw[postaction={decorate}, ->, black, thick] (0,0)--(-3.7,-2.3);
        \draw[postaction={decorate}, black, thick] (2.5, 2)--(4, 0);
        \draw[postaction={decorate}, black, thick] (1.5, -3)--(-1.5,-3);
        \draw[postaction={decorate}, black, thick] (-4, 0)--(-2.5, 2);
    \end{scope}

    \draw[->, black, thick] (-2.5, 2)--(-3.3, 2.8);
    \draw[->, black, thick] (2.5, 2)--(3.3, 2.8);
    \draw[->, black, thick] (-4, 0)--(-5,0);
    \draw[->, black, thick] (4, 0)--(5,0);
    \draw[->, black, thick] (-1.5, -3)--(-2, -4);
    \draw[->, black, thick] (1.5, -3)--(2, -4);

    \node at (-1.8, .2) {$k_1-k_3+$};
    \node at (-1.8, -.2) {$p_8 + p_9 + p_1$};
    \node at (1, 1.15) {$k_2-k_1+$};
    \node at (1.2, .8) {$p_2+p_3+p_4$};
    \node at (1, -1.4) {$k_3 - k_2 + $};
    \node at (1, -1.8) {$p_5+p_6+p_7$};
    \node at (-4, -1) {$k_1+p_1$};
    \node at (4, -1) {$k_2-p_5$};
    \node at (-4, 1.1) {$k_1$};
    \node at (4, 1.1) {$k_2$};
    \node at (-1.5, 2.5) {$k_1-p_2$};
    \node at (1.5, 2.5) {$k_2 + p_4$};
    \node at (2.6, -2.8) {$k_3 + p_7$};
    \node at (-2.6, -2.8) {$k_3-p_8$};
    \node at (0, -3.5) {$k_3$};

    \node at (-3.5, 3) {$p_2$};
    \node at (3.5, 3) {$p_4$};
    \node at (-5.2, 0) {$p_1$};
    \node at (-2.2, -4.2) {$p_8$};
    \node at (2.2, -4.2) {$p_7$};
    \node at (0, 3.2) {$p_3$};
    \node at (3.9, -2.5) {$p_6$};
    \node at (-3.9, -2.5) {$p_9$};
    \node at (5.2, 0) {$p_5$};
\end{tikzpicture}
\caption{The triple pentagon is a Feynman graph at $\ell\!=\!3$ loops for $d\!=\!9$ interacting~particles.}
\label{fig:TP1}
\end{figure}

We write $\mathbf{P}=(p_1,\ldots,p_d) \in (\mathbb{R}^{1,3})^d$ 
 for the external momenta and 
  $\mathbf{K}=(k_1,\ldots,k_\ell) \in (\mathbb{R}^{1,3})^\ell$ for the loop momenta.
  With this abbreviation, our Feynman integral takes the form
  \begin{equation} \label{eq:feynmanKP}
    I(\mathbf{P})\,\,\,=\,\,\int_\Gamma \frac{N(\mathbf{K};\mathbf{P})}{D(\mathbf{K};\mathbf{P})} {\rm d}\mu(\mathbf{K}) \, ,
\end{equation}
    for some cycle $\Gamma$ and some integration measure ${\rm d}\mu$ on $(\mathbb{C}^{1,3})^\ell$. 
    Here $N,D$ are polynomials in the coordinates of the momenta.
    The denominator $ D(\bf{K};{\bf P}) $ is the product of
    the squares of all $d+\ell$ edge labels.
    For the legs we multiply the squares $p_i^2    = p_{i0}^2 - p_{i1}^2 - p_{i2}^2 - p_{i3}^2$.
The internal edges contribute the squares of certain linear
combinations of the $p_i$ and $k_j$. 
The internal momenta are integrated out in (\ref{eq:feynmanKP})
and the resulting integral $I(\mathbf{P})$ is a
function of the external momenta.
This function is transcendental,
and one is interested in its analytic properties.
Of special interest are the singularities, namely
 poles and branch cuts. Landau analysis 
 largely disregards the numerator, and, for our purposes, we may set $N({\bf K};{\bf P}) = 1$.

We illustrate the labeling and the construction of the integral for
a Feynman graph called the {\em triple pentagon}.
Here $\ell = 3, d = 9$ and $e = 12$, so the graph has ten internal vertices.
One central trivalent vertex is surrounded by a ring of nine vertices, each
of which has a leg. The central vertex lies on three pentagons.
In the $i$th pentagonal loop, we use the label $k_i$
for the edge opposite to the central vertex. The labeled triple pentagon is shown in
Figure~\ref{fig:TP1}.

In Figure \ref{fig:TP1} we see that, at each of the ten vertices,
the sum of the incoming labels equals the sum of the outgoing labels.
At the central vertex this is due to momentum conversation:
$$ \qquad p_1 + p_2 + p_3 + p_4 + p_5 + p_6 + p_7 + p_8 + p_9 \,\, = \,\, 0 \qquad
{\rm in} \,\, \, \RR^{1,3} \, . $$

We now perform a linear change of coordinates. Namely we replace
each of the momenta by dual variables. This means that we introduce
momentum vectors for each of the regions in the Feynman graph. The nine external regions
are labeled by $x_1,x_2,\ldots,x_9$. The three pentagons are
labeled by $y_1,y_2,y_3$, as shown in Figure \ref{fig:TP2}.
The rule is that the variable labeling a directed edge is the difference of the variables of the
regions it separates. In our example,
$$ k_1 = x_2-y_1,
 k_2 = x_5-y_2,
 k_3 = x_8-y_3,\,
 p_1 = x_1-x_2,
 p_2 = x_2-x_3,
 p_3 = x_3-x_4, \ldots,
  p_9 = x_9-x_1. $$
  The coordinates $({\bf X},{\bf Y})$ are called {\em dual variables}, because
  they reflect the duality of planar graphs. Note that in general the number of particles of a Feynman graph need not be the same as the number of external regions $d$.
  In dual variables, $D({\bf K};{\bf P})$ in (\ref{eq:feynmanKP}) becomes
  $$ D({\bf X}; {\bf Y}) \,\,\,\, = \,
  \begin{matrix} &
  ( x_1 - y_1)^2  (x_2-y_1)^2 (x_3-y_1)^2 \,
  (x_4-y_2)^2 (x_5-y_2)^2 (x_6-y_2)^2 \\ & \,
 (x_7-y_3)^2 (x_8-y_3)^2 (x_9-y_3)^2 \,\,
(y_1-y_2)^2 (y_2-y_3)^2 (y_3-y_1)^2 \, .
\end{matrix}
  $$
As before, each  of the $12$ factors in this product is evaluated using the 
Lorentzian inner product~\eqref{eq:Lorentzian_product}. 
For instance,
  the third factor is written in scalar quantities as follows:
  \begin{equation}
  \label{eq:neuneins} (x_3 - y_1)^2 \,\,=\,\,
  (x_{30}-y_{10})^2 -
  (x_{31}-y_{11})^2 -
(x_{32}-y_{12})^2 -
(x_{33}-y_{13})^2 .
\end{equation}

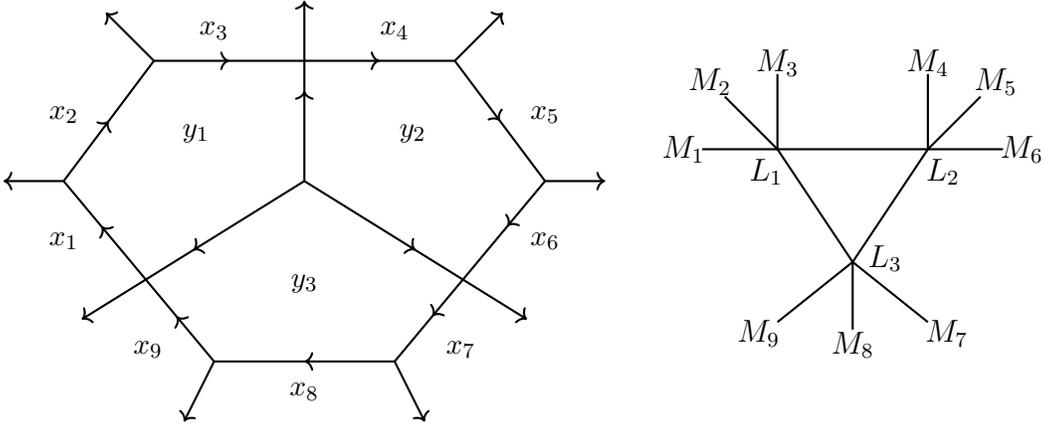
\begin{figure}[h]
    \centering
\begin{tikzpicture}[scale = .8, every node/.style={font=\small}]

    \begin{scope}[very thick,decoration={markings,mark=at position 0.25 with {\arrow{>}}, mark = at position .75 with {\arrow{>}}}] 
        \draw[postaction={decorate}, black, thick] (-2.5, 2)--(2.5, 2);
        \draw[postaction={decorate}, black, thick] (4, 0)--(1.5, -3);
        \draw[postaction={decorate}, black, thick] (-1.5, -3)--(-4, 0);
    \end{scope}

    \begin{scope}[very thick,decoration={markings,mark=at position 0.5 with {\arrow{>}}}] 
    
        \draw[postaction={decorate}, ->, black, thick] (0,0)--(0,3);
        \draw[postaction={decorate}, ->, black, thick] (0,0)--(3.7,-2.3);
        \draw[postaction={decorate}, ->, black, thick] (0,0)--(-3.7,-2.3);
        \draw[postaction={decorate}, black, thick] (2.5, 2)--(4, 0);
        \draw[postaction={decorate}, black, thick] (1.5, -3)--(-1.5,-3);
        \draw[postaction={decorate}, black, thick] (-4, 0)--(-2.5, 2);
    \end{scope}

    \draw[->, black, thick] (-2.5, 2)--(-3.3, 2.8);
    \draw[->, black, thick] (2.5, 2)--(3.3, 2.8);
    \draw[->, black, thick] (-4, 0)--(-5,0);
    \draw[->, black, thick] (4, 0)--(5,0);
    \draw[->, black, thick] (-1.5, -3)--(-2, -4);
    \draw[->, black, thick] (1.5, -3)--(2, -4);

    \node at (-1.8, .8) {$y_1$};
    \node at (1.8, .8) {$y_2$};
    \node at (0, -1.7) {$y_3$};
    \node at (-4, -1) {$x_1$};
    \node at (4, -1) {$x_6$};
    \node at (-4, 1.1) {$x_2$};
    \node at (4, 1.1) {$x_5$};
    \node at (-1.5, 2.5) {$x_3$};
    \node at (1.5, 2.5) {$x_4$};
    \node at (2.6, -2.8) {$x_7$};
    \node at (-2.6, -2.8) {$x_9$};
    \node at (0, -3.5) {$x_8$};
\end{tikzpicture}
\begin{tikzpicture}
\node at (0,0) {};
\begin{scope}[shift = {(0, .5)}]
    

    \draw[black, thick] (2, 3)--(4, 3);
    \draw[black, thick] (2, 3)--(3, 1.5);
    \draw[black, thick] (4, 3)--(3, 1.5);

    \draw[black, thick] (2, 3)--(1, 3);
    \draw[black, thick] (2, 3)--(1.3, 3.7);
    \draw[black, thick] (2, 3)--(2, 4);

    \draw[black, thick] (4, 3)--(4, 4);
    \draw[black, thick] (4, 3)--(5, 3);
    \draw[black, thick] (4, 3)--(4.7, 3.7);

    \draw[black, thick] (3, 1.5)--(4, .7);
    \draw[black, thick] (3, 1.5)--(2, .7);
    \draw[black, thick] (3, 1.5)--(3, .6);
    
    \node at (1.85, 2.70) {$L_1$};
    \node at (4.20, 2.70) {$L_2$};
    \node at (3.43, 1.55) {$L_3$};

    \node at (2, 4.15) {$M_3$};
    \node at (.75, 3) {$M_1$};
    \node at (1.10, 3.9) {$M_2$};

    \node at (4, 4.15) {$M_4$};
    \node at (5.25, 3) {$M_6$};
    \node at (4.9, 3.9) {$M_5$};

    \node at (4.25, .55) {$M_7$};
    \node at (3, .4) {$M_8$};
    \node at (1.75, .55) {$M_9$};

    \end{scope}

\end{tikzpicture}
\caption{Regions of the triple pentagon labeled by dual variables (left).
The dual graph labeled by momentum twistors (right).
These are lines in $3$-space with prescribed incidences.}
\label{fig:TP2}
\end{figure}

We finally perform a second linear change of coordinates, namely 
from  the dual variables $({\bf X},{\bf Y})$ to 
momentum twistors $({\bf L},{\bf M})= 
(L_1,\ldots,L_\ell,M_1,\ldots,M_d)$.
Each $L_j$ and each $M_k$ is a line in $\PP^3$.
The $\ell+d$ lines can be represented as the row spans of $2 \times 4$ matrices as follows:
\begin{equation}
\label{eq:getLM}
L_j \,\, = \,\, \begin{pmatrix}
1 & 0 & y_{j0}+y_{j3} & y_{1j} - i y_{2j} \\
0 & 1 & y_{j1}+ i y_{j2} & y_{j0} - y_{j3} \end{pmatrix}
\quad {\rm and} \quad
 M_k \,\, = \,\, \begin{pmatrix}
1 & 0 & x_{k0}+x_{k3} & x_{k1} - i x_{k2} \\
0 & 1 & x_{k1}+i x_{k2} & x_{k0} - x_{k3} \end{pmatrix}.
\end{equation}
Here, $i = \sqrt{-1}$. The formula (\ref{eq:getLM}) is derived in
\cite[Section 8]{HMPS1}. Its key property is as follows:
the determinant of any $4 \times 4$ matrix obtained by stacking
two of the $2 \times 4$ matrices vanishes if and only if the
two lines intersect in $\PP^3$. Remarkably, the
factors in the  denominator of our integrand
are precisely  the algebraic translations of this geometric condition.
This is the content of \cite[Equation (29)]{HMPS1}. For instance, 
the expression in (\ref{eq:neuneins}) is the $4 \times 4$ determinant
$$ \langle L_1 M_3 \rangle \,\,:=\,\,{\rm det} \begin{pmatrix}
  L_1 \\ M_3
\end{pmatrix}\, \,\, = \,\,\, (x_{30}-y_{10})^2 -
  (x_{31}-y_{11})^2 -
(x_{32}-y_{12})^2 -
(x_{33}-y_{13})^2 \, .
$$
We use the notation $\langle L_j L_k \rangle$ and $\langle L_j M_k \rangle$ for such $4 \times 4$ determinants. Each of these
determinants
is the inner product of two Pl\"ucker coordinate vectors,
as in \cite[Equation (2)]{HMPS1}.

In summary,  for the triple pentagon, the denominator  $ D({\bf L}; {\bf M}) $ in   (\ref{eq:Denom})
is the product
\begin{equation}\label{eq:prop_tr} \!\!\!\!\!
    \langle L_1 M_1\rangle \langle L_1 M_2\rangle \langle L_1 M_3\rangle \langle L_2 M_4\rangle\langle L_2 M_5\rangle \langle L_2 M_6\rangle \langle L_3 M_7\rangle \langle L_3 M_8\rangle\langle L_3 M_9\rangle
\langle L_1 L_2\rangle \langle L_1 L_3\rangle\langle L_2 L_3\rangle .
\end{equation} 
The $12$ factors correspond to the $12$ edges in the dual graph on the
right in Figure \ref{fig:TP2}.
The denominator is $D({\bf X};{\bf Y})$ in dual variables,
and it equals $D({\bf K};{\bf P})$ in  (\ref{eq:feynmanKP})
when written in the
original momentum coordinates. The latter is the
product of the $12$ edge labels in Figure \ref{fig:TP1}.

The same two coordinate transformations can be carried out for any
planar Feynman graph. One ends up with a graph with
$\ell $ vertices $L_i$ and $d$ vertices $M_k$, like that
shown on the right of Figure \ref{fig:TP2}. The latter graph
specifies incidence relations among $\ell+d$ lines in~$\PP^3$. 

To study singularities of the integral~\eqref{eq:LMintegral}, one introduces \textit{Landau diagrams}. A Landau diagram looks like a scalar Feynman graph, and it represents a stratum in the vanishing locus of~\eqref{eq:Denom}. 
The left picture in Figure~\ref{fig:TP2} is a Landau diagram for the underlying Feynman integral, where all propagators in~\eqref{eq:prop_tr} are set to zero. In the following, we rather work with the \textit{dual} graph to a Landau diagram, as for example the graph on the right in Figure~\ref{fig:TP2}.
For simplicity, we also refer to the dual graph as the Landau diagram.
  That diagram precisely encodes the incidences of lines in $3$-space cutting out the stratum under consideration. 

\section{Incidence Varieties and the Landau Map}
\label{sec:line-incidence}

We now turn to incidence varieties of lines in $\PP^3$,
following the set-up in \cite[Section 3]{HMPS1}. Our ambient space
${\rm Gr}(2,4)^\ell$ is the product of $\ell$ Grassmannians
which sits inside  $(\PP^5)^\ell$ via the Pl\"ucker embedding.
For a graph $G \subset \binom{[\ell]}{2}$,  the
incidence variety $V_G$ is the set of tuples $(L_1,\ldots,L_\ell)$
such that $\langle L_i L_j \rangle = 0$ for all $ij \in G$.
The dimension of $V_G$ and a characterization for which graphs $V_G$ is irreducible or a complete intersection is given in \cite[Section 5]{HMPS1}. 

The {\em multidegree} $[V_G]$ is the class of the incidence variety $V_G$ in the Chow ring
$$ A^* \bigl( (\PP^5)^\ell , \mathbb{Z} \bigr) \,\,= \,\,
\ZZ[t_1,t_2,\ldots,t_\ell] /\langle t_1^6, t_2^6, \ldots,t_\ell^6\rangle . $$
We write this as a homogeneous polynomial whose degree is the codimension of $V_G$ in $(\PP^{5})^\ell$:
\begin{equation}
\label{eq:multidegree}
 [V_G] \,\, = \,\,  \sum_{u \in \NN^\ell} \gamma_u \cdot t_1^{5-u_1} t_2^{5-u_2} \cdots \,t_\ell^{5-u_\ell}. 
 \end{equation}
Here $u$ runs over nonnegative integer vectors $u$ with
$|u| = \sum_{i=1}^\ell u_i  =  {\rm dim}(V_G) =: d$.
The coefficient $\gamma_u$ is the {\em LS degree} of the auxiliary graph $G_u$,
which is obtained by attaching $u_i$ pendant edges at the vertex $i$ of $G$.
Hence $G_u$ has $\ell+d$ vertices and $|G|+d$ edges. In relation to Section~\ref{sec:feynman},
the auxiliary graph $G_u$ should be understood as the planar dual of a \textit{leading Landau diagram}.
Geometrically, the LS degree $\gamma_u$ is the degree of the {\em Landau map}
\begin{equation}
\label{eq:LandauMap}
 \psi_u \,: \, {\rm Gr}(2,4)^{\ell + r} \,\rightarrow \, {\rm Gr}(2,4)^d ,\,\,
({\bf L}, {\bf M}) \,\rightarrow \, {\bf M} \,.
\end{equation}
This map  deletes the $\ell$ internal~lines.
The fiber of $\psi_u$ over a point ${\bf M} = (M_1,\ldots,M_d) \in {\rm Gr}(2,4)^d$
consists of all line configurations ${\bf L} = (L_1,\ldots,L_\ell)$ which satisfy $d$
Schubert conditions. Namely, the $i$th line $L_i$ is required to satisfy
$\langle L_i M_j \rangle = 0$ for the $u_i$ external lines $M_j$ where $j$ is incident to $i$ in $G_u$. Hence, $\gamma_u$ counts the expected number of leading singularities.

\begin{example}[Triangle] \label{ex:triangle}
Let $\ell = 3$ and $G = \{12,13,23\}$.
The variety $V_G$ is the union of
two irreducible components $V_{[3]}$
and $V_{[3]}^*$ of dimension $d=9$ in ${\rm Gr}(2,4)^3$.
Configurations in $V_{[3]}$ are triples of concurrent lines,
while configurations in $V_{[3]}^*$ are triples of coplanar lines.
We have $[V_G] = [V_{[3]}] + [V_{[3]}^*]$, and the
two irreducible components have the same multidegree:
$$ [V_{[3]}] \,=\, [V_{[3]}^*] \,\, = \,\, 4 t_1 t_2 t_3 (t_1 + t_2) (t_1 + t_3)(t_2 + t_3)
\,\,=\,\,
4  \,\biggl(\,\sum_{\pi \in S_3} t_{\pi(1)}^3 t_{\pi(2)}^2 t_{\pi(3)} \biggr)
\,+\, 8 \, t_1^2 t_2^2 t_3^2 \, . $$      
We now fix $u=(3,3,3)$. The graph $G_u$ is shown on the right in
Figure \ref{fig:TP2}. The LS degree $\gamma_u$ equals $16=8+8$.
Indeed, given $9$ general lines ${\bf M}=(M_1,\ldots,M_9)$ in $\PP^3$,
there are precisely $8$ concurrent triples ${\bf L}=(L_1,L_2,L_3)$
which satisfy the $9$ Schubert conditions they impose:
$$  \begin{small} \begin{matrix}
\langle L_1 M_1 \rangle =\langle L_1 M_2\rangle =\langle L_1 M_3\rangle =
\langle L_2 M_4\rangle =\langle L_2 M_5\rangle =\langle L_2 M_6\rangle =
\langle L_3 M_7\rangle =\langle L_3 M_8\rangle=\langle L_3M_9\rangle = 0.
\end{matrix}
\end{small}
$$
Similarly, there are $8$ coplanar triples ${\bf L}$ satisfying these equations.
These ${\bf L}$ form a {\em Cayley octad}, i.e.~they are found by
intersecting three quadratic surfaces in $\PP^3$.
See \cite[Example~3.2]{HMPS1}.
\end{example}

The physical meaning of the Landau map $\psi_u$
concerns the singularities of the Feynman integral 
(\ref{eq:feynmanKP}). In Section \ref{sec:feynman}
we showed that this integral can be written in the form
 (\ref{eq:LMintegral}), where we are integrating
 over ${\bf L}$ in ${\rm Gr}(2,4)^\ell$
 and the resulting function of ${\bf M}$
 has its domain in ${\rm Gr}(2,4)^d$.
The denominator $D({\bf L}; {\bf M})$ is the
product of propagators, and these form 
 an arrangement of hypersurfaces. Higher-codimension singular loci are obtained by intersecting subsets of these hypersurfaces, that is, by setting a subset of propagators to zero, $
D_{i_1}=\cdots=D_{i_k}=0$.
Singularities of maximal codimension are called \emph{leading singularities} (LS).
These are the points in the fibers of $\psi_u$.
Their number is the LS degree $\gamma_u$, which we read off from $[V_G]$.

An important ingredient in our framework is the decomposition 
of the variety $V_G$ into irreducible components.
We denote these irreducible varieties by $V_{G,\sigma}$ where $\sigma$ runs
over an appropriate finite set of labels.
For many graphs of interest,  these labels arise from 
concurrent and coplanar triples, as seen in Example \ref{ex:triangle}.
We  will now make this precise.

In graph theory,
a graph $G$ on $[\ell]$ is called {\em outerplanar} if it is a subgraph of a
triangulated $\ell$-gon.  The vertices are  labeled
in cyclic order $1,2,\ldots,\ell$. Being outerplanar is a strong form of
planarity. Similar to Kuratowski's characterization of planar graphs
by excluded minors, a graph $G$ is outerplanar if and only if it has no subgraph
that is a  subdivision of $K_4$ or $K_{2,3}$.
If $G$ is outerplanar then its triangles are visible in the
planar drawing. We write $\tau(G)$ for the number of triangles.
For instance, if $G$ is a triangulated $\ell$-gon then $\tau(G)= \ell-2$
and the variety $V_G$ has $2^{\ell-2}$ irreducible components
\cite[Theorem 4.10]{HMPS1}.
These components are labeled by the bicolorings $\sigma$ of
the triangles. A black triangle means that three lines
are concurrent, while a white triangle means that they
 are coplanar. See \cite[Figure 1]{HMPS1} for  $\ell=4$.

\begin{theorem} \label{thm:outerplanar}
Let $G$ be an outerplanar graph. Then the incidence variety $V_G$ is
a complete intersection. It has precisely
$2^{\tau(G)}$ irreducible components, as described above,
one for each bicoloring $\sigma$ of the triangles.
In particular, $V_G$ is irreducible when the graph $G$ is triangle-free.
\end{theorem}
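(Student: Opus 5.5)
We argue by induction on the number $\ell$ of vertices, using that every outerplanar graph $G$ with $\ell \geq 2$ has a vertex of degree at most $2$. If that vertex has degree $2$, its two neighbours may be taken adjacent, forming an ear of a triangulation (otherwise we first add the edge, or treat the non-triangle case separately). Removing this vertex $v$ gives an outerplanar graph $G'$ on $\ell-1$ vertices. There is a natural projection $\pi: V_G \to V_{G'}$ forgetting $L_v$. By induction, $V_{G'}$ is a complete intersection of codimension $|G'|$ in ${\rm Gr}(2,4)^{\ell-1}$ with $2^{\tau(G')}$ components $V_{G',\sigma'}$. We then analyse the fibres of $\pi$. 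The fibre over a configuration is the set of lines $L_v$ meeting the (at most two) prescribed lines $L_a, L_b$: a Schubert variety in ${\rm Gr}(2,4)$.

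\emph{Degree $\leq 1$.} The fibre is irreducible of dimension $4-\deg v$ over every point, since the lines meeting a fixed line form a hyperplane section of the Klein quadric. This holds even when the neighbours degenerate. Hence each component of $V_G$ dominates a component of $V_{G'}$, with irreducible equidimensional fibres. The components therefore correspond bijectively, and the dimension drops by exactly $\deg v$.

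\emph{Degree $2$ with $ab \notin G$.} Over the open dense subset of each $V_{G',\sigma'}$ where $L_a \cap L_b = \emptyset$, the fibre is the set of lines meeting two skew lines. This is $\PP^1\times\PP^1$, irreducible of dimension $2$. We must check that no extra component of dimension $\geq \dim V_{G',\sigma'}+2$ lives over the locus where $L_a$ and $L_b$ meet. There the fibre has dimension $2$, being the union of the star of the point $L_a\cap L_b$ and the lines in their common plane (dimension $3$ only if $L_a = L_b$). That locus has codimension $\geq 1$ in each component, and the equality locus $L_a=L_b$ has codimension $\geq 4$. Verifying these codimensions using the inductive structure of $V_{G'}$ is routine.

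\emph{Degree $2$ with $ab \in G$, creating a new triangle.} Over the generic point of $V_{G',\sigma'}$, the lines $L_a, L_b$ meet in a point $p$ and span a plane $H$. The fibre is then $\{L \ni p\} \cup \{L \subset H\}$, two $\PP^2$'s, so it splits into two components. These give the two colourings of the new triangle: black for concurrent, white for coplanar. To see that each piece stays irreducible, we observe that each is a $\PP^2$-bundle over the dense open subset of $V_{G',\sigma'}$ where $L_a \neq L_b$, so its closure is irreducible. We also need that the closures are distinct and that nothing extra sits over $L_a=L_b$. On that locus the fibre is the $3$-dimensional set of lines meeting $L_a$, while the base drops by $4$ dimensions, so this contributes no component. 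This yields $2\cdot 2^{\tau(G')}=2^{\tau(G)}$ components.

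In each case every component of $V_G$ has dimension $\dim V_{G'} + 4 - \deg v$, that is, codimension $|G|$. Since $V_G$ is cut out by $|G|$ equations in the Cohen--Macaulay variety ${\rm Gr}(2,4)^\ell$ (the product of Klein quadrics), $V_G$ is a complete intersection. Triangle-freeness gives $\tau(G)=0$ and hence irreducibility.

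The main obstacle is the degenerate-locus bookkeeping. We must guarantee that the loci where $L_a, L_b$ coincide or meet unexpectedly are of sufficiently large codimension within each component of $V_{G'}$. This requires a strengthened inductive hypothesis: for any two vertices of $G'$, the condition $L_a = L_b$ imposes codimension $4$ on each component, and $\langle L_aL_b\rangle=0$ for non-adjacent $a,b$ imposes codimension $1$. I would carry this along the induction, or invoke the dimension results of \cite[Section 5]{HMPS1} and the triangulated case \cite[Theorem 4.10]{HMPS1}. For the latter we would use that a subgraph of a triangulation is obtained by deleting edges, although deleting edges alone does not obviously control components, hence the vertex-removal induction.
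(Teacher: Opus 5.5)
Your strategy is the same as the second half of the paper's proof: delete a vertex $v$ of degree $\le 2$, study the fibres of $\pi$, and split the fibre into a concurrent and a coplanar $\PP^2$ when $v$ closes a triangle. The gap is in the dimension count, which is the complete-intersection assertion itself. Your induction needs two facts on every component of $V_{G'}$:
\begin{itemize}
\item $\langle L_aL_b\rangle\not\equiv 0$ when $ab\notin G$;
\item the locus $\{L_a=L_b\}$ has codimension at least $2$.
\end{itemize}
You call this routine, but it is the entire content of the first claim, and the strengthened hypothesis you propose to carry is false. If $\{a,c,d\}$ and $\{c,d,b\}$ are triangles of $G'$ sharing the edge $cd$ and both are black, then $L_a$ and $L_b$ pass through $L_c\cap L_d$. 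So $\langle L_aL_b\rangle$ vanishes identically on that component; this is why components such as $I_{1234}+I_{145}^*$ occur for the triangulated pentagon. Likewise, $L_a=L_b$ cuts out only codimension $3$ when $a,b$ are adjacent or share a neighbour. For $G'=K_2$ the diagonal is $4$-dimensional inside the $7$-dimensional $V_{K_2}$, so your ``base drops by $4$'' is also off. Inside a monochromatic block it cuts out only codimension $2$. A hypothesis about all pairs of vertices of $G'$ therefore cannot be propagated through the induction.

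The paper obtains the complete intersection first and independently. Outerplanar graphs are $(2,3)$-sparse, so $|G[J_1]|\le 2|J_1|-3$. Strict contraction stability, $|G|-|G_J|<4|J_1|-4$, then follows from this together with $|H|\le 2|J_1|+|K|-2$ for the outerplanar bipartite graph $H$ of edges leaving $J_1$. Then \cite[Theorem 5.4]{HMPS1} gives the complete intersection and $X_G=V_G$, so the component count runs on an equidimensional $V_G$.

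If you prefer to stay inside your own induction, use only the specific pair $a,b$ in $G$, together with minor-closedness of outerplanarity. Take as inductive statement $\dim V_H=4|V(H)|-|H|$ for all outerplanar $H$ on fewer vertices.
\begin{itemize}
\item Suppose $\langle L_aL_b\rangle\equiv 0$ on some component of $V_{G'}$; by Krull that component has the expected dimension. It would then lie in $V_{G'\cup\{ab\}}$. But $G'\cup\{ab\}$ is $G$ with the edge $va$ contracted, hence outerplanar of strictly smaller expected dimension, which is a contradiction.
\item The locus $\{L_a=L_b\}\cap V_{G'}$ is $V$ of the graph obtained from $G$ by contracting $va$ and $vb$, which is again outerplanar. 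Since $G$ contains no $K_{2,3}$, $a$ and $b$ have at most one common neighbour in $G'$. So at most two edges are lost, including $ab$, and the codimension is at least $2$.
\end{itemize}
These two bounds are exactly what both of your degree-$2$ cases require.
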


\begin{proof} 
Every outerplanar graph $G$ is $(2,3)$-sparse, i.e.~it is independent in the rigidity matroid \cite[Remark 4.4]{HMPS1}.
We claim that $G$ is strictly contraction stable. This notion was defined in \cite[Section 5]{HMPS1}. 
We then conclude,  by \cite[Theorem 5.4]{HMPS1}, that $V_G$ is a complete intersection.

Our claim states that,  for any partition $J = \{J_1, \ldots, J_r\}$ of $[\ell]$ with singletons omitted:
   \[   |G_J| \,+\, 4(|J_1| + |J_2| + \cdots + |J_r| - r) \,\,> \,\, |G|. ~~    \]
   In this formula,
        $G_J$ is the graph on $\ell - s_J$ vertices obtained by identifying all vertices in each $J_i$, merging parallel edges, and deleting resulting loops. Here $s_J = |J_1| + \cdots |J_r| - r$. Note that the graph $(G_{\{J_1\}})_{\{J_2\}}$ obtained by first contracting with respect to the partition $\{J_1\}$ and then $\{J_2\}$ is the same as that obtained by contracting with respect to $\{J_1, J_2\}$.
    Therefore, it suffices to prove the inequality for partitions with only one block. 
    
    Let $J = \{J_1\}$ be a partition of $[\ell]$ with  one non-singleton block.
    We  must show that $|G| - |G_J| < 4 |J_1| - 4$. Let $J_1^c = [\ell] \setminus [J_1]$, and $H = \{ij \in G ~|~ i \in J_1,~ j \in J_1^c \}$,
     and $K = \{i \in J_1^c ~|~ ij \in G \,\,\hbox{for some} \,\,j \in J_1\}$.
The numbers of edges in our graphs are
    \[
    |G| \,=\, |G[J_1]| + |G[J_1^c]| + |H| \quad {\rm and} \quad |G_J| \,=\, |G[J_1^c]| + |K| ,
    \]
    where $G[S]$ is the subgraph of $G$ induced on $S$. Hence,
        $|G| - |G_J| = |G[J_1]| + |H| - |K|$. Since $G$ is (2,3)-sparse, $|G[J_1]| \leq 2|J_1| - 3$, so it remains to show that $|H| - |K| < 2|J_1| - 1$. Note that $H$ is bipartite with parts $J_1$ and $K$.
        As a minor of $G$, it is outerplanar.         It is easy
        to show, by induction,
         that $|H| \leq 2|J_1| + |K| -2$ for any outerplanar bipartite graph $H$.

    We next prove that $V_G$ has $2^{\tau(G)}$ irreducible components indexed by bicolorings of $G$'s triangles.
      For $\ell \leq 3$ this is clear. Let $\ell >3$. Since $G$ is outerplanar, there exists a vertex $v$  of degree $r \leq 2$. Consider the       map $\pi : {\rm Gr}(2,4)^{\ell} \rightarrow {\rm Gr}(2,4)^{\ell-1}$ which forgets the line associated to $v$. The image of 
      $\pi$ is $V_{\widetilde{G}}$, where $\widetilde{G}$ is the graph on $\ell-1$ vertices obtained from $G$ by deleting $v$. By our induction hypothesis, $V_{\widetilde{G}}= X_{\widetilde{G}}$ has $2^{\tau(\widetilde{G})}$ irreducible components. Let $V_{\tilde{\sigma}} \subset V_{\widetilde{G}}$ be any irreducible component, given by a bicoloring $\tilde{\sigma}$ of the $\tau(\widetilde{G})$ triangles in $\widetilde{G}$.

    Suppose that $r=1$. The fiber is irreducible and $\pi$ defines a fiber bundle over $V_{\tilde{\sigma}}$. Hence there exists an irreducible component $V_{\sigma} \subset V_G$ associated to $V_{\tilde{\sigma}}$. In this case, $\tau(G) = \tau(\widetilde{G})$ and thus $V_G$ has $2^{\tau(G)}$ irreducible components indexed by the bicolorings of triangles of $G$. 
    
    Now suppose that $r=2$ and denote by $u$ and $w$ the vertices adjacent to $v$. If $uw \notin G$, then $\pi^{-1}(V_{\tilde{\sigma}})$ is again irreducible and the desired result follows as in the $r = 1$ case. Lastly, if $r = 2$ and $uw \in G$, $\tau(G) = \tau(\tilde{G}) + 1$ and the fiber $\pi^{-1}(V_{\tilde{\sigma}}) = V_{\sigma_w} \cup V_{\sigma_b}$ is the union of two irreducible varieties corresponding to the bicolorings of the new triangle $\{u,v,w\}$ in $G$. The concurrent component $V_{\sigma_b}$ can be parametrized by taking the line corresponding to $v$ to be spanned by the intersection point of $L_u$ and $L_w$ and a new generic point in $\PP^3$. The coplanar component $V_{\sigma_w}$ can be parametrized similarly. This parametrization yields a Zariski-open subset of $V_{\sigma_b}$ where $L_u \neq L_w$, however since $V_G$ is strictly contraction-stable, the subvariety $X_G$ where all $\ell$ lines are distinct is equal to $V_G$. Thus we see that $\pi^{-1}(V_{\tilde{G}}) = V_G = X_G$ has $2^{\tau(G)}$ irreducible components indexed by the bicolorings of the triangles of $G$ as required. 
    \end{proof}

A planar graph can have more or fewer irreducible components than $2^{\tau(G)}$, where $\tau(G)$ is the number its triangles in a planar drawing.

\begin{example}
A first example is the kite graph $
G=\{12,13,14,23,34,25,45\}$. The variety $V_G$ is a complete intersection of codimension $7$, but it has six irreducible components, whereas $G$ has only two triangles:
$6>2^{\tau(G)}=4$. Note that $G$ is a subgraph of the wheel~$W_5$.
\end{example}

Understanding the irreducible decomposition for planar graphs beyond the outerplanar case remains an interesting open problem. A first family for which such a decomposition is known is given by the wheel graphs $G=W_\ell$: by \cite[Theorem~4.9]{HMPS1}, the number of irreducible components is $
2^{\tau(G)}-2\tau(G)$,
which is smaller than the expected count $2^{\tau(G)}$.



\section{Discriminants and Resultants}
\label{sec:discriminants}

Fix a graph $G$ on $[\ell]$,  and let $d$
be the dimension of its incidence variety $V_G$.
Consider $u \in \NN^\ell$ such that $|u| = d$ and
$\gamma_u \geq 2$. 
For generic external data ${\bf M} \in {\rm Gr}(2,4)^d$,
the fiber $\psi_u^{-1}({\bf M})$ contains precisely
$\gamma_u $ line configurations ${\bf L}$.
We are interested in the condition  on ${\bf M}$ such that two points 
in $\psi_u^{-1}({\bf M})$ come together. This condition gives a hypersurface
in ${\rm Gr}(2,4)^d$, defined by a unique (up to scaling)
polynomial $\Delta_{G,u}({\bf M})$, which we call
the {\em LS discriminant}.

First suppose that $V_G$ is irreducible. In this case,
$\Delta_{G,u}({\bf M})$ is an irreducible polynomial.
Namely, it is precisely the {\em multigraded Hurwitz form}
of $V_G$. This is defined in \cite{PSS} and it 
represents a hypersurface in $(\PP^5)^d$.
This hypersurface lives in ${\rm Gr}(u_1,6) \times \cdots \times
{\rm Gr}(u_d,6)$, and we write it in primal Stiefel coordinates.
This means that ${\bf M}$ is represented by a $6 \times d$ matrix
whose columns are Pl\"ucker coordinates on the $d$ factors of $(\PP^5)^d$.
Furthermore, in our application to Landau analysis, we always restrict
$\Delta_{G,u}({\bf M}) $ to  ${\rm Gr}(2,4)^d$.
This means that each column of the 
$6 \times d$ matrix representing ${\bf M}$ satisfies the quadratic Pl\"ucker relation.

\begin{example}[$\ell=1$]
We have $u=(4)$, as in \cite[Example 3.1]{HMPS1},  and $\psi_u^{-1}({\bf M})$ consists of the
two lines that are incident to four  lines $M_1,M_2,M_3,M_4$.
Here the LS discriminant equals
 \begin{equation}
\label{eq:discriminant_4_lines} \qquad
\Delta_{K_1,(4)} \,\, = \,\,
 {\rm det} \begin{small} \begin{bmatrix} 
0 & M_1 M_2 &  M_1 M_3 &  M_1 M_4 \\
 M_1 M_2 & 0 & M_2 M_3 & M_2 M_4 \\
   M_1 M_3 &  M_2 M_3 & 0 & M_3 M_4 \\
     M_1 M_4 & M_2 M_4& M_3 M_4 & 0
\end{bmatrix} ,
\quad {\rm where} \,\,M_i M_j := \langle M_i M_j \rangle.
\end{small}
\end{equation}
This Gram determinant vanishes when the two lines coincide.
See \cite[Example 6.1]{PSS}.
\end{example}

We next analyze the graph  $G = \{12\}$ with $\ell = 2$ vertices.
  Set $u = (4,3)$.
 For simplicity of notation we write
${\bf L} = (X,Y)$ and ${\bf M} = (A,B,C,D; E,F,G)$.
The variety $V_G$ is the hypersurface in 
${\rm Gr}(2,4)^2$ defined by $XY = 0$. The equations for our Schubert problem are
\begin{equation}
\label{eq:sevenlines} AX  \,=\, BX \,=\, CX \,=\, DX \,\,=\,\,
EY \,=\, FY \,= \,GY \,\, = \,\, 0.
\end{equation}
These are inner product like
$\, AX \, = \,
a_{12} x_{34} - a_{13} x_{24} + a_{14} x_{23} + a_{23} x_{14}
- a_{24} x_{13} + a_{34} x_{12}$.
 
 The LS degree equals $\gamma_u = 4$, i.e.~there are four 
 pairs ${\bf L} \in V_G$ such that (\ref{eq:sevenlines}) holds.
 The LS discriminant $\Delta_{G,u}$ has degree $(4,4,4,4,4,4,4)$ in the
entries of the seven given Pl\"ucker vectors $A,B,C,D,E,F,G$.
We will present  an explicit formula in Propositions \ref{prop:eins} and \ref{prop:zwei}.

\smallskip

Assuming that $A,B,C,D,E,G$ span $\RR^6$, 
and introducing variables $x_1,x_2,\ldots,x_6$, we write 
$$ X \,\, = \,\, x_1 A + x_2 B + x_3 C + x_4 D + x_5 E + x_6 G . $$
This basic choice is recorded in the following Gram matrix, which is symmetric of size $6 \times 6$:
\begin{equation}
\label{eq:sixbysix} \begin{footnotesize}
\mathcal{G} \quad = \quad
\begin{bmatrix}
0 & AB & AC & AD & AE & AG \\
AB & 0 & BC & BD & BE & BG \\
AC & BC & 0 & CD & CE & CG \\
AD & BD & CD & 0 & DE & DG \\
AE & BE & CE & DE & 0 & EG \\
AG & BG & CG & DG & EG & 0 \\
\end{bmatrix}. \end{footnotesize}
\end{equation}
Our assumption that $X$ intersects $A,B,C,D$ yields four linear equations in $x_1,\ldots,x_6$, namely
\begin{equation}
\label{eq:fourlinear}
 \begin{matrix}
 AX & = & AB x_2 + AC x_3 + AD x_4 + AE x_5 + AG x_6 & = & 0 ,\\
 BX & = & AB x_1 + BC x_3 + BD x_4 + BE x_5 + BG x_6 & = & 0 , \\
 CX & = & AC x_1 + BC x_2 + CD x_4 + CE x_5 + CG x_6 & = & 0, \\
 DX & =  & AD x_1 + BD x_2 + CD x_3 + DE x_5 + DG x_6 & = & 0.
\end{matrix}
\end{equation}
The Pl\"ucker relation $XX = 2 (x_{12} x_{34} - x_{13} x_{24} + x_{14} x_{23}) = 0$
implies the quadratic constraint
\begin{equation}
\label{eq:quad1} XX \,\, = \,\,AB x_1 x_2+AC x_1 x_3+A D x_1 x_4+
\cdots
     +DE x_4 x_5+DG x_4 x_6+EG x_5 x_6 \,\, = \,\,  0.
\end{equation}
For the LS discriminant, we require that the two transversal lines
of $E,F,G $ and $X$ coincide:
\begin{equation}
\label{eq:quad2}
\!\!{\rm det} \! \begin{footnotesize} \begin{bmatrix}
0 & EF & EG &EX \\
 EF & 0 & FG & FX \\
 EG & FG & 0 & GX \\
 EX & FX & GX & 0 
\end{bmatrix} =  \begin{matrix}
(AE^2FG^2-2AE\,AF\,EG\,FG-\cdots+AG^2EF^2)\,x_1^2 \,+\,\, \end{matrix} \cdots \,\, = \,\, 0.\quad \end{footnotesize}
\end{equation}

\begin{proposition} \label{prop:eins}
The LS discriminant equals $ 1/{\rm det}(\mathcal{G})^2$ times the resultant of
six equations in six unknowns, namely
the four linear forms in (\ref{eq:fourlinear}) 
and the two quadrics in (\ref{eq:quad1}) and~(\ref{eq:quad2}).
\end{proposition}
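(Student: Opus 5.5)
We plan to show that the multivariate resultant $R$ of the six forms (four linear forms (\ref{eq:fourlinear}), the quadric (\ref{eq:quad1}), the quartic-in-$\mathbf M$ quadric (\ref{eq:quad2})) in $x_1,\ldots,x_6$ vanishes on exactly the locus $\{\Delta_{G,u}=0\}$ together with a spurious component $\{\det\mathcal G=0\}$, and then match multiplicities. First, the setup is legitimate: $A,B,C,D,E,G$ are generic points of the Pl\"ucker quadric in $\PP^5$, hence span $\mathbb C^6$, and $\mathcal G$ is the Gram matrix of these six vectors with respect to the nondegenerate bilinear form $\langle\cdot,\cdot\rangle$. Thus $\det\mathcal G$ equals (up to a constant) the square of the $6\times 6$ determinant $[A\,B\,C\,D\,E\,G]$, and it vanishes exactly when the coordinate change $X\mapsto (x_1,\ldots,x_6)$ degenerates.

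Second, on the open set $\det\mathcal G\neq 0$, we identify common zeros. The four linear equations cut out a projective line $\ell\cong\PP^1$ in $\PP^5$. The quadric (\ref{eq:quad1}) is the Pl\"ucker quadric, so $\ell\cap\{XX=0\}$ consists of the two transversals $X_1,X_2$ of $A,B,C,D$. The quadric (\ref{eq:quad2}) is the Gram discriminant (\ref{eq:discriminant_4_lines}) for $E,F,G,X$. It vanishes iff the two transversals $Y$ of $E,F,G,X$ coincide, i.e.\ iff the fiber of $\psi_u$ over $\mathbf M$ acquires a double point. Indeed, the fiber is $\{(X_i,Y)\}$: four points, two per $X_i$. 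Two of them collide iff either $X_1=X_2$ or the $Y$-pair over some $X_i$ collides. The first case is excluded generically by irreducibility, since $\Delta_{G,u}$ is the irreducible Hurwitz form of $V_G$ and its zero set is the image of the ramification locus. One must still check whether the $X_1=X_2$ component, given by the Gram determinant of $A,B,C,D$, divides $R$. The answer is no: if $X_1=X_2$, the resultant vanishes only if (\ref{eq:quad2}) also vanishes there, which is a further condition. So $\{R=0\}\setminus\{\det\mathcal G=0\}=\{\Delta_{G,u}=0\}\setminus\{\det\mathcal G=0\}$.

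Third, we compare exponents and degrees, which is the main obstacle. By the Poisson formula, $R$ equals, up to a factor $\rho^{2}\cdot$(product over $\ell$), the product $Q_2(X_1)\,Q_2(X_2)$. Here $\rho$ is the maximal-minor data of the linear system, and $Q_2$ denotes (\ref{eq:quad2}). This product is symmetric in $X_1,X_2$, hence polynomial in $\mathbf M$. Over the function field it is exactly (a power-one multiple of) $\Delta_{G,u}$, since $\Delta_{G,u}$ is the discriminant of the $Y$-fibration summed over both $X_i$. The spurious factor comes from expressing $X_i$ in the $x$-coordinates: the Cramer solution of (\ref{eq:fourlinear}) has denominators built from $\mathcal G$, and substituting into the Poisson formula produces exactly $\det(\mathcal G)^2$. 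We would confirm the exponent $2$ by a degree count. Each coefficient of (\ref{eq:fourlinear}) has degree $(1,\ldots)$ per Pl\"ucker vector; the resultant is homogeneous of degree $\prod_{j\ne i}\deg$ in the coefficients of each form, i.e.\ degree $4$ in each quadric and $2\cdot 4=8$ in each linear form. Tracking multidegrees in $A,\ldots,G$ and subtracting the target $(4,\ldots,4)$ must leave precisely the multidegree of $\det(\mathcal G)^2$, which is $(4,4,4,4,4,0,4)$ ($F$ is absent). If the bookkeeping is tight, irreducibility of $\Delta_{G,u}$ and of $\det\mathcal G$ (on the Grassmannian) forces $R=c\cdot\det(\mathcal G)^2\Delta_{G,u}$. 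As a sanity check, we would evaluate numerically at random rational points.
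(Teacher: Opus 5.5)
Your strategy (identify the zero set of the resultant $R$, then fix the exponents by a degree count) differs from the paper's. The paper substitutes into the generic resultant and checks by computer that the result is nonzero of multidegree $(8,8,8,8,8,4,8)$ in $A,\dots,G$. It then identifies the leftover factor of multidegree $(4,4,4,4,4,0,4)$ as $\det(\mathcal G)^2$ by computation.

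There is a genuine gap in your third step. It is the only place where the exponent $2$ on $\det(\mathcal G)$ and the exponent $1$ on $\Delta_{G,u}$ get determined, and it does not work as written.
\begin{itemize}
\item[(i)] The numbers are wrong. For forms of degrees $(1,1,1,1,2,2)$, $\prod_{j\ne i}d_j$ is $1\cdot1\cdot1\cdot2\cdot2=4$ for each linear form and $1\cdot1\cdot1\cdot1\cdot2=2$ for each quadric, total $20$. You wrote $8$ and $4$; with those, every multidegree doubles and the count contradicts $\det(\mathcal G)^2$ rather than confirming it.
\item[(ii)] You never compute the multidegree of $R$ in $A,\dots,G$; ``if the bookkeeping is tight'' is exactly what must be proved. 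It is not a naive count, since one Pl\"ucker vector enters the coefficients of several of the six forms.
\item[(iii)] $\det\mathcal G$ is not irreducible. By your own first step it is a constant times $\det(V)^2$ with $V=[A\,B\,C\,D\,E\,G]$. So the closing irreducibility argument must produce the exponent $4$ of $\det V$, which the Cramer-denominator remark does not supply. The exponent $1$ on $\Delta_{G,u}$ is likewise only asserted.
\end{itemize}
A smaller point: the locus $X_1=X_2$ is part of the branch locus of $\psi_u$ and is not excluded ``by irreducibility''. It is the extraneous $(4,0)$ factor discussed after Proposition~\ref{prop:bbexpected}. $R$ misses it only because the Poisson product becomes $Q_2(X_1)^2$ there, which is generically nonzero, as you correctly note.

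The repair is a change of variables. Let $g_1,\dots,g_6$ be the same six conditions in the intrinsic Pl\"ucker coordinates $X$: $\langle AX\rangle,\dots,\langle DX\rangle$, $XX$, and the determinant in (\ref{eq:quad2}) viewed as a quadric in $X$.
\begin{itemize}
\item[(a)] Then $f_i(x)=g_i(Vx)$, up to the constant in (\ref{eq:quad1}). The transformation law $\mathrm{Res}(g\circ V)=\det(V)^{d_1\cdots d_6}\,\mathrm{Res}(g)$ with $d_1\cdots d_6=4$ gives $R=c\,\det(V)^4\,\mathrm{Res}_X(g)=c'\det(\mathcal G)^2\,\mathrm{Res}_X(g)$ exactly.
\item[(b)] In the $X$-coordinates each of $A,B,C,D$ occurs only in one linear form, where $\mathrm{Res}$ has degree $4$. $E,F,G$ occur only in $g_6$, whose coefficients are quadratic in each of them, and $\mathrm{Res}$ has degree $2$ in $g_6$. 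So $\mathrm{Res}_X(g)$ has multidegree $(4,\dots,4)$, and $R$ has $(8,8,8,8,8,4,8)$, matching the paper.
\item[(c)] Your second step shows $\mathrm{Res}_X(g)$ vanishes exactly on the LS discriminant locus. Irreducibility plus the degree $(4,\dots,4)$ of $\Delta_{G,u}$, or first-order vanishing of $Q_2(X_1)Q_2(X_2)$, then gives $\mathrm{Res}_X(g)=c''\Delta_{G,u}$.
\end{itemize}
Repaired this way, your route explains where $\det(\mathcal G)^2$ comes from without a computer; the paper's computation only verifies the identity.
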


\begin{proof}
We consider four linear equations ${\bf T} {\bf x} = 0$
and two quadratic equations ${\bf x}^t {\bf U} {\bf x} = {\bf x}^t {\bf V} {\bf x} = 0$,
where ${\bf T}$ is a $4 \times 6$ matrix and
${\bf U}$ and ${\bf V}$ are symmetric $6 \times 6$ matrices.
The resultant is a polynomial of degree $20$ in the $66 = 4 \cdot 6 + 2 \cdot \binom{7}{2}$ 
unknown entries of these matrices. Namely, it has degree $4$ in each row of ${\bf T}$
and it is quadratic in each of ${\bf U}$ and ${\bf V}$.

We now substitute (\ref{eq:fourlinear}) for ${\bf T}$,
(\ref{eq:quad1}) for ${\bf U}$, and (\ref{eq:quad2}) for ${\bf V}$.
A computation shows that the resulting polynomial is nonzero,
and it  has degree $26 = 4 +4+4+4+2+8$ in the $28$
quantities $AB,AC,\ldots,FG$. 
More precisely,  it is multihomogeneous of degree $(8,8,8,8,8,4,8)$ in the
seven Pl\"ucker vectors $A,B,C,D,E,F,G$.
 The extraneous factor has degree $(4,4,4,4,4,0,4)$.
We find that it is the square of the determinant
of the Gram matrix $\mathcal{G}$.
\end{proof}

It remains to give a formula for the resultant
${\rm Res}_{1,1,1,1,2,2}$ of six homogeneous equations in six unknowns, of
the indicated degrees.  Let $f_1,f_2,f_3,f_4$ denote the linear forms in
${\bf T}{\bf x}$, and set $f_5 = {\bf x} {\bf U} {\bf x}^t$ and
$f_6 = {\bf x} {\bf V} {\bf x}^t$. Our formula is an explicit polynomial
in the $66$ coefficients.

For $i =0,1,\ldots,6$, write  $\sigma_i$ for the operator which replaces 
the unknowns $x_1,\ldots,x_i$ with new unknowns $y_1,\ldots,y_i$.
Let $\Delta$ be the $ 6 \times 6$ matrix whose entries are
the divided differences 
$$ \qquad \Delta_{ij} \,\,=\,\,  \frac{\sigma_{i-1}(f_j) - \sigma_i(f_j)}{x_i-y_i}
\quad \qquad {\rm for}\,\, \,\, 1 \leq i,j \leq 6. $$
The  {\em B\'ezoutian} ${\bf B}$ is the $6 \times 6$-matrix whose
entries are the mixed partials
$ \,\partial^2 {\rm det}(\Delta)/ \partial x_i \partial y_j$.
These entries are multilinear forms in the coefficients of 
our polynomials $f_1,f_2,f_3,f_4,f_5,f_6$.

\begin{proposition} \label{prop:zwei}
The resultant for four linear forms and two quadrics in six unknowns is
\begin{equation} \label{eq:tenbyten}
 {\rm Res}_{1,1,1,1,2,2} ({\bf T}, {\bf U},{\bf V}) \quad  = \quad 
{\rm det} \begin{bmatrix}\, {\bf B} & \,{\bf T}^t \\ {\bf T} \,& 0\, \end{bmatrix}. \end{equation}
This $10 \times 10$ determinant is a polynomial of degree
$(4,4,4,4,2,2)$ in the $66$ coefficients.
\end{proposition}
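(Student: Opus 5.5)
We would prove Proposition~\ref{prop:zwei} by reducing to the resultant of two quadrics in two unknowns and then identifying that resultant with a B\'ezoutian determinant. The first step is the classical reduction. For generic ${\bf T}$ of rank $4$, the common zeros of $f_1,\dots,f_4$ form a $2$-dimensional subspace $W=\ker {\bf T}\subset \mathbb{C}^6$, with basis $w_1,w_2$ spanning a $6\times 2$ matrix $K$. Then ${\rm Res}_{1,1,1,1,2,2}$ equals, up to a normalizing power of the maximal minors of ${\bf T}$, the resultant ${\rm Res}_{2,2}$ of the binary quadrics $f_5(Ks)$ and $f_6(Ks)$, $s\in\mathbb{C}^2$. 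This is the Poisson-type formula \cite[\S 13.1.A]{GKZ}. Degree bookkeeping gives degree $2$ in each of ${\bf U},{\bf V}$ and degree $4$ in each row of ${\bf T}$: the Pl\"ucker coordinates of $W$ are the maximal minors of ${\bf T}$, and the binary resultant is quadratic in the restricted coefficients, hence of degree $2\cdot 2=4$ in those minors. This fixes the target polynomial and shows it is irreducible of multidegree $(4,4,4,4,2,2)$.

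The second step is to show that the right-hand side of (\ref{eq:tenbyten}) is a polynomial of the same multidegree that vanishes whenever the six forms have a common nontrivial zero. For the degrees, each entry of ${\bf B}$ is multilinear, of degree $1$ in the coefficients of each $f_j$. Expanding the bordered determinant, which is a generalized Laplace expansion along the ${\bf T}$ rows and columns, expresses it as
\[
\sum \pm\,\bigl(\text{maximal minor of } {\bf T}\bigr)^{2}\cdot\bigl(2\times 2 \text{ minor of } {\bf B}\bigr).
\]
Equivalently, it equals $\det(K^t{\bf B}K)$ times the square of the Pl\"ucker vector of $\ker{\bf T}$, once suitably normalized. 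So the determinant is the determinant of ${\bf B}$ compressed to $W$, multiplied by a factor of degree $2$ in each row of ${\bf T}$. The key point to verify is that $K^t{\bf B}K$, the B\'ezoutian of all six forms compressed to $W$, agrees with the $2\times 2$ B\'ezout matrix of the binary quadrics $f_5\circ K$ and $f_6\circ K$, up to a factor of degree $2$ in each row of ${\bf T}$. This comes from the multiplicativity of divided-difference determinants under linear substitution: on $W$ the linear forms $f_1,\dots,f_4$ vanish, and their divided differences are just their coefficient rows. Since the $2\times2$ B\'ezout determinant of two binary quadrics is their resultant, this yields the identity up to a constant, and the total degree $(4,4,4,4,2,2)$ matches.

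The constant would be fixed by a single specialization, for instance $f_i=x_i$ for $i\le 4$ and $f_5=x_5^2-x_6^2$, $f_6=x_5x_6$. The main obstacle is the compression identity relating $K^t{\bf B}K$ to the binary B\'ezoutian. If the direct argument becomes messy, we would fall back on an irreducibility argument. The determinant vanishes on the resultant locus: a common zero $p$ makes the bordered matrix singular, because $\binom{Kp'}{*}$ produces a kernel vector from the B\'ezoutian identity ${\bf B}$-evaluations at a common root. It has the correct multidegree, and it is not identically zero, as the above specialization shows. Hence it equals the irreducible resultant up to a scalar.
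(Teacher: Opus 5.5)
Your argument is correct in substance but follows a genuinely different route from the paper's. The paper does no computation. It specializes D'Andrea--Dickenstein's determinantal formula \cite[Lemma 5.3]{Carlos} at critical degree $\delta=2$ with $t=1$: the B\'ezoutian block ${\bf B}$ in bidegree $(1,1)$ is bordered by the degree-one Sylvester blocks, which involve only the linear forms, and no extraneous factor appears. You instead exploit that four of the forms are linear. First, Laplace expansion along the border shows that the $10\times10$ determinant equals $\sum_{R,S}\pm\det{\bf T}_R\,\det{\bf T}_S\,\det{\bf B}_{[6]\setminus R,[6]\setminus S}$. These are products of two, generally different, maximal minors, not squares. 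This sum is the pairing of $\wedge^2{\bf B}$ with the Pl\"ucker vector of $W=\ker{\bf T}$, i.e.\ $\det(K^t{\bf B}K)$ for a normalized kernel basis $K$. You then identify $K^t{\bf B}K$ with the binary B\'ezout matrix of $g_j=f_j\circ K$ and conclude via ${\rm Res}_{2,2}$ and the reduction for linear forms. The citation is a one-liner valid for all degrees. Your argument is self-contained and explains why no extraneous factor survives. However, it is tailored to this linear-forms case and rests on the compression identity.

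That identity needs a sharper reason than ``multiplicativity of divided differences under linear substitution'', because the coordinatewise matrix $\Delta$ is not equivariant under coordinate changes. Here is what works. The first four columns of $\Delta$ form the constant matrix ${\bf T}^t$. So for invertible $P=[P_1|K]$, $P^t\Delta$ is block upper triangular with diagonal blocks $({\bf T}P_1)^t$ and $D=K^t\Delta_{:,\{5,6\}}$, and $(s-t)^tD(Ks,Kt)=g(s)-g(t)$. Any two $2\times2$ matrices of linear forms with this property differ by $J(s-t)\lambda^t$ with $\lambda$ constant (Koszul). That changes the determinant only by a combination of the $g_j(s)-g_j(t)$, which have no bidegree-$(1,1)$ part. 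Hence the $(1,1)$-part of $\det D$ is the classical B\'ezoutian, whose $2\times2$ coefficient matrix ${\bf B}_2$ has determinant $\pm{\rm Res}_{2,2}(g_5,g_6)$. Apply the same change of basis to the bordered matrix, and set $\rho=\det({\bf T}P_1)/\det P$. This gives $\det\bigl[\begin{smallmatrix}{\bf B}&{\bf T}^t\\{\bf T}&0\end{smallmatrix}\bigr]=\rho^4\det{\bf B}_2$. Covariance of the resultant gives ${\rm Res}_{1,1,1,1,2,2}=\rho^4\,{\rm Res}_{2,2}(g_5,g_6)$, so no stray factor remains. Your ``factor of degree $2$ in each row'' is inaccurate, but this is harmless.

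Your fallback also works, and it does not need the compression identity. At a common zero $p$, the identity $\det\Delta(x,p)\,(x-p)^t=f(x)\,\mathrm{adj}\,\Delta(x,p)$, read degree by degree in $x$, forces $x^t{\bf B}p\in{\rm span}(f_1,\dots,f_4)$. That is, ${\bf B}p={\bf T}^t\mu$, so $(p,-\mu)$ lies in the kernel. Finally, your specialization is not a formality. With the normalization ${\rm Res}(x_1,\dots,x_4,x_5^2,x_6^2)=1$, one finds ${\bf B}_{\{5,6\}}=\left[\begin{smallmatrix}0&1\\1&0\end{smallmatrix}\right]$, and the $10\times10$ determinant equals $-1$. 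So \eqref{eq:tenbyten} holds only up to sign.
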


\begin{proof}
This is an instance of the
determinantal formulas for resultants derived by
D'Andrea and Dickenstein in \cite{Carlos}.
The critical degree for our equations is
$t_6 =  1+1+1+1+2+2-6=2$. Setting
$t=1$ in \cite[Lemma 5.3]{Carlos},
we obtain the block matrix of size $10 \times 10$ shown in (\ref{eq:tenbyten}).
\end{proof}

For general graphs, the variety $V_G$ is reducible. In this case,
$\Delta_{G,u}$ is a product whose irreducible factors include the
   LS discriminants $\Delta_{G,\sigma,u}$
 of the  components $V_{G,\sigma}$. 
In addition, there are certain mixed  factors.
We show this for the triangle graph $G = K_3$ with $u = (3,3,3)$.

\begin{theorem} \label{thm:16168}
The LS discriminant $\Delta_{K_3,u}$ has degree $(48,48,48)$. It has three
irreducible factors, corresponding to the three scenarios for two of the $8+8$ solutions
to come together: \vspace{-0.15cm}
\begin{itemize}
\item the LS discriminant $ \Delta_{K_3,u, \mathbf{b}} $ of the concurrent component $V_{[3]}$ has degree $(16,16,16)$, 
\vspace{-0.2cm}
\item the LS discriminant $ \Delta_{K_3,u, \mathbf{w}} $ of the coplanar component $V_{[3]}^*$ has degree $(16,16,16)$,
\vspace{-0.2cm}
\item the square of the irreducible mixed LS discriminant, which has degree $(8,8,8)$. 
\vspace{-0.2cm}
\end{itemize}
The last factor vanishes when a solution of  (\ref{eq:sevenlines}) on $V_{[3]}$ merges with a
solution on $V_{[3]}^*$.
\end{theorem}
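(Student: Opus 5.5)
The plan is to describe the branch locus of $\psi_u$ on $V_G=V_{[3]}\cup V_{[3]}^*$, grouped by which pairs of the $16$ fiber points collide over a general point of the hypersurface. This gives exactly three scenarios: two points of $V_{[3]}$ merge, two points of $V_{[3]}^*$ merge, or a point of $V_{[3]}$ merges with a point of $V_{[3]}^*$. The first two loci are the multigraded Hurwitz forms of the irreducible components (Theorem~\ref{thm:outerplanar}), so they are irreducible. For the third locus I will show it is the image under $\psi_u$ of the intersection $V_{[3]}\cap V_{[3]}^*$. A collision of points lying on different components forces the limit to sit in the intersection. That intersection consists of triples of lines that are both concurrent and coplanar, so it has codimension one in each component, dimension $8$. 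Its image is therefore a hypersurface, defined by the multigraded Chow form of $V_{[3]}\cap V_{[3]}^*$ (or of its relevant irreducible component), and hence irreducible. I must also rule out further components of the branch locus, for instance loci where fiber points go to infinity or $\psi_u$ drops rank in extra ways. For this I use that the branch divisor of a finite map from the reduced complete intersection $V_G$ is supported on the ramification divisors of the components together with the image of the singular (intersection) locus.

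The multiplicities come from local analysis. Near a general point of the mixed locus, the fiber of $\psi_u$ restricted to a transversal one-parameter family looks like $\{xy=0\}$ meeting a line that moves to pass through the origin. Each of the two branches contributes one solution, and these two collide linearly in the parameter. Because the LS discriminant is defined as the discriminant of the fiber (the product of squared differences of solutions), a collision of two simple roots at linear rate contributes exponent $2$. This accounts for the square. A collision within a single smooth component is a simple ramification point, which gives exponent $1$ in the Hurwitz form normalization.

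For the degrees I will use the formulas of Section~\ref{sec:hurwitz}, which may be assumed. The Hurwitz form degree of each component, in the factor corresponding to each $M_j$, is computed from the multidegree $4t_1t_2t_3(t_1+t_2)(t_1+t_3)(t_2+t_3)$ together with the canonical class of $V_{[3]}$. By the $S_3$ symmetry and duality (which swaps concurrent and coplanar), both components give the same value, $16$ per vertex. For the mixed factor, the Chow-form degree in each group of three external lines equals a coefficient of the multidegree of $V_{[3]}\cap V_{[3]}^*$. I would compute that multidegree directly: parametrize the configuration by a flag (point $\in$ plane) plus three lines in the pencil, and count the configurations meeting the given Schubert conditions to get $8$. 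Summing gives $16+16+2\cdot 8=48$, which is consistent with a direct computation of the total degree $(48,48,48)$. As a cross-check I would compute the latter via Riemann--Hurwitz applied to the finite map of degree $16$ restricted to general curves in each factor.

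The main obstacle is proving irreducibility and the exact degrees rigorously, rather than only numerically. In particular, the degree $8$ of the mixed factor, and its appearance with exponent exactly $2$, need care. My first attempt will be the flag parametrization of $V_{[3]}\cap V_{[3]}^*$, which turns the count into a Schubert calculus problem on the flag variety. If that becomes unwieldy, I fall back on the numerical monodromy computation as in Example~\ref{ex:5cycle}.
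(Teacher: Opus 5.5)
Your proposal is correct, and it takes a genuinely different route from the paper. The paper proves Theorem~\ref{thm:16168} by a symbolic \texttt{Macaulay2} computation of $\Delta_{K_3,u}$ and its factorization, validated against the expected degree from Section~\ref{sec:hurwitz}. The value $16$ is only explained afterwards, via the Cayley octad: on $V_{[3]}$ the solutions are the points of $Q_A\cap Q_B\cap Q_C$, where these are the quadrics (\ref{eq:umatrix}) through the three triples of external lines.

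You instead derive the structure conceptually. You split the branch divisor according to which components the colliding points lie on, identify the mixed factor with the Chow form of $W=V_{[3]}\cap V_{[3]}^*$ via Theorem~\ref{thm:osserman}, get the square from the normal-crossing model, and get the degrees from adjunction and Riemann--Hurwitz. The numbers check out:
\begin{itemize}
\item On the resolution $\widetilde V=\{(p,L_1,L_2,L_3):p\in L_i\}$, a $(\PP^2)^3$-bundle over $\PP^3$ with point class $h$, the locus $W$ is cut out by $p\wedge q_1\wedge q_2\wedge q_3$ (where $L_i=pq_i$). Its class is $H_1+H_2+H_3-2h$. Pushing forward factorwise, $\pi_*H^2=1$, $\pi_*H^3=2h$, $\pi_*H^4=2h^2$, which gives $\gamma_{(2,3,3)}(W)=8+4+4-8=8$, your flag count.
\item A curve section of $V_{[3]}$ of type $(2,3,3)$ is birational to the elliptic quartic $Q_B\cap Q_C$, so each component factor has degree $2(1+8-1)=16$.
\item The corresponding section of $V_G$ is two elliptic curves meeting in $8$ nodes, of arithmetic genus $9$, so the total is $2(9+16-1)=48=16+16+2\cdot 8$.
\end{itemize}
Do not use Example~\ref{ex:NLSthreelines} for this cross-check. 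It describes these curves as disjoint of genus $5$, which would give $24+24$ and leave no room for a mixed term.

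Your route explains why there are exactly three factors and why the mixed one is squared. It also suggests a general mechanism: mixed factors as squared Chow forms of intersections of components. It makes irreducibility and reducedness of the mixed factor easy, since that factor is the image of the irreducible incidence correspondence over $W$.

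The computation, in turn, certifies two things that you assert rather than prove:
\begin{itemize}
\item That each component's branch locus stays irreducible on line data. This is not automatic: multigraded Hurwitz forms of irreducible varieties can pick up extra factors, as for $K_2$, $u=(4,3)$, and specializing to Plücker vectors of lines could split them. The octad description reduces the concurrent case to the discriminant of three quadrics.
\item That ramification within a component is generically simple, and that the two branches cross transversally along $W$. This is what fixes the exponents $1$ and $2$.
\end{itemize}
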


This theorem is proved by a symbolic computation with {\tt Macaulay2} \cite{M2}.
In the next section we discuss an independent validation based on
the techniques   in \cite[Section 6]{PSS}.

\smallskip

Another singular scenario  for the integral   (\ref{eq:LMintegral})
arises when the Schubert problem is overconstrained,
namely if  $u \in \NN^\ell$ satisfies $|u| = d+1$.
The Landau map $\psi_u$ is not surjective,~but its image is a hypersurface in
${\rm Gr}(2,4)^{d+1}$. This hypersurface is defined by a polynomial
$R_{G, u}$ which we call the {\em SLS resultant}.
A {\em superleading singularity} (SLS) arises when
$R_{G, u}({\bf M})$ = 0.

 \begin{example}[$\ell=1$] \label{ex:commontrans}
 The SLS resultant is
the condition for the five lines $M_j$ in $\PP^3$ to
have a common transversal. This is the
determinant of 
the Gram matrix for five Pl\"ucker vectors:
\begin{equation}\label{eq:resultant_5_lines}
R_{K_1,(5)}({\bf M}) \,\, = \,\,\,
 {\rm det} \begin{small} \begin{bmatrix} 
0 & M_1 M_2 &  M_1 M_3 &  M_1 M_4 &  M_1 M_5 \\
 M_1 M_2 & 0 & M_2 M_3 & M_2 M_4 & M_2 M_5 \\
  M_1 M_3 &  M_2 M_3 & 0 & M_3 M_4 & M_3 M_5 \\
  M_1 M_4 & M_2 M_4& M_3 M_4 & 0 & M_4 M_5 \\
  M_1 M_5  & M_2 M_5& M_3 M_5 & M_4 M_5 & 0
\end{bmatrix}. \end{small}
\end{equation}
We see that this SLS resultant has degree $(2,2,2,2,2)$
as a hypersurface in $(\PP^5)^5$.
\end{example}

\begin{example}[$\ell=2$] \label{ex:SLS2}
Let $G = \{12\}$ and $u = (4,4)$.
We augment the system (\ref{eq:sevenlines}) by
one more equation $HY = 0$. Here, the SLS resultant
$R_{G,u}$ is a large irreducible polynomial 
which is homogeneous of degree $4$ in each of the eight Pl\"ucker vectors in
${\bf M} = (A,B,\ldots,G,H)$. 
\end{example}

\begin{example}[$\ell=3$] \label{ex:SLS3}
Let $G = K_3$ as in Theorem~\ref{thm:16168}, but now we take
$u = (4,3,3)$.
The SLS resultant is the
product of two irreducible factors, one for each 
irreducible component of $V_G$.
Each of these two factors has degree $(8,8,8,8;\,4,4,4;\,4,4,4)$ in
the $10$ external lines in ${\bf M}$. We conclude that the SLS
  resultant $R_{K_3,u}$ has degree $(16,16,16,16;\,8,8,8;\,8,8,8)$.
  In the next section we shall see that these numbers are 
    the LS degrees $\gamma_{u-e_i}$ for $i=1,2,3$.
\end{example}

\section{Chow and Hurwitz}
\label{sec:hurwitz}

Osserman and Trager \cite{OT} introduced the multigraded Chow forms
for arbitrary subvarieties in a product of projective spaces.
Their definition shows that the multigraded Chow forms
of the incidence variety $V_G$ are precisely the
SLS resultants $R_{G,u}$, for the various vectors $u \in \NN^d$ that satisfy $|u|  = d+1$.
These Chow forms are multiplicative over the irreducible decomposition of $V_G$.
Hence $R_{G,u}$ is the product of the irreducible SLS resultants
$R_{G,u, \sigma}$, where $V_{G,\sigma}$ ranges over the components of $V_G$.
For instance, if $G$ is outerplanar then this product is over the $2^{\tau(G)}$ bicolorings
of the triangles of $G$. From \cite[Theorem 1.2]{OT} we obtain:

\begin{theorem} \label{thm:osserman}
The SLS resultant $R_{G,u}$ is a homogeneous polynomial of  degree $\gamma_{u-e_i}$ in the
six Pl\"ucker coordinates of each of the $u_i$ external lines 
that are attached to vertex $i$ of $G$.
\end{theorem}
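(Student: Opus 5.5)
The plan is to apply the Osserman--Trager degree formula \cite[Theorem 1.2]{OT} to the incidence variety, after making sure the setting is exactly theirs. First I set up the ambient space. The vector $u$ distributes the $d+1$ external lines among the vertices of $G$, and each external line $M_j$ that is attached to vertex $i$ imposes the linear condition $\langle L_i M_j\rangle = 0$ on the $i$th factor of $(\PP^5)^\ell$. In Pl\"ucker coordinates this condition is a hyperplane in $\PP^5$. Its coefficient vector is the Pl\"ucker vector of $M_j$, with the usual coordinate swap coming from the inner-product pairing. So the SLS resultant $R_{G,u}$ is the multigraded Chow form of $V_G \subset (\PP^5)^\ell$ of type $u$: it is the condition that $V_G$ meets the product of linear spaces cut out by $u_i$ hyperplanes in the $i$th factor. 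Since $|u|=\dim V_G+1$, this locus is a hypersurface, and by \cite{OT} the Chow form is nonzero precisely when the corresponding Chow-ring coefficient configuration is admissible. I will take this identification as given, as the discussion preceding the statement does, and restrict to ${\rm Gr}(2,4)^{d+1}$ at the end.

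Next I invoke \cite[Theorem 1.2]{OT}. It states that the Chow form of type $u$ of a variety $X$ of dimension $|u|-1$ is multihomogeneous, and that its degree in the coefficients of any single hyperplane from the $i$th factor is the multidegree coefficient of $X$ indexed by $u-e_i$. I would confirm this with a direct argument as well. Fix all hyperplanes except one hyperplane $H$ in factor $i$. The remaining $|u|-1=d$ hyperplanes cut $V_G$ in finitely many points, namely the fiber $\psi_{u-e_i}^{-1}$. Their number is the coefficient of $t_1^{5-u_1}\cdots t_i^{5-u_i+1}\cdots t_\ell^{5-u_\ell}$ in $[V_G]$, which by \eqref{eq:multidegree} is $\gamma_{u-e_i}$. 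The Chow form then vanishes exactly when $H$ contains the $i$th coordinate of one of these points. So, as a function of $H$, it is the product of $\gamma_{u-e_i}$ linear forms, up to a factor that does not depend on $H$. This gives degree $\gamma_{u-e_i}$ in the Pl\"ucker vector of $M_j$.

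The main obstacle is multiplicity, on two counts. The first is that $V_G$ may be reducible, and the generic intersection points must be reduced. Here I would use multiplicativity of Chow forms over the components $V_{G,\sigma}$ together with the matching additivity $[V_G]=\sum_\sigma [V_{G,\sigma}]$. This additivity holds because $V_G$ is generically reduced; for outerplanar $G$ it follows from Theorem~\ref{thm:outerplanar}, since a complete intersection in a smooth variety has no embedded components. Generic reducedness of the fiber is Kleinman--Bertini transversality for general hyperplanes in each factor. The second count is that the hyperplanes are not general in $\PP^5$: they come from points of ${\rm Gr}(2,4)$. Restricting to the Grassmannian leaves the polynomial degree unchanged, provided the restriction is nonzero. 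The Schubert hyperplanes $\{\langle L M\rangle=0\}$ with $M\in{\rm Gr}(2,4)$ span the dual $\PP^5$ and are still general enough for Bertini, since ${\rm SL}(4)$ acts transitively on lines. So the degree computed with general hyperplanes persists on ${\rm Gr}(2,4)$, and the product formula above shows the restriction does not vanish identically.
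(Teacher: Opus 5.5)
Your proposal is correct and is essentially the paper's argument: identify $R_{G,u}$ with the Osserman--Trager multigraded Chow form of $V_G$ (multiplicative over the components $V_{G,\sigma}$) and read off the degree from \cite[Theorem 1.2]{OT}. Your count that varies one hyperplane at a time stands in for the paper's remark that the $u_i\times u_i$ minors of the $u_i\times 6$ Stiefel matrix are linear in each row, so that the degree on ${\rm Gr}(u_i,6)$ equals the degree in each external line. One side remark is off: being a complete intersection gives unmixedness, not generic reducedness. Nothing hinges on it, though, since $\gamma_u$ is defined through the class of the variety $V_G$.
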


It is important to recognize the different coordinate systems that occur in this result.
In the setting of \cite{OT},  the multigraded Chow form is a hypersurface in
${\rm Gr}(u_1,6) \times \cdots \times {\rm Gr}(u_d,6)$, and the
degree $\gamma_{u-e_i}$ refers to Pl\"ucker coordinates on the $i$th factor
${\rm Gr}(u_i,6)$. Each line that is attached to the $i$th vertex of $G$ has
its own six Pl\"ucker coordinates, and we  write these as
the rows of a $u_i \times 6$ matrix.  That matrix furnishes primal
Stiefel coordinates for  ${\rm Gr}(u_i,6)$ in the set-up of
Osserman and Trager.
Since the $u_i \times u_i$ minors of this $u_i \times 6$ Stiefel matrix
are linear in each row,  the degree of $R_{G,u}$ in each of the $u_i$ lines in the $i$-th
block of ${\bf M}$ coincides with the degree on ${\rm Gr}(u_i,6)$ in \cite{OT}.
And this is the
coefficient $\gamma_{u-e_i}$ in the multidegree $[V_{G}]$.

Examples \ref{ex:SLS2} and \ref{ex:SLS3} illustrate
Theorem \ref{thm:osserman}. 
We present a case study for a larger~graph.

\begin{example}[$\ell=5$]
Consider the triangulated pentagon $G = \{12, 13, 14, 15, 23, 34, 45\}$.
Its ideal $I_G$ is radical, and a complete intersection by \cite[Theorem 5.4]{HMPS1}.
We also know from \cite[Theorem 4.10]{HMPS1} that the ideal has the prime decomposition
$$ \begin{matrix} I_G & = &
I_{12345} \, \cap \,I_{12345}^* \, \,\,\cap\,\,\,
(I_{123} + I_{1345}^*) \,\cap \,
(I_{123}^* + I_{1345}) \phantom{dodododododododododododo} \\ & & \cap\,\,\,
(I_{1234} + I_{145}^*) \,\cap \, 
(I_{1234}^* + I_{145})\, \,\,\cap \,\,\,
(I_{123} + I_{134}^* + I_{145} )\,\cap\,
(I_{123}^* + I_{134} + I_{145}^* ). \end{matrix}
$$
Since $I_G$ is a complete intersection, its multidegree  is the product of its generating degrees:
$$ [I_G] \,\,=\,\, 32\, t_1 t_2 t_3 t_4 t_5 \, (t_1+t_2)  (t_1+t_3)  (t_1+t_4)  (t_1+t_5)  (t_2+t_3) (t_3+t_4) (t_4+t_5) . $$
The expansion is a sum of $82$ terms of degree $12$, whose
coefficients add up to $4096$. For each of the eight irreducible components $V_{G,\sigma}$
we compute the multidegree $[V_{G,\sigma}]$, and we check that these add up correctly,
i.e.~$[I_{12345}] + [I_{12345}^*] + \cdots + [I_{123}^* \!+\! I_{134} \!+\! I_{145}^* ] = [I_G]$.
Among the eight summands, there are four distinct pairs of multidegrees. Namely, we find the following:
\begin{itemize}
\item[(a)] $[I_{12345}]  =  [I_{12345}^*]$ has $30$ terms with coefficients in $\{4,8\}$, \vspace{-0.1cm}
\item[(b)] $[I_{123} + I_{1345}^*] = [I_{123}^* + I_{1345}]$ has $ 56$ terms with coefficients in $\{4,8,12,16,24\}$,  \vspace{-0.1cm}
\item[(c)] $[I_{1234} + I_{145}^*] =  [I_{1234}^* + I_{145}]$ has $ 56$ terms with coefficients in $\{4,8,12,16,24\}$,  \vspace{-0.1cm}
\item[(d)] $[I_{123} {+} I_{134}^* {+} I_{145}] =  [I_{123}^* {+} I_{134} {+} I_{145}^*]$
has $ 82$ terms with coeffs in $\{4, 8, 12, 16, 20, 24, 32\}$.
\end{itemize}
Equipped with this data, we can now apply Theorem \ref{thm:osserman} to
derive the expected multidegrees of all irreducible SLS resultants $R_{G,u,\sigma}$.
Here $\sigma$ is a bicoloring that marks one of the eight irreducible components,
and $u$ ranges over all vectors in $\NN^5$ that satisfy $|u| = d+1 = 14$.

We consider the five vectors $u$ that have  four entries $2$ and one entry $3$.
Each vector specifies a graph $G_u$ with $19$ vertices and $21$ edges,
and hence a planar Landau diagram with $19$ regions.
The degrees of the associated SLS resultants are listed in the following table:
$$ \begin{small}
\begin{matrix}
 \,\,u \,=\!\! & (2, 3, 3, 3, 3) &  (3, 2, 3, 3, 3) &  (3, 3, 2, 3, 3) &  (3, 3, 3, 2, 3) &  (3, 3, 3, 3, 2) \smallskip \\
{\rm (a)} &   (0, 8, 8, 8, 8) &  (8, 0, 8, 8, 8) &  (8, 8, 0, 8, 8) &  (8, 8, 8, 0, 8) &  (8, 8, 8, 8, 0) \\
{\rm (b)} & (16, 8, 24, 16, 16) &  (8, 0, 8, 8, 8) &  (24, 8, 16, 16, 16) &  (16, 8, 16, 0, 8) &  (16, 8, 16, 8, 0) \\
{\rm (c)} & (16, 16, 16, 24, 8) &  (16, 0, 8, 16, 8) &  (16, 8, 0, 16, 8) &  (24, 16, 16, 16, 8) &  (8, 8, 8, 8, 0) \\
{\rm (d)} & (32, 16, 32, 32, 16) &  (16, 0, 8, 16, 8) &  (32, 8, 16, 24, 16) &  (32, 16, 24, 16, 8) &  (16, 8, 16, 8, 0) \\
\end{matrix}
\end{small}
$$
Observe that the degree in the Pl\"ucker coordinates of each external line $M_j$ attached to the same internal line $L_i$ is the same. For instance, the first entry $(0,8,8,8,8)$ in the table above reflects that this SLS resultant has degree $0$ in the 2 external lines attached to vertex $1$, and degree $8$ in the external lines attached to all other vertices. 
The last row  (d) concerns the two most interesting components $V_{G,\sigma}$,
where $\sigma$ assigns alternating colors to the three triangles in $G$.
The SLS resultant in the middle is an irreducible polynomial
of degree $(32,8,16,24,16)$ in ${\bf M} = (M_1,M_2,M_3,M_4,M_5)$.
To obtain the degree of $R_{G, u}$, we add the degrees of $R_{G, u, \sigma}$ for each of the eight components given by the bicolorings $\sigma$. Since the degrees are preserved under Hodge star duality, we add
twice the four degrees in the third column.
We see that the reducible
 SLS resultant $R_{G,u} = \prod_\sigma R_{G,u,\sigma}$
has degree $(160, 64, 64, 128, 96)$ in~{\bf M}.
\end{example}

Pratt, Sodomaco and Sturmfels \cite{PSS}
introduced the multigraded Hurwitz forms
for an arbitrary subvariety in a product of projective spaces.
Their definition shows that the multigraded Chow forms
of the incidence variety $V_G$ are precisely the
LS discriminants $\Delta_{G,u}$,
for the various vectors $u \in \NN^d$ that satisfy $|u|  = d$.
These Hurwitz forms are reducible when $V_G$ has
multiple components. But the factorization is more
complicated than that for Chow forms.
We saw this in Theorem \ref{thm:16168}
for the incidence variety of the triangle,
$V_{K_3} \subset (\PP^5)^3$.

A complete understanding of the factorization properties
of LS discriminants would require a multigraded extension of
the results  in \cite[Section 4]{Stu} on degenerations of Hurwitz forms.
We shall not pursue this here, but instead we often work with irreducible $V_G$ or restrict to an irreducible component $V_{G, \sigma}$. For instance, we can take $G$
to be a triangle-free outerplanar graph. Even the special case
when $G$ is a tree is of interest for our application.

Consider the $\ell+|G|$ equations that define $V_G$ in $(\PP^5)^\ell$.
First, we have the $\ell $ Pl\"ucker relations, and these have degrees
$2 e_i$ for $i=1,\ldots,\ell$. Second, we have the equations
$\langle L_i L_j \rangle = 0$  for all edges $ij \in G$, and these have degrees
$e_i+e_j$. Since $V_G$ is a complete intersection, 
the following formula holds for its multidegree from which one easily extracts the LS degrees~$\gamma_u$: 
\begin{equation}\label{eq:LS_CI}
    [V_G] \,\,\, = \,\,\, 2^\ell \cdot t_1 t_2 \cdots t_\ell\, \cdot \prod_{ij \in G} ( t_i + t_j ).
\end{equation}
In \cite[Section 6]{PSS}, the authors 
compare $V_G$ with the complete intersection defined by
generic equations of the same degrees.
The following result is proved in \cite[Corollary 6.3]{PSS}.

\begin{proposition} \label{prop:bbexpected}
The LS discriminant $\Delta_{G,u}$ is a polynomial of degree $(b_1,\ldots,b_\ell)$ where
\begin{equation}
\label{eq:discdegree}
     \quad b_i \,\,\,\leq \,\,\,2 \gamma_u \, \, +\!
     \sum_{\substack{j\in[\ell]\\ u_j+\delta_{ij}>0}}
     \!\! \gamma_{u+e_i-e_j} \cdot \biggl(1-u_j- \delta_{ij}+{\rm degree}_G(i)
           \biggr)\,
\quad \hbox{for all}\,\,\, \,i \in [\ell] .
\end{equation}
\end{proposition}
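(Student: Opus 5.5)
The plan is to realize $\Delta_{G,u}$ as a multigraded Hurwitz form of the complete intersection $V_G\subset(\PP^5)^\ell$ and compare it with a generic complete intersection of the same multidegrees, following \cite[Section 6]{PSS}. The bound then comes from a degeneration argument. Consider the family of complete intersections cut out by $\ell$ forms of degree $2e_i$ and $|G|$ forms of degree $e_i+e_j$, one for each $ij\in G$. The Hurwitz form is defined for every member of an open subset of this family, and its multidegree is locally constant there. Over the whole family, the Hurwitz form of the special member $V_G$ divides the specialization of the generic Hurwitz form, as long as that specialization is nonzero. If it is zero, I would use upper semicontinuity of the degree under specialization of a flat family of divisors. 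In either case, the degree of $\Delta_{G,u}$ in each group of variables is bounded by the corresponding degree of the generic one. This is exactly how the inequality (rather than equality) in \eqref{eq:discdegree} arises.

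The main computation is the degree of the Hurwitz form of a generic complete intersection $X\subset(\PP^5)^\ell$ of dimension $d$, with respect to the linear spaces cutting it according to $u$. I would compute it as the degree of the branch locus of the projection from the incidence correspondence $\{(x,\Lambda): x\in X\cap\Lambda\}$. This equals the class of the ramification divisor pushed forward. By adjunction, the ramification divisor is represented by the relative canonical class $K_X+\sum_j u_j H_j$ restricted to the fiber-product variety, plus the contribution $2H_i$ from differentiating the varying linear forms in the $i$-th block. Since $X$ is a complete intersection, $K_X=\sum_i(-6+2)H_i+\sum_{ij\in G}(H_i+H_j)$ on $(\PP^5)^\ell$. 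Restricting to $X$ gives coefficient $-4+\deg_G(i)$ on $H_i$, which in the block normalization of \eqref{eq:discdegree} becomes $1-u_j-\delta_{ij}+\deg_G(i)$ after the degree shifts. The degree in the $i$-th block of data is then obtained by intersecting this divisor class against the remaining linear conditions. A term $2\gamma_u$ comes from differentiating the $i$-th block of the generic linear forms themselves. A term $\gamma_{u+e_i-e_j}$ comes from each $H_j$-summand: replacing one linear condition in block $j$ by the hyperplane class exchanges $t_j$ for $t_i$ in the multidegree \eqref{eq:LS_CI}. Only indices with $u_j+\delta_{ij}>0$ contribute, because otherwise no linear condition in block $j$ is available to exchange.

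The main obstacle is keeping track of the index bookkeeping: which hyperplane class multiplies which coefficient $\gamma_{u+e_i-e_j}$, and the role of $\delta_{ij}$. I would check this against the two cases computed earlier in the paper. For $\ell=1$, $u=(4)$, the formula should give degree $2$ per line for $\Delta_{K_1,(4)}$, matching \eqref{eq:discriminant_4_lines}. For $G=\{12\}$, $u=(4,3)$, it should give degree $4$, matching Propositions \ref{prop:eins}--\ref{prop:zwei}. Since the statement is \cite[Corollary 6.3]{PSS}, the final step is simply to verify that \eqref{eq:LS_CI} satisfies that corollary's hypotheses, and then invoke it.
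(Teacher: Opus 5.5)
Your route is the paper's. The paper proves this proposition only by citing \cite[Corollary 6.3]{PSS}, that is, by comparing $V_G$ with a generic complete intersection of the same degrees $2e_i$ and $e_i+e_j$, whose Hurwitz form has degree $2(g_{u-e_i}+\gamma_u-1)$ in block $i$ by \cite[Theorem 3.4]{PSS}. Your specialization and divisibility argument, together with Riemann--Hurwitz and adjunction, is exactly what stands behind that citation. Two small corrections. The hypothesis to check is that $V_G$ is a complete intersection of the Pl\"ucker quadrics and the edge equations (e.g.\ Theorem~\ref{thm:outerplanar}); \eqref{eq:LS_CI} is a consequence of this, not the hypothesis. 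Also, your semicontinuity fallback is not needed: the specialization is nonzero because a generic fiber consists of $\gamma_u$ reduced points, which the definition of $\Delta_{G,u}$ already presupposes.

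What does need fixing is your bookkeeping, which as described does not produce \eqref{eq:discdegree}.

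\emph{The correct computation.} Let $u_j$ count the external lines at vertex $j$, as in this paper, so $\gamma_u$ is the coefficient of $t^{5-u}$ in $[V_G]$. Let $C$ be $V_G$ cut by $u-e_i$ generic hyperplanes. Adjunction gives $K_C=\sum_j\bigl(u_j-\delta_{ij}-4+{\rm degree}_G(j)\bigr)H_j|_C$. Moreover $\deg H_j|_C=\gamma_{u-e_i+e_j}$: one condition moves from block $i$ to block $j$, not conversely. So Riemann--Hurwitz gives the generic degree in block $i$ as
\[
2\gamma_u+\sum_{j}\gamma_{u-e_i+e_j}\bigl(u_j-\delta_{ij}-4+{\rm degree}_G(j)\bigr).
\]

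\emph{Relation to the display.} This equals \eqref{eq:discdegree} only after re-indexing by the exponent vector $a=(5-u_1,\ldots,5-u_\ell)$, which is the convention of \cite{PSS}. Under that re-indexing, $u_j-\delta_{ij}-4$ becomes $1-a_j-\delta_{ij}$ and the subscript becomes $a+e_i-e_j$. The condition $a_j+\delta_{ij}>0$ only says that this subscript is nonnegative; it has nothing to do with a linear condition in block $j$ being available to exchange.

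\emph{Your sanity checks.} Read with this paper's $u$, the display gives $2\cdot 2+2(1-4-1)=-4$ for $\ell=1$, $u=(4)$. So your first check works only in the exponent reading. Your second check is wrong in either reading. For $K_2$, $u=(4,3)$ the bound is $(8,4)$, not $(4,4)$; the excess $(4,0)$ is the extraneous factor in $A,B,C,D$ discussed after the proposition.

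\emph{The vertex degree.} The vertex degree in the $j$-th term is ${\rm degree}_G(j)$, not ${\rm degree}_G(i)$ as you (and the display) write, because the edge equations enter the $H_j$-coefficient of $K_C$. For the triangulated square $\{12,23,13,34,14\}$ with $u=(2,3,3,3)$, using ${\rm degree}_G(j)$ reproduces the paper's value $(160,176,256,176)$. Using ${\rm degree}_G(i)$ would give $b_1\le 192$ instead.
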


The right hand side in (\ref{eq:discdegree}) is the expected 
degree of the LS discriminant.
Here $\delta_{ij}$ is Kronecker's delta, and ${\rm degree}_G(i)$ is the
number of edges in $G$ that are incident to vertex~$i$.
 We conjecture that equality holds in (\ref{eq:discdegree})
 if we take extraneous factors into consideration.
 
 Let us explain what this conjecture means for $\ell=2$. The LS discriminant 
 $\Delta_{K_2,(4,3)}$ was featured in Proposition \ref{prop:eins},
 where we saw that its degree  is $(b_1,b_2) = (4,4)$. The
 upper bound in Proposition \ref{prop:bbexpected}
 equals $(8,4)$. This is the true degree of the
 multigraded Hurwitz form for the complete intersection
of degrees $(2,0),(0,2),(1,1)$ in $\PP^5 \times \PP^5$.
When specializing to $V_G$, we obtain
$\Delta_{K_2,(4,3)}$ times an extraneous factor
of degree $(4,0)$. That extraneous factor is the
LS discriminant (\ref{eq:discriminant_4_lines}) for
the external lines $A,B,C,D$ that are incident to $X$ in (\ref{eq:sevenlines}).

 Pratt et al.~\cite[Section 6]{PSS} developed software in  the computer algebra system {\tt Macaulay2}
 for computing the expected degrees of all  LS discriminants $\Delta_{G,u}$ for a fixed graph $G$. 
 Here $u$ ranges over all terms in the multidegree $[V_G]$.
 In particular, the software also outputs all the LS degrees $\gamma_u$.
  Illustrations of how to use this {\tt Macaulay2} code are given in \cite[Figure~5]{PSS}
for $\ell =2$ and  in \cite[Figure 6]{PSS} for $\ell = 3$. In what follows we discuss a larger example.

\begin{example}[$5$-cycle] \label{ex:5cycle}
Fix $\ell = 5$ and $G = \{12,23,34,45,15\}$.
The expansion of $[V_G] = 32 t_1 t_2 t_3 t_4 t_5 (t_1 + t_2) (t_2+t_3)(t_3+t_4)(t_4+t_5)(t_1+t_5)$
has $31 $ monomials $\gamma_u t_1^{5-u_1} \cdots t_5^{5-u_5}$. 
Each of them satisfies $|u| = 15 = {\rm dim}(V_G)$.
We compute the
expected degrees of all LS discriminants $\Delta_{G,u}$. The command is
${\tt multidegHurwitz(5,\{\{1,2\},\{2,3\},\{3,4\},\{4,5\},\{1,5\}\}})$.
For instance, for $u = (2,3,3,3,4)$, the output reveals that the expected degree of $\Delta_{G,u}$ equals
$(32, 64, 96, 128, 192)$. The unique monomial in $[V_G]$ with $ \gamma_u = 64$
comes from $u = (3,3,3,3,3)$. Here, the
expected degree of the LS discriminant $\Delta_{G,u}$ equals $(256,256,256,256,256)$.
\end{example}

A key ingredient for computing the expected degrees of LS discriminants is the multisectional genus 
\cite[eqn (12)]{PSS} of the incidence variety $V_G$. We introduce this in the next~section.

\section{NLS Geometry}
\label{sec:nls}

 Fix a graph $G \subseteq \binom{[\ell]}{2}$ and let $d$
be the dimension of its incidence variety $V_G \subseteq {\rm Gr}(2,4)^\ell$.
Every vector $u \in \NN^\ell$ specifies
 a new graph $G_u$ with $e := \sum_{i=1}^\ell u_i$
pendant edges.  The Landau map
 $\psi_u :  V_{G_u} \rightarrow {\rm Gr}(2,4)^e$
 takes configurations to its external lines.
The fibers of $\psi_u$  have expected dimension $d-e$,
which we now assume to be nonnegative.
If $d=e$ then the fiber is finite, its points are the
{\em leading singularities} (LS), and  its cardinality is the {\em LS degree}.

Landau analysis is concerned with the discriminant of the Landau map $\psi_u$.
This discriminant is the hypersurface in ${\rm Gr}(2,4)^e$ whose points correspond to
singular fibers.  For $e=d$, we get the {\em LS discriminant}, which vanishes when two of
the leading singularities collide.

We are here interested in the next case, when the fibers are curves.
We thus set $e=d-1$. The fiber of  the Landau map $\psi_u$ 
over a general point ${\bf M} \in {\rm Gr}(2,4)^e$ is a smooth projective curve.  
When this curve is singular, then our integral has a 
{\em next-to-leading singularity (NLS)}.
Our section title NLS geometry refers to
the study of these curves and their degenerations.

Depending on  $u$, it can happen that the general fiber
$\psi^{-1}({\bf M})$ is a reducible curve.
The geometric genus of this curve is called the
 {\em NLS genus} of the graph $G_u$.
 If the curve is smooth and irreducible then this is the
 familiar genus of the corresponding Riemann surface.
The hypersurface in ${\rm Gr}(2,4)^e$ whose fibers 
are singular curves is the {\em NLS discriminant} $\Delta_{G,u}$.

Of course, we could go on and
examine the case $e=d-2$ where the
fibers are surfaces and we get an NNLS discriminant.
For $e=d-3$, the fibers would be threefolds and 
we get  N${}^3$LS discriminants, etc.
The ultimate Landau analysis would involve all of
these discriminants. For starters, we here focus on the
case of NLS discriminants where the fibers are curves.
Hence, for the rest of this section, we fix $e=d-1$
and we consider $u \in \NN^\ell$ with $|u| = e$.

\begin{example}[$\ell=1$]\label{ex:ell1_NSL}
Here $e = 3$ and $u=(3)$.
  The fiber of the Landau map $\psi_u$
consists of all lines $X$ which intersect three given lines
$M_1,M_2,M_3$ in $\PP^3$.
This fiber is a plane conic, so the NLS genus is~$0$.
The NLS discriminant $\Delta_{G,u}$
vanishes when the conic degenerates to two lines. It is  a polynomial with 
$978$ monomials of degree $(3,3,3)$
in the unknowns $(M_1,M_2,M_3)$.
To compute $\Delta_{G,u}$, we consider the linear equations
$M_1 X = M_2 X  =  M_3 X = 0$
in  unknowns~$x_{ij}$.
We solve it for three of the six unknowns, say $x_{23},x_{24},x_{34}$,
and we plug into the Pl\"ucker quadric $XX$.
The result is a quadratic form in $x_{12},x_{13},x_{14}$
whose coefficients are trilinear in $(M_1,M_2,M_3)$.
The Hessian determinant of this quadratic form equals
$\Delta_{G,u}$.
\end{example}

We define the {\em multisectional genus} of the graph $G$ to be the generating function
$$ g(G) \,\,  = \,\,  \sum_{|u| = e} g_u \cdot t_1^{5-u_1} \cdots \,t_\ell^{5-u_\ell} , $$
where $g_u$ is the NLS genus of $G_u$. This is the genus of the generic
fiber of the Landau map~$\psi_u$.
One can compute $g(G)$ with the {\tt Macaulay2}
implementation described in \cite[Section~6]{PSS}.
For instance, for the graph with $\ell=2$, the command on the left in \cite[Figure 2]{PSS} shows that
\begin{equation}
\label{eq:gK2}
 g(K_2) \,\, = \,\,  t_1^4 - t_1^3 t_2 + t_1^2 t_2^2 - t_1 t_2^3 + t_2^4. 
\end{equation}
The expected degree of the LS discriminant on the right in
(\ref{eq:discdegree}) is determined from the
multisectional genus via the formula 
$ 2 (g_{u-e_i} + \gamma_u - 1)$, which is
found in \cite[Theorem 3.4]{PSS}.
For instance, for the two-edge graph $K_2$ with $u = (4,3)$, we recover the expected degree
$$ \bigl( \,2(g_{3,3} + \gamma_{4,3} - 1) \,,\, 2 (g_{4,2} + \gamma_{4,3} - 1) \,\bigr)
\,\, = \,\, \bigl(\,2(1 + 4-1)\,,\, 2(-1+4-1) \,\bigr) \,\,=\,\, (8,4). $$
This equals $(4,0)$ plus the true degree $(4,4)$, as explained in
\cite[Example 6.2]{PSS} and after Proposition~\ref{prop:bbexpected}.
We next compute the NLS genera for the larger graph from Example \ref{ex:5cycle}.

\begin{example}[$5$-cycle] \label{ex:multigenera}
Fix $\ell = 5$ and $G = \{12,23,34,45,15\}$.
The multisectional genus $g(G)$ is a polynomial
with $735$ terms of degree $11$  in $t_1,t_2,t_3,t_4,t_5$. 
We compute this in {\tt Macaulay2} by typing
\ {\tt multiGenera(5,\{\{1,2\},\{2,3\},\{3,4\},\{4,5\},\{1,5\}\})}.
The output shows that the largest  NLS genus is $g_u = 65$. This
arises for $u = (3,2,2,2,2)$ and four others. Also interesting 
is $u = (3,2,2,3,1)$, for which the fiber $\psi^{-1}({\bf M})$ is
a curve of genus $g_u = 33$.
\end{example}

A graph with five vertices is rather large
when it comes to finding explicit formulas
for the NLS discriminant. We therefore return to 
$\ell=2$, with the aim of extending
Propositions \ref{prop:eins} and \ref{prop:zwei}
to the NLS setting, where now the Landau diagram is a {\em double box}.
Namely, we fix $G = K_2$ and $u=(3,3)$. We drop the external line $D$ in
(\ref{eq:sevenlines}). The fiber
$\psi^{-1}({\bf M})$ consists of all pairs $(X,Y) \in V_G$ such that
$AX = BX = CX = 0$ and $EY=FY=GY = 0$.
This is an elliptic curve. We saw in (\ref{eq:gK2}) that
the NLS genus is $g_{(3,3)} = 1$. Geometrically,
our elliptic curve is obtained by intersecting two quadratic surfaces in 
$\PP^3$.  The first quadric contains the lines $A,B,C$, and the
second quadric contains the lines $E,F,G$. This defines them uniquely.

\begin{theorem} \label{thm:NLSdoublebox}
The NLS discriminant for the double box is a polynomial $\Delta_{K_2,(3,3)}$ of degree $12$ in 
the Pl\"ucker coordinates of each of the six external lines
$A,B,C; E,F,G$. It is the mixed discriminant of two symmetric $4 \times 4$ matrices
$\,U$ and $\,V$ whose entries are given in (\ref{eq:umatrix}).
\end{theorem}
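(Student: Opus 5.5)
The plan is to identify the fiber of $\psi_u$ with the base locus of a pencil of quadrics in $\PP^3$. Then the NLS discriminant becomes the classical condition for that pencil to be degenerate.

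\emph{Step 1: the quadric through three lines.} For three general lines $A,B,C$ there is a unique quadric surface $Q_1$ containing them. Its points $x\in\PP^3$ are characterized by the condition that the planes $x\vee A$ and $x\vee B$ meet in a line which meets $C$. We write this condition as $(x\vee A)\wedge(x\vee B)\wedge C=0$. It is quadratic in $x$ and multilinear in the Pl\"ucker vectors of $A,B,C$, so it defines a symmetric $4\times 4$ matrix $U$ whose entries have degree $(1,1,1)$ in $(A,B,C)$. This is the matrix recorded in (\ref{eq:umatrix}). Likewise $V$ has entries of degree $(1,1,1)$ in $(E,F,G)$ and defines $Q_2\supset E\cup F\cup G$.

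\emph{Step 2: the fiber is $Q_1\cap Q_2$.} A line $X$ meeting $A,B,C$ lies on $Q_1$, since it meets $Q_1$ in at least three points, and it belongs to the ruling opposite to the one containing $A,B,C$. Similarly $Y$ is a line of the opposite ruling on $Q_2$. The fiber is therefore $\{(X,Y)\in\PP^1\times\PP^1 : X\cap Y\neq\emptyset\}$.

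We define a map $Q_1\cap Q_2\to\psi^{-1}({\bf M})$ sending a point $p$ to the pair consisting of the ruling line of $Q_1$ through $p$ and the ruling line of $Q_2$ through $p$. This map is an isomorphism whenever the quadrics are smooth. Its inverse sends $(X,Y)$ to $X\cap Y$, which is a single point unless $X=Y$, and that cannot happen for general data. Consequently the fiber is singular exactly when the quartic curve $Q_1\cap Q_2$ is singular. This recovers the elliptic curve and $g_{(3,3)}=1$.

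\emph{Step 3: degeneracy criterion and degree count.} The base curve of a pencil generated by two smooth quadrics is singular if and only if the binary quartic $f(s,t)=\det(sU+tV)$ has a repeated root. The mixed discriminant of $U$ and $V$ is by definition ${\rm disc}(f)$. The coefficient of $s^{4-k}t^k$ in $f$ has bidegree $(4-k,k)$ in $(U,V)$. The discriminant of a binary quartic has degree $6$ and is isobaric, and it is symmetric under $s\leftrightarrow t$. Hence ${\rm disc}(f)$ is bihomogeneous of degree $(12,12)$ in $(U,V)$. Since the entries of $U$ are linear in each of $A,B,C$, this gives degree $12$ in each of the six Pl\"ucker vectors.

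\emph{Main obstacle.} The hard step is to show that the NLS discriminant, defined as an irreducible hypersurface in ${\rm Gr}(2,4)^6$, equals this mixed discriminant exactly, with no extraneous factors and no missed components. Two loci need care.

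\begin{itemize}
\item The locus where $A,B,C$ fail to be skew. There $U$ becomes singular, the isomorphism of Step 2 breaks, and extraneous factors might enter the substitution.
\item The locus $X=Y$. There the map of Step 2 is not injective.
\end{itemize}

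I would first argue that ${\rm disc}(f)$ does not vanish identically on these loci, because a simple root of $f$ at a singular $U$ is still simple. Then I would verify irreducibility of the specialized polynomial, together with the degree count, by a direct symbolic computation in {\tt Macaulay2}, in the same spirit as the proof of Proposition~\ref{prop:eins}.
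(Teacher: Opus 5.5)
Your Steps 1--3 follow the paper's proof. It also identifies the fiber with the quartic curve $Q_1\cap Q_2$ cut out by $x^tUx=\langle Ax|B|Cx\rangle$ and its analogue for $E,F,G$. It uses the classical criterion that this curve is singular exactly when $\det(U+tV)$ has a repeated root. It then gets degree $12$ per line from the bidegree $(12,12)$ of the mixed discriminant, which the paper quotes from the literature. Your isobaric count and the explicit isomorphism $p\mapsto (X_p,Y_p)$ supply details the paper only asserts.

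The step that fails is your resolution of the ``main obstacle''. It is not true that ${\rm disc}(f)$ stays nonzero where $A,B,C$ fail to be pairwise skew, and the irreducibility you plan to verify in {\tt Macaulay2} is false.

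Suppose $A$ and $B$ meet at $q$, and write $A=qa_2$, $B=qb_2$. In $\langle qa_2x\,q\rangle\langle b_2c_1c_2x\rangle-\langle qa_2xb_2\rangle\langle qc_1c_2x\rangle$ the first term vanishes. So $x^tUx$ is the product of the linear forms of the planes $A\vee B$ and $q\vee C$, and $U$ has rank $2$, not $3$. Then $\det U$ and all $3\times 3$ minors of $U$ vanish, so $t^2$ divides $\det(U+tV)$. Hence the mixed discriminant vanishes, to order at least two, along $\langle AB\rangle=0$. The same holds along $\langle AC\rangle=0$, $\langle BC\rangle=0$, $\langle EF\rangle=0$, $\langle EG\rangle=0$ and $\langle FG\rangle=0$.

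These components are not extraneous. When $A$ and $B$ meet, the transversals to $A,B,C$ form two pencils: the lines through $q$ in the plane $q\vee C$, and the lines in $A\vee B$ through the point $(A\vee B)\wedge C$. So the fiber breaks into two rational curves meeting in two points, and it is genuinely singular. The paper defines the NLS discriminant as the locus of singular fibers and never claims it is irreducible. Here it is reducible, and the degree $12$ in the statement counts these bracket factors. An irreducible factor would have strictly smaller degree, so your plan cannot recover the statement as written.

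To close the proof, drop the irreducibility requirement. Check directly that on these six loci the fiber is singular and the mixed discriminant vanishes. Your Step 2 handles the complement, except for the locus $X=Y$. That locus requires six lines with a common transversal, so it has codimension two and cannot affect the comparison of hypersurfaces.
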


\begin{proof}
The mixed discriminant of $U$ and $V$ is the discriminant of
the univariate quartic $f(t) = {\rm det}(U+tV)$. According to
\cite[Lemma 2.1]{BFS}, this has $67753552 $ monomials of degree $(12,12)$.
It vanishes precisely when the quartic curve
 defined by $U$ and $V$ is singular in~$\PP^3$.
 
 The unique quadratic surface containing the lines $A,B,C$ in $\PP^3$ is the zero set of  the quadratic form  $ x^t U x =  \langle Ax| B | Cx \rangle $.  This is the compact notation 
 to be
 defined below (\ref{eq:remarkablycluster}). It translates into the
 following explicit formulas for the entries $u_{ij}$ of 
 the $ 4 \times 4$ matrix $U$:
 \begin{equation}
\label{eq:umatrix}
\begin{small}
\begin{matrix}
u_{11} & = & 2 a_{23} b_{24} c_{34}-2 a_{23} b_{34} c_{24}-2 a_{24} b_{23} c_{34}
            +2 a_{24} b_{34} c_{23}+2 a_{34} b_{23} c_{24}-2 a_{34} b_{24} c_{23} \\
u_{12} & = & (-b_{13} c_{24}+b_{14} c_{23}-b_{23} c_{14}+b_{24} c_{13}) a_{34}
+(a_{13} c_{24}-a_{14} c_{23}+a_{23} c_{14}-a_{24} c_{13}) b_{34} \\  & & 
      +(-a_{13} b_{24}+a_{14} b_{23}-a_{23} b_{14}+a_{24} b_{13}) c_{34} \\
u_{13} & = & (-b_{12} c_{34}-b_{14} c_{23}+b_{23} c_{14}+b_{34} c_{12}) a_{24}
    +(a_{12} c_{34}+a_{14} c_{23}-a_{23} c_{14}-a_{34} c_{12}) b_{24} \\  & & 
      +(-a_{12} b_{34}-a_{14} b_{23}+a_{23} b_{14}+a_{34} b_{12}) c_{24} \\
u_{14} & = & (b_{12} c_{34}-b_{13} c_{24}+b_{24} c_{13}-b_{34} c_{12}) a_{23}
 +(-a_{12} c_{34}+a_{13} c_{24}-a_{24} c_{13}+a_{34} c_{12}) b_{23} \\ & & 
      +(a_{12} b_{34}-a_{13} b_{24}+a_{24} b_{13}-a_{34} b_{12}) c_{23} \\
u_{22} & = &  2 a_{13} b_{14} c_{34}-2 a_{13} b_{34} c_{14}-2 a_{14} b_{13} c_{34}
 +2 a_{14} b_{34} c_{13}+2 a_{34} b_{13} c_{14}-2 a_{34} b_{14} c_{13} \\
u_{23} & = & (b_{12} c_{34}+b_{13} c_{24}-b_{24} c_{13}-b_{34} c_{12}) a_{14}
+(-a_{12} c_{34}-a_{13} c_{24}+a_{24} c_{13}+a_{34} c_{12}) b_{14} \\  & & 
      +(a_{12} b_{34}+a_{13} b_{24}-a_{24} b_{13}-a_{34} b_{12}) c_{14} \\
u_{24} & = & (-b_{12} c_{34}+b_{14} c_{23}-b_{23} c_{14}+b_{34} c_{12}) a_{13}
  +(a_{12} c_{34}-a_{14} c_{23}+a_{23} c_{14}-a_{34} c_{12}) b_{13} \\  & & 
      +(-a_{12} b_{34}+a_{14} b_{23}-a_{23} b_{14}+a_{34} b_{12}) c_{13} \\
      u_{33} & = & 2 a_{12} b_{14} c_{24}-2 a_{12} b_{24} c_{14}-2 a_{14} b_{12} c_{24}
      +2 a_{14} b_{24} c_{12}+2 a_{24} b_{12} c_{14}-2 a_{24} b_{14} c_{12} \\
u_{34} & = & (-b_{13} c_{24}-b_{14} c_{23}+b_{23} c_{14}+b_{24} c_{13}) a_{12}
    +(a_{13} c_{24}+a_{14} c_{23}-a_{23} c_{14}-a_{24} c_{13}) b_{12} \\ & & 
     +(-a_{13} b_{24}-a_{14} b_{23}+a_{23} b_{14}+a_{24} b_{13}) c_{12} \\
     u_{44} & = & 2 a_{12} b_{13} c_{23}-2 a_{12} b_{23} c_{13}-2 a_{13} b_{12} c_{23}
      +2 a_{13} b_{23} c_{12}+2 a_{23} b_{12} c_{13}-2 a_{23} b_{13} c_{12}
\end{matrix}
\end{small}
\end{equation}
The unique quadratic surface $V$ containing the lines $E,F,G$ in $\PP^3$ is given by the 
same formulas, but now with $u,a,b,c$ replaced by $v,e,f,g$. If we perform these
substitutions in the discriminant of $f(t) = {\rm det}(U + t V)$ then we obtain
the  NLS discriminant $\Delta_{K_2,(3,3)}$.
\end{proof}

\begin{remark}
We also consider $u = (4,2)$ for the graph $G = K_2$.
Here the Landau diagram is a pentagon attached to a triangle.
We see from (\ref{eq:gK2}) that the NLS genus is negative, namely
$g_{(4,2)}=-1$. This reflects the fact that the
general fiber of  the Landau map $\psi_u$ 
consists of two disjoint conics. Indeed, the
disjoint union of two curves of genus $0$ has genus $-1$.
\end{remark}

We note that the formulas in (\ref{eq:umatrix})
can also be used for the LS discriminant for the triangle 
graph $G = K_3$.  Namely, 
the Schubert problem for the concurrent component $V_{[3]}$
amounts to intersecting three quadratic surfaces in $\PP^3$.
This is the Cayley octad in \cite[Example 8.3]{HMPS1}.
Each of the three quadrics is determined by a triple of lines, and
the explicit formula for its  symmetric $4 \times 4$ matrix is given in
(\ref{eq:umatrix}). The LS discriminant for $V_{[3]}$ is
the discriminant of these three quadrics.
It is known that this discriminant has degree $16$
in the entries of each of the three
symmetric $4 \times 4$ matrices.
This explains the degree of $\Delta_{K_3, \mathbf{b},u}$ in
Theorem \ref{thm:16168}.

We conclude this section with a discussion of the
NLS geometry arising from three lines.

\begin{example}[$\ell=3$] \label{ex:NLSthreelines}
The chain $G = \{12,23\}$ has NLS genus $g_u = 5$ for $u=(3,3,3)$.
Indeed, the  three Schubert conditions at each vertex
define a $\PP^2 $ inside $\PP^5$. The general fiber of $\psi_u$ is
a curve in $(\PP^2)^3$ defined by five equations, of degrees
$(2,0,0),(0,2,0),(0,0,2)$ for the Grassmannians ${\rm Gr}(2,4)$,
and of degrees $(1,1,0)$ and $(0,1,1)$ for the line incidences.
 The general complete intersection in $(\PP^2)^3$ defined by these five degrees
is a smooth and irreducible curve of genus $5$. We see this by typing
\ {\tt multiGenera(3,\{\{1,2\},\{2,3\}\})}.

Next consider the triangle $G =K_3= \{12,13,23\}$,
and let $u=(3,3,2)$. Here the NLS genus is $9$.
The general fiber is the disjoint union of two irreducible curves
of genus~$5$, one on $V_{[3]}$ and one on $V_{[3]}^*$.
The genus of such a disjoint union equals $5+5-1=9$,
and this is the NLS genus. Note that $9$ is also the 
genus for a general complete intersection curve 
of degrees $(2,0,0),(0,2,0),(0,0,2),
(1,1,0),(1,0,1),(0,1,1)$ inside
the ambient space $\PP^2 \times \PP^2 \times \PP^3$.
\end{example}

\begin{remark}
Our terminology is organized by the dimension of the generic fiber of the Landau map: zero-dimensional fibers are \emph{leading}, one-dimensional fibers \emph{next-to-leading}, two-dimensional fibers \emph{next-to-next-to-leading}, and so on. This differs from some physics conventions, where \emph{leading Landau singularities} correspond to strata of highest codimension in the on-shell space, and \emph{subleading} ones to strata of lower codimension \cite{Dennen:2015cqa}. The two conventions agree when the minimal fiber dimension is $0$, but not in general. For instance, the highest-codimension stratum of the \emph{$3$-mass triangle} in Example \ref{ex:ell1_NSL} is \emph{next-to-leading} in our terminology, since its generic fiber is a curve, but \emph{leading} in the usual physics sense.
\end{remark}

\section{Recursive Formulas}
\label{sec:recursive}

We now present a recursive method for evaluating
LS discriminants and SLS resultants. 
In both cases, we start with a context-free example and then apply it to  the pentabox ($\ell = 2$). 

\begin{example}\label{ex:rec_quadr}
Consider the following system of two polynomials in two unknowns $x$ and $y$:
\begin{equation} \label{eq:fgsys}
\begin{matrix}
f (x)& = &a_2 x^2+a_1 x+a_0 \, , \qquad \qquad \qquad \qquad \qquad
\qquad \qquad \qquad \qquad \qquad \qquad \qquad \\
g(x,y) & =& b_{22}x^2y^2 + b_{21}x^2y + b_{12}xy^2 + b_{20}x^2+b_{11} x y+b_{02} y^2+b_{10}x+b_{01}y+b_{00} \, .
\end{matrix}
\end{equation}
In the recursive method, we solve $f(x) = 0$ with the formula
$ x^{(\pm)}  =   \frac{-a_1 \pm \sqrt{ a_1^2 - 4 a_0 a_2}}{2 a_2}$.
Plugging the roots into $g(x,y)$ and taking the product yields a rational function
of degree $0$ in~$a_0,a_1,a_2$:
\begin{equation}
\label{eq:EDelta}
\left(\mathrm{Disc}_y(g)\big|_{x=x^{(+)}} \right) \cdot \left(\mathrm{Disc}_y(g)\big|_{x=x^{(-)}} \right) 
\,=\, a_2^{-4} \cdot{\rm Res}_x (f,{\rm Disc}_y(g))
\,=\,  a_2^{-4} \cdot {\rm Disc}(f,g) \, .
\end{equation}
The discriminant of (\ref{eq:fgsys}) is an irreducible polynomial with $150$ terms of degree $(4,4)$:
\begin{equation}
\label{eq:disccontextfree}
{\rm Disc}(f,g)  = a_2^4b_{01}^4 - 8a_2^4b_{00}b_{01}^2b_{02} + 16a_2^4b_{00}^2b_{02}^2 + \cdots  - 16a_0^3a_1b_{10}b_{20}b_{22}^2 +  16a_{0}^4b_{20}^2b_{22}^2 \, .
\end{equation}
\end{example}

This example is relevant for the LS discriminant of
the pentabox ($\ell = 2$). 
Let ${\bf L} = (X,Y)$,  ${\bf M}^{(1)} = (A,B,C,D)$,
and ${\bf M}^{(2)} = (E,F,G)$, as in Proposition \ref{prop:eins}. In the following, we write intersections
and joins of linear subspaces in projective space $\PP^3$
 using the Grassmann-Cayley algebra~\cite[Section 2.4]{VRC}.
 Here we use the symbol $\star$ for intersection $\wedge$, and concatenation for join~$\vee$.
We set $p(x) = d_1 + x d_2$, where $D = d_1 d_2$, and $q(y) = g_1 + y g_2 $, where $G = g_1 g_2$. Note that $D = d_1d_2 = d_1 \vee d_2$ means that $D$ is the line spanned by the points $d_1$ and $d_2$. 
Then $X := (p(x) B) \star (p(x)C)$ satisfies $\langle BX \rangle =\langle CX \rangle= \langle DX \rangle= 0$,
and $ Y := (q(y) E)  \star (q(y) F)$ satisfies ${\langle EY \rangle=\langle FY \rangle=\langle GY \rangle= 0}$.
We have two more constraints, defined by $f(x) = \langle AX\rangle $ and $g(x,y) = \langle XY \rangle$.
These are now written precisely  as in~(\ref{eq:fgsys}), with
  coefficients 
 \begin{equation}
 \label{eq:remarkablycluster}
 a_2 =  \langle d_2 A | B | d_2 C \rangle \,,\,\,\,\,
 a_1 =  \langle d_1 A | B | d_2 C \rangle +  \langle d_2 A | B | d_1 C \rangle
\quad  {\rm and} \quad
 a_0 =  \langle d_1 A | B | d_1 C \rangle \, ,
\end{equation}
and similarly for the $b_{ij}$. The quantities $\langle abc | de | fgh \rangle
 := \langle abcd \rangle \langle efgh \rangle
   - \langle abce \rangle \langle dfgh \rangle$ are the \emph{chain polynomials}. 
   Their expansions appeared in (\ref{eq:umatrix}).
   We obtain the following~result.

\begin{proposition}
\label{prop:box-pentabox-recursion}
The LS discriminant of the box ($\ell=1$) and the pentabox $(\ell=2)$ are
    \begin{itemize}
        \item ${\rm Disc}_x(f) \,\,=\,\, a_1^2- 4 a_0 a_2 = \Delta_{K_1,(4)}(\mathbf{M}^{(1)})  $, \vspace{-0.1cm}
        \item $ {\rm Disc}(f,g) \,=\, \langle B C \rangle^2 \langle B D \rangle^2 \langle CD \rangle^2 \cdot  \Delta_{K_2,(4,3)}(\mathbf{M}) $.
This is exactly the formula in (\ref{eq:disccontextfree}).
    \end{itemize}
\end{proposition}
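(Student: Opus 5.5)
For the first item I would use the parametrization just introduced. Every line $X$ meeting $B$, $C$ and $D$ can be written as $X = (p(x)B)\star(p(x)C)$: it passes through the point $p(x)$ of $D$ and lies in the two planes $p(x)B$ and $p(x)C$. This identifies the conic of transversals to $B,C,D$ (Example \ref{ex:ell1_NSL}) birationally with $\PP^1 \ni x$. The condition $\langle AX\rangle = 0$ is then the quadratic $f(x)$. Expanding $\langle A\, ((p B)\star(p C))\rangle$ bilinearly in $p = d_1 + x d_2$ gives exactly the chain polynomials in (\ref{eq:remarkablycluster}). So the two roots of $f$ are the two transversals to $A,B,C,D$, and ${\rm Disc}_x(f)$ vanishes exactly where they collide. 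Hence ${\rm Disc}_x(f)$ and $\Delta_{K_1,(4)}$ cut out the same hypersurface.

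Both polynomials have degree $2$ in each of $A,B,C,D$. For ${\rm Disc}_x(f)$ this holds because $a_i$ is linear in $A$, $B$ and $C$, and has total degree $1$ in $d_1,d_2$, which is degree $1$ in $D$ after symmetrization. The Gram determinant (\ref{eq:discriminant_4_lines}) is irreducible. So the two agree up to a scalar and possibly a factor supported on a degenerate locus. To exclude such a factor, I would check that ${\rm Disc}_x(f)$ is expressible through the brackets $\langle M_iM_j\rangle$ alone. The cleanest route is to compare both sides on a normal form, for instance $B,C,D$ as three rulings of a fixed quadric. Equality up to scalar then follows, and the scalar is fixed by one numerical evaluation.

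For the second item I would apply (\ref{eq:EDelta}). For each root $x^{(\pm)}$, the line $X$ is fixed, and $Y = (q(y)E)\star(q(y)F)$ runs over the transversals to $E,F,G$. The equation $g(x,y) = \langle XY\rangle = 0$ is quadratic in $y$, and ${\rm Disc}_y(g)|_{x=x^{(\pm)}}$ vanishes exactly when the two $Y$'s paired with that $X$ coincide. Two of the four solutions can collide in two ways: either the $X$'s collide, detected by ${\rm Disc}_x(f)$, or the $Y$'s over a fixed $X$ collide. Both are contained in ${\rm Disc}(f,g)$, which by the Poisson-type identity equals $a_2^4\prod_\pm {\rm Disc}_y(g)|_{x^{(\pm)}}$. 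The geometric content therefore gives $\Delta_{K_2,(4,3)} \mid {\rm Disc}(f,g)$, since the former is irreducible.

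What remains is to identify the cofactor, and this is the main obstacle. The parametrization is not intrinsic: it depends on the splitting $D = d_1 d_2$ and is degenerate when $p(x)$ hits $B$ or $C$, or when $B,C,D$ are not in general position. I expect the extraneous factor to come from such degeneracies, and the guess $\langle BC\rangle^2\langle BD\rangle^2\langle CD\rangle^2$ fits. To confirm it I would compare multidegrees. ${\rm Disc}(f,g)$ has degree $(4,4)$ in the coefficient groups $(a,b)$. Each $a_i$ is trilinear in $(A,B,C)$ and linear in $D$. Each $b_{ij}$ involves $X$ quadratically through $B,C,D$ and $Y$ through $E,F,G$. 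From this I can read off the degree in each of $A,\dots,G$, subtract the known $(4,4,4,4,4,4,4)$ of $\Delta_{K_2,(4,3)}$, and check that the difference is $(0,4,4,4,0,0,0)$, matching the proposed factor. Finally I would show that ${\rm Disc}(f,g)$ vanishes to order two on each hypersurface $\langle BC\rangle = 0$ (and likewise for $BD$, $CD$). When $B$ and $C$ meet, the quadric through $B,C,D$ degenerates, $f$ and $g$ acquire a common degenerate structure, and a direct substitution in (\ref{eq:disccontextfree}) or a local expansion should show the double vanishing. A single random rational evaluation then fixes the constant.
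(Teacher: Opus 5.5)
Your argument is correct in outline, but it follows a different route from the paper. The paper proves the first item by a direct \texttt{Macaulay2} computation. For the second item it only verifies the identity on specializations of ${\bf M}$, and then checks that both sides have degree $(4,8,8,8,4,4,4)$.

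You instead derive divisibility from geometry. A double root of $f$, or of $g(x^{(\pm)},y)$, means a hyperplane section is tangent to the conic of transversals. The map $x\mapsto X(x)$, resp.\ $y\mapsto Y(y)$, parametrizes that conic isomorphically when $B,C,D$, resp.\ $E,F,G$, are pairwise skew. Irreducibility plus the same degree count then settles the first item. In the second item you pin down the cofactor of degree $(0,4,4,4,0,0,0)$ by showing that $\langle BC\rangle^2$, $\langle BD\rangle^2$ and $\langle CD\rangle^2$ divide ${\rm Disc}(f,g)$.

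This turns the paper's computational check into an actual proof, up to one normalizing evaluation. It also explains the extraneous factor as a degeneration of the parametrization $X(x)=(p(x)B)\star(p(x)C)$. The paper's route is shorter but offers no such explanation.

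Two smaller points on the first item. Your normal-form step is superfluous: $\Delta_{K_1,(4)}$ is irreducible, divides ${\rm Disc}_x(f)$, and has the same degree $(2,2,2,2)$, so the cofactor is already constant. Also, each $a_i$ is quadratic, not linear, in the points $d_1,d_2$. That is weight one in the Pl\"ucker coordinates of $D$, so your conclusion still holds.

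One claim in the second item is false: the $X$-collisions are \emph{not} contained in ${\rm Disc}(f,g)$. By your own Poisson formula, ${\rm Disc}(f,g)=a_2^4\,H(x^{(+)})H(x^{(-)})$ with $H={\rm Disc}_y(g)$. At $x^{(+)}=x^{(-)}$ this equals $a_2^4H(x^{(+)})^2$, which is generically nonzero. That locus is the extraneous box-discriminant factor of the Hurwitz form discussed after Proposition~\ref{prop:bbexpected}, not part of $\Delta_{K_2,(4,3)}$. Your degree count also rules it out, since the cofactor has degree $0$ in $A$. Your divisibility argument only uses $Y$-collisions, so nothing breaks.

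The crux, the double vanishing along $\langle BC\rangle=0$, is still only asserted. It goes through as follows.
\begin{itemize}
\item If $B$ and $C$ meet in a point $r$ and span a plane $\Pi$, then $X(x)=\lambda(p(x))\,p(x)\wedge r$, where $\lambda$ is the linear form defining $\Pi$.
\item For $\langle BC\rangle=\varepsilon$ small, $f=\lambda(p(x))\langle A\,p(x)\,r\rangle+O(\varepsilon)$. This has a simple root near the zero $x_0$ of $\lambda(p(x))$, so one root $x^{(-)}$ of $f$ lies within $O(\varepsilon)$ of $x_0$, and there $X(x^{(-)})=O(\varepsilon)$.
\item Hence every coefficient of $g(x^{(-)},y)$ is $O(\varepsilon)$, so $H(x^{(-)})=O(\varepsilon^2)$, while $a_2$ and $H(x^{(+)})$ stay bounded.
\item For $\langle BD\rangle=0$ or $\langle CD\rangle=0$ the same happens. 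The plane $p(x)B$ or $p(x)C$ degenerates at $p(x)=B\cap D$ or $C\cap D$, giving a base point of $X(x)$.
\end{itemize}
These three brackets are primes not associate to $\Delta_{K_2,(4,3)}$, so this identifies the cofactor.
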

\begin{proof}
    The first statement is proven by symbolic computation in \texttt{Macaulay2} \cite{M2}, and the second was verified
    on specializations of ${\bf M}$.
    We now prove that the two sides have the same degree.
        Since the degree of $\Delta_{K_2, (4,3)}$ is $(4,4,4,4,4,4,4)$ in the Pl\"ucker coordinates of the external lines, the right hand side has degree $(4,8,8,8,4,4,4)$. On the other hand, the $a_i$ have degree $(1,1,1,1,0,0,0)$ while the $b_{ij}$ have degree $(0,1,1,1,1,1,1)$.  Since (\ref{eq:disccontextfree}) has degree $(4,4)$ in the $a_i$ and $b_{ij}$, we see that the degree
        of the left hand side in the Pl\"ucker coordinates equals
         $4(1,1,1,1,0,0,0) + 4(0,1,1,1,1,1,1) = (4,8,8,8,4,4,4)$. 
    \end{proof}

This result enables us to  compute $\Delta_{K_2,(4,3)}$ recursively. From~\eqref{eq:EDelta}, we obtain
$$    \left( \mathrm{Disc}_y(g)\big|_{x=x^{(+)}} \right) \cdot \left(\mathrm{Disc}_y(g)\big|_{x=x^{(-)}} \right) 
    \,\,= \,\, \Delta_{K_1,(4)}(X^{(+)},\mathbf{M}^{(1)}) \cdot \Delta_{K_1,(4)}(X^{(-)},\mathbf{M}^{(1)}) \, ,
    $$
\begin{equation}
\label{eq:Xpm}
{\rm where}   \ X^{(\pm)}(\mathbf{M}^{(1)})   \,=\,  \left(p(x^{(\pm)}) B \right)  \star \left(p(x^{(\pm)})C \right) 
    \ {\rm with} \
 x^{(\pm)}  \,=\, \begin{small}
       \frac{-a_1 \pm \sqrt{ a_1^2 - 4 a_0 a_2}}{2 a_2} \, . \end{small}
\end{equation}
We renormalize the solutions as $L^{\pm}(\mathbf{M}^{(1)})=\varepsilon(\mathbf{M}^{(1)}) \cdot X^{(\pm)}(\mathbf{M}^{(1)})$ by 
\begin{equation}\label{eq:4box_extr_fact}
    \varepsilon(\mathbf{M}^{(1)}) \,\,=\,\,
    \begin{small} \frac{2 a_2}{\sqrt{\langle B C \rangle \langle B D \rangle \langle CD \rangle}} \, , \qquad \text{with $a_2$ as in \eqref{eq:remarkablycluster}} \, . \end{small}
\end{equation}
This ensures that the solutions $L^{\pm}(\mathbf{M}^{(1)})$ have degree $(1,1,1,1)$ in ${\bf M}^{(1)}= (A,B,C,D)$. 
Combining this with~\eqref{eq:EDelta}, we obtain the following recursive formula
for the LS discriminant:
\begin{equation}\label{eq:pentabox_rec}
    \Delta_{K_2,(4,3)}(\mathbf{M}) \,\,= \,\,  \prod_{\sigma \in \{\pm\}} \Delta_{K_1,(4)}(\varphi_{\pm}(\mathbf{M})) \,.
\end{equation}
Here the right hand side uses the two \textit{four-mass box substitution maps}
\begin{equation}\label{eq:four_box_subs_maps}
    \varphi_{\pm} : {\rm Gr}(2,4)^{4+3} \rightarrow {\rm Gr}(2,4) \times {\rm Gr}(2,4)^{3} \, , \ (M_1^{(1)},\dots,M_4^{(1)},\mathbf{M}^{(2)}) \mapsto (L^{\pm}(\mathbf{M}^{(1)}),\mathbf{M}^{(2)}) \, .
\end{equation}
One checks that both sides of~\eqref{eq:pentabox_rec} have indeed degree $(4,4,\dots,4)$ in the external lines. 

\smallskip

We now vastly generalize Proposition \ref{prop:box-pentabox-recursion}.
We partition the vertex set $[\ell]$ of $G$ into two non-empty subsets $V_1$ and $V_2$.  Let $G_1$ and $G_2$ be the induced subgraphs on $V_1$ and~$V_2$, respectively. Then, $G=G_1 \cup G_2 \cup E$, where $E$ is the set of edges
connecting $V_1$ and $V_2$.
Let $u=(u_1,u_2)$ be a term in the multidegree of $V_G$, partitioned according to $V_1$ and $V_2$. 
Set $d:=d_1+d_2 $ with $d_i = |u_i|$. Pick any external degeneration $H_u =H_{u_1} \cup H_{u_2}$ and denote the corresponding Landau diagrams by $\mathcal{L} = G_u \cup H_u$, $\mathcal{L}_1 = G_{1,u_1} \cup H_{u_1}$ and $\mathcal{L}_2 = G_{2,u_2} \cup H_{u_2} \cup E$, and their associated Landau maps by $\psi$ and $\psi_i$. We say that $\mathcal{L}$ is \textit{reducible with respect to} $\mathcal{L}_1$ if $d_1 = {\rm dim}(V_{G_1})=|u_1|$. 
Geometric considerations imply the~following~result.

\begin{proposition}\label{prop:rec_fibers}
Let $\mathcal{L}$ be a Landau diagram that is reducible with respect to $\mathcal{L}_1$. Then, 
\begin{equation}\label{eq:fiber}
    \psi^{-1}({\bf M})\quad =\,\bigcup_{{\bf L}^{(1)} \in 
    \psi_1^{-1}({\bf M}^{(1)})} 
    \left\{({\bf L}^{(1)}, \mathbf{L}^{(2)}) : \mathbf{L}^{(2)} \in \psi_2^{-1}(\varphi_{{\bf L}^{(1)}}({\bf M})) \right\} .
\end{equation}
 The substitution map  in (\ref{eq:fiber}) is the algebraic function
 given by evaluating at a solution:
  \begin{equation}\label{eq:subs_map}
     \varphi_{{\bf L}^{(1)}}: {\rm Gr}(2,4)^d \,\rightarrow\, {\rm Gr}(2,4)^{|E|+d_2} \, , \quad {\bf M}=({\bf M}^{(1)},{\bf M}^{(2)}) \,\mapsto \,({\bf L}^{(1)},{\bf M}^{(2)}) \, .
 \end{equation}
\end{proposition}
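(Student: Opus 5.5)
The identity \eqref{eq:fiber} is a set-theoretic equality between the fiber of $\psi$ and a union of fibers of $\psi_2$. I will prove it by unwinding the definitions of the three Landau diagrams $\mathcal{L}$, $\mathcal{L}_1$, $\mathcal{L}_2$, and then checking the two inclusions.

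First, I record what a point of $\psi^{-1}({\bf M})$ is. It is a tuple ${\bf L}=({\bf L}^{(1)},{\bf L}^{(2)})\in {\rm Gr}(2,4)^{\ell}$, with ${\bf L}^{(i)}$ indexed by $V_i$, subject to three groups of conditions:
\begin{itemize}
\item $\langle L_aL_b\rangle=0$ for all edges $ab\in G_1\cup G_2\cup E$;
\item $\langle L_aM_j\rangle=0$ for the pendant edges of $G_u$;
\item the incidences prescribed by the external degeneration $H_u=H_{u_1}\cup H_{u_2}$, which involve only ${\bf M}$.
\end{itemize}
I split this system according to the partition of the vertices. The equations involving only ${\bf L}^{(1)}$ and ${\bf M}^{(1)}$ are those of $G_1$, the pendant edges at $V_1$, and $H_{u_1}$. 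These are exactly the defining conditions of $\psi_1^{-1}({\bf M}^{(1)})$ for the diagram $\mathcal{L}_1$.

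The remaining equations involve ${\bf L}^{(2)}$ together with ${\bf L}^{(1)}$ and ${\bf M}^{(2)}$. They are the edges of $G_2$, the pendant edges at $V_2$, $H_{u_2}$, and the edges in $E$. Each edge $ab\in E$ with $a\in V_1$ gives $\langle L_a L_b\rangle=0$. I read this as an external incidence in $\mathcal{L}_2$, where the line $L_a$ now plays the role of an external line. Hence the remaining equations are precisely the conditions defining $\psi_2^{-1}({\bf L}^{(1)},{\bf M}^{(2)})=\psi_2^{-1}(\varphi_{{\bf L}^{(1)}}({\bf M}))$ for the diagram $\mathcal{L}_2=G_{2,u_2}\cup H_{u_2}\cup E$.

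With this splitting, the inclusion ``$\subseteq$'' is immediate: any $({\bf L}^{(1)},{\bf L}^{(2)})\in\psi^{-1}({\bf M})$ has ${\bf L}^{(1)}\in\psi_1^{-1}({\bf M}^{(1)})$ and ${\bf L}^{(2)}$ in the corresponding $\psi_2$-fiber. The inclusion ``$\supseteq$'' follows because the union of the two subsystems is the whole system.

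The only substantive point is the role of reducibility, $d_1=|u_1|=\dim V_{G_1}$. It guarantees that for generic ${\bf M}^{(1)}$ the fiber $\psi_1^{-1}({\bf M}^{(1)})$ is finite, with $\gamma_{u_1}$ points. As a result, the union in \eqref{eq:fiber} is finite, and $\varphi_{{\bf L}^{(1)}}$ is a well-defined multivalued algebraic function with one branch per point of that fiber.

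I expect this genericity step to be the main obstacle. To handle it, I would invoke the dimension count from the complete-intersection property, in the form used in Theorem~\ref{thm:outerplanar} for outerplanar $G$. This shows that $\psi_1$ is generically finite and dominant. It remains to check that $\mathcal{L}_2$ then has $\dim V_{G_2}+(\text{contribution of }E)=|E|+d_2$ as its correct expected dimension; this follows from $d=d_1+d_2$. Consequently, each $\psi_2$-fiber over $\varphi_{{\bf L}^{(1)}}({\bf M})$ is also finite for generic ${\bf M}$.
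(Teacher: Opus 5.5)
Your argument is correct and is exactly the ``geometric consideration'' the paper relies on (it gives no further proof): the incidences defining $\psi^{-1}({\bf M})$ split into the system for $\mathcal{L}_1$ and, once each $L_a$ with $ab\in E$ is treated as an external line, the system for $\mathcal{L}_2$, which gives the equality for every ${\bf M}$ with no genericity or reducibility needed. Your final paragraph is therefore unnecessary. Its finiteness claim for the $\psi_2$-fibers should instead be deduced from the equality itself (each slice lies inside $\psi^{-1}({\bf M})$ whenever that fiber is finite), because $\varphi_{{\bf L}^{(1)}}({\bf M})$ need not be a generic point of ${\rm Gr}(2,4)^{|E|+d_2}$ (the lines $L_a$ may repeat or be mutually incident), so the dimension count alone does not give finiteness.
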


 \begin{figure}[htbp]
    \centering
\includegraphics[width=0.95\textwidth]{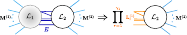}
\caption{A Landau diagram which is reducible with respect to $\mathcal{L}_1 = G_{1,u_1} \cup H_{u_1}$.}\label{fig:recursion}
\end{figure}

In the following we write $\psi_1^{-1}(\mathbf{M}^{(1)}) = \big\{\mathbf{L}_1^{(1)} , \dots, \mathbf{L}_{\gamma_1}^{(1)} \big\}$ with $\mathbf{L}_r^{(1)} = \mathbf{L}_r^{(1)}(\mathbf{M}^{(1)})$, where $\gamma_1$ is the LS degree of $\mathcal{L}_1$, and $\varphi_r = \varphi_{\mathbf{L}^{(1)}_r}$ for the corresponding substitution maps for $r=1,\dots,\gamma_1$.
\begin{theorem}\label{thm:discr_fact}
Let $\mathcal{L}$ be a Landau diagram that is reducible with respect to $\mathcal{L}_1$. Then, 
\begin{equation}\label{eq:discr_fact}
        \Delta_{\mathcal{L}}(\mathbf{M}) \, \, = \, \,  \mathcal{E}(\mathbf{M}^{(1)}) \,\,\cdot \!\!\ \prod_{r=1}^{\gamma_1} \ \!\!
        \Delta_{\mathcal{L}_2}(\varphi_r(\mathbf{M}))
          \,  \, ,
\end{equation}
where the extraneous factor $\mathcal{E}$ is a rational function in the Plücker coordinates of $\mathbf{M}^{(1)}$.
\end{theorem}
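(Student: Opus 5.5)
We prove \eqref{eq:discr_fact} by comparing zero loci, following the pattern of Example~\ref{ex:rec_quadr} and the pentabox formula \eqref{eq:pentabox_rec}.

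\emph{Step 1: symmetry and polynomiality.} Consider the product $P(\mathbf{M}) := \prod_{r=1}^{\gamma_1} \Delta_{\mathcal{L}_2}(\varphi_r(\mathbf{M}))$. Each factor is an algebraic function of $\mathbf{M}^{(1)}$, via the solutions $\mathbf{L}^{(1)}_r(\mathbf{M}^{(1)})$, and polynomial in $\mathbf{M}^{(2)}$. The product is symmetric under the monodromy of the finite cover $\psi_1$, which permutes the $\gamma_1$ points of the fibre. Hence $P$ is a rational function of $\mathbf{M}$. Its denominators come only from the field of definition of the $\mathbf{L}^{(1)}_r$, so they depend only on $\mathbf{M}^{(1)}$.

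To make this precise, I would follow the Poisson formula \cite[\S 13.1.A]{GKZ}. Fix representatives of the $\mathbf{L}^{(1)}_r$ in Pl\"ucker coordinates. Then $P$ equals an elimination-theoretic expression, namely a resultant of the equations of $\psi_1^{-1}(\mathbf{M}^{(1)})$ against $\Delta_{\mathcal{L}_2}$, divided by powers of a leading coefficient, exactly as $a_2^{-4}$ appears in \eqref{eq:EDelta}. The rescaling ambiguity of the representatives also contributes factors depending only on $\mathbf{M}^{(1)}$, compare the normalization $\varepsilon$ in \eqref{eq:4box_extr_fact}.

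\emph{Step 2: zero loci agree.} By Proposition~\ref{prop:rec_fibers}, the fibre $\psi^{-1}(\mathbf{M})$ is the disjoint union over $r$ of the fibres $\psi_2^{-1}(\varphi_r(\mathbf{M}))$, for $\mathbf{M}^{(1)}$ outside the branch locus of $\psi_1$. Two points of $\psi^{-1}(\mathbf{M})$ can collide in two ways:
\begin{itemize}
\item within one piece, which happens exactly when $\Delta_{\mathcal{L}_2}(\varphi_r(\mathbf{M}))=0$ for some $r$;
\item across pieces $r\neq s$, which forces $\mathbf{L}^{(1)}_r=\mathbf{L}^{(1)}_s$, i.e.\ $\Delta_{\mathcal{L}_1}(\mathbf{M}^{(1)})=0$, a condition on $\mathbf{M}^{(1)}$ alone.
\end{itemize}
So, away from hypersurfaces defined purely in terms of $\mathbf{M}^{(1)}$, the zero set of $P$ equals the hypersurface $\{\Delta_{\mathcal{L}}=0\}$.

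\emph{Step 3: multiplicities.} A generic point of the collision hypersurface of $\psi$ is a simple branch point. Locally it corresponds to a single $r$ and a simple branch point of $\psi_2$ at $\varphi_r(\mathbf{M})$, so multiplicities match on every component not contained in a hypersurface depending only on $\mathbf{M}^{(1)}$. Hence $\Delta_{\mathcal{L}}/P$ is a rational function whose divisor is supported on hypersurfaces pulled back from the $\mathbf{M}^{(1)}$ factor. By unique factorization in the multihomogeneous coordinate ring, written in Stiefel coordinates, this quotient is a rational function of the Pl\"ucker coordinates of $\mathbf{M}^{(1)}$. That quotient is $\mathcal{E}$.

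\emph{Main obstacle.} The hardest part is Step 3, and in particular showing that no component of $\{\Delta_{\mathcal{L}}=0\}$ involving $\mathbf{M}^{(2)}$ appears with a different multiplicity, and that $P$ is not identically degenerate. I would first check generic transversality. For generic $\mathbf{M}^{(1)}$, the map $\mathbf{M}^{(2)}\mapsto \varphi_r(\mathbf{M})$ is a surjection onto the target of $\psi_2$ with the first block fixed, as in \eqref{eq:subs_map}. Therefore the pullback of the reduced hypersurface $\{\Delta_{\mathcal{L}_2}=0\}$ stays reduced. Irreducible components of $\Delta_{\mathcal{L}}$ that might be pulled back from $\mathbf{M}^{(1)}$, for instance a factor $\Delta_{\mathcal{L}_1}$, are then absorbed into $\mathcal{E}$.
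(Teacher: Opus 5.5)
Your proposal follows the same route as the paper's proof, and it is more complete. Both arguments get rationality of the product as a symmetric function of the $\gamma_1$ points of $\psi_1^{-1}(\mathbf{M}^{(1)})$, use Proposition~\ref{prop:rec_fibers} to compare collision loci, and absorb everything depending only on $\mathbf{M}^{(1)}$ (the normalization of the $\mathbf{L}^{(1)}_r$, and the collisions $\mathbf{L}^{(1)}_r=\mathbf{L}^{(1)}_s$) into $\mathcal{E}$; the paper, however, only argues that the discriminant divides the numerator of the product, whereas you also treat the reverse inclusion and multiplicities.

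One correction to Step~3: a generic point of a collision component need not be a simple branch point, and $\varphi_r(\mathbf{M})$ need not be generic in the data space of $\mathcal{L}_2$, so ``stays reduced'' can fail. Collisions between different irreducible components are crossings, which is why the mixed factor in Theorem~\ref{thm:16168} appears squared. The fix is to argue that near such a point $\psi$ is locally the base change of $\psi_2$ along $\varphi_r$, which makes the local multiplicities on the two sides agree, whatever they are.
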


\begin{proof}
    The product~(\ref{eq:discr_fact}) is a symmetric function in the roots of the zero-dimensional polynomial system 
    given by $\mathcal{L}_1$. It is  a rational function in the Plücker coordinates in $\mathbf{M}$ which has degree zero in the those of $\mathbf{M}^{(1)}$. By Proposition~\ref{prop:rec_fibers},~the function (\ref{eq:discr_fact}) vanishes when two points in $\psi^{-1}(\mathbf{M})$ coincide. Hence, $\Delta_{\mathcal{L}_1}$ divides the numerator of the product in~(\ref{eq:discr_fact}). The extraneous factor $\mathcal{E}(\mathbf{M}^{(1)})$ arises from the ambiguity in defining the points $\mathbf{L}^{(1)}_r$ in $\psi_1^{-1}(\mathbf{M}^{(1)})$, and consequently the substitution maps $\varphi_r$. It can be set to one by redefinition of~\eqref{eq:subs_map}.
\end{proof}

It follows from~\eqref{eq:pentabox_rec} that the extraneous factor $\mathcal{E}(\mathbf{M}^{(1)})$
in~\eqref{eq:discr_fact}, which is produced by the four-mass box recursion described
in \eqref{eq:Xpm} and \eqref{eq:four_box_subs_maps}, is always equal to one.

\begin{corollary}\label{cor:rec_discr_4box}
In Theorem \ref{thm:discr_fact}, let $\mathcal{L}_1$ be the box and
        $\varphi_{\pm}$ as in~(\ref{eq:four_box_subs_maps}).
    Then, 
    $$
      \Delta_{\mathcal{L}}(\mathbf{M}) \, \,  = \, \,  \prod_{\sigma \in \{\pm \}} \Delta_{\mathcal{L}_2}(\varphi_{\pm}(\mathbf{M})) 
       \, .
$$
\end{corollary}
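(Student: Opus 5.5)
The plan is to specialize Theorem~\ref{thm:discr_fact} to the case where $\mathcal{L}_1$ is the four-mass box, with $\gamma_1 = 2$. We then show that the extraneous factor $\mathcal{E}(\mathbf{M}^{(1)})$ is a nonzero constant once the two box solutions are normalized as in \eqref{eq:4box_extr_fact}. Theorem~\ref{thm:discr_fact} already gives
\[
\Delta_{\mathcal{L}}(\mathbf{M}) \,=\, \mathcal{E}(\mathbf{M}^{(1)}) \cdot \Delta_{\mathcal{L}_2}(\varphi_+(\mathbf{M}))\cdot \Delta_{\mathcal{L}_2}(\varphi_-(\mathbf{M})),
\]
where $\varphi_\pm$ substitute $L^{\pm}(\mathbf{M}^{(1)})$ for the internal line attached to $A,B,C,D$. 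It remains to control the dependence of $\mathcal{E}$ on $\mathbf{M}^{(1)} = (A,B,C,D)$.

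The first step is to observe that the product $P(\mathbf{M}) := \prod_{\pm}\Delta_{\mathcal{L}_2}(\varphi_\pm(\mathbf{M}))$ is a polynomial, not merely a rational function. The box line $L$ enters $\Delta_{\mathcal{L}_2}$ as one external line, with some degree $k$ in its Plücker coordinates. The normalized solutions $L^\pm$ have the form $\alpha \pm \beta\sqrt{\Delta_{K_1,(4)}(\mathbf{M}^{(1)})}$ with $\alpha,\beta$ polynomial in $\mathbf{M}^{(1)}$, since $2a_2 x^{(\pm)} = -a_1 \pm \sqrt{a_1^2-4a_0a_2}$. The normalization $\varepsilon$ clears the denominator $a_2$, and the factor $\sqrt{\langle BC\rangle\langle BD\rangle\langle CD\rangle}$ cancels against the factor produced by $X = (pB)\star(pC)$. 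Hence the symmetric product $P$ is Galois-invariant under $\sqrt{\Delta}\mapsto-\sqrt{\Delta}$ and lies in the polynomial ring. I would verify this explicitly on the box itself, where the pentabox identity \eqref{eq:pentabox_rec} shows it.

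The second step is a degree count. $L^\pm$ has degree $(1,1,1,1)$ in $(A,B,C,D)$, so $P$ has degree $2k$ in each of $A,B,C,D$. Now compare with $\Delta_{\mathcal{L}}$. In the diagram $\mathcal{L}$ the four lines $A,B,C,D$ all attach to the same internal vertex. Moreover, the LS degree of $\mathcal{L}$ equals $2\gamma_2$ by Proposition~\ref{prop:rec_fibers}. I would argue, using the Hurwitz-form degree formula (Proposition~\ref{prop:bbexpected} together with the identification with $\Delta_{K_2,(4,3)}$ as a base case), that the degree of $\Delta_{\mathcal{L}}$ in each of $A,\dots,D$ is also $2k$.

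The third step concludes the argument. The factor $\mathcal{E}$ is a ratio of polynomials in $\mathbf{M}^{(1)}$ only, and by the previous step it has degree zero in each of $A,B,C,D$. Its possible zeros and poles are loci where $P$ vanishes but no two points of $\psi^{-1}(\mathbf{M})$ collide, or conversely. Candidate extraneous loci are $\Delta_{K_1,(4)}=0$, $a_2=0$, and $\langle BC\rangle\langle BD\rangle\langle CD\rangle=0$. For each of these, I would check directly that the normalized $L^\pm$ remain well defined, so that no extra vanishing occurs. On $\Delta_{K_1,(4)}=0$, $P$ becomes a square and vanishes only where $\Delta_{\mathcal{L}}$ does, because colliding box solutions force colliding points of $\psi^{-1}(\mathbf{M})$. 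This is the step I expect to be the main obstacle. Absent a clean argument, I would reduce it to the pentabox computation: the same normalization applies to every $\mathcal{L}_2$, since $\mathcal{E}$ depends only on $\mathbf{M}^{(1)}$ and on the recursion, not on $\mathcal{L}_2$. Comparing with \eqref{eq:pentabox_rec} where $\mathcal{E}=1$ then gives $\mathcal{E}\equiv 1$ in general, which proves the corollary.
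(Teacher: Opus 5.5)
Your final step is the paper's own argument. The paper justifies the corollary in one line: the extraneous factor $\mathcal{E}$ in Theorem~\ref{thm:discr_fact} comes only from how the box solutions are normalized, and the pentabox identity \eqref{eq:pentabox_rec}, with the normalization \eqref{eq:4box_extr_fact}, shows that this factor is one.

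One refinement makes the transfer from the pentabox precise. Grant, as the paper does, that $\mathcal{E}$ arises only from this normalization. Even so, $\mathcal{E}$ is not literally independent of $\mathcal{L}_2$. Rescaling $L^\pm$ by $\lambda_\pm$ multiplies the product by $(\lambda_+\lambda_-)^k$, where $k$ is the total degree of $\Delta_{\mathcal{L}_2}$ in the Plücker coordinates of the substituted line, summed over all its occurrences. So $\mathcal{E}=c(\mathbf{M}^{(1)})^{k}$ with $c$ independent of $\mathcal{L}_2$. The pentabox case has $k=2$, which forces $c^2$, and hence $c$, to be constant.

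Your steps 1--3 are not needed, and several claims in them are wrong as stated.

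\begin{itemize}
\item The individual $L^\pm=\varepsilon X^\pm$ are not of the form $\alpha\pm\beta\sqrt{\Delta_{K_1,(4)}}$ with polynomial $\alpha,\beta$. They contain $\sqrt{\langle BC\rangle\langle BD\rangle\langle CD\rangle}$, which does not lie in the field generated by $\sqrt{\Delta_{K_1,(4)}}$.
\item The factors $a_2$ and $\langle BC\rangle\langle BD\rangle\langle CD\rangle$ are balanced only in the product over $\pm$, not in each $L^\pm$. As $a_2\to 0$, one root $x^{(\pm)}$ tends to infinity. On $\langle BC\rangle=0$, one of $X^{(\pm)}$ vanishes identically, namely the transversal lying in the plane spanned by $B$ and $C$.
\item Galois invariance only places $P$ in $\mathbb{Q}(\mathbf{M})$. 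That $P$ is a polynomial is essentially the content of the corollary, not an input to it.
\item Proposition~\ref{prop:bbexpected} is only an upper bound; for the pentabox it gives $(8,4)$ against the true $(4,4)$. So it cannot determine the degree of $\Delta_{\mathcal{L}}$.
\item On $\Delta_{K_1,(4)}(\mathbf{M}^{(1)})=0$, the whole fiber $\psi^{-1}(\mathbf{M})$ does become non-reduced. The collision argument therefore does not explain why $P$ has no extra vanishing there. This hypersurface is absent from both sides only because $\Delta_{\mathcal{L}}$ is the irreducible Hurwitz form, which discards such factors. Compare the extraneous factor of degree $(4,0)$ discussed after Proposition~\ref{prop:bbexpected}.
\end{itemize}
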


 Corollary~\ref{cor:rec_discr_4box} is applicable whenever $u_i = 4$ for some $i$.
 This holds for any $u$
 in the multidegree of $V_G$ if $G$ is a tree. It is also true when $G$ is a cycle and $u = (4,3,\dots,3,2,3,\dots,3)$.


\smallskip 
We now turn to the case of SLS resultants,
and again start with a context-free example.

\begin{example}
We augment $f(x)$ and $g(x,y)$ from (\ref{eq:fgsys}) by 
$\,h (y)= c_2 y^2+c_1 y+c_0$.
The resultant  of these equations is an irreducible  polynomial with $1340$ terms of degree $(4,4,4)$:
\begin{equation}
\begin{small}
    {\rm Res}(f,g,h)\,\, = \,\,
a_2^4b_{02}^4c_0^4
- 2a_1a_2^3b_{02}^3b_{12}c_0^4
+ a_1^2a_2^2b_{02}^2b_{12}^2c_0^4
+ \cdots
-2a_0^3a_1b_{10}b_{20}^3c_2^4
+a_0^4b_{20}^4c_2^4 \, .
\end{small}
\end{equation}
We apply a recursive method as in Example~\ref{ex:rec_quadr}, solving for the roots $x^{(\pm)}$ of $f$, and we find
\begin{equation}
\label{eq:Res}
\left(\mathrm{Res}_y(g,h)\big|_{x=x^{(+)}} \right) \cdot \left(\mathrm{Res}_y(g,h)\big|_{x=x^{(-)}} \right) 
\,=\, a_2^{-4} \cdot {\rm Res}_y( {\rm Res}_x(f,g),h) \,=\,
a_2^{-4} \cdot {\rm Res}(f,g,h) \, .
\end{equation}
\end{example}

We adapt (\ref{eq:Res}) to our setting, by introducing parameters $x,y$ for
$X,Y$ as  above. We set $f(x) = \langle AX \rangle$, $g(x,y) = \langle XY \rangle$ and $h(y) = \langle HY \rangle$.
Similarly to Proposition~\ref{prop:box-pentabox-recursion}, we obtain:
\begin{proposition}
\label{prop:penta-dpenta-recursion}
The pentagon ($\ell=1$) and double pentagon $(\ell=2)$ have the SLS resultants
    \begin{itemize}
        \item ${\rm Res}_x(f, \langle EX\rangle ) =  \langle BC \rangle \langle CD \rangle \langle BD \rangle \cdot R_{K_1,(5)}(A,B,C,D,E)  \, ,$ \vspace{-0.1cm}
        \item ${\rm Res}(f,g,h) =  \langle B C \rangle^2 \langle B D \rangle^2 \langle CD \rangle^2  \langle EF \rangle^2 \langle EG \rangle^2 \langle FG \rangle^2 \cdot R_{K_2,(4,4)}(\mathbf{M}) \, .$ 
    \end{itemize}
\end{proposition}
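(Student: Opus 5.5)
The plan mirrors the proof of Proposition~\ref{prop:box-pentabox-recursion}: establish that both sides vanish on the same irreducible hypersurface, and then identify the extraneous factors by a degree count.

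\emph{Pentagon.} For $\ell=1$, the parametrization $X = (p(x)B)\star(p(x)C)$ with $p(x)=d_1+xd_2$ sweeps out the conic of transversals to $B,C,D$. Both $f(x)=\langle AX\rangle$ and $\langle EX\rangle$ are quadrics in $x$. Their resultant vanishes exactly when some transversal of $B,C,D$ also meets $A$ and $E$. That is the condition $R_{K_1,(5)}=0$ from Example~\ref{ex:commontrans}. Since $R_{K_1,(5)}$ is an irreducible Chow form, it divides ${\rm Res}_x$.

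For the degrees, each coefficient of $f$ is linear in each of $A,B,C,D$, and the same holds for $\langle EX\rangle$ with $A$ replaced by $E$. The resultant of two binary quadrics is bihomogeneous of degree $(2,2)$ in the two coefficient vectors. So ${\rm Res}_x$ has degree $(2,4,4,4,2)$ in $(A,B,C,D,E)$, whereas $R_{K_1,(5)}$ has degree $(2,2,2,2,2)$. The quotient therefore has degree $(0,2,2,2,0)$. I expect it to be $\langle BC\rangle\langle CD\rangle\langle BD\rangle$, which accounts for the degeneration of the parametrization of the conic. This parametrization collapses exactly when two of $B,C,D$ meet; that is where the chain polynomials in \eqref{eq:remarkablycluster} acquire a common factor. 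I would check this identity symbolically in \texttt{Macaulay2}, as was done for the box.

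\emph{Double pentagon.} Here I would use the context-free identity \eqref{eq:Res}: ${\rm Res}(f,g,h)$ vanishes exactly when there is a pair $(x,y)$ with $X$ meeting $A,B,C,D$, $Y$ meeting $E,F,G,H$, and $\langle XY\rangle=0$. This is precisely the condition that ${\bf M}$ lies in the image of $\psi_{(4,4)}$, so the irreducible $R_{K_2,(4,4)}$ of Example~\ref{ex:SLS2} divides ${\rm Res}(f,g,h)$.

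For the degree count, the $a_i$ have degree $(1,1,1,1;0,0,0,0)$, the $b_{ij}$ have degree $(0,1,1,1;0,1,1,1)$, and the $c_i$ have degree $(0,0,0,0;1,1,1,1)$, with $H$ playing the role of $A$ on the second side. Since ${\rm Res}(f,g,h)$ has degree $(4,4,4)$, its total degree is $(4,8,8,8;4,8,8,8)$. Theorem~\ref{thm:osserman} gives $R_{K_2,(4,4)}$ degree $4$ in every line. The quotient thus has degree $(0,4,4,4;0,4,4,4)$, which matches $\langle BC\rangle^2\langle BD\rangle^2\langle CD\rangle^2\langle EF\rangle^2\langle EG\rangle^2\langle FG\rangle^2$.

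To identify this quotient as a factor, I would argue through \eqref{eq:Res}. The $x$-side parametrization contributes $\langle BC\rangle\langle BD\rangle\langle CD\rangle$ once for each of the two roots $y$ of $h$ (by symmetry in the roles of the two sides), and likewise on the $y$-side. This is the same mechanism that produced the squared factor in Proposition~\ref{prop:box-pentabox-recursion}.

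\emph{Main obstacle.} The hard step is proving equality rather than mere divisibility-plus-degree matching. One must show that the quotient contains no other factor of the same multidegree and carries no extra multiplicity of $R_{K_2,(4,4)}$. The latter follows because $\gamma_{u-e_i}=4$ equals the degree of the resultant, so $R_{K_2,(4,4)}$ cannot appear squared. For the former, I would first try specializing ${\bf M}$ to random rational lines and comparing both sides numerically, as in Proposition~\ref{prop:box-pentabox-recursion}. I would then confirm that each bracket $\langle BC\rangle$ divides ${\rm Res}(f,g,h)$ to order exactly two by restricting to the locus where $B$ meets $C$, where $a_2,a_1,a_0$ and the $b_{ij}$ all share a factor.
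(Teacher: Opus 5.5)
Your degree counts are correct, and your route is genuinely different from the paper's. The paper gives no argument beyond ``similarly to Proposition~\ref{prop:box-pentabox-recursion}'', namely a \texttt{Macaulay2} computation for $\ell=1$, a check on specializations of $\mathbf{M}$ for $\ell=2$, and the same degree comparison you carry out. Your divisibility approach is more conceptual, and it closes without any computer check.

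\emph{Pentagon.} The degeneration is explicit. If $B$ meets $D$ at $p(x_0)$, then $p(x)B=(x-x_0)\,d_2B$. If $B$ meets $C$ at $o$, then $X(x)=\mu(p(x))\,(p(x)\vee o)$ for a linear form $\mu$. In either case $f$ and $\langle EX\rangle$ acquire a common linear factor in $x$, so each bracket divides ${\rm Res}_x(f,\langle EX\rangle)$. Together with $R_{K_1,(5)}\mid{\rm Res}_x$ and your count $(0,2,2,2,0)$, this gives the first bullet up to a constant.

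\emph{Double pentagon.} Your appeal to \eqref{eq:Res} is already a proof once written out. Apply the first bullet to the five lines $(H,E,F,G,X(x^{(\pm)}))$, and use $a_2^4\prod_{\pm}P(x^{(\pm)})={\rm Res}_x(f,P)$ for $P$ of formal degree $4$. This shows that ${\rm Res}(f,g,h)$ is a constant multiple of
\[
\bigl(\langle EF\rangle\langle EG\rangle\langle FG\rangle\bigr)^2\cdot{\rm Res}_x\bigl(f,\,R_{K_1,(5)}(H,E,F,G,X(x))\bigr),
\]
and the second factor is a polynomial in $\mathbf{M}$. The mirror identity, solving $h$ first, gives the same with $\langle BC\rangle\langle BD\rangle\langle CD\rangle$. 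There, each root $y^{(\pm)}$ of $h$ contributes one copy via the first bullet, which is exactly your ``once for each root'' mechanism. The two squared products and $R_{K_2,(4,4)}$ are pairwise coprime, so their product divides ${\rm Res}(f,g,h)$. The multidegree match $(4,8,8,8;4,8,8,8)$ then forces equality. Thus the second bullet follows rigorously from the first, which is more than a verification on specializations.

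A few corrections to your write-up.
\begin{enumerate}
\item Your ``main obstacle'' is not one. Once a product of pairwise coprime irreducibles divides ${\rm Res}(f,g,h)$ and has the same multidegree, equality up to a nonzero scalar is automatic. No separate exclusion of ``other factors'' and no numerical comparison is needed.
\item Restricting to $\langle BC\rangle=0$ only shows that the resultant vanishes there, i.e.\ divisibility to order at least one. It cannot give multiplicity two; that must come from the mirror factorization above.
\item On that locus, the coefficients $a_i$ and $b_{ij}$ do not share a common polynomial factor in $\mathbf{M}$. Rather, $f$ and $g$, as polynomials in $x$, share the linear factor $\mu(p(x))$.
\item ``Vanishes exactly when'' should read ``vanishes at least when''. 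The resultants also vanish on the bracket loci, and only the containment of the Landau image in their zero set is used.
\end{enumerate}
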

Just like for the LS discriminant, we again obtain a recursive formula:
\begin{equation}\label{eq:dob_penta_rec}
    R_{K_2,(4,4)}(\mathbf{M}) \,\,=\,\,  \prod_{\sigma \in \{\pm\}} R_{K_1,(5)}(\varphi_{\pm}(\mathbf{M})) \, .
\end{equation}
We extend the recursion in Theorem~\ref{thm:discr_fact} 
from LS discriminants to SLS resultants, and obtain:

\begin{theorem}\label{thm:res_fact}
If a super-leading Landau diagram $\mathcal{L}$ is reducible with respect to $\mathcal{L}_1$, then
\begin{equation}\label{eq:res_fact}
       R_{\mathcal{L}}(\mathbf{M}) \, \, =  \, \, 
       \mathcal{E}(\mathbf{M}^{(1)}) \cdot \prod_{r=1}^{\gamma_1} R_{\mathcal{L}_2}(\varphi_r(\mathbf{M}))   \, ,
\end{equation}
where the extraneous factor $\mathcal{E}$ is a rational function in the Plücker coordinates of $\mathbf{M}^{(1)}$.
\end{theorem}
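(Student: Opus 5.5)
The plan is to mirror the proof of Theorem~\ref{thm:discr_fact}, replacing the discriminant condition on fibers by the nonemptiness condition that defines the SLS resultant. Here $|u| = d+1$ and $|u_1| = d_1 = \dim V_{G_1}$. The first step is to observe that Proposition~\ref{prop:rec_fibers} continues to hold in the super-leading setting. Its proof only uses the reducibility assumption, so that for generic $\mathbf{M}^{(1)}$ the fiber $\psi_1^{-1}(\mathbf{M}^{(1)})$ is finite of size $\gamma_1$. Consequently, $\mathbf{M}$ lies in the image of $\psi$ if and only if there is some $r$ such that the overconstrained Schubert problem $\mathcal{L}_2$ has a solution over $\varphi_r(\mathbf{M})$, i.e.\ $R_{\mathcal{L}_2}(\varphi_r(\mathbf{M})) = 0$. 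Note that $\mathcal{L}_2$ is itself super-leading, since it has $|E|+d_2$ external lines while its count exceeds $\dim V_{G_2}$ by one.

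Next I consider the product $P(\mathbf{M}) = \prod_{r=1}^{\gamma_1} R_{\mathcal{L}_2}(\varphi_r(\mathbf{M}))$. It is symmetric in the $\gamma_1$ roots $\mathbf{L}^{(1)}_r$ of the zero-dimensional system for $\mathcal{L}_1$. Hence, by the standard Poisson-type argument already used in \eqref{eq:Res}, it is a rational function of the Plücker coordinates of $\mathbf{M}$, namely an eliminant of the system up to powers of leading coefficients. Its dependence on $\mathbf{M}^{(2)}$ is polynomial, since the $\mathbf{M}^{(2)}$ enter $R_{\mathcal{L}_2}$ polynomially. By the first step, the zero locus of the numerator of $P$ contains the image hypersurface $\{R_{\mathcal{L}}=0\}$ of $\psi$.

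Conversely, I will show that every factor of $P$ involving $\mathbf{M}^{(2)}$ vanishes only on the image of $\psi$. If $P(\mathbf{M})=0$ at generic points of such a factor, then some $\varphi_r(\mathbf{M})$ is in the image of $\psi_2$. Lifting gives a point of $\psi^{-1}(\mathbf{M})$, so $R_{\mathcal{L}}(\mathbf{M})=0$.

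It remains to match multiplicities, and to show that the leftover factor depends only on $\mathbf{M}^{(1)}$. I would do this by a degree count in the Plücker coordinates of each external line, as in the proofs of Propositions~\ref{prop:box-pentabox-recursion} and~\ref{prop:penta-dpenta-recursion}. The degrees of $R_{\mathcal{L}}$ are given by Theorem~\ref{thm:osserman} as the LS degrees $\gamma_{u-e_i}$. For each line in $\mathbf{M}^{(2)}$ attached to vertex $i\in V_2$, the degree of $P$ is $\gamma_1$ times the degree of $R_{\mathcal{L}_2}$, which is again given by Theorem~\ref{thm:osserman}. The multidegree $[V_G]$ restricted to this reducible $u$ factorizes so that $\gamma_{u-e_i}(G) = \gamma_1\cdot\gamma_{u_2-e_i}(\mathcal{L}_2)$, because every solution of $\mathcal{L}$ with one condition dropped at a vertex in $V_2$ decomposes as a root of $\mathcal{L}_1$ followed by a solution of $\mathcal{L}_2$. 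So the degrees agree in $\mathbf{M}^{(2)}$, and the quotient $P/R_{\mathcal{L}}$ is independent of $\mathbf{M}^{(2)}$. Its inverse is the extraneous factor $\mathcal{E}(\mathbf{M}^{(1)})$, coming, as in Theorem~\ref{thm:discr_fact}, from the normalization of the $\mathbf{L}^{(1)}_r$ and from leading coefficients such as $a_2^{-4}$ in \eqref{eq:Res}.

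The main obstacle is showing that $R_{\mathcal{L}}$ appears in $P$ with multiplicity exactly one, and that no other factor involving $\mathbf{M}^{(2)}$ appears. Two things could go wrong here. The roots $\mathbf{L}^{(1)}_r$ are conjugate under monodromy, so the $\gamma_1$ factors are Galois-conjugate and their product is a single irreducible factor rather than a power. The other risk is that for a reducible $V_G$ several components contribute. I would handle this by working componentwise, using the multiplicativity of Chow forms over the components $V_{G,\sigma}$. For the multiplicity I would check generic reducedness: at a generic point of the image, the fiber of $\psi$ is a single reduced point, so exactly one $r$ contributes, to first order.
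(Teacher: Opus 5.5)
Your proposal is correct and follows the paper's route. The paper gives no separate proof of Theorem~\ref{thm:res_fact}; it says it extends the proof of Theorem~\ref{thm:discr_fact}: the product over the $\gamma_1$ roots is symmetric, hence rational, it vanishes on the image of $\psi$ by Proposition~\ref{prop:rec_fibers}, and the leftover factor is attributed to the normalization of the $\mathbf{L}^{(1)}_r$. Your converse inclusion, your degree count in the lines of $\mathbf{M}^{(2)}$ via Theorem~\ref{thm:osserman}, and your generic-reducedness check supply the justification, missing from the paper, that the quotient depends only on $\mathbf{M}^{(1)}$; note that monodromy conjugacy alone would give only a power of an irreducible, so it is the reducedness check that rules this out.
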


\begin{corollary}\label{cor:rec_res_4box}
    In the set-up of Corollary~\ref{cor:rec_discr_4box}, 
    with $\mathcal{L}_1$ the box and $\varphi_{\pm}$ as in~(\ref{eq:four_box_subs_maps}), we have 
    \begin{equation}
        R_{\mathcal{L}}(\mathbf{M}) \, \, = \, \, \prod_{\sigma \in \{\pm \}} R_{\mathcal{L}_2}(\varphi_{\pm}(\mathbf{M}))   \, .
    \end{equation}
\end{corollary}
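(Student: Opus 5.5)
The plan is to deduce Corollary~\ref{cor:rec_res_4box} from Theorem~\ref{thm:res_fact} by showing that the extraneous factor $\mathcal{E}(\mathbf{M}^{(1)})$ equals one once the normalization \eqref{eq:4box_extr_fact} is used. This mirrors the way Corollary~\ref{cor:rec_discr_4box} follows from Theorem~\ref{thm:discr_fact}.

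\emph{Step 1: reduce to the box.} Take $\mathcal{L}_1$ to be the box. Then $\gamma_1 = 2$, and the fiber $\psi_1^{-1}(\mathbf{M}^{(1)})$ consists of the two transversals $L^{\pm}(\mathbf{M}^{(1)})$ given by \eqref{eq:Xpm}, rescaled by $\varepsilon$ as in \eqref{eq:4box_extr_fact}. Theorem~\ref{thm:res_fact} gives
\[
R_{\mathcal{L}}(\mathbf{M}) \,=\, \mathcal{E}(\mathbf{M}^{(1)}) \prod_{\pm} R_{\mathcal{L}_2}(\varphi_{\pm}(\mathbf{M})).
\]
It remains to show $\mathcal{E}=1$.

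\emph{Step 2: degree count.} By Theorem~\ref{thm:osserman}, $R_{\mathcal{L}}$ has degree $\gamma_{u-e_j}$ in each line attached to vertex $j$. For $j$ in $\mathcal{L}_1$ (the box vertex), $\gamma_{u-e_1}$ is the LS degree of the diagram with that vertex carrying three external lines. For $j \in V_2$, reducibility gives $\gamma_{u-e_j} = 2\,\gamma^{(2)}_{u_2-e_j}$.

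The lines $L^{\pm}$ have degree $(1,1,1,1)$ in $\mathbf{M}^{(1)}$. So the product $\prod_\pm R_{\mathcal{L}_2}(\varphi_\pm(\mathbf{M}))$ has degree $2\gamma^{(2)}_{u_2-e_j}$ in each $M^{(2)}_j$. In each line of $\mathbf{M}^{(1)}$ its degree is the total degree of $R_{\mathcal{L}_2}$ in the slots filled by $L^{\pm}$, namely the sum of its degrees in the $L$-slots.

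Matching this against $\gamma_{u-e_1}$ is the combinatorial heart of the argument. It follows from the recursion on multidegrees $[V_G]$ obtained by forgetting one vertex, where one $t_1$-factor contributes through the edges in $E$. So $\mathcal{E}$ is a rational function of multidegree zero in every line of $\mathbf{M}^{(1)}$.

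\emph{Step 3: $\mathcal{E}$ is constant.} The product is symmetric in $\pm$, so it is rational in $\mathbf{M}$. Its only possible poles come from $a_2$ and from $\langle BC\rangle\langle BD\rangle\langle CD\rangle$, which enter through \eqref{eq:Xpm} and \eqref{eq:4box_extr_fact}.

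For the chain-polynomial denominator $a_2$, the normalization by $\varepsilon$ is designed to clear it. The two-quadratic identity \eqref{eq:Res} already shows that this produces exactly $a_2^{-4}$ against ${\rm Res}(f,g,h)$, and $\varepsilon^{\deg}$ absorbs that power.

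For the factors $\langle BC\rangle$ and similar, I would argue that the symmetrized product is invariant under the full $S_4$ permuting $A,B,C,D$. The transversal pair is intrinsic and $\varepsilon$ is chosen to be symmetric up to sign. So any leftover factor supported on $\langle BC\rangle\langle BD\rangle\langle CD\rangle$ alone would break this symmetry.

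Hence $\mathcal{E}$ is a degree-zero, $\mathrm{SL}_4$-invariant, pole-free rational function, and therefore a constant. Comparing with the base case \eqref{eq:dob_penta_rec}, equivalently Proposition~\ref{prop:penta-dpenta-recursion}, fixes this constant to $1$.

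\emph{Main obstacle.} The hardest part is Step 3: making rigorous that $\varepsilon$ removes all extraneous factors for an arbitrary $\mathcal{L}_2$ and not only in the double pentagon. I would first try to show that $\mathcal{E}$ depends only on $\mathcal{L}_1$ and on the degrees of $R_{\mathcal{L}_2}$ in the $L$-slots, because the rescaling of $L^{\pm}$ acts homogeneously on those slots. Then $\mathcal{E} = (\text{box-intrinsic factor})^{k}$, and the computation of Proposition~\ref{prop:penta-dpenta-recursion} shows that this factor is trivial.
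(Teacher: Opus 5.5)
Your skeleton matches the paper's: Theorem~\ref{thm:res_fact}, plus the claim that the normalization \eqref{eq:4box_extr_fact} makes $\mathcal{E}=1$, calibrated on the double pentagon \eqref{eq:dob_penta_rec}. The paper gives nothing beyond that calibration, just as for Corollary~\ref{cor:rec_discr_4box}. However, the argument in your Step~3 that $\mathcal{E}$ is constant does not work.

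\begin{itemize}
\item The factor $\varepsilon=2a_2/\sqrt{\langle BC\rangle\langle BD\rangle\langle CD\rangle}$ is not $S_4$-symmetric. First, $a_2=\langle d_2A|B|d_2C\rangle$ depends on the auxiliary points $d_1,d_2\in D$. Second, \eqref{eq:Xpm} treats $A$ and $D$ differently from $B$ and $C$. Third, the denominator omits $A$. So $S_4$-invariance of $\prod_\pm R_{\mathcal{L}_2}(\varphi_\pm(\mathbf{M}))$ cannot be used as an input. It is a consequence of the corollary, since $R_{\mathcal{L}}$ is symmetric.
\item ``Pole-free'' is exactly what must be proved. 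Let $k$ be the total degree of $R_{\mathcal{L}_2}$ in the slots filled by $L_1$, and let $g(x)=R_{\mathcal{L}_2}(X(x),\ldots,X(x),\mathbf{M}^{(2)})$, which has degree $2k$ in $x$. Poisson's formula (the general form of \eqref{eq:Res}) gives
\[
\prod_{\pm} R_{\mathcal{L}_2}(\varphi_\pm(\mathbf{M})) \,=\, \varepsilon^{2k}a_2^{-2k}\,\mathrm{Res}_x(f,g) \,=\, \frac{4^k\,\mathrm{Res}_x(f,g)}{(\langle BC\rangle\langle BD\rangle\langle CD\rangle)^k}.
\]
The $a_2$ cancels, but the denominator is real. $\mathrm{Res}_x(f,g)$ does vanish on $\langle BC\rangle=0$, $\langle BD\rangle=0$ and $\langle CD\rangle=0$, because $X(x)$ acquires a linear factor in $x$ there. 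The whole content is that the multiplicity is exactly $k$ and that nothing else is extraneous.
\item Your fallback, $\mathcal{E}=(\text{box-intrinsic factor})^k$, is precisely this statement. Homogeneity under rescaling $L^\pm$ only shows how $\mathcal{E}$ changes with the normalization, not that it is independent of $\mathcal{L}_2$.
\end{itemize}

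A degree count plus invariant theory closes the gap.
\begin{itemize}
\item $\mathrm{Res}_x(f,g)$ vanishes on the image of the Landau map, so $R_{\mathcal{L}}$ divides it.
\item The resultant has degree $2k$ in the coefficients of $f$ and degree $2$ in those of $g$. Hence it has degree $(2k,4k,4k,4k)$ in $(A,B,C,D)$; for $D$, use that it is an $\mathrm{SL}_2$-invariant of $(d_1,d_2)$ of bidegree $(4k,4k)$. It has degree $2\deg_M R_{\mathcal{L}_2}$ in each line $M$ of $\mathbf{M}^{(2)}$.
\item By Theorem~\ref{thm:osserman} and your Step~2, $R_{\mathcal{L}}$ has the same degree in $\mathbf{M}^{(2)}$. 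In each of $A,B,C,D$ it has degree $\gamma_{u-e_1}=2k$, because the conic of transversals to three of these lines meets a degree-$k$ hypersurface in $2k$ points.
\item Therefore $Q=\mathrm{Res}_x(f,g)/R_{\mathcal{L}}$ is an $\mathrm{SL}_4$-invariant polynomial in $B,C,D$ alone, of degree $(2k,2k,2k)$.
\item $\mathrm{SL}_4$ acts on $\wedge^2\mathbb{C}^4$ through $\mathrm{SO}_6$, and only three vectors are involved. By the first fundamental theorem, $Q$ is a polynomial in $\langle BC\rangle,\langle BD\rangle,\langle CD\rangle$, and the degrees force $Q=c\,(\langle BC\rangle\langle BD\rangle\langle CD\rangle)^k$.
\end{itemize}
Hence the product equals $4^k c\,R_{\mathcal{L}}$ for every $\mathcal{L}_2$. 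The nonzero constant is absorbed into the normalization of the Chow form. The base case alone could not have fixed a constant for a different $\mathcal{L}_2$.
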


We conclude by discussing a scenario when the Landau diagram is \emph{not} reducible.

\begin{proposition}\label{prop:rec_cycles} Fix the Landau diagram $\mathcal{L} \!=\! G_u$ where $G$ is the $\ell$-cycle
    and $u = (3,\ldots,3)$. The LS discriminant $\Delta_{\mathcal{L}} $ is the discriminant of a univariate polynomial $f_\ell(x)$ of degree $2^{\ell+1}$.
\end{proposition}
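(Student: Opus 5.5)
The approach is to reduce the Schubert problem to a chain of $\PP^1$-valued unknowns, one per vertex of the cycle, and then eliminate all but one of them. Fix generic external data ${\bf M}$, with three lines $M^{(i)}_1,M^{(i)}_2,M^{(i)}_3$ attached to vertex $i$ of the $\ell$-cycle $G=\{12,23,\ldots,\ell 1\}$.

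\textbf{Step 1: one parameter per internal line.} By Example \ref{ex:ell1_NSL}, the lines $L_i$ meeting the three lines at vertex $i$ form a conic in ${\rm Gr}(2,4)\subset\PP^5$. Geometrically, this conic is the second ruling of the unique quadric $Q_i$ through those three lines, whose matrix is given by the chain polynomials in (\ref{eq:umatrix}). I would parametrize this ruling rationally by $x_i\in\PP^1$, for instance as in Section \ref{sec:recursive}: set $p_i(x_i)=m_1+x_im_2$ on $M^{(i)}_1=m_1m_2$ and $L_i=(p_i(x_i)M^{(i)}_2)\star(p_i(x_i)M^{(i)}_3)$. The Plücker vector of $L_i$ is then a quadratic polynomial in $x_i$.

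\textbf{Step 2: the edge equations and the elimination.} Each edge condition $\langle L_iL_{i+1}\rangle=0$ becomes a polynomial $g_i(x_i,x_{i+1})$ of bidegree $(2,2)$, with indices taken modulo $\ell$. Starting from $g_1(x_1,x_2)$, I would successively take resultants with respect to $x_2,x_3,\ldots,x_{\ell-1}$. Each step doubles the degree, so a chain of $k$ edges gives a polynomial $h_k(x_1,x_{k+1})$ of bidegree $(2^k,2^k)$. Finally I would close the cycle by setting
\[ f_\ell(x_1)\,=\,{\rm Res}_{x_\ell}\bigl(h_{\ell-1}(x_1,x_\ell),\,g_\ell(x_\ell,x_1)\bigr), \]
whose degree in $x_1$ is $2\cdot 2^{\ell-1}+2^{\ell-1}\cdot 2=2^{\ell+1}$.

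\textbf{Step 3: degree count and injectivity of $x_1$.} As a consistency check, the LS degree $\gamma_u$ is the coefficient of $\prod_i t_i^2$ in (\ref{eq:LS_CI}), namely $2^\ell\prod_i(t_i+t_{i+1})$. That coefficient is $2^\ell\cdot 2=2^{\ell+1}$, since exactly two of the $2^\ell$ expansion terms pick each $t_i$ once. This agrees with ${\rm deg}\, f_\ell$. It then remains to show that, for generic ${\bf M}$, the projection $(x_1,\ldots,x_\ell)\mapsto x_1$ is injective on the fiber $\psi_u^{-1}({\bf M})$. Given $x_1$, the other coordinates are recovered generically uniquely: each elimination step has a generically simple common root, so each $x_{i+1}$ is a rational function of $x_i$ on the solution set. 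Consequently the roots of $f_\ell$ are exactly the $x_1$-coordinates of the $2^{\ell+1}$ leading singularities. The discriminant ${\rm Disc}(f_\ell)$ therefore vanishes precisely when two points of the fiber collide, or when two distinct points share an $x_1$-coordinate.

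\textbf{Step 4 (main obstacle): matching ${\rm Disc}(f_\ell)$ with $\Delta_{\mathcal{L}}$.} The hardest part is to show that ${\rm Disc}(f_\ell)$ agrees with $\Delta_{\mathcal{L}}$ up to factors that do not depend on the fiber. The potential extraneous factors are of three kinds: the Jacobian-type factors introduced by the iterated resultants, the parametrization factors of the kind seen in Proposition \ref{prop:box-pentabox-recursion} (such as $\langle M_2M_3\rangle$ and $a_2$), and the locus where two distinct solutions share an $x_1$-coordinate. For the last locus, I would argue that it is not a component of the branch locus, because moving $M^{(1)}$ separates such coincidences. I would then strip the remaining factors by a degree comparison with Proposition \ref{prop:bbexpected} and by verification on specializations, as in the proof of Proposition \ref{prop:box-pentabox-recursion}.

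Finally, I would remark why Theorem \ref{thm:discr_fact} does not apply here. Since no $u_i$ equals $4$, there is no proper subdiagram $\mathcal{L}_1$ with $|u_1|={\rm dim}\,V_{G_1}$. The cycle thus forces a genuine global elimination rather than a recursion.
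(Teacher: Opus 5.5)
\textbf{Steps 1--3.} These follow the paper's route in different packaging. The paper also opens the cycle at one vertex and parametrizes that line's conic as $L_\ell(x)=(p(x)M^{(\ell)}_2)\star(p(x)M^{(\ell)}_3)$. It then defines $f_\ell(x)$ as the SLS resultant $R_{\widetilde G,\tilde u}$ of the path on $[\ell-1]$ with $\tilde u=(4,3,\ldots,3,4)$, evaluated with $L_\ell(x)$ inserted at both ends. The degree $2^{\ell+1}$ is read off from Theorem~\ref{thm:osserman}. The Chow-form version is intrinsic, and it can itself be computed recursively via Corollary~\ref{cor:rec_res_4box}, since both ends of the path have $u_i=4$. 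Up to relabeling which vertex is distinguished, your iterated Sylvester resultants give a polynomial with the same roots, differing from the paper's only by a factor depending on ${\bf M}$ alone. Your degree bookkeeping is correct. The paper then claims only that ${\rm Disc}(f_\ell)$ equals $\Delta_{\mathcal L}$ ``up to possibly some extraneous factors.''

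\textbf{The gap in Step 4.} Here you aim for more, and you dismiss the locus where two distinct points of $\psi_u^{-1}({\bf M})$ share the coordinate $x_1$. That locus is a genuine hypersurface. Both lines $L_1$ lie on the same conic of lines meeting $M^{(1)}_1,M^{(1)}_2,M^{(1)}_3$, so $x_1({\bf L})=x_1({\bf L}')$ is a single condition.

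For $\ell=3$ you can see this concretely. Let $Q_i$ be the quadric through $M^{(i)}_1,M^{(i)}_2,M^{(i)}_3$. The concurrent solutions correspond to the eight Cayley octad points $q\in Q_1\cap Q_2\cap Q_3$, with $L_i$ the line of the other ruling of $Q_i$ through $q$. Two of them share $L_1$ exactly when some ruling line of $Q_1$ meets $Q_2$ and $Q_3$ in the same two points. That is two conditions on a one-parameter family, hence one condition on ${\bf M}$, and the octad stays reduced there.

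At the generic point of this hypersurface, $f_\ell$ has a double root while $\psi_u$ is unramified. So ${\rm Disc}(f_\ell)$ picks up an extra factor that does not divide $\Delta_{\mathcal L}$; it is generically squared, like the index factor in the discriminant of a primitive element. Saying that moving $M^{(1)}$ separates such coincidences only shows the locus is a proper subset, not that it has codimension at least two.

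Proposition~\ref{prop:bbexpected} cannot strip this factor off either. It is only an upper bound, with equality conjectured modulo extraneous factors, so for general $\ell$ you have no exact degree of $\Delta_{\mathcal L}$ to compare against. Checks on specializations only handle a fixed $\ell$.

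\textbf{What you do prove.} Your argument establishes the paper's version of the statement. Every irreducible factor of $\Delta_{\mathcal L}$ divides ${\rm Disc}(f_\ell)$. The cofactor is extraneous, and it includes both this coincidence factor and the parametrization factors.
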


\begin{proof}
Let $M^{(i)}_1,M^{(i)}_2,M^{(i)}_3$ be 
the external lines at vertex $i \in [\ell]$.
We parametrize the line at vertex $\ell$ as $L_\ell(x) = (p(x)M_2^{(\ell)}) \star (p(x)M_3^{(\ell)})$, 
where $p(x) = u + x v$ with $M^{(\ell)}_1 = uv$. 
        Consider the SLS resultant of a path $\widetilde{G}$ on $[\ell-1]$ with $\tilde{u} = (4,3,\dots,3,4)$, with external data $M^{(i)}_j$ at $i\in [\ell-1]$, and an extra line $L_{\ell}(x)$ at vertices $1$ and $\ell-1$.  We view this SLS resultant as a polynomial
         $f_\ell(x) = R_{\widetilde{G},\tilde{u}}\bigl(L_\ell(x),\mathbf{M}^{(1)},  \ldots, \mathbf{M}^{(\ell-1)},L_{\ell}(x) \bigr)$
           in one variable $x$. The degree of $f_\ell(x)$  equals $2 \cdot 2^\ell =  2^{\ell + 1}$. 
           The
          discriminant of $f_\ell(x)$ in $x$ is equal to the full LS discriminant $\Delta_{\mathcal{L}} $, up to possibly some extraneous factors. We now describe the case of $\ell = 3$.
\end{proof}

\begin{example} 
    Let $G=K_3$ and $u=(3,3,3)$. 
    Here, $f_3(x)$ factors into two irreducible polynomials, each of degree eight.
    Its discriminant has the three factors in Theorem  \ref{thm:16168},
    one  for the concurrent component, one for the coplanar component, and one for the mixed~part.  
        
    We explain how to obtain $f_3(x)$ from our recursive approach. We follow the notation as in the proof of Proposition~\ref{prop:rec_cycles}, and we use~\eqref{eq:pentabox_rec}.
     We find that $f_3(x)$ equals the resultant
    \begin{equation*}
        R_{K_2,(4,4)}\left(L_3(x),\mathbf{M}^{(1)},\mathbf{M}^{(2)} ,L_3(x) \right) \,\,= \,\, \!\! \prod_{\sigma \in \{\pm\}} 
        \! R_{K_1,(5)}\left(L^{(\sigma)}(x), \mathbf{M}^{(1)} , L_3(x) \right) \, ,
    \end{equation*}
    where $L^{(\sigma)}(x) = L^{(\sigma)} \left(L_3(x),\mathbf{M}^{(2)} \right)$ as below~\eqref{eq:Xpm}.
     Each factor is a Gram determinant~\eqref{eq:resultant_5_lines}.
      The product of the two $5 \times 5$ Gram matrices, specialized to this setting, is a rational function of $\mathbf{M}$. 
      Hence $f_3(x) $  is the determinant of a 
      $5 \times 5$ matrix with polynomial entries in $\mathbf{M}$ and~$x$.
\end{example}

\section{Rationality}
\label{sec:rationality}

Fix a graph $G$ on $[\ell]$ and $u \in \NN^\ell$ with $|u| = d = {\rm dim}(V_G)$.
We assume that $G$ is outerplanar.
The Landau diagram $G_u$ is a planar graph with
external vertices labeled cyclically by $[d]$.
The fiber $\psi_u^{-1}({\bf M})$ of the Landau map $\psi_u$ is irreducible over the rational function field $\QQ({\bf M})$. It consists of $\gamma_u$ points that are algebraic functions 
of $\mathbf{M}$. It is generally impossible to express these $\gamma_u$ points
in terms of radicals. Hence, writing down the LS discriminant is non-trivial.

We now degenerate the external lines
${\bf M}$ so that the fiber is given by rational functions.
Following \cite[Section 8]{HMPS1}, we fix a graph $H_u$ on $[d]$, and we
require ${\bf M}$  to lie in $V_{H_u} \subseteq {\rm Gr}(2,4)^d$. We denote the resulting Landau diagram by $\mathcal{L}=G_u \cup H_u$.
The induced Landau map $\psi_\mathcal{L} : V_{G \cup H_u} \rightarrow V_{H_u}$
projects configurations of $\ell + d$ lines onto  configurations of $d$ lines. 
We seek a sparse graph $H_u$ which yields
LS degree many rational formulas for the $\ell$ internal lines ${\bf L}$ in terms of the $d$ external lines ${\bf M}$. If this
rationality property holds then we say that  $\mathcal{L}$ is a \textit{rational} Landau diagram.
The physical meaning of this degeneration is that when two external lines intersect, the momentum flowing in the corresponding vertex of the dual graph is on-shell (massless),
in contrast to the non-intersecting case which is off-shell (massive).

We assume that $u_i \geq 2$ for all $i \in [\ell]$. Let $H^\triangle_u$ be the graph with one edge among the vertices in $[d]$ that are neighbours of any given $i \in [\ell]$.
Thus $H^\triangle_u$ is a graph on $[d]$ with $\ell$ edges.
The graph $\mathcal{L}^\triangle := G_{u}^{\triangle} := G_u \cup H^\triangle_u$ has one pendant triangle attached to each
original vertex $i \in [\ell]$ and $u_i - 2$ pendant edges at each vertex $i$. 
Each triangle gives rise to three constraints on the incidence variety $V_{G}^\triangle $ of the graph $G^\triangle$.
This denotes the graph we obtain by deleting the pendant edges  from $G_u^\triangle$.
 We write the incidences of this graph $G^\triangle$ as
\begin{equation}
\label{eq:trianglerel}
\langle L_i M_1^{(i)} \rangle \,=\, \langle L_i M_2^{(i)} \rangle \,=\, \langle M_1^{(i)} M_2^{(i)} \rangle \,\,=\,\,0 \, 
\qquad {\rm for} \,\, i \in [\ell] \, .
\end{equation}
Here $M_1^{(i)}$ and $ M_2^{(i)}$ are the external lines that are connected by an edge in $H^\triangle_u$.

The equations (\ref{eq:trianglerel}) imply that the  lines $L_i\,, M_1^{(i)}, M_2^{(i)}$ are either concurrent or coplanar.
We assign a bicoloring $\sigma_{\rm ext}$ to the pendant triangles  and we incorporate this into the bicoloring $\sigma=(\sigma_{\rm int},\sigma_{\rm ext})$, where $\sigma_{\rm int}$ colors the triangles of $G$. Then, $\sigma$ labels one irreducible component $V^\triangle_{G,\sigma}$ of the variety $V_{G}^\triangle$.
A priori, the irreducible component $V^\triangle_{G,\sigma}$ is a subvariety of $(\PP^5)^{3\ell}$.

We consider the Landau map $\psi_{\mathcal{L}}^\triangle : V_{G}^\triangle \rightarrow (\PP^5)^{2\ell}$
which takes  a configuration of $3 \ell$ lines $({\bf L}, {\bf M})$ to
the tuple of $\ell$ incident pairs ${\bf M} = (M_1^{(i)}, M_2^{(i)} : i \in [\ell] )$.
We write $\psi_{\mathcal{L},\sigma}^\triangle$ for the restriction of the Landau map $\psi_{\mathcal{L}}^\triangle$
to the irreducible component $V^\triangle_{G,\sigma}$ of $V_{G}^\triangle$.

\begin{proposition}  The generic fiber of $\psi_{\mathcal{L},\sigma}^\triangle$ is
an irreducible variety of dimension $d-2 \ell$. This variety is naturally embedded 
in the product $\PP^2 \times \PP^2 \times \cdots \times \PP^2$ of $\ell$ projective planes.
\end{proposition}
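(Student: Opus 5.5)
The plan is to describe the generic fiber explicitly, vertex by vertex. Fix a general point $\mathbf{M}=(M_1^{(i)},M_2^{(i)} : i\in[\ell])$ in the image of $\psi^\triangle_{\mathcal{L},\sigma}$. For each $i$ the lines $M_1^{(i)},M_2^{(i)}$ meet, since $\langle M_1^{(i)}M_2^{(i)}\rangle=0$. Hence they determine a point $p_i=M_1^{(i)}\cap M_2^{(i)}$ and a plane $h_i=M_1^{(i)}\vee M_2^{(i)}$.

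A line $L_i$ satisfying (\ref{eq:trianglerel}) either passes through $p_i$ or lies in $h_i$. The external color $\sigma_{\rm ext}(i)$ selects one of these options. The set of lines through $p_i$ is a plane $\PP^2\subset\PP^5$, namely the star of $p_i$. Dually, the lines in $h_i$ also form a plane $\PP^2$, the dual plane of $h_i$. Both are linear sections of ${\rm Gr}(2,4)$. This gives the embedding
\[ \psi^{\triangle\,-1}_{\mathcal{L},\sigma}(\mathbf{M}) \,\subseteq\, \PP^2\times\cdots\times\PP^2 \qquad (\ell \hbox{ factors}), \]
and inside this product the fiber is cut out by the bilinear incidences $\langle L_iL_j\rangle=0$ for $ij\in G$, together with the choice $\sigma_{\rm int}$ of component at the triangles of $G$.

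For the dimension, I would count on both sides of the map. By Theorem \ref{thm:outerplanar} applied to the outerplanar graph $G^\triangle$, the variety $V^\triangle_G$ is a complete intersection in ${\rm Gr}(2,4)^{3\ell}$. Each component therefore has dimension $12\ell-|G|-3\ell=9\ell-|G|$. The target, the variety of $\ell$ incident pairs of lines, is irreducible of dimension $7\ell$.

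I then need the map $\psi^\triangle_{\mathcal{L},\sigma}$ to be dominant onto this target. I would show this by exhibiting, over a general incident configuration, at least one admissible $\mathbf{L}$. For this I build $\mathbf{L}$ inductively along the vertex-deletion order used in the proof of Theorem \ref{thm:outerplanar}. Granting dominance, the generic fiber has dimension $9\ell-|G|-7\ell=2\ell-|G|$. Since $d=\dim V_G=4\ell-|G|$, this equals $d-2\ell$. Equivalently, $2\ell$ parameters come from the planes $\PP^2$, and the $|G|$ bilinear conditions are independent.

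Irreducibility I would prove by induction on $\ell$, mimicking the proof of Theorem \ref{thm:outerplanar}. Pick a vertex $v$ of degree $r\le 2$ in $G$, and project the fiber onto the factors indexed by $[\ell]\setminus\{v\}$. By induction, the image is the irreducible fiber for $\widetilde G=G\setminus v$. The fiber of this projection is the set of $L_v$ in its plane $\PP^2$ that meet the $r$ neighboring lines. Consider the case where $L_v$ ranges over the star of $p_v$. The lines through $p_v$ meeting a fixed line $L_u$ form a line in $\PP^2$, namely those in the plane $p_v\vee L_u$. Meeting two lines $L_u,L_w$ gives the single transversal through $p_v$. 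The case of lines in $h_v$ is dual. So over a dense open set the projection is a $\PP^2$-, $\PP^1$- or point-bundle over an irreducible base, and the total space is irreducible.

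The main obstacle is the case $r=2$ with $uw\in G$, where $\{u,v,w\}$ is a triangle with an internal color. Here I must check that the unique transversal produced above is compatible with $\sigma_{\rm int}$. For example, if $L_u,L_w$ meet at $q$ and $p_v\notin h_u\vee h_w$-type special position, then the transversal through $p_v$ is forced to be $p_v\vee q$. That is the concurrent choice, and it is incompatible with a white triangle unless $p_v$ lies in the plane $L_u\vee L_w$. I would resolve this by recalling that the fiber is taken over a general point of the image of the component $V^\triangle_{G,\sigma}$, which is not necessarily a general incident configuration. In mixed-color cases the image is then a proper subvariety, and the dimension count must be redone on the image. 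If this happens, I would replace the count $7\ell$ with the dimension of the image, computed via the same bundle structure. I would then verify, using the parametrization of $V_{\sigma_b},V_{\sigma_w}$ from the proof of Theorem \ref{thm:outerplanar}, that the fiber dimension is still $d-2\ell$. In the compatible case the fiber of the projection is a single point, which preserves irreducibility.
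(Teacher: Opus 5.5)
Your embedding argument (lines through $p_i$ form the star $\PP^2$, lines in $h_i$ form the dual plane) is exactly the paper's proof, and the paper stops there: it gives no argument for dimension or irreducibility. Your dimension count is correct \emph{given} dominance: $\dim V^\triangle_{G,\sigma}=9\ell-|G|$, the variety of incident pairs has dimension $7\ell$, and $2\ell-|G|=d-2\ell$.

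\textbf{Dominance fails in general.} Take the triangulated square $G=\{12,13,14,23,34\}$ with $\sigma_{\rm int}=(\mathbf{w},\mathbf{w})$ and all four external triangles black. On this component all four $L_i$ lie in one plane $Q$, and $p_i\in L_i$, so $p_1,p_2,p_3,p_4$ must be coplanar.
\begin{itemize}
\item The image of $\psi^\triangle_{\mathcal{L},\sigma}$ is therefore a hypersurface in the $28$-dimensional variety of incident pairs, and the fiber over a general $\mathbf{M}$ is empty.
\item Over a general point of the image, the fiber is $\{L_i\subset Q,\ p_i\in L_i\}\cong(\PP^1)^4$. Its dimension is $4=d-2\ell+1$, consistent with $31-27$.
\end{itemize}
So your fallback, ``redo the count on the image and check that the fiber dimension is still $d-2\ell$'', fails here. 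The statement needs an extra hypothesis, such as dominance of $\psi^\triangle_{\mathcal{L},\sigma}$. This coloring is one of the non-admissible ones in the paper's triangulated-square example.

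\textbf{The irreducibility induction breaks at the step you flag, and the diagnosis is misplaced.} When the internal color of $\{u,v,w\}$ differs from the external color at $v$, it is the forgetful projection $F_G\to F_{\widetilde G}$ that fails to be dominant, not $\psi^\triangle_{\mathcal{L},\sigma}$.
\begin{itemize}
\item For $G=K_3$ with white internal and black external triangles, $\psi^\triangle_{\mathcal{L},\sigma}$ is dominant with fiber $(\PP^1)^3$: take $Q=p_1p_2p_3$ and $L_i$ a line of $Q$ through $p_i$.
\item Yet forgetting $L_3$ lands in the divisor $\{p_3\in L_1\vee L_2\}$ of the three-dimensional fiber $F_{\widetilde G}$, with $\PP^1$ fibers over it.
\end{itemize}
Irreducibility of $F_{\widetilde G}$ says nothing about whether this divisor, or its dual $\{L_u\cap L_w\in h_v\}$, is irreducible or even nonempty. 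The example above is exactly a case where it is empty.

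To close the gap you need a global description of these loci. One option is to induct over maximal monochromatic regions of triangles, where all lines pass through one point $q$ or lie in one plane $Q$. There the external colors become linear conditions on $q$ or $Q$, and you must identify when they are overdetermined.

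Even in the compatible steps, a bundle over a dense open subset only shows that its closure is irreducible. You must also rule out extra components over the jump loci $p_v\in L_u$ and $L_u\subset h_v$ by a codimension count.
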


\begin{proof}
The bicoloring $\sigma$ specifies for each $i \in [\ell]$ whether the triple in
(\ref{eq:trianglerel}) is concurrent or coplanar. Since $M_1^{(i)}$ and $M_2^{(i)}$ are
are given, this means that $L_i$ either passes through a given point, or it lies in a given plane.
In each case, the set of such lines $L_i$ forms a  plane $\PP^2$.
\end{proof}

\begin{example}[$\ell = 3$]
Fix the triangle $G = K_3$. Then $G^\triangle$ is an outerplanar graph
with $3 \ell = 9$ vertices and $12$ edges. These form $4$ triangles,
so there are $2^4 = 16$ bicolorings $\sigma$. Each of them specifies 
an irreducible component  $V_{G,\sigma}^\triangle$. For instance, if $\sigma$
assigns black to all $4$ triangles, then the points in $V_{G,\sigma}^\triangle$ 
are trees of $9$ lines in $\PP^3$ that meet at $4$ intersection points.
For fixed external lines $M_j^{(i)}$, the fiber of $\psi_{\mathcal{L},\sigma}^\triangle$ is a threefold of multidegree $t_1t_2t_3$ in~$(\PP^2)^3$.
\end{example}

We use the notation $W_{\sigma}^\triangle$ for the generic fiber of
the Landau map $\psi_{\mathcal{L},\sigma}^\triangle$.
This is a variety in the new ambient space $(\PP^2)^\ell$ whose coordinates are also denoted
${\bf L} = (L_1,L_2,\ldots,L_\ell)$. 
These are now vectors of length three,
so there are no more Pl\"ucker quadrics to be considered.
Indeed, the irreducible components of the $\ell$ systems in
(\ref{eq:trianglerel}) specify the ambient space $(\PP^2)^\ell$.

Let $\sigma = (\sigma_{\rm int},\sigma_{{\rm ext}})$ where
$\sigma_{\rm int}$ colors the internal triangles in $G$, and $\sigma_{{\rm ext}}$ colors the $\ell$ external triangles.
Let $W_{\sigma_{\rm ext}}^\triangle$ denote the variety defined by
the bilinear equation $\langle L_i L_j \rangle$ for the edges $ij \in G$.
This is reducible if $G$ has triangles, and all its irreducible
components $W_{(\sigma_{\rm ext},\sigma_{\rm int})}^\triangle$ have the same
codimension $|G|$ in $(\PP^2)^\ell$.  The multidegrees
of these varieties satisfy
\begin{equation}
\label{eq:WDelta}
 \qquad [W_{\sigma_{\rm ext}}^\triangle] \,\, = \,\, \sum_{\sigma_{\rm int}} 
\, \big[W_{(\sigma_{\rm ext},\sigma_{\rm int})}^\triangle \big]  \quad \in \quad A^*\bigl(\, (\PP^2)^\ell \,\bigr) \,\, = \,\,
\ZZ[t_1,\ldots,t_\ell]/\langle t_1^3,\ldots,t_\ell^3 \rangle \, .
\end{equation}
We shall be interested in scenarios when all coefficients in the multidegree $[W_{\sigma}^\triangle]$ are $0$ or $1$.

To study this, we return to the variety $V_{G}^\triangle$ in its
original definition. Recall that
$u_i \geq 2$ for all $i \in [\ell]$, and hence
 $d = \sum_{i=1}^\ell u_i \geq 2 \ell$.
The graph $H^\triangle_u$ has $d$ vertices but only $\ell$ edges.
The Landau map $\psi_\mathcal{L}^\triangle$ of the Landau diagram $\mathcal{L}^\triangle=G_u^\triangle \cup H_u^\triangle$ takes $V_{G}^\triangle$ onto $V_{H}^\triangle$, and its generic fiber consists of finitely many points. These points are distributed
over the irreducible components $V_{G,\sigma}^\triangle$.
We are interested in the case when each component $V_{G,\sigma}^\triangle$
intersects the fiber of $\psi_\mathcal{L}^\triangle$ in precisely one point.
If this happens then $\mathcal{L}^\triangle$ is a rational Landau diagram. We say that $G^\triangle$ is rational if the Landau diagram $G_u^\triangle \cup H_u^\triangle$ is rational for every~$u$. 

\begin{proposition}
\label{prop:mld-1-implies-rational}
Suppose that, for each summand $[W^\triangle_{\sigma}]$ which appears in the multidegree (\ref{eq:WDelta}),
the monomials have only coefficients $0$ and $1$.
Then $G^\triangle$ is a rational Landau diagram.
\end{proposition}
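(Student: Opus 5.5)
The plan is to reduce the rationality question to counting points on each component $W^\triangle_\sigma \subset (\PP^2)^\ell$ cut by linear conditions. We then use the $0/1$ hypothesis on the multidegree to conclude that each such point is unique, and that a unique point of a fiber defined over $\QQ({\bf M})$ is $\QQ({\bf M})$-rational.

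First, fix $u$ with $u_i \geq 2$ and a bicoloring $\sigma=(\sigma_{\rm int},\sigma_{\rm ext})$. By the preceding proposition, once the incident pairs $(M_1^{(i)},M_2^{(i)})$ are fixed, the triangle conditions (\ref{eq:trianglerel}) with color $\sigma_{\rm ext}(i)$ confine $L_i$ to a plane $\PP^2_i$. This plane is either the lines through the point $M_1^{(i)}\star M_2^{(i)}$ or the lines in the plane $M_1^{(i)}M_2^{(i)}$. It is parametrized linearly, hence rationally, in ${\bf M}$. In these coordinates each remaining pendant condition $\langle L_i M_j\rangle=0$, for the $u_i-2$ other external lines at $i$, is a linear form on $\PP^2_i$ with coefficients polynomial in ${\bf M}$. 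Thus the fiber of $\psi^\triangle_{\mathcal{L},\sigma}$ intersected with the pendant conditions equals $W^\triangle_\sigma\cap\bigcap_i\Lambda_i$. Here $\Lambda_i\subset\PP^2_i$ is a general linear subspace of codimension $u_i-2$, and generality holds because the remaining $M_j$ are generic and unconstrained by $H^\triangle_u$. Counting dimensions, $\dim W^\triangle_\sigma = 2\ell-|G|$. By the definition of $d$ and $u$ this equals $\sum_i (u_i-2)$, so the intersection is expected to be finite.

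Second, by the standard interpretation of multidegrees, the number of points of $W^\triangle_\sigma\cap\bigcap_i\Lambda_i$ is the coefficient of $\prod_i t_i^{u_i-2}$ in $[W^\triangle_\sigma]$, valid for general linear spaces. This uses Bertini/Kleiman transversality on $(\PP^2)^\ell$ with the group $\prod \mathrm{PGL}_3$ acting. The step I expect to be the main obstacle is justifying that the specific linear spaces coming from lines $M_j$ are general enough. Each linear form $\langle \cdot\, M_j\rangle$ restricted to $\PP^2_i$ ranges over all linear forms as $M_j$ varies in ${\rm Gr}(2,4)$. The image of the restriction map from the span of the Plücker quadric is all of $(\PP^2_i)^*$, so a generic choice of $M_j$ gives generic hyperplanes and transversality applies. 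By hypothesis this coefficient is $0$ or $1$. Summing over $\sigma_{\rm int}$ via (\ref{eq:WDelta}) recovers the full count, so every point of the generic fiber of $\psi^\triangle_{\mathcal{L}}$ lies on a component that contributes exactly one point.

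Third, for rationality, $W^\triangle_\sigma$ is defined over $\QQ$ and the linear data over $\QQ({\bf M})$. The intersection is therefore a zero-dimensional scheme over $\QQ({\bf M})$ that is reduced by transversality and has exactly one geometric point. That point is fixed by ${\rm Gal}(\overline{\QQ({\bf M})}/\QQ({\bf M}))$, so its coordinates lie in $\QQ({\bf M})$. Concretely, it can be obtained by linear elimination, or equivalently via the Grassmann–Cayley formula given by the Cramer-type solution. Composing with the rational parametrizations of $\PP^2_i\subset{\rm Gr}(2,4)$ yields each $L_i$ as a rational function of ${\bf M}$. Since this holds for every $u$ and every $\sigma$, with total count equal to the LS degree, $G_u^\triangle\cup H_u^\triangle$ is rational for all $u$. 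Hence $G^\triangle$ is a rational Landau diagram.
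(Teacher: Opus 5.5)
Your proposal is correct and follows the same route as the paper's proof. You confine each $L_i$ to the $\PP^2$ (lines through $p_i$ or lines in $P_i$) selected by the external coloring, identify the number of solutions on each component with a coefficient of $[W^\triangle_\sigma]$, and use the $0/1$ hypothesis to get a unique solution that is rational in ${\bf M}$; your genericity, Kleiman-transversality and Galois-descent details fill in what the paper leaves implicit.

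None of the following affects the argument, but three details are off. In the paper's codimension grading the relevant monomial is $\prod_i t_i^{4-u_i}$, not $\prod_i t_i^{u_i-2}$. $W^\triangle_\sigma$ is defined over $\QQ({\bf M})$ rather than $\QQ$, since its equations involve the $p_i$ and $P_i$. Your closing claim that the total count equals the LS degree is not proved, but it is also not needed for the paper's definition of rationality.
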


\begin{proof}
The fiber of $\psi_{\mathcal{L}}^\triangle$ is specified by a tuple ${\bf M}$
where each pair of lines $M_1^{(i)},M_2^{(i)}$ spans a plane $P_i$ in $\PP^3$ and intersects
in a point $p_i \in \PP^3$. Consider the restriction $\psi_{\mathcal{L},\sigma}^\triangle$ to one irreducible component $V_{G,\sigma}^\triangle$.
The bicoloring $\sigma$ determines whether $L_i$ lies in that plane or whether it passes through that point.
In either case, the $L_i$ is represented by a point in $\PP^2$. The number of solutions
${\bf L} \in (\PP^2)^\ell$ to these constraints for $\sigma$ is a coefficient of $[W_{\sigma}^\triangle]$.
Our hypothesis implies that there is only one solution, which is
a rational expression in ${\bf M}$.
\end{proof}

We now discuss the rational graph for our running example, the triangle $G=K_3$. The simpler formulas for the box in the rational setting can be found in Example \ref{ex:fact_cluster_4mb_disc}. 
We will write the rational fibers using the Grassmann-Cayley algebra,
with notation as in Section \ref{sec:recursive}.

\begin{example}[$\ell=3$]\label{ex:rat_trian}
Fix $G = K_3$ and $u=(3,3,3)$. The graph $G^\triangle_u$ has $12$ vertices and $15$ edges. 
Let $\sigma \in \{{\rm {\bf b}lack},{\rm {\bf w}hite}\}^4$
where $\sigma_1$ is the color of the internal triangle.
Let $M_i = M^{(i)}_3$. We also denote by $p_i$ the intersection point of $M^{(i)}_1$ with $M^{(i)}_2$ and by $P_i$ the plane they span.
There are $8$ distinct bicolorings of $G^\triangle$, up to symmetry.
The $8$ points in the fiber are:
\smallskip \begin{small}
\[
\begin{aligned}
\textbf{bbbb}: \quad 
{\bf L} &= \bigl(p_1  q,\, p_2  q,\, p_3  q\bigr) \, ,\quad 
q = (p_1  M_1) \star (p_2  M_2) \star (p_3  M_3) \, , \\
\textbf{wbbb}: \quad 
{\bf L} &= \bigl(p_1  q_1,\, p_2  q_2,\, p_3  q_3\bigr) \, ,\quad 
q_i = M_i \star (p_1  p_2  p_3)\, , \\
\textbf{bbbw}: \quad 
{\bf L} &= \bigl(p_1  q,\, p_2  q,\, (M_3 \star P_3)  q\bigr)\, ,\quad 
q = (p_1  M_1) \star (p_2  M_2) \star P_3 \, , \\
\textbf{wbbw}: \quad 
{\bf L} &= \bigl(p_1  (M_1 \star Q) \, ,\, p_2  (M_2 \star Q),\, P_3 \star Q\bigr) \, ,\quad 
Q = p_1  p_2  (M_3 \star P_3) \, , \\
\textbf{bbww}: \quad 
{\bf L} &= \bigl(p_1  q,\, (M_2 \star P_2)  q,\, (M_3 \star P_3)  q\bigr) \, ,\quad 
q = (p_1  M_1) \star P_2 \star P_3 \, , \\
 \end{aligned} \] \[ \begin{aligned}
\textbf{wbww}: \quad 
{\bf L} &= \bigl(p_1  (M_1 \star Q),\, P_2 \star Q,\, P_3 \star Q\bigr) \, ,\quad 
Q = p_1  (M_2 \star P_2)  (M_3 \star P_3) \, , \\
\textbf{bwww}: \quad 
{\bf L} &= \bigl((M_1 \star P_1)q,\, (M_2 \star P_2)  q,\, (M_3 \star P_3)  q\bigr) \, ,\quad 
q = P_1 \star P_2 \star P_3 \, , \\
\textbf{wwww}: \quad 
{\bf L} &= \bigl(P_1 \star Q,\, P_2 \star Q,\, P_3 \star Q\bigr) \, ,\quad 
Q = (M_1 \star P_1)  (M_2 \star P_2)  (M_3 \star P_3) \, .
\end{aligned}
\] \end{small}
In each case, ${\bf L} = (L_1,L_2,L_3)$ is given by a rational expression in terms of the $9$ lines in ${\bf M}$.
\end{example}

The next example shows that the degeneration $H^\triangle_u$ does not always make $\mathcal{L}^\triangle$ rational.

\begin{example}[Cycles]\label{eg:cycles}
    Let $\mathcal{L}^\triangle$ with $G = C_\ell$ the cycle on $\ell \geq 4$ vertices and $u=(3,3,\dots,3)$. The LS degree
     equals  $2^{\ell + 1}$. We claim that the monomial $t_1^2 \cdots t_\ell^2$ has coefficient $2$ in
      $[W_{\sigma}^\triangle]$.
       The idea is the following. 
       To compute  the fiber of $\psi_\mathcal{L}^\triangle$ for a given bicoloring $\sigma \in \{\mathbf{b},\mathbf{w}\}^{\ell}$, we open the cycle at the vertex $\ell$,
       as in Proposition  \ref{prop:rec_cycles}.
        We solve the equations associated to $\mathcal{L}^\triangle$ along the path from $1$ to $\ell-1$, and we are left with one degree of freedom. By closing the system at vertex $\ell$ we are left to solve the resultant of five lines, 
        which by equation (\ref{eq:resultant_5_lines}) yields a quadratic 
        equation in one variable. Hence, the Plücker coordinates 
        of the lines in the fiber of $\psi_\mathcal{L}^\triangle$ belong to a 
        degree-two field extension of $\mathbb{Q}(\mathbf{M})$. In particular, $\mathcal{L}^\triangle$ is not~rational.
\end{example}

We next show that trivalent trees give rise to rational fibers.

\begin{theorem} \label{thm:trivalenttree}
Let $G$ be a trivalent tree on $[\ell]$. For each of the $2^\ell$ bicolorings
$\sigma$, the variety $W_{\sigma}^\triangle$ is irreducible and its multidegree has
coefficients $0$ or $1$. Hence $G^\triangle$ is a rational graph.
\end{theorem}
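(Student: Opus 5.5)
The plan is to work directly in the reduced ambient space $(\PP^2)^\ell$ described above. Fix a bicoloring $\sigma = \sigma_{\rm ext}$; a tree has no internal triangles, so $\sigma_{\rm int}$ is empty. For each vertex $i$ the line $L_i$ ranges over a plane $\PP^2$. This plane is either the star of lines through the point $p_i = M_1^{(i)} \star M_2^{(i)}$ (black), or the set of lines in the plane $P_i = M_1^{(i)} M_2^{(i)}$ (white). For each edge $ij \in G$, the incidence $\langle L_i L_j\rangle$ restricts to a bilinear form on $\PP^2 \times \PP^2$. I will describe this form in each of the three color cases and show that it is a nondegenerate $3\times 3$ bilinear form for generic ${\bf M}$.
\begin{itemize}
\item Two black vertices: two lines through $p_i$ and $p_j$ meet iff they are coplanar with the line $p_ip_j$. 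This is a rank-$3$ correspondence between the two stars, after identifying each star with a plane not through its point.
\item Two white vertices: the situation is dual, with lines in $P_i$ and $P_j$ meeting on $P_i \star P_j$.
\item Mixed colors: a line through $p_i$ meets a line in $P_j$ iff it meets it at a point of $P_j$. Projecting from $p_i$ identifies the star with $P_j$, and the condition becomes point-line incidence in $P_j$, which is again a perfect pairing.
\end{itemize}
Hence $W_\sigma^\triangle$ is cut out by $\ell-1$ nondegenerate bilinear equations, one per edge of the tree.

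Next I will prove irreducibility and compute the multidegree by induction on $\ell$, peeling off a leaf exactly as in the proof of Theorem~\ref{thm:outerplanar}. Let $v$ be a leaf with neighbor $w$, and let $\widetilde G = G\setminus v$. The projection $W_\sigma^\triangle \to W_{\tilde\sigma}^\triangle$ forgetting $L_v$ has fiber equal to a line in $\PP^2$: the set of $L_v$ meeting the given $L_w$ is a linear condition, and it is a genuine line because the pairing is perfect. So $W_\sigma^\triangle$ is a $\PP^1$-bundle over an irreducible variety, hence irreducible of codimension $|G|=\ell-1$. Since it is a complete intersection of divisors of classes $t_i+t_j$, its class in $\ZZ[t]/\langle t_i^3\rangle$ is
$$[W_\sigma^\triangle] \,=\, \prod_{ij\in G}(t_i+t_j).$$

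The main step, and the one I expect to be the obstacle, is showing that this product has only coefficients $0$ and $1$ modulo $t_i^3$. A monomial $\prod t_i^{a_i}$ arises from orienting each edge toward the endpoint contributing $t$, with $a_i$ the indegree of $i$. Its coefficient is therefore the number of orientations of the tree with indegree sequence $a$, restricted to $a_i \le 2$. On a tree an orientation is determined by its indegree sequence: orient leaf edges first, then remove the leaf and recurse. So every coefficient is $0$ or $1$, and truncation by $t_i^3$ only deletes terms. This argument does not even need trivalence, which I would double-check against the geometric count.

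Trivalence enters only in ensuring that the relevant terms for admissible $u$ (with $u_i \ge 2$) have exponents within range. It should also make the genericity claims hold: each $L_i$ meets at most three of its neighbors' lines plus external constraints, which stays within $\PP^2$. Finally, Proposition~\ref{prop:mld-1-implies-rational} gives that $G^\triangle$ is rational, and the leaf induction supplies the explicit Grassmann–Cayley formulas.
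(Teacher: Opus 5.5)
\textbf{Gap: the same-color pairings are not perfect.} Only the mixed case is a perfect pairing.

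For two black vertices $v,w$, write $L_v=p_v a$ and $L_w=p_w b$ with $a\in\PP^3/p_v$ and $b\in\PP^3/p_w$. Then $\langle L_vL_w\rangle=\det(p_v,a,p_w,b)$, and this vanishes for every $b$ as soon as $a\equiv p_w$. For instance, with $p_v=e_1$ and $p_w=e_2$ the form is, up to sign, $a_3b_4-a_4b_3$, which has rank $2$. Geometrically, the line $p_vp_w$ lies in both stars and meets every line through $p_v$ and every line through $p_w$. Dually, for two white vertices the kernel is the line $P_v\star P_w$.

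This breaks your bundle argument. If a leaf $v$ has the same color as its neighbor $w$, the projection forgetting $L_v$ is not a $\PP^1$-bundle. Over $L_w=k$, with $k=p_vp_w$ or $k=P_v\star P_w$, the fiber is all of $\PP^2_v$. So your proof of irreducibility is unjustified for every coloring with a monochromatic edge, for example the all-black one. The same goes for your proof that $W_\sigma^\triangle$ has codimension $\ell-1$, which you need for the class $\prod_{ij\in G}(t_i+t_j)$.

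\textbf{How to close it.} A dimension count suffices.
\begin{itemize}
\item Every component of $W_\sigma^\triangle$ has dimension at least $2\ell-(\ell-1)=\ell+1$, since it is cut out by $\ell-1$ equations.
\item Away from $\{L_w=k\}$ you do have a $\PP^1$-bundle over an open subset of the irreducible $\ell$-dimensional $W_{\tilde\sigma}^\triangle$.
\item The exceptional set $E$ has dimension $\dim\bigl(W_{\tilde\sigma}^\triangle\cap\{L_w=k\}\bigr)+2$.
\item For generic ${\bf M}$, the point $k$ is not the kernel point of any other edge $wx$; for example $p_vp_w\neq p_wp_x$. So fixing $L_w=k$ imposes a genuine linear condition on $L_x$ for every remaining neighbor $x$.
\item The locus $W_{\tilde\sigma}^\triangle\cap\{L_w=k\}$ therefore splits as a product over the branches $T_x$ at $w$. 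Each factor is a hyperplane section of an irreducible variety of dimension $|T_x|+1$ that does not contain it, by induction. Hence $\dim E=(\ell-2)+2=\ell<\ell+1$.
\end{itemize}
So $E$ cannot be a component, and $W_\sigma^\triangle$ is an irreducible complete intersection.

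\textbf{The rest of your argument.} With this repair, your orientation count is correct. It is a cleaner route to the $0/1$ coefficients than the paper's. The paper reads off monomials from the multidegree of $V_G^\triangle\subset(\PP^5)^{3\ell}$ divided by $t_1^3\cdots t_\ell^3$. It then gets uniqueness by solving at a vertex with four constraints and propagating through the tree. That is exactly the leaf-peeling that shows an orientation of a tree is determined by its indegrees. You are also right that trivalence plays no role in the statement about $W_\sigma^\triangle$.
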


\begin{proof}
Since $V_G$ is a complete intersection by \cite[Theorem 5.4]{HMPS1}, so is 
 $V_{G}^\triangle$ in $(\PP^5)^{3\ell}$.
This implies that the generic fiber of $\psi_\mathcal{L}^\triangle$ is a complete intersection in $(\PP^5)^\ell$.
Its multidegree equals
\begin{equation}
\label{eq:treemultidegree}
2^\ell \prod_{i = 1}^\ell t_i^{1+u_i} \prod_{ij \in G}(t_i + t_j) \, .
\end{equation}
There are $2^\ell$ possible bicolorings  $\sigma$ of the pendant triangles, each of which gives an irreducible component
$V_{G,\sigma}^\triangle$.  The only possible monomials which can appear in the multidegree of $V_{G,\sigma}^\triangle$ are those appearing in the multidegree of $V_G^\triangle$. If we divide these monomials by $t_1^3 t_2^3 \cdots t_\ell^3$
then we get the monomials that can occur in the multidegree of 
$W_{\sigma}^\triangle$, which lives in $(\PP^2)^\ell$.

 We now prove by induction that each of these monomials appear with coefficient $1$. If $G$ is the graph with $1$ 
 vertex and no edges, then $G^\triangle$ has $4$ pendant edges and one triangle, which is either black or white. In either case,
 $W_{\sigma}^\triangle = \PP^2$, which  has multidegree $1$.
 If $\ell \geq 2$ then we consider any vertex $i$ of $G$ which has at least $4$ neighbors in $G^\triangle$.
Since the bicoloring $\sigma$ is fixed, the Schubert problem has a unique solution $L_i$ which depends
rationally on ${\bf M}$.
By propagating the solution through the tree, and we find that there is a unique
 solution on each component $V_{G,\sigma}^\triangle$ to the Schubert problem for any monomial in the multidegree
 (\ref{eq:treemultidegree}). 
 \end{proof}

\begin{remark}
    The rational degeneration in Theorem~\ref{thm:trivalenttree} is minimal in the following sense. If $G$ is a trivalent tree and $u$ is such that $u_i \geq 2$ for  all $i$, then $G_u \cup H$ is rational if and only if $H=H^\triangle_u$. Moreover, the assumption on $G$ being trivalent is necessary. To see this, let $G$
         be the one-vertex graph and $u = (0,4,4,4,4)$. Then the Landau diagram $G^\triangle_u$ is not rational.
\end{remark}

\begin{theorem}\label{thm:rat_tr_of_l_gon}
Let $G$ be a triangulation of the $\ell$-gon whose dual graph is a path with pendant edges. Then $G^\triangle$ is rational. However, for other triangulations,
   $G^\triangle$ need not be rational.
\end{theorem}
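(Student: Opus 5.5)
The plan is to use Proposition~\ref{prop:mld-1-implies-rational}: it suffices to show that, for every bicoloring $\sigma=(\sigma_{\rm int},\sigma_{\rm ext})$ of the $\ell-2$ internal triangles and the $\ell$ pendant triangles, the multidegree $[W^\triangle_\sigma]\in\ZZ[t_1,\ldots,t_\ell]/\langle t_1^3,\ldots,t_\ell^3\rangle$ has only coefficients $0$ and $1$. Each coefficient of $[W^\triangle_\sigma]$ counts solutions of the following Schubert problem in $(\PP^2)^\ell$. Each $L_i$ lies in a fixed $\PP^2$, namely the lines through $p_i$ or the lines in $P_i$. It must meet the $u_i-2$ remaining external lines at $i$, and these conditions are linear in the $\PP^2$-coordinates of $L_i$. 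It must also satisfy the internal incidences of $G$, refined by $\sigma_{\rm int}$.

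The key local fact concerns an internal triangle $\{i,j,k\}$ in which $L_i$ and $L_j$ are already determined. Its colour then fixes the point $L_i\star L_j$ or the plane $L_iL_j$. Together with the external colour of $k$, this imposes on $L_k$ one or two further \emph{linear} conditions in its $\PP^2$. For example, a black internal triangle with a black pendant triangle forces $L_k=p_k(L_i\star L_j)$. A black internal triangle with a white pendant triangle forces $L_k\ni (L_i\star L_j)$ with $L_k\subset P_k$, which is one linear condition. The argument is therefore a sequential elimination: order the vertices so that every vertex, when it is reached, has all of its constraints linear in its own coordinates once the earlier lines are known. Then each Schubert problem has at most one solution, given by a Grassmann--Cayley expression as in Example~\ref{ex:rat_trian}.

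For a caterpillar dual graph, I build the triangulation by walking along the spine $T_1,\ldots,T_m$ of the dual path. Consecutive spine triangles share an edge. Each pendant triangle hanging off $T_s$ is an ear with a single new vertex adjacent only to the two endpoints of a side of $T_s$. Ears contribute free vertices whose lines are determined linearly by the two spine lines they touch. I would process them in either order relative to the spine, depending on where the degrees of freedom sit for the given monomial: either solve the ear vertex first, which requires $u_v\ge 3$ or a black/black colour match, and then push its constraint onto the spine edge, or solve it last. The spine itself is handled like a chain of triangles glued along edges, in the style of a path of pentaboxes, propagating from whichever end has the most external lines.

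The hardest step is the bookkeeping showing that, for every monomial appearing in $[V_G^\triangle]$ divided by $t_1^3\cdots t_\ell^3$, such an elimination order exists. I would do this by induction on $\ell$, removing an ear at an end of the spine and using the complete-intersection multidegree $2^\ell t_1\cdots t_\ell\prod_{ij\in G}(t_i+t_j)$ to track which monomials survive. The crucial point is that a caterpillar never forces a vertex to be determined by two independent ``fronts'' meeting. Such a meeting would produce a closing condition of the kind in Example~\ref{eg:cycles}, where the last line must be a common transversal, and this is quadratic by \eqref{eq:resultant_5_lines}.

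For the negative part, I take the smallest triangulation whose dual tree is not a caterpillar: a central triangle whose three sides each carry a path of two triangles. This is a triangulated $9$-gon with seven triangles. The three legs of length two are solved inward from their tips. Choosing $u$ so that the central triangle receives one free parameter from each leg, the last vertex has to satisfy two conditions coming from different branches, which again reduces to a quadric transversal problem. I expect a coefficient $2$ in $[W^\triangle_\sigma]$ for a suitable monomial and $\sigma$. I would confirm this with a {\tt Macaulay2} multidegree computation of $W^\triangle_\sigma$ in $(\PP^2)^9$.
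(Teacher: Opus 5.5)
There is a genuine gap, and it lies in how you read the hypothesis. You take ``path with pendant edges'' to mean that the adjacency tree of the triangles is a caterpillar. In that class a spine triangle may carry an ear on its third side, so all three of its sides are diagonals, and for such triangulations the conclusion is false.

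The paper's counterexample is exactly of this kind. It is the hexagon $G=\{12,13,23,34,35,45,56,15,16\}$ with $u=(2,3,2,3,2,3)$. Its triangle tree is the star with centre $135$ and leaves $123,345,156$, which is a caterpillar in your sense with a spine of length zero. With the ears black and $135$ white, part of the fibre needs a quadratic extension.

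Your own elimination scheme shows this. Take the pendant triangles at $1,3,5$ black. Since $u_2=3$, the line $L_2$ moves in a pencil inside a fixed plane $\Pi_2$, namely $p_2M$ or $P_2$, where $M$ is the extra line at $2$. So the black ear $123$ can only be solved last, and pushing it onto the spine leaves the condition $L_1\star L_3\in\Pi_2$. Likewise you get $L_3\star L_5\in\Pi_4$ and $L_5\star L_1\in\Pi_6$. The white centre puts $L_1,L_3,L_5$ in the plane $Q=p_1p_3p_5$ with $p_i\in L_i$.

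You are then left with Castillon's problem. You must inscribe in $Q$ a triangle whose sides pass through $p_1,p_3,p_5$ and whose vertices lie on the lines $Q\star\Pi_2$, $Q\star\Pi_4$, $Q\star\Pi_6$. Going once around the triangle composes three perspectivities into a projectivity of a line to itself. Its two fixed points are generically conjugate over a quadratic extension of $\QQ({\bf M})$, so $[W^\triangle_\sigma]$ has a coefficient $2$. Hence your claim that a caterpillar ``never forces two independent fronts to meet'' fails. The three sides of a triangle whose sides are all diagonals form exactly such a closing cycle, just as in Example~\ref{eg:cycles}.

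The intended reading counts one pendant edge of the dual for each boundary edge of the $\ell$-gon, which makes the dual a trivalent tree with $\ell$ leaves. ``Path with pendant edges'' then means that the triangles themselves form a path. Equivalently, every triangle has a side on the boundary, as for fans and for the triangulations $T_\ell$ of Corollary~\ref{cor:fibonacci}.

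On this class your sequential elimination is the right idea, and it is what the paper means by ``analogous to Theorem~\ref{thm:trivalenttree}''. Each triangle receives pushed constraints on at most two sides, and its third side is a boundary edge, so nothing has to close up. You would still need to carry out the bookkeeping over all monomials and colourings, which you only announce.

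For the negative part, your $9$-gon is not minimal, and you attribute the obstruction to the wrong feature. What matters is an internal triangle rather than a non-caterpillar tree, and the hexagon above already suffices.
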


\begin{proof} The proof of the rationality statement
 is analogous to that of Theorem~\ref{thm:trivalenttree}.
 For the last claim, let $\ell = 6$ and
 $G =\{12,13,23,34,35,45,56,15,16\}$ with $u = (2,3,2,3,2,3)$. We consider the irreducible component $V_{G,\sigma}^\triangle$ of $V_{G,u}^\triangle$ where the triangles $123, 156, 345$ are colored black, and $135$ is white. 
 A computation shows that  the LS degree of this component is $58$.
 Among the solutions,
  $32$ are rational, while the other $16$ require a quadratic extension.
\end{proof}

We now briefly share an example with a remarkable combinatorial structure. Let $T_\ell$ be the triangulation of the cyclically labeled $\ell$-gon with diagonals $(2,\ell), (\ell,3),(3,\ell-1), \ldots, (v_\ell-1,v_\ell+1)$, where $v_\ell=\lceil \frac{\ell}{2}+1 \rceil$. Let $F_\ell = F_{\ell-1}+F_{\ell-2}$ with $F_0 = 0$ and $F_1=1$ be the \emph{Fibonacci numbers}. The following result 
is derived from Theorem~\ref{thm:rat_tr_of_l_gon}, using an application of~\eqref{eq:LS_CI}. 

\begin{corollary}\label{cor:fibonacci}
    Let $u \in \mathbb{N}^\ell$ be such that $u_i = 3$ for $i \in \{1,2,v_\ell \}$ and $u_i = 2$ otherwise. 
    The LS degree of $ (T_\ell)_u $ is equal to $2^\ell F_{\ell+1}$. Moreover, 
    the Landau diagram $(T_\ell)_u \cup H^\triangle_u$ is rational.
\end{corollary}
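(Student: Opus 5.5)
The corollary has two parts, and I will treat them separately: the LS degree count, which comes from \eqref{eq:LS_CI}, and rationality, which comes from Theorem~\ref{thm:rat_tr_of_l_gon}.

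\textbf{Rationality.} I will check that the dual graph of $T_\ell$ is a path with pendant edges. The diagonals $(2,\ell),(\ell,3),(3,\ell-1),\ldots$ form a zigzag, so each triangle of $T_\ell$ shares an edge with at most two other triangles, and these adjacencies are arranged consecutively. Hence the dual tree is a path; the pendant edges come from the boundary edges of the $\ell$-gon. Theorem~\ref{thm:rat_tr_of_l_gon} then says that $T_\ell^\triangle$ is rational. In particular $(T_\ell)_u\cup H^\triangle_u$ is rational for our $u$, which has $u_i\ge 2$ for all $i$, as the construction of $H^\triangle_u$ requires.

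\textbf{LS degree.} First I verify that $u$ is admissible. The graph $T_\ell$ has $2\ell-3$ edges, so $\dim V_{T_\ell}=4\ell-(2\ell-3)=2\ell+3$. On the other hand $|u|=3\cdot 3+2(\ell-3)=2\ell+3$, so $|u|=\dim V_{T_\ell}$. By Theorem~\ref{thm:outerplanar}, $V_{T_\ell}$ is a complete intersection, so \eqref{eq:LS_CI} gives $\gamma_u=2^\ell c_\ell$. Here $c_\ell$ is the coefficient of $\prod_i t_i^{5-u_i}$ in $t_1\cdots t_\ell\prod_{ij\in T_\ell}(t_i+t_j)$.

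Dividing by $t_1\cdots t_\ell$, $c_\ell$ counts the orientations of the edges of $T_\ell$ (each edge choosing one endpoint) in which vertex $i$ is chosen $4-u_i$ times. So vertices $1,2,v_\ell$ are chosen once each and every other vertex is chosen twice. The total is $3+2(\ell-3)=2\ell-3$, which matches the number of edges.

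I will then compute these orientation counts by a transfer recursion along the zigzag: peel off the ear vertex at the end of the path at each step. The degree-two ears are forced, and each peeling step leaves a smaller zigzag with modified indegree demands. The goal is a recursion $c_\ell=c_{\ell-1}+c_{\ell-2}$ with $c_3=F_4=3$ and $c_4=F_5=5$.

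\textbf{Base cases and main obstacle.} The base cases can be checked by hand. For $\ell=3$ the graph is $K_3$ with $u=(3,3,3)$; the coefficient of $t_1t_2t_3$ in $(t_1+t_2)(t_1+t_3)(t_2+t_3)$ is $2$, so I must recheck the indexing convention for $v_3$. I expect this to be resolved by taking $\ell\ge 4$, or by noticing that $v_3=3$ coincides with an existing index, which changes $u$.

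The main obstacle is setting up the recursion correctly given the alternating shape of the zigzag. I would handle it with two auxiliary counts, according to whether the current boundary edge is oriented toward the newer or the older vertex. The resulting $2\times2$ transfer matrix should have Fibonacci growth, and the special vertex $v_\ell$ in the middle fixes the boundary condition so that the count comes out as $F_{\ell+1}$. As a sanity check, I will verify small cases against the {\tt multidegHurwitz} output from \cite[Section 6]{PSS}.
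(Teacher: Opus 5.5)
Your route is the same as the paper's. Rationality comes from Theorem~\ref{thm:rat_tr_of_l_gon} applied to the zigzag, whose dual graph is indeed a path. The LS degree is $2^\ell$ times the coefficient $c_\ell$ from \eqref{eq:LS_CI}, which you correctly read as the number of ways to assign each edge of $T_\ell$ to one endpoint so that vertex $i$ receives $4-u_i$ edges.

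The gap is in the count itself. The discrepancy you noticed at $\ell=3$ is real, and neither of your proposed fixes works.
\begin{itemize}
\item For $\ell=3$ we have $v_3=\lceil 5/2\rceil=3\notin\{1,2\}$. So $u=(3,3,3)$ and $c_3=2$, giving $\gamma_u=16$, which is exactly the LS degree of the triangle in Example~\ref{ex:triangle}.
\item Passing to $\ell\ge 4$ does not help. For $T_4$, with edges $12,23,34,14,24$ and demands $(1,1,1,2)$, exactly one of the three edges at vertex $4$ must avoid it. Each of the three choices extends uniquely, so $c_4=3$, not $5$. This matches the LS degree $48=2^4\cdot 3$ of the Triangulated Square example in Section~\ref{sec:rationality}, which is $(T_4)_u$ up to relabeling.
\item Continuing the same enumeration by hand gives $c_5=5$, $c_6=8$, $c_7=13$.
\end{itemize}
So the base values $c_3=3$, $c_4=5$ you are aiming for are false. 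Any Fibonacci recursion for $c_\ell$ then forces $c_\ell=F_\ell$.

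With the paper's convention $F_0=0$, $F_1=1$, what \eqref{eq:LS_CI} actually yields is $\gamma_u=2^\ell F_\ell$. The formula $2^\ell F_{\ell+1}$ in the statement is off by one index and cannot be proved as written. The paper's own Example~\ref{ex:positroid_Fibonacci} confirms this: for $\ell=6$ it obtains $512=2^6\cdot 8$, and $8=F_6$, whereas $2^6F_7=832$.

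Your plan goes through once you correct the target. Set up the transfer recursion to prove $c_\ell=c_{\ell-1}+c_{\ell-2}$ with $c_3=2$, $c_4=3$, or start from $c_4=3$, $c_5=5$ if you want to avoid the degenerate definition of $T_3$. Then state the conclusion as $\gamma_u=2^\ell F_\ell$. The rationality half of your proposal needs no change.
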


Rational Landau diagrams are important because the factors of their
  LS discriminants are expected to be
    cluster variables, as we shall see in Section~\ref{sec:cluster}.
    This is related to the  positivity structures in Section \ref{sec:positivity}.
     The following example illustrates this in the simplest~scenario.

\begin{example}[$\ell=1$]\label{ex:fact_cluster_4mb_disc}
The  LS discriminant $\Delta_{K_1,(4)}$ is the Gram determinant of the four lines $M_1, M_2, M_3, M_4$. If two of these lines are incident, so $\langle M_1 M_2 \rangle = 0$, then this simplifies~to
\begin{equation}\label{eqn:rational-box}
 {\rm det} \begin{small} \begin{bmatrix} 
0 & 0 &  M_1 M_3 &  M_1 M_4  \\
 0 & 0 & M_2 M_3 & M_2 M_4 \\
  M_1 M_3 &  M_2 M_3 & 0 & M_3 M_4  \\
  M_1 M_4 & M_2 M_4& M_3 M_4 & 0\\
\end{bmatrix}  \end{small}
 \,\,=\,\, \bigl(\,\langle M_2M_3 \rangle \langle M_1M_4 \rangle - \langle M_1 M_3 \rangle \langle M_2 M_4 \rangle\,\bigr)^2. 
\end{equation}
Using notation as in Example \ref{ex:rat_trian}
we can write this expression as $\Delta_{K_1,(4)} = \langle L^{(\mathbf{b})} L^{(\mathbf{w})} \rangle$, where 
\begin{equation}\label{eq:3_mass_sol}
    L^{(\mathbf{b})}  = (p  \, M_3) \star (p \, M_4) \, , \quad L^{(\mathbf{w})}  = (P \star M_3)  (P \star M_4) .
\end{equation}
\end{example}

We now discuss in detail the rational factorization of the LS discriminant for the triangle.
We use the following notation for the three factors in the generic regime of Theorem~\ref{thm:16168}:
\begin{equation}\label{eq:discr_tr}
            \Delta_{\mathcal{L}} \,\,=\,\, \Delta_{\mathcal{L}, \mathbf{b}} \cdot \Delta_{\mathcal{L}, \mathbf{w}} \cdot \Delta_{\mathcal{L}, \mathrm{mix}}\, ,
\end{equation}
Each of the three discriminants will now break into a product of many smaller factors.

\begin{theorem}\label{thm:tr_rat}
Let $\mathcal{L} = G_u \cup H^\triangle_u$ with $G = K_3$ and $u = (3,3,3)$.
The fiber of $\psi_\mathcal{L}$ 
is given in Example \ref{ex:rat_trian}. The labels
 $\{{\bf b},{\bf w}\}^4$ index the vertices of the $4$-cube.
The full LS discriminant $\Delta_{\mathcal{L}}$ has $32$ irreducible factors, one for each edge of the $4$-cube. The discriminant on the irreducible component of $V_G$ associated to $\sigma_{\rm int} \in \{{\bf b},{\bf w}\}$ factors into $12$ perfect squares as: \begin{equation}\label{eq:discr_tr_fact}
            \Delta_{\mathcal{L}, \sigma_{\rm int}} \,\,=\,\, \prod_{\sigma_1,\, \sigma_2} \langle L_i^{(\sigma_1)} L_i^{(\sigma_2)} \rangle \, .
        \end{equation}
     The product is over pairs $\sigma_1, \sigma_2$ of bicolorings whose first component is $\sigma_{\rm int}$ and which differ by only one color at position $i$ in $\sigma_{\rm ext}$. 
        The mixed part factors into $8$ perfect squares as: \begin{equation}\label{eq:discr_tr_fact_mixed}
            \Delta_{\mathcal{L}, \mathrm{mix}} \,\,=\,\, \prod_{\sigma} \langle p^{(\sigma)} P^{(\sigma)} \rangle \, .
        \end{equation}
        The point $p^{(\sigma)}$ and plane $P^{(\sigma)}$ come from
         the bicoloring~$\sigma \in \{\mathbf{b},\mathbf{w}\}^3$ of the external~triangles.
\end{theorem}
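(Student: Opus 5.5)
The plan is to compute the degenerated discriminant directly from the explicit rational fiber. We work on $V_{H^\triangle_u}$, where each pair $M^{(i)}_1,M^{(i)}_2$ meets in a point $p_i$ and spans a plane $P_i$. By Example \ref{ex:rat_trian}, the fiber $\psi_\mathcal{L}^{-1}({\bf M})$ consists of exactly $16$ points ${\bf L}^{(\sigma)}$, $\sigma\in\{{\bf b},{\bf w}\}^4$, each a rational Grassmann--Cayley expression in ${\bf M}$. The LS discriminant of the generic regime, restricted to $V_{H^\triangle_u}$, vanishes exactly where two of these points collide. Since the points are rational, this vanishing locus is a union of the loci $\{{\bf L}^{(\sigma)}={\bf L}^{(\sigma')}\}$. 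The first task is to decide which pairs $(\sigma,\sigma')$ give a codimension-one collision locus inside $V_{H^\triangle_u}$, and to write a defining polynomial for each.

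The key geometric observation concerns two bicolorings with the same $\sigma_{\rm int}$ that differ only in the external color at vertex $i$. There, $L_i^{(\sigma)}$ is the line through $p_i$ and $L_i^{(\sigma')}$ is the line in $P_i$, and all remaining constraints are shared. The two fiber points coincide exactly when these two lines coincide. This happens as soon as they meet, i.e. when $\langle L_i^{(\sigma)}L_i^{(\sigma')}\rangle=0$, because the other lines are then forced to agree by the recursive construction, as in Example \ref{ex:fact_cluster_4mb_disc}. This accounts for the $4$-cube edges in the three external directions: $3\cdot 4=12$ edges for each value of $\sigma_{\rm int}$, giving the $12$ factors in (\ref{eq:discr_tr_fact}). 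For pairs differing in the internal color, with equal $\sigma_{\rm ext}$ (the $8$ remaining edges), two triples meeting pairwise coincide when all three lines pass through a common point inside a common plane. This reduces to a point--plane incidence $\langle p^{(\sigma)}P^{(\sigma)}\rangle=0$, where $p^{(\sigma)}$ is the concurrency point from the ${\bf b}$-solution and $P^{(\sigma)}$ is the plane from the ${\bf w}$-solution. Read off from the table, these are, e.g., $q$ and $Q$ in the \textbf{bbbb}/\textbf{wbbb} rows. Pairs at Hamming distance $\ge 2$ should give collisions only in codimension $\ge 2$. We check this by showing that such a collision forces two independent edge conditions.

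Next we must show that each collision factor appears squared, and that the product has the right total degree. For squaredness, we use the local model of Example \ref{ex:fact_cluster_4mb_disc}. Near a generic point of a collision locus, the two merging solutions are the two branches of a quadratic whose discriminant restricts to a perfect square, because on $V_{H^\triangle_u}$ the branches become rational. Hence the Hurwitz form vanishes to even order, and the natural identification gives exact exponent $2$ in $\Delta$, consistent with (\ref{eqn:rational-box}). For the degree, we compare with Theorem \ref{thm:16168}. Each factor $\langle L_i^{(\sigma)}L_i^{(\sigma')}\rangle$ and $\langle p^{(\sigma)}P^{(\sigma)}\rangle$ has an explicit multidegree in the external Plücker vectors, computed from the Grassmann--Cayley formulas. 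Summing over the $12+12$ component factors must reproduce the restriction of degree $(16,16,16)$ for each of $\Delta_{\mathcal{L},{\bf b}}$ and $\Delta_{\mathcal{L},{\bf w}}$, and the $8$ mixed factors must give $(8,8,8)$.

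The main obstacle is this bookkeeping and irreducibility step. Restricting the generic Hurwitz form to $V_{H^\triangle_u}$ may introduce extraneous factors that vanish identically on the degenerate locus, such as $\langle M^{(i)}_1M^{(i)}_2\rangle$-type terms or factors from $\langle p_i\,M_i\rangle$. We also need each displayed bracket to be irreducible and pairwise distinct, which yields exactly $32$ factors. I would first try to settle degrees symbolically from the Grassmann--Cayley expressions. Irreducibility and the absence of extra factors I would verify by a \texttt{Macaulay2} computation on a rational parametrization of $V_{H^\triangle_u}$, matching the paper's treatment of Theorem \ref{thm:16168}.
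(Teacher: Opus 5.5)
Your proposal is correct and follows essentially the same route as the paper. The factors $\langle L_i^{(\sigma_1)}L_i^{(\sigma_2)}\rangle$ and $\langle p^{(\sigma)}P^{(\sigma)}\rangle$ are read off from the rational fiber in Example~\ref{ex:rat_trian}, shown to divide the discriminant, and the factorization is then forced by comparing degrees with Theorem~\ref{thm:16168}. Your added arguments, that meeting forces coincidence and that each collision factor appears with exponent two, fill in points the paper leaves implicit; one small slip is that in the \textbf{wbbb} row the relevant plane is $p_1p_2p_3$, not a $Q$.
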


    \begin{proof}
    We first prove~\eqref{eq:discr_tr_fact}. The concurrent discriminant $\Delta_{\mathcal{L}, \mathbf{b}}$ vanishes when the lines $ L_i^{(\sigma_1)} $ and $L_i^{(\sigma_2)}$ in Example~\ref{ex:rat_trian} coincide.
    Hence $\langle L_i^{(\sigma_1)} L_i^{(\sigma_2)} \rangle $ divides 
$    \Delta_{\mathcal{L}, \mathbf{b}}$.
    Equation~\eqref{eq:discr_tr_fact} 
     follows by counting the degrees in the Plücker coordinates of ${\bf M}$ and comparing with Theorem~\ref{thm:16168}.
    We next prove~\eqref{eq:discr_tr_fact_mixed}. It follows from the constructions in
     Example \ref{ex:rat_trian} that $L_i^{(\mathbf{b}, \sigma)}=L_i^{(\mathbf{w},\sigma)}$ 
     if and only if  $p^{(\sigma)}$ lies in $P^{(\sigma)}$.
      It follows that $\langle p^{(\sigma)} P^{(\sigma)} \rangle$ divides the mixed LS discriminant
       $\Delta_{\mathcal{L}, \mathrm{mix}}$. We again reach the conclusion by counting degrees and comparing with Theorem~\ref{thm:16168}.
\end{proof}

We expect~Theorem~\ref{thm:tr_rat} to generalize to other rational Landau diagrams $\mathcal{L}$. The full LS discriminant $\Delta_{\mathcal{L}}$ factorizes into many factors, one for each irreducible component of $V_G$, and one for the mixed LS discriminants. 
All factors should be analogous to  those for the~triangle.

\begin{conjecture}\label{conj:rat_formula}
Given any rational Landau diagram $\mathcal{L}$, the LS discriminant on each irreducible component of $V_G$ can be expressed as in~\eqref{eq:discr_tr_fact}, and similarly the mixed LS discriminant as in~\eqref{eq:discr_tr_fact_mixed}. Both products run over all external colorings that are admissible for $\mathcal{L}$.
\end{conjecture}

The following example explains what we mean by ``admissible external colorings''.
 
\begin{example}[Triangulated Square]
    Let $\mathcal{L}=G_u \cup H^\triangle_u$ with $G=\{12,23,13,34,14\}$ and $u = (2,3,3,3)$. The LS degree equals $48 $, and $H^\triangle_u$ is a minimal rational degeneration of $G_u$.
    The fiber of $\psi_\mathcal{L}$ is in bijection with the following bicolorings 
    $\sigma \subset \{\mathbf{b},\mathbf{w}\}^6$ of the triangles in $G^\triangle$.
    If the colors $\sigma_1$ and $\sigma_2$ of the
           two internal triangles are distinct, then all external colorings $(\sigma_3,\sigma_4, \sigma_5, \sigma_6)$ are allowed.
          If they have the same color, then the external triangle at vertex $1$ must have that same color.
          The number of such $\sigma$ is indeed $2 \cdot 2^4 + 2 \cdot 2^3 = 48$.
          
           By Proposition~\ref{prop:bbexpected}, the
expected degree of $\Delta_{\mathcal{L}}$ is $(160,176,256,176)$.
We verified that this is consistent with   Conjecture~\ref{conj:rat_formula}.
The degrees in the proposed factorization add up to $(160,176,256,176)$.
    As an example, let $\sigma= (*\mathbf{bbbbw})$ and let $L_1^{(*)}$ denote the line at vertex $1$ in the fiber of $\psi_\mathcal{L}$.
    These two lines are given explicitly by the following geometric formulas:
    \begin{equation*}
    L_1^{(\mathbf{b})} = p_1  \big((p_1  p_2  p_3) \star (p_3  P_3) \star (p_4  P_4) \big) \, , \quad L_1^{(\mathbf{w})} = P_1 \star \Big( \big( P_1 \star(p_2  P_2) \star (p_3  P_3) \big)  p_3  p_4 \Big) \, .
    \end{equation*}
    The term $\langle L_1^{(\mathbf{b})} L_1^{(\mathbf{w})} \rangle $ is a factor of degree $(2,2,2,2)$ in the LS discriminant of
   $V_{(\mathbf{b},\mathbf{w})} \subseteq V_G$. Similarly, we can construct all $48 $ rational lines,
  and hence all factors~\eqref{eq:discr_tr_fact} and~\eqref{eq:discr_tr_fact_mixed} of $\Delta_{\mathcal{L}}$.
  \end{example}

\section{Positivity and Reality}
\label{sec:positivity}

In Section~\ref{sec:rationality} we examined when the fiber of the Landau map $\psi_{\mathcal{L}}$ is rational in the external lines $\mathbf{M}$.  We now study when the fiber $\psi_{\mathcal{L}}^{-1}({\bf M})$ is real. We seek
configurations $\mathbf{M}$ of real lines $M_1,\ldots,M_d$ with the property that
all $\gamma_u$ complex points in $\psi_{\mathcal{L}}^{-1}({\bf M})$  have real coordinates.
Our answer will produce large families of fully real Schubert problems.
From the physics perspective, reality is tightly linked to positivity of Landau discriminants and
to the expectation that planar \(\mathcal{N}=4\) SYM amplitudes are regular functions
on the \textit{positive configuration space}, which is the quotient of the positive Grassmannian by the torus action.

Recall that a real $k \times n$ matrix $Z$ is {\em totally positive} if all its $k \times k$ minors are positive.
We write ${\rm Mat}^{>0}_{k \times n}$ for the semi-algebraic set of these matrices. Its image in the
Grassmannian ${\rm Gr}(k,n)$ is denoted ${\rm Gr}_{>0}(k,n)$ and called the {\em positive Grassmannian}.
An element $f$ in the function field $\RR({\rm Gr}(k,n))$ is called {\em Grassmann copositive} if it is
regular on ${\rm Gr}_{>0}(k,n)$ and has no zeros there.
Thus $f$ represents a rational function that is strictly positive on ${\rm Gr}_{>0}(k,n)$.

 We introduce a notion of positivity for the external lines of a planar Landau diagram.
 The tuple $\mathbf{M}=(M_1,\dots,M_d)$ in $\Gr(2,4)^d$ is \emph{positive} if each line $M_i$ is spanned by a pair $(z_{p_i},z_{p_i+1})$ of consecutive columns of a totally positive matrix $Z=[z_1|\cdots|z_n]\in \Mat^{>0}_{4\times n}(\mathbb{R})$
 Here $1=p_1<\cdots<p_d\leq n \leq 2d$, and, if the lines $M_i, M_{i+1}$ are incident, then $p_{i+1}=p_i+1$.

\begin{conjecture}[Reality Conjecture]\label{conj:reality_conjecture}
Let $G$ be an outerplanar graph, with cyclically labeled vertices in $[\ell]$, and fix
any of its planar Landau diagrams $\mathcal{L} = G_u \cup H_u$. 
 If $\,\mathbf{M}$ is positive then the number of points in the
  fiber $\psi_\mathcal{L}^{-1}(M)$ equals the LS degree, and these points are all~real.
  \end{conjecture}

This section presents strong evidence for this result, both theoretical and computational.
We argue that Conjecture \ref{conj:reality_conjecture} can be approached
by recursively decomposing the graph $G$. A key player is the substitution map~\eqref{eq:subs_map},
which we shall view as a cluster promotion map~\cite{VRC}.
Our next result offers a first glimpse on the connection to positroids and cluster algebras.

\begin{proposition}\label{prop:copositivity_resultant}
The discriminant (\ref{eq:discriminant_4_lines}) 
and the resultant  (\ref{eq:resultant_5_lines}) are Grassmann copositive.
\end{proposition}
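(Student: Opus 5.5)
The plan is to reduce both statements to explicit inequalities among the brackets $\langle M_iM_j\rangle$, and then to verify these inequalities on a positive parametrization of ${\rm Gr}_{>0}(4,n)$.

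\textbf{Setup.} Write $M_i = z_{p_i}z_{p_i+1}$ with $Z\in\Mat^{>0}_{4\times n}$. Every bracket $\langle M_iM_j\rangle=\langle z_{p_i}z_{p_i+1}z_{p_j}z_{p_j+1}\rangle$ with $p_i+1<p_j$ is a maximal minor with increasing column indices, hence strictly positive. When $M_i,M_{i+1}$ are incident the bracket is zero. We treat the generic case (all brackets positive) first, and the degenerate incident cases at the end.

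\textbf{The four-line discriminant.} Put $a=\langle M_1M_2\rangle\langle M_3M_4\rangle$, $b=\langle M_1M_3\rangle\langle M_2M_4\rangle$ and $c=\langle M_1M_4\rangle\langle M_2M_3\rangle$. Expanding the Gram determinant \eqref{eq:discriminant_4_lines} gives
$$\Delta_{K_1,(4)}=a^2+b^2+c^2-2ab-2bc-2ca=(b-a-c)^2-4ac .$$
So it suffices to show $b-a-c>2\sqrt{ac}$, i.e.\ $\sqrt b>\sqrt a+\sqrt c$; this is the physicists' statement that the four-mass box square root is real on positive kinematics. I would proceed as follows.
\begin{enumerate}
\item Use the three-term Plücker relations among the eight columns $z_{p_1},z_{p_1+1},\dots,z_{p_4+1}$ to write $b-a-c$ as a subtraction-free polynomial $S$ in maximal minors. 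This comes from splitting the "crossing" product $b$ by repeated Plücker exchanges.
\item Pass to a positive chart, for instance Lindström–Gessel–Viennot network weights or a positive cluster torus. There $a$, $c$ and $S$ are positive Laurent polynomials, and the inequality $S^2>4ac$ becomes a statement about positive polynomials.
\item Prove $S^2>4ac$ by exhibiting, among the monomials of $S$, two disjoint groups $S_1,S_2$ with $S_1S_2\ge ac$ termwise, and then applying AM–GM.
\end{enumerate}
The main obstacle is step 3: finding the right splitting of $S$. I would first test it on the moment curve $z_t=(1,t,t^2,t^3)$, where all brackets are Vandermonde products and the inequality is a one-line check. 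I would then try to promote the moment-curve splitting to general $Z$ by induction on $n$ along the positroid/BCFW-type cell decomposition. Should this fail, there is a fallback: since ${\rm Gr}_{>0}$ is connected and $\Delta>0$ on the moment curve, it suffices to show that $\Delta\neq0$ on the positive part. That reduces to showing that the two transversals of $M_1,\dots,M_4$ never coincide there, which I would attack by the quadric argument. The quadric through $M_1,M_2,M_3$ is real, and one would show that its ruling meets $M_4$ in two distinct real points by a sign change of $\langle M_4\,\cdot\,\rangle$ along the positive ordering.

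\textbf{The five-line resultant.} I would deduce this from the four-line case via the recursion of Proposition~\ref{prop:penta-dpenta-recursion}:
$$\langle BC\rangle\langle CD\rangle\langle BD\rangle\,R_{K_1,(5)}=\mathrm{Res}_x(f,\langle EX\rangle)\;\propto\;a_2^{2}\,\langle EX^{(+)}\rangle\langle EX^{(-)}\rangle ,$$
with $X^{(\pm)}$ the two transversals of $A,B,C,D$. These are real by the four-line case. Positivity of $R_{K_1,(5)}$ then amounts to showing that $\langle EX^{(+)}\rangle\langle EX^{(-)}\rangle>0$ (up to the fixed sign of the normalization), i.e.\ that both transversals lie on the same side of $E=M_5$. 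I would write $\langle EX^{(+)}\rangle\langle EX^{(-)}\rangle$ as a symmetric function of the roots, namely a quadratic form in the $a_i$ and chain polynomials evaluated at $E$. I would then show this form is subtraction-free in brackets by the same Plücker-exchange technique, checking the overall sign on the moment curve.

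\textbf{Degenerate incident cases.} When some $\langle M_iM_{i+1}\rangle=0$, the discriminant becomes the perfect square \eqref{eqn:rational-box}. Its base $\langle M_2M_3\rangle\langle M_1M_4\rangle-\langle M_1M_3\rangle\langle M_2M_4\rangle$ is, by one Plücker relation, $\pm$ a single product of positive minors, hence nonzero. The five-line degenerations are handled analogously, or by continuity from the generic case combined with a direct check that they do not vanish.
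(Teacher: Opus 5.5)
\textbf{There is a genuine gap: the positivity that the proposition asserts is never actually proved, in either half.} Your reductions are sound. You correctly get $\Delta_{K_1,(4)}=(b-a-c)^2-4ac$, so $\sqrt b>\sqrt a+\sqrt c$ suffices. You also correctly get that $R_{K_1,(5)}$ is a positive multiple of $\langle EX^{(+)}\rangle\langle EX^{(-)}\rangle$.

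\textbf{Discriminant.} You give no identity showing that $b-a-c$ is subtraction-free, and by your own account the AM--GM splitting in step~3 is left open. Your fallback is the right idea, but as phrased it has no content yet, and the natural test points fail. Take $M_i=z_{2i-1}z_{2i}$ and let $q(x)=\langle 12x3\rangle\langle 456x\rangle-\langle 12x4\rangle\langle 356x\rangle$ be the quadric through $M_1,M_2,M_3$. Then $q(z_7)=\langle 1247\rangle\langle 3567\rangle-\langle 1237\rangle\langle 4567\rangle$ and $q(z_8)$ are both positive. Indeed, projecting from $z_7$ (resp.\ $z_8$), each becomes the cluster variable $[124][356]-[123][456]$ of $\Gr(3,6)$, which is positive on $\Gr_{>0}(3,6)$. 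So the spanning columns of $M_4$ show no sign change.

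A point that does work is $x=\langle 3458\rangle z_7-\langle 3457\rangle z_8$, where $M_4$ meets the plane $z_3z_4z_5$. There one computes $q(x)=-\langle 3456\rangle\langle 3478\rangle\bigl(\langle 1257\rangle\langle 3458\rangle-\langle 1258\rangle\langle 3457\rangle\bigr)$. After projecting from $z_5$ and adjusting signs, the last factor is a cluster variable of the same type, so $q(x)<0$. By the first item of Proposition~\ref{prop:box-pentabox-recursion}, $\Delta_{K_1,(4)}$ is exactly the discriminant of $q$ restricted to $M_4$. Since that binary form is indefinite, $\Delta_{K_1,(4)}>0$. With this computation supplied, you would have a self-contained argument for the first half; the paper simply cites \cite[Section~4]{ALS} here.

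\textbf{Resultant.} Here the gap is more serious. Written as a symmetric function of the roots, $\langle EX^{(+)}\rangle\langle EX^{(-)}\rangle$ equals ${\rm Res}_x(f,\langle EX\rangle)/a_2^2$. That is just $R_{K_1,(5)}$ times known positive factors. So ``show this form is subtraction-free by Pl\"ucker exchanges'' restates the problem rather than reducing it, and you offer no mechanism for it.

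The paper settles this half by computation. It uses a birational positive chart $\RR_{>0}^{24}\to\Gr_{>0}(4,10)$ coming from a plabic graph. Under it, the $5\times 5$ Gram determinant becomes a degree-$30$ polynomial whose $5917$ coefficients are all positive.

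If you want to keep your geometric route, you need an actual reason why $M_5$ meets neither transversal on positive data. For instance, copositivity of the four-mass promotion maps (Lemma~\ref{lem:4mb_pos}, resting on \cite[Corollary~10.12]{VRC}) gives $\det[v^\pm|w^\pm|z_9|z_{10}]\neq 0$. Connectedness of $\Gr_{>0}(4,10)$, together with one evaluation, then fixes the sign.

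\textbf{Incident cases.} These are not part of the statement, which concerns $M_i=z_{2i-1}z_{2i}$. Your claim there is also wrong. Up to sign, $\langle M_2M_3\rangle\langle M_1M_4\rangle-\langle M_1M_3\rangle\langle M_2M_4\rangle$ equals $\langle 245|13|267\rangle$. This is an irreducible quadratic cluster variable of $\Gr(4,7)$ (Example~\ref{ex:3massbox}), not a single product of minors.
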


\begin{proof}
For the LS discriminant $\Delta_{K_1,(4)}({\bf M})$, this result was established in
\cite[Section 4]{ALS}.
We here present a proof for the SLS resultant $R_{K_1,(5)}({\bf M})$.
It rests on the birational parametrization 
$\RR_{>0}^{24} \rightarrow {\rm Gr}_{>0}(4,10)$ of the positive Grassmannian that is given by
the $4 \times 10$ matrix
$$ \! Z =\,\begin{tiny} \begin{bmatrix}
\,\,1 & 0 &  0 &  0 \smallskip \,\, \,\, \\ \,\,
g+k+o+s+v+x &  1 &  0 &  0 \medskip \,\, \,\,  \\ \,
gh{+}(g{+}k)l{+}(g{+}k{+}o)p{+}(g{+}k{+}o{+}s)t{+}(g{+}k{+}o{+}s{+}v)w &
d+h+l+p+t+w &  1 &  0 \smallskip \,\, \,\, \\ \,
{ghi{+}(gh{+}(g{+}k)l)m{+}(gh{+}(g{+}k)l{+}(g{+}k{+}o)p)q{+}  \atop
(gh{+}(g{+}k)l{+}(g{+}k{+}o)p{+}(g{+}k{+}o{+}s)t)u }&
{de{+}(d{+}h)i{+}(d{+}h{+}l)m{+} \atop (d{+}h{+}l{+}p)q{+}(d{+}h{+}l{+}p{+}t)u }&
b+e+i+m+q+u &  1 \medskip \,\, \,\, \\ \,\,
(ghi+(gh{+}(g{+}k)l)m+(gh{+}(g{+}k)l+(g{+}k{+}o)p)q)r &
(de{+}(d{+}h)i{+}(d{+}h{+}l)m{+}(d{+}h{+}l{+}p)q)r &  (b+e+i+m+q)r &
r \smallskip \,\, \,\, \\ \,\, (ghi{+}(gh{+}(g{+}k)l)m)n &
(de{+}(d{+}h)i{+}(d{+}h{+}l)m)n &  (b+e+i+m)n &
n  \medskip \,\, \,\, \\ \,\,
ghij &  (de+(d+h)i)j &  (b+e+i)j &  j \smallskip \,\, \,\, \\ \,\,
0 &  def &  (b+e)f &  f \medskip \,\, \,\, \\ \,\,
0 &  0 &  bc &  c \smallskip \,\, \,\, \\ \,\, 0 & 0 &  0 &  a \,\, \\
\end{bmatrix}^t\!\! . \end{tiny} $$
This parametrization is associated with the affine permutation $(5, 6, 7, 8, 9, 10, 11, 12, 13, 14)$
in the theory of positroids. We found the matrix $Z$ with Bourjaily's {\tt Mathematica} package~\cite{mathematica_plabic}.
Writing $M_i$ for the line spanned by columns $z_{2i-1}$ and $z_{2i}$, we now
substitute $Z$ into the Gram determinant (\ref{eq:resultant_5_lines}).
The determinant evaluates to a polynomial of degree $30$ in the $24$ parameters $a,b,\ldots,x$.
We see that each of its $5917$ monomials has a positive coefficient.
\end{proof}

Consider a substitution map $\bar{\varphi}: \Mat_{4,2d} \rightarrow \Mat_{4,2d'}$, with $d > d' \geq 2$. 
Then $\bar{\varphi}$ extends~to:
\begin{itemize}
    \item a map $\tilde{\varphi}: \Gr(4,2d) \rightarrow \Gr(4,2d')$, between Grassmannians $\Gr(4,2s) \simeq \Mat_{4,2s}/{\rm GL}(4)$;
    \item a map $\varphi: \Gr(2,4)^d \rightarrow \Gr(2,4)^{d'}$, between spaces of lines $\Gr(2,4)^{s} \simeq \Mat_{4,2s}/{\rm GL}(2)^s$.
\end{itemize}
If the map $\bar{\varphi}$ sends totally positive matrices to totally positive matrices, then $\tilde{\varphi}$ sends the positive Grassmannian to the positive Grassmannian, and $\varphi$ sends positive line
configurations to positive line configurations.
If this holds, then we say that these maps are \emph{copositive}. 

\begin{lemma}\label{lem:4mb_pos}
The four-mass box substitution maps $\varphi_{\pm}$ in~\eqref{eq:four_box_subs_maps} are copositive.  
\end{lemma}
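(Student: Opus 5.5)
To prove Lemma~\ref{lem:4mb_pos}, I would work at the level of the matrix map $\bar\varphi_{\pm}$. The input is $Z=[z_1|\cdots|z_{2d}]\in{\rm Mat}^{>0}_{4\times 2d}$, with $M_i^{(1)}=z_{2i-1}z_{2i}$ for $i=1,\dots,4$ and the remaining columns giving ${\bf M}^{(2)}$. The output replaces the first eight columns by two vectors $\ell_1,\ell_2$ spanning $L^{\pm}({\bf M}^{(1)})$. One has to show that some choice of representatives $\ell_1,\ell_2$, in the correct position in the cyclic order, makes the new $4\times 2d'$ matrix totally positive.

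\textbf{Step 1: an explicit parametrization of the output.} Using \eqref{eq:Xpm} with $D=d_1d_2=z_7z_8$, we have $p(x)=z_7+xz_8$ and $X=(p(x)B)\star(p(x)C)$. This line passes through $p(x)$, so I would take $\ell_1=p(x^{(\pm)})=z_7+x^{(\pm)}z_8$. For $\ell_2$ I would take the intersection of $X$ with the plane $z_1z_2z_3$ (or an adjacent plane), written in the Grassmann--Cayley algebra as a combination of $z_1,z_2,z_3$ with coefficients $4\times4$ brackets linear in $p(x)$. Here $x^{(\pm)}$ is given by the quadratic formula with coefficients \eqref{eq:remarkablycluster}. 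The quantity $a_1^2-4a_0a_2=\Delta_{K_1,(4)}$ is positive on positive data by Proposition~\ref{prop:copositivity_resultant}. Hence $\sqrt{\Delta}$ is real and $x^{(\pm)}$ is real.

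\textbf{Step 2: sign control of the new minors.} Every maximal minor of the output matrix is either an old minor, which is positive, or a bracket involving $\ell_1$ and/or $\ell_2$. By multilinearity, each new bracket expands as $\alpha+\beta\,\sqrt{\Delta}$, where $\alpha,\beta$ are polynomials in the Plücker coordinates of $Z$, divided by a power of $a_2$. I would first check the signs of the chain polynomials $a_0,a_2$ and of $a_1$: substituting the positroid parametrization (as in the proof of Proposition~\ref{prop:copositivity_resultant}) should give subtraction-free expressions, with $a_0,a_2>0$ and $a_1<0$, say. This gives a definite sign to $x^{(\pm)}$, and one fixes the orientation of $\ell_1$ accordingly. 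Then I would show positivity of each new bracket $\alpha\pm\beta\sqrt{\Delta}$ in one of two ways: either both $\alpha>0$ and $\alpha^2-\beta^2\Delta>0$, where the latter should factor as a product of positive brackets, or $\alpha,\beta$ both have the correct sign.

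\textbf{Step 3: reduction to finitely many checks.} By cyclic and dihedral symmetry of the box, and because only minors containing $\ell_1$ or $\ell_2$ are new, it suffices to treat brackets $\langle \ell_1\ell_2 z_jz_k\rangle$ and $\langle \ell_i z_jz_kz_m\rangle$ with $j,k,m$ among the columns of ${\bf M}^{(2)}$. By the Plücker relations, each of these reduces to positivity of a few universal expressions in $Z|_{{\bf M}^{(1)}}$ paired with arbitrary further positive vectors. These can be verified on ${\rm Gr}_{>0}(4,10)$ or ${\rm Gr}_{>0}(4,12)$ by the subtraction-free test used in Proposition~\ref{prop:copositivity_resultant}.

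\textbf{Main obstacle.} I expect the hard part to be handling the square root in Step 2, that is, proving $\alpha^2-\beta^2\Delta>0$. It also requires choosing the representatives $\ell_1,\ell_2$, with the right normalization $\varepsilon$ from \eqref{eq:4box_extr_fact} and the right orientation, so that the signs come out uniformly for both $\pm$. My first attempt would be to identify $\alpha^2-\beta^2\Delta$ with a product of chain polynomials, or with a resultant such as $R_{K_1,(5)}$. That expression is copositive by Proposition~\ref{prop:copositivity_resultant}, and then a single evaluation point would fix the sign of $\alpha$.
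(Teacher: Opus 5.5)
Your set-up is sound: $\ell_1=X^\pm\cap M_4$ and $\ell_2=X^\pm\cap M_1$ are the representatives the paper uses (cf.\ Example~\ref{ex:4mb_promotion}). Since every new minor involves at most $11$ columns of a totally positive matrix, it does suffice to work on $\Gr_{>0}(4,10)$ and $\Gr_{>0}(4,11)$. But the step that carries the whole content of the lemma, namely positivity of the new minors, is only hoped for, and both mechanisms you offer for it fail.

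\textbf{The sign pattern is not what you expect.} For $x\ge 0$ the matrix $[z_1|\cdots|z_6|z_7+xz_8]$ is again totally positive, and $f(x)$ is exactly the quantity $a_2$ computed for that configuration. So $f(x)>0$ for all $x\ge0$, and also $a_2>0$. Hence $a_0,a_1,a_2$ are all positive; on the moment curve $z_i=(1,i,i^2,i^3)$ one finds $f(x)\propto 35x^2+28x+5$. Both roots $x^{(\pm)}$ are therefore negative. So $\ell_1$ lies outside the cone spanned by $z_7,z_8$, and by the mirror argument $\ell_2$ lies outside the cone spanned by $z_1,z_2$. No orientation makes $\langle \ell_1 z_jz_kz_m\rangle=\langle 7jkm\rangle+x^{(\pm)}\langle 8jkm\rangle$ manifestly positive. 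What is needed is the uniform bound $|x^{(\pm)}|<\langle 7jkm\rangle/\langle 8jkm\rangle$ over all positive extensions.

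\textbf{The norm does not factor as you hope.} For these minors, $\alpha^2-\beta^2\Delta$ is, up to a power of $a_2>0$, the quartic $\langle p_*12\,|\,34\,|\,p_*56\rangle$ with $p_*=(78)\star(jkm)$. Geometrically, this is the quadric through $M_1,M_2,M_3$ evaluated at the point where $M_4$ meets the plane $z_jz_kz_m$. It is neither a product of brackets nor a five-line resultant, so Proposition~\ref{prop:copositivity_resultant} tells you nothing about it. Your guess is right only for $\langle \ell_2\ell_1 z_jz_k\rangle$: there the norm is, up to a factor depending on $z_1,\dots,z_8$ alone, $R_{K_1,(5)}(M_1,\dots,M_4,z_jz_k)$. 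You would still need to prove positivity of the quartic on $\Gr_{>0}(4,11)$. Once that is done, the sign of $\alpha$ follows by connectedness, because $\alpha^2>\beta^2\Delta\ge 0$ forces $\alpha\neq0$. Until then the proposal proves nothing about the single-$\ell$ minors.

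\textbf{What the paper does instead.} The paper avoids this computation entirely. It notes that $\varepsilon$ in \eqref{eq:4box_extr_fact} does not vanish on the positive part, since its square is a Laurent monomial in cluster variables of $\Gr(4,8)$; hence $X^\pm=L^\pm$ as lines there. It then identifies $[v^\pm|w^\pm|z_9|\cdots|z_{2d}]$ with the output of the four-mass promotion maps of \cite{VRC}, whose copositivity is \cite[Corollary 10.12]{VRC}. That corollary is exactly the ingredient missing from your plan. A self-contained version of your route would have to supply positivity of the quartic above and of its mirror image for $\ell_2$.

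One minor point: $X\cap(z_1z_2z_3)$ has coefficients quadratic in $p$. The formula linear in $p$ that you want is $\ell_2=(12)\star(p\,34)$.
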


\begin{proof}
Let $[z_1|z_2|\cdots|z_{2d}] \in {\rm Mat}^{>0}_{4 \times 2d}$ and  $M_i = z_{2i-1} z_{2i}$.
Then $\varphi_\pm({\bf M})$ is the configuration of
 lines spanned by consecutive pairs of columns in the matrix $\widetilde{\mathbf{z}}^\pm = [v^{\pm}|w^{\pm}|z_{9}|z_{10}|\ldots|z_{2d-1}|z_{2d}]$, 
 with $X^{\pm} = v^{\pm} w^{\pm}$ as in \eqref{eq:Xpm}
are two transversals to the four lines $M_1, \ldots, M_4$. On these lines we selected the points
  $v^\pm = X^\pm \cap M_1$ and $w^\pm = X^\pm \cap M_2$.
  Note that $X^{\pm} = L^{\pm}$ as lines, from above~\eqref{eq:4box_extr_fact}, and~\eqref{eq:4box_extr_fact} is non-vanishing on the positive Grassmannian as its square is a Laurent monomial in cluster variables on ${\rm Gr}(4,8)$. It follows that $\varphi_{\pm}$ are copositive if and only if the $4 \times (2d-6)$ matrices $\widetilde{\mathbf{z}}^\pm$ are both totally positive. The latter point follows from the results in \cite[Section 10]{VRC}. The
maps associated to $\widetilde{\mathbf{z}}^\pm$ are essentially the same~as the \emph{$4$-mass promotion maps} $\tilde{\varphi}_\pm: {\rm Gr}(4,n) \rightarrow {\rm Gr}(4,n')$, with $n=2d$ and $n'=n-4$, mapping $[z_1|z_2|\cdots|z_n]$ to
$[z_1|w^\pm|v^\pm|z_8|z_9|\cdots|z_n]$. By \cite[Corollary 10.12]{VRC},
the maps $\tilde{\varphi}_\pm$ are copositive, and hence so are $\varphi_{\pm}$.
We will discuss promotion maps extensively in Section \ref{sec:promotion_maps}.
\end{proof}

We can combine the copositivity of $\varphi_{\pm}$
with Proposition~\ref{prop:rec_fibers} to obtain the following.

\begin{theorem}\label{th:real_trees}
The Reality Conjecture~\ref{conj:reality_conjecture} is true when the graph $G$ is a tree.   
\end{theorem}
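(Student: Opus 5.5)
We argue by induction on the number $\ell$ of vertices of the tree $G$, peeling off a leaf at each step with the four-mass box recursion. Fix a planar Landau diagram $\mathcal{L}=G_u\cup H_u$ with positive external data ${\bf M}$.

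For the base case $\ell=1$, the fiber consists of the transversals to $u_1=4$ lines, possibly with incidences imposed by $H_u$. When $H_u$ is empty, the two transversals are real and distinct exactly when the Gram determinant \eqref{eq:discriminant_4_lines} is positive. This is the Grassmann copositivity of Proposition~\ref{prop:copositivity_resultant}. When two consecutive external lines meet, the discriminant becomes the square \eqref{eqn:rational-box}. The two solutions \eqref{eq:3_mass_sol} are then given rationally by Grassmann--Cayley formulas in real data, so they are real. Positivity should also keep them distinct, since the factor $\langle M_2M_3\rangle\langle M_1M_4\rangle-\langle M_1M_3\rangle\langle M_2M_4\rangle$ is a cluster variable for ${\rm Gr}(4,n)$ and hence nonzero on ${\rm Gr}_{>0}$. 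The one-mass and further degenerate cases are handled the same way.

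For the inductive step, recall that for a tree every term $u$ of $[V_G]$ has some $u_i=4$, as noted after Corollary~\ref{cor:rec_discr_4box}. I expect this vertex can be chosen to be a leaf, or at least so that the split is reducible. We take $V_1=\{i\}$ and $V_2=[\ell]\setminus\{i\}$. Then $d_1=u_i=4=\dim V_{K_1}$, so $\mathcal{L}$ is reducible with respect to the box $\mathcal{L}_1$. The resulting $G_2$ is a forest in which the unique neighbor $j$ of $i$ receives the new external line $L_i^{\pm}$.

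Proposition~\ref{prop:rec_fibers} now writes the fiber $\psi^{-1}({\bf M})$ as a disjoint union over the two box solutions $L^\pm$ of the fibers $\psi_2^{-1}(\varphi_\pm({\bf M}))$. By the base case, $L^\pm$ are real and distinct. By Lemma~\ref{lem:4mb_pos}, $\varphi_\pm({\bf M})$ is again a positive line configuration. The induction hypothesis applied to $\mathcal{L}_2$ therefore says each of these fibers is real of full size $\gamma_{\mathcal{L}_2}$. Since $\gamma_u=2\gamma_{\mathcal{L}_2}$ and the two pieces are disjoint because $L^+\neq L^-$, the whole fiber has $\gamma_u$ real points.

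The main obstacle is checking that $\varphi_\pm({\bf M})$ is positive in the precise sense of the definition, together with its bookkeeping. The new line $L^\pm$ must sit in the correct cyclic position, namely where the box attaches to vertex $j$. The incidences of $H_u$ restricted to $\mathcal{L}_2$ must correspond to consecutive columns. The outerplanar labeling must also be preserved.

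I would first arrange the cyclic labels so that the four box lines are $M_1,\dots,M_4$, spanned by $z_1,\dots,z_8$. Then $[v^\pm|w^\pm|z_9|\cdots]$ is exactly the matrix shown positive in Lemma~\ref{lem:4mb_pos}. For degenerate $H_u$, the same step needs the corresponding one- and two-mass promotion maps from \cite{VRC} in place of the four-mass ones, and there I would argue via the rational box solutions \eqref{eq:3_mass_sol}. A secondary issue is the forest case, which arises when a peeled vertex is not a leaf. It is handled by applying the argument to each component separately, because the fiber is a product over components.
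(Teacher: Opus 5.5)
Your argument is essentially the paper's proof: induction on $\ell$, splitting off a vertex with $u_i=4$ as a four-mass box via Proposition~\ref{prop:rec_fibers}, with Lemma~\ref{lem:4mb_pos} keeping $\varphi_\pm(\mathbf{M})$ positive so that the induction hypothesis applies. Such a vertex need not be a leaf (e.g.\ the path $1$--$2$--$3$ with $u=(3,4,3)$), and the paper handles this exactly as in your forest case, taking $G_2$ to be the union of the subtrees attached to $i$; your explicit base case, distinctness count and bookkeeping caveats go into more detail than the paper's proof does.
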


\begin{proof}
    We prove this by induction on $\ell$.
        Fix $u$ so that $|u|={\rm dim}(V_G)$. Since $G$ is a tree, $V_G$ is a complete intersection and its multidegree is given by~\eqref{eq:LS_CI}. Moreover, the tree $G$ has a vertex $i$ with $u_i=4$. Let $e_1,\ldots,e_s$ be the edges of $G$ connected to $i$. We apply Proposition~\ref{prop:rec_fibers} to $G_1 \cup E \cup G_2$, where $G_1$ is the one-vertex graph, $E=\{e_1,\ldots,e_s\}$, and $G_2=\cup_{i=1}^s\tilde{G_i}$ with the tree $\tilde{G_i}$ being the subgraph of $G$ connected to $i$ via the edge $e_i$. The claim follows by induction, where we combine
         Proposition~\ref{prop:rec_fibers} with Lemma~\ref{lem:4mb_pos}.
          In particular, Lemma~\ref{lem:4mb_pos} guarantees inductively t
          hat the  maps $\varphi_{\pm}$ are well-defined on
        the set of positive        configurations ${\bf M}$,
         so that the recursive construction of Proposition~\ref{prop:rec_fibers} works.
\end{proof}

\begin{remark} \label{rmk:allowfor}
In the Reality Conjecture \ref{conj:reality_conjecture}, we allow for the
possibility that two real points in $\psi_{\mathcal{L}}^{-1}({\bf M})$ collide if they come
from different irreducible components of $V_G$. Our conjecture concerns
each individual component $V_{G,\sigma}$ separately. Namely, whenever ${\bf M}$ is positive,
we want the fiber of the restricted Landau map $\psi_{\mathcal{L}}|_{V_{G,\sigma}}$
over ${\bf M}$ to be reduced and fully real.
\end{remark}

\begin{theorem} \label{thm:copositive}
Suppose that the Reality Conjecture  \ref{conj:reality_conjecture}
holds for a planar Landau diagram $\mathcal{L}$.
Then the LS discriminant for each irreducible component of $\,V_G$
is Grassmann copositive.
\end{theorem}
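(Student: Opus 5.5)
The plan is to show that $\Delta_{\mathcal{L},\sigma}$, the LS discriminant of a fixed irreducible component $V_{G,\sigma}$, has no zeros on the positive region. Since that region is connected, a sign-constancy argument then upgrades this to copositivity. We pull the positive region back to positive $4\times n$ matrices $Z\in{\rm Mat}^{>0}_{4\times n}$, where $M_i=z_{p_i}z_{p_i+1}$ as in the definition of positivity. We then regard $\Delta_{\mathcal{L},\sigma}(\mathbf{M}(Z))$ as a polynomial function on ${\rm Mat}^{>0}_{4\times n}$. Because each $M_i$ appears with a fixed multidegree, changing the representative of each line rescales this function by a power of a positive $2\times 2$ determinant. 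Hence its sign descends to a well-defined sign on the corresponding subset of ${\rm Gr}_{>0}(4,n)$, and that subset is connected (it is a cell, homeomorphic to an open ball).

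First, nonvanishing. By definition, the Hurwitz form $\Delta_{\mathcal{L},\sigma}$ vanishes at $\mathbf{M}$ exactly when the fiber of $\psi_{\mathcal{L}}|_{V_{G,\sigma}}$ over $\mathbf{M}$ fails to consist of the expected number of reduced points. This covers two points colliding, and also the fiber degenerating, for instance becoming positive-dimensional or losing a point to infinity. By the Reality Conjecture, in the per-component form of Remark \ref{rmk:allowfor}, this fiber is reduced of full cardinality for every positive $\mathbf{M}$. So $\Delta_{\mathcal{L},\sigma}(\mathbf{M})\neq 0$ on the positive region. Mixed factors are deliberately excluded: collisions across different components are allowed by the remark, which is why the statement concerns each component separately.

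Second, sign. The function is real-valued, since $\Delta_{\mathcal{L},\sigma}$ has rational coefficients (it is defined over $\QQ$, up to scaling) and $Z$ is real. It is continuous and nonvanishing on a connected set, so it has constant sign. After multiplying by $\pm1$ it is strictly positive, which is Grassmann copositivity.

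The main obstacle is the first step: rigorously identifying the zero locus of the multigraded Hurwitz form with the locus of non-reduced or degenerate fibers. In particular we must rule out that a positive $\mathbf{M}$ gives a reduced fiber of the right size and yet hits $\Delta_{\mathcal{L},\sigma}=0$ because of the restriction of the Hurwitz form to ${\rm Gr}(2,4)^d$. I would handle this with the characterization in \cite{PSS}: the Hurwitz form vanishes exactly when the linear section is non-transverse or non-finite. The definition of $\Delta_{G,u}$ as the locus where fiber points come together then confirms that, on the locus where the fiber is finite of degree $\gamma_u$, vanishing is equivalent to non-reducedness. A secondary point is that the positive region should be connected when incidences are imposed, i.e. when $p_{i+1}=p_i+1$. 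I would check this by noting it is the image of the connected set ${\rm Mat}^{>0}_{4\times n}$ under a continuous map.
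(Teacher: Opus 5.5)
Your proposal is correct and follows the paper's proof: the Reality Conjecture, read per component as in Remark~\ref{rmk:allowfor}, keeps every positive $\mathbf{M}$ out of the branch locus of $\psi_{\mathcal{L}}|_{V_{G,\sigma}}$, so the irreducible LS discriminant has no zeros on the connected positive region (the paper cites that ${\rm Gr}_{>0}(4,2d)$ is a ball), and normalizing the sign at one point gives copositivity. Your extra details (the zero locus of the Hurwitz form, real coefficients, and connectedness when incidences force $n<2d$) fill in points the paper leaves implicit, with one slip: the sign descends to ${\rm Gr}_{>0}(4,n)$ because of the ${\rm GL}^+(4)$-action, under which the invariant $\Delta$ scales by a power of $\det g>0$, not because of rescaling individual lines, and this descent is unnecessary anyway since ${\rm Mat}^{>0}_{4\times n}$ is itself connected.
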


\begin{proof}
Let $\Delta_{\mathcal L}$ be the irreducible LS discriminant associated with the component $V_{G,\sigma}$. By Conjecture~\ref{conj:reality_conjecture}, for every positive ${\bf M}\in {\rm Gr}_{>0}(4,2d)$ the fiber $\psi_{\mathcal L}^{-1}({\bf M})$ consists of exactly the LS degree many real points. In particular, no such ${\bf M}$ lies in the branch locus of $\psi_{\mathcal L}$ for the component $V_{G,\sigma}$.
Hence $\Delta_{\mathcal L}({\bf M})\neq 0$ for all
${\bf M}\in {\rm Gr}_{>0}(4,2d)$.
Since ${\rm Gr}_{>0}(4,2d)$ is connected (indeed, it is homeomorphic to a ball~\cite{postnikov}), the continuous function $\Delta_{\mathcal L}$ has constant sign there. Fixing its overall sign to be positive at one point of ${\rm Gr}_{>0}(4,2d)$, we conclude that $\Delta_{\mathcal L}({\bf M})>0$ for all positive ${\bf M}$. Hence $\Delta_{\mathcal L}$ is Grassmann copositive.
\end{proof}


\begin{corollary} \label{cor:LLDtree}
 The LS discriminant is Grassmann copositive whenever the graph $G$ is a~tree.
\end{corollary}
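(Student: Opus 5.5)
The corollary follows by chaining two earlier results: Theorem~\ref{th:real_trees}, which establishes the Reality Conjecture~\ref{conj:reality_conjecture} for trees, and Theorem~\ref{thm:copositive}, which derives copositivity of LS discriminants from the Reality Conjecture. Let $G$ be a tree and fix a planar Landau diagram $\mathcal{L} = G_u \cup H_u$ with $|u| = {\rm dim}(V_G)$. A tree has no triangles, so $V_G$ is irreducible by Theorem~\ref{thm:outerplanar}. Hence there is only one component $V_{G,\sigma}$, and the LS discriminant $\Delta_{\mathcal{L}}$ is itself the irreducible discriminant of that component. In particular, no mixed factors of the kind seen in Theorem~\ref{thm:16168} can occur.

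Next, Theorem~\ref{th:real_trees} gives that for every positive ${\bf M}$ the fiber $\psi_{\mathcal{L}}^{-1}({\bf M})$ consists of exactly $\gamma_u$ real points. We then feed this into Theorem~\ref{thm:copositive}. Its proof shows that $\Delta_{\mathcal{L}}$ does not vanish on the positive region, and connectedness of ${\rm Gr}_{>0}(4,2d)$ then gives a constant sign, which we normalize to be positive. Regularity on the positive region holds because $\Delta_{\mathcal{L}}$ is a polynomial in the Pl\"ucker coordinates.

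The only delicate point is that the recursion underlying Theorem~\ref{th:real_trees} must keep every fiber reduced, and not merely real, as Remark~\ref{rmk:allowfor} requires. I would check this inductively using Corollary~\ref{cor:rec_discr_4box}. Write $\Delta_{\mathcal{L}} = \prod_{\pm}\Delta_{\mathcal{L}_2}(\varphi_\pm({\bf M}))$. Here the base case $\Delta_{K_1,(4)}$ is Grassmann copositive by Proposition~\ref{prop:copositivity_resultant}, and $\varphi_\pm$ preserve positive configurations by Lemma~\ref{lem:4mb_pos}. Each factor is therefore positive by induction on $\ell$, which gives copositivity directly and confirms reducedness.

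I expect the main obstacle to be the positive external degenerations $H_u$ for which $\mathcal{L}_2$ is no longer literally a tree diagram with generic positive data. In that situation I would invoke Theorem~\ref{th:real_trees} as stated, since it already covers arbitrary planar $\mathcal{L}$ on a tree $G$.
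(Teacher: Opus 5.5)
Your proposal is correct and follows the paper's route. The corollary is obtained by feeding Theorem~\ref{th:real_trees} (the Reality Conjecture for trees) into Theorem~\ref{thm:copositive}, with $V_G$ irreducible because a tree has no triangles. Your extra inductive reducedness check is harmless but not needed: Conjecture~\ref{conj:reality_conjecture}, as proved for trees, already states that the fiber over positive ${\bf M}$ has exactly $\gamma_u$ points, so no collisions occur there.
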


We now turn to Grassmann copositivity for SLS resultants.
This was shown for the pentagon diagram ($\ell=1$) in Proposition \ref{prop:copositivity_resultant}.
We conjecture that Grassmann copositivity holds for
the multigraded Chow form of
every irreducible variety $V_{G,\sigma}$, provided $G$ is outerplanar.
 In other words, 
 the conclusion of Theorem \ref{thm:copositive} is expected to be valid
for all super-leading Landau diagrams. 
What we can show here is the following analogue to 
Corollary \ref{cor:LLDtree}.

\begin{proposition}
 The SLS resultant is Grassmann copositive whenever the graph $G$ is a~tree.
\end{proposition}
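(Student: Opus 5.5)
The plan is to mirror the proof of Theorem~\ref{th:real_trees} and Corollary~\ref{cor:LLDtree}, using the recursive factorization of Corollary~\ref{cor:rec_res_4box} instead of a reality argument. We induct on the number $\ell$ of vertices of the tree $G$.

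\emph{Base case.} For $\ell=1$ the SLS resultant is $R_{K_1,(5)}$, the $5\times 5$ Gram determinant (\ref{eq:resultant_5_lines}). It is Grassmann copositive by Proposition~\ref{prop:copositivity_resultant}.

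\emph{Finding a box vertex.} For $\ell\geq 2$, fix $u$ with $|u|=d+1$. By (\ref{eq:LS_CI}), the multidegree of $V_G$ is $2^\ell t_1\cdots t_\ell\prod_{ij\in G}(t_i+t_j)$, and $|G|=\ell-1$. The exponent of $t_i$ in any monomial is therefore at most $1+\deg_G(i)$, and the exponents sum to $2\ell-1$. The monomials of $[V_G]$ give the terms $u'$ with $|u'|=d$, and then $u=u'+e_j$ for some $j$. A leaf $i$ has exponent at most $2$, so it satisfies $u'_i\geq 3$. Averaging the exponents shows that some vertex has exponent $1$, so some vertex satisfies $u'_i=4$. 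We choose such an $i$ with $u_i=4$ (if $u'_j$ is already $4$ at $j=i$, we pick another vertex or argue with $u_i=5$ separately).

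\emph{Splitting and recursing.} Take $G_1=\{i\}$ and let $E$ be the edges at $i$. The diagram is then reducible with respect to the box $\mathcal{L}_1$. Corollary~\ref{cor:rec_res_4box} gives
\[
R_{\mathcal{L}}(\mathbf{M})\,=\,R_{\mathcal{L}_2}(\varphi_+(\mathbf{M}))\cdot R_{\mathcal{L}_2}(\varphi_-(\mathbf{M})).
\]
Here $\mathcal{L}_2$ is the super-leading diagram on the forest $G\setminus\{i\}$, and $L^{\pm}$ is attached as an external line at each neighbour of $i$. The SLS resultant of a forest should factor as a product over its components: one factor is the resultant of the overdetermined component, and the remaining components contribute trivially or through their own Chow forms. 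Each factor is the SLS resultant of a smaller tree and is copositive by the induction hypothesis. This step needs care, because the same line $L^\pm$ is attached to several vertices. I would handle it by taking several points on $L^\pm$, repeating consecutive columns so that the configuration stays positive, exactly as in the promotion-map setup.

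\emph{Conclusion.} By Lemma~\ref{lem:4mb_pos}, $\varphi_\pm$ maps positive configurations to positive configurations. Hence each factor $R_{\mathcal{L}_2}(\varphi_\pm(\mathbf{M}))$ is regular and nonzero on positive $\mathbf{M}$. Connectedness of ${\rm Gr}_{>0}(4,2d)$, as in the proof of Theorem~\ref{thm:copositive}, then fixes a constant sign, so the product is copositive.

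\emph{Main obstacle.} The hardest part is regularity. A priori $\varphi_\pm$ involves square roots, and we must ensure that $L^\pm$ stays real and distinct on the positive region. This is where Lemma~\ref{lem:4mb_pos}, i.e.\ the copositivity of the $4$-mass promotion maps from \cite{VRC}, is essential, together with the positivity of $\Delta_{K_1,(4)}$. A second issue is ordering the columns of $\widetilde{\mathbf z}^\pm$ so that the lines $L^\pm$ occupy the correct cyclic positions when $i$ has degree at least $2$. I would first try the case where $i$ is adjacent to a leaf, so that after removing $i$ the tree stays connected on one side, and handle the remaining components through the forest factorization.
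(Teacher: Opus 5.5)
Your argument is the paper's proof. It runs by induction on $\ell$ with base case Proposition~\ref{prop:copositivity_resultant}, picks a vertex $i$ with $u_i=4$, removes it via Corollary~\ref{cor:rec_res_4box}, and uses Lemma~\ref{lem:4mb_pos} to keep the substituted data positive. Your connectedness step correctly absorbs the normalization of $L^{\pm}$. Two loose ends remain, and one of your proposed fixes should be dropped.

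\emph{The case you defer.} Suppose the only vertex with $u'_i=4$ is the one that received the extra line. Then $u_i=5$, and there may be no box vertex at all, e.g.\ $G=K_2$, $u=(5,3)$. The paper handles this by passing to a super-leading subdiagram, and the reduction follows directly from Theorem~\ref{thm:osserman}. For $j\neq i$ the vector $u-e_j$ has $i$-th entry $5$, while every monomial of $[V_G]$ contains $t_i$, so $\gamma_{u-e_j}=0$. Hence $R_{\mathcal{L}}$ involves only the five lines at $i$. It is therefore a power of the pentagon resultant $R_{K_1,(5)}$, which is copositive by your base case.

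\emph{The shared line $L^{\pm}$.} Your worry that $L^{\pm}$ is attached at several vertices does not arise. Each component of $G\setminus\{i\}$ contains exactly one neighbour of $i$. Since $V_{G_2}$ is a product over these components, $R_{\mathcal{L}_2}$ is non-constant only if exactly one component $T$ is overdetermined by one and the others are leading. In that case $R_{\mathcal{L}_2}$ is a power of $R_T$, in which $L^{\pm}$ occurs only once. So drop the column-repetition idea; it would fail anyway, because repeated columns make some maximal minors vanish and destroy total positivity.

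\emph{Cyclic order.} Restrict $Z$ to the columns of the four box lines and of $T$. This submatrix is still totally positive, and in it the four box lines are cyclically consecutive. Lemma~\ref{lem:4mb_pos} then applies after a cyclic rotation, which does not change the lines. This places $L^{\pm}$ at the correct position in the planar order of $T$, so the induction hypothesis applies to each factor.
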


\begin{proof}
    The proof is again by induction on $\ell$.
    Since $\mathcal{L}$ is super-leading and $G$ is a tree, we can assume  that 
    there exists a vertex $i$ of $G$ with $u_i = 4$. In fact, if this is not the case, 
    there exists a super-leading subdiagram $\mathcal{L}^{'}$ of $\mathcal{L}$, 
    and $R_{\mathcal{L}}  = R_{\mathcal{L}^{'}}$.
    The proof is now similar to that of Theorem~\ref{th:real_trees}.
We use Corollary~\ref{cor:rec_res_4box} to reduce the tree by removing the vertex $i$.
\end{proof}

It is our aim  to extend the results above to graphs which are not trees.
One approach we pursued is computer experiments.
Namely, we undertook a thorough study of  the reality of fibers
using methods from numerical algebraic geometry.
In what follows, we
report on the computations we carried out
with the software {\tt HomotopyContinuation.jl}  \cite{BT}.

We first checked that the Reality Conjecture \ref{conj:reality_conjecture} holds for all outerplanar graphs 
with $\ell =3, 4$ vertices. For each graph $G$ and vector $u$ such that $\gamma_u \neq 0$, we solved the square system of equations which cut out $V_{G, u}$ for $10^5$ random choices of positive 
line configurations $\mathbf{M}$.
These  are spanned by a pair of consecutive columns of a totally positive matrix $Z$. We then verified that each of the LS degree many solutions is real, meaning that the imaginary part of the coordinates of each solution is less than some tolerance. For all such outerplanar graphs, we found only real solutions across our $10^5$ random positive external lines $\mathbf{M}$. 

We then checked numerically that the substitution maps for both $G = K_2$ and $u = (4,3)$ as well as $G = K_3$ and $u = (3,3,3)$ are copositive.
This positivity result means that we can use the graphs as `seeds' for the recursions as in Proposition \ref{prop:rec_fibers} to prove that certain
infinite families of graphs satisfies the Reality Conjecture  \ref{conj:reality_conjecture}.
For example, $G=K_3$ with $u=(4,3,2)$ can be obtained from a recursion from the pentabox $G = K_2$ and $u = (4,3)$.

To do this, we again solve the system of equations which cut out $V_{G, u}$ numerically for external lines $\mathbf{M}$ coming from a totally positive matrix $Z_1$ which is a submatrix of a $4 \times n$ totally positive matrix  $Z = [Z_1 | Z_2]$. In the case of $G = K_3$, the matrix $Z_1$ has size $4 \times 18$ and $Z_2$ has size $4 \times (n-18)$. We then solve the system for the lines $L_1, L_2, L_3$ and form the points $x = L_1 \cap M_1, ~ p = L_1 \cap L_2,~ y = L_2 \cap M_9$,
where $M_1, \ldots, M_9$ are the external lines attached to $G = K_3$ arranged in cyclic order. Lastly, we check that for each solution, the matrix $[x|p|y|Z_2]$ is totally positive, though it suffices to check for $n=21$, i.e. that the $4 \times 6$ matrix $[x|p|y|Z_2]$ is totally positive. This implies that the substitution maps associated to $G=K_3$ are copositive and suggests that Corollary \ref{cor:LLDtree} also holds for diagrams made by iteratively gluing together triangles. An analogous statement would hold for the pentabox~$G=K_2$. 

\section{Positroid Varieties}
\label{sec:positroid}

In this section we establish a connection between
incidences of lines in $\PP^3$
and Grassmannians in higher dimensions. 
LS degrees, discriminants and resultants have natural interpretations
in terms of positroid varieties \cite{positive_geom, mathematica_plabic}
and their amplituhedron maps \cite{amplituhdr, m4tiling}. Moreover, lines in the fibers of Landau maps are obtained from 
vector configurations
associated with positroids.

We fix a graph $G$ on $[\ell]$, and we assume for simplicity
that $G$ is outerplanar. 
The Landau diagram $ G_u$ is planar for all $u \in \NN^\ell$ with $|u| = d = {\rm dim}(V_G)$.
  We fix an irreducible component $V_{G,\sigma}$ of the variety $V_G$. 
  By Theorem \ref{thm:outerplanar},   this is indexed
  by a bicoloring $\sigma$ of the triangles in $G$.
 A black triangle means that three lines are concurrent. A white triangle means 
that they are coplanar. We denote by $V_{G,\sigma,u}$ the component of $V_{G_u}$ corresponding to $\sigma$, and by $\psi_{G,\sigma,u}$ the associated Landau map. 
The LS degree $\gamma_{G,\sigma,u}$ is the coefficient in the multidegree
$[V_{G,\sigma}]$ which counts the solutions to our Schubert problem.
The LS discriminant $\Delta_{G,\sigma,u}$ vanishes
when two solutions come together.
In this section,
we write $\Delta_{G,\sigma,u}$ as a polynomial in the entries
of a $4 \times n$ matrix ${\bf M}$ where $n=2d$ and the $d$
lines are spanned by consecutive~columns.

Let $r = r(G,\sigma)$ be the largest integer such that
all line incidences from $V_{G,\sigma}$ are realized by lines spanning the full projective space $\PP^{r-1}$.
Fix such a realization and pick $u_i$ generic points on its $i$-th line.
Let $M_{G,\sigma,u}$ be the rank $r$ matroid given by
these $d = |u|$ points in $\PP^{r-1}$.
Planarity ensures that, after relabeling, the matroid $M_{G,\sigma,u}$ is a positroid.
The dual of this positroid has rank $k := d-r$ on the same ground set $[d]$.
We extend this dual by $d$ new elements which are parallel to the given $d$ elements.
This yields a positroid $\Pi_{G,\sigma,u}$ of rank $k$ on $[n] = [2d]$.
The ranks $r$ and $k$ depend on choice of
coloring $\sigma$ of $G$. Namely, $r$ grows with the number of
black triangles. If all triangles are white then $r$ is small and $k$ is large.
We identify $\Pi_{G,\sigma,u}$  with its positroid variety in ${\rm Gr}(k,n)$.
The dimension of this variety is $4k$.

The ($m=4$) amplituhedron map $Z$ from
${\rm Gr}(k,n)$ to ${\rm Gr}(k,k+4)$ 
restricts to a finite-to-one map on the variety
$\Pi_{G,\sigma,u}$. Here $Z$ is any matrix whose
kernel is the row space of ${\bf M}$.
The degree of the map $Z : \Pi_{G,\sigma,u} \rightarrow {\rm Gr}(k,k+4)$ is the
{\em intersection number} of the positroid.
Our main result states that this
is a faithful  representation of our Schubert problem in $3$-space.

\begin{theorem} \label{thm:positroid}
The LS degree $\gamma_{G,\sigma,u}$ equals the intersection number
of the positroid $\Pi_{G,\sigma,u}$. Solutions to the Schubert problem
given by ${\bf M}$ correspond to subspaces
$\mathfrak{L} \in \Pi_{G,\sigma,u}$ such that
$\mathfrak{L} \subseteq {\rm kernel}({\bf M})$.
Hence, the LS discriminant $\Delta_{G,\sigma,u}$
equals the Hurwitz-Lam form of $\Pi_{G,\sigma,u}$.
\end{theorem}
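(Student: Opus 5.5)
The plan is to build an explicit, generically bijective correspondence between configurations $({\bf L},{\bf M})$ in $V_{G,\sigma,u}$ and pairs $(\mathfrak{L},{\bf M})$ with $\mathfrak{L}\in\Pi_{G,\sigma,u}$ and $\mathfrak{L}\subseteq\ker({\bf M})$. The enumerative and discriminantal claims then follow by transport of structure.

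\textbf{Step 1: from a line configuration to a subspace.} Fix a generic solution ${\bf L}$ of the Schubert problem for ${\bf M}$. For each external line $M_j$ attached to vertex $i$, the incidence $\langle L_i M_j\rangle=0$ gives a point $q_j\in L_i\cap M_j$. Write $q_j=\alpha_j z_{2j-1}+\beta_j z_{2j}$ in terms of the two columns spanning $M_j$. The $d$ points $q_1,\ldots,q_d$ span only a $\PP^{r-1}$ worth of freedom, because their incidences are exactly those of $V_{G,\sigma}$. Concretely, the linear relations among the $q_j$ are those of the matroid $M_{G,\sigma,u}$: points on a common $L_i$ satisfy the relations of a line, and concurrent or coplanar triangles impose the extra coincidences. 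This matroid has rank $r$, so the relations form a $k=d-r$ dimensional space $R\subseteq\mathbb{C}^d$. Lifting each relation $\lambda\in R$ to the vector in $\mathbb{C}^{2d}$ with entries $(\lambda_j\alpha_j,\lambda_j\beta_j)$ gives a $k$-dimensional subspace $\mathfrak{L}\subseteq\mathbb{C}^{n}$ with ${\bf M}\mathfrak{L}=0$.

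I then check that $\mathfrak{L}$ lies in $\Pi_{G,\sigma,u}$. The dual of a rank $r$ realization has rank $k$, and replacing each element $j$ by the parallel pair $(2j-1,2j)$, scaled by $(\alpha_j,\beta_j)$, realizes exactly the parallel extension used to define $\Pi_{G,\sigma,u}$. Planarity supplies the cyclic order needed for this to be a positroid cell.

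\textbf{Step 2: the inverse map.} Given $\mathfrak{L}\in\Pi_{G,\sigma,u}$ generic with $\mathfrak{L}\subseteq\ker{\bf M}$, the parallel pairs determine the ratios $(\alpha_j:\beta_j)$, hence the points $q_j\in M_j$. The columns $z_1,\ldots,z_n$ are then vectors whose relation space contains $\mathfrak{L}$. Contracting along the parallel classes, the $q_j$ satisfy the dual positroid's relations, so they realize $M_{G,\sigma,u}$ inside $\PP^3$. I recover $L_i$ as the span of the $q_j$ attached to vertex $i$. This needs $u_i\ge 2$; otherwise I use the incidences with neighbours, via the same positroid rank conditions. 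Finally I verify that the incidence relations of $G$ and the coloring $\sigma$ hold.

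This is the step I expect to be the main obstacle. A realization of $M_{G,\sigma,u}$ in $\PP^3$ is a degenerate projection of the generic realization in $\PP^{r-1}$, and I must show that generic points of the fiber don't acquire extra dependencies or lose the concurrency/coplanarity pattern. I would handle this by induction on the outerplanar structure, as in the proof of Theorem~\ref{thm:outerplanar}: remove a vertex of degree at most $2$ and track how the positroid changes, namely by a single-element extension or a rank-one change.

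\textbf{Step 3: degrees and discriminants.} Since $\dim\Pi_{G,\sigma,u}=4k=\dim{\rm Gr}(k,k+4)$, the amplituhedron map is generically finite. Its fiber over the point $Z$ with $\ker Z={\rm rowspan}({\bf M})$ is $\{\mathfrak{L}\in\Pi:\mathfrak{L}\subseteq\ker{\bf M}\}$. Steps 1 and 2 give a bijection between this fiber and $\psi_{G,\sigma,u}^{-1}({\bf M})$, and the bijection is algebraic in ${\bf M}$. Taking cardinalities for generic ${\bf M}$ gives $\gamma_{G,\sigma,u}$ equal to the intersection number.

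The branch loci of the two maps also coincide. The correspondence is étale where defined, because both sides are reduced zero-dimensional schemes cut out by equivalent equations. So two solutions merge on one side exactly when they merge on the other, except on a proper closed set where the correspondence degenerates; I would argue that this set is not a divisor, or is excluded by irreducibility. The Hurwitz–Lam form of \cite{PS} is by definition the irreducible polynomial defining this branch divisor, written in the coordinates ${\bf M}$. Since $V_{G,\sigma}$ is irreducible, $\Delta_{G,\sigma,u}$ is also irreducible, and therefore the two polynomials agree up to scalar.
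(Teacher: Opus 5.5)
There is a genuine gap in Step~1, and it carries over into Step~2. The points $q_j\in L_i\cap M_j$ lie in $\mathbb{C}^4$, so their relation space $R$ has dimension $d-\mathrm{rank}(q_1,\dots,q_d)\ge d-4$. The integer $r=r(G,\sigma)$, however, is defined through abstract realizations spanning $\PP^{r-1}$, and it can exceed $4$. For example, a path of four consecutively incident lines spans $\PP^4$. In the paper's own Fibonacci example the positroid lies in $\Gr(10,30)$, so $d=15$, $k=10$ and $r=5$. In such cases $\dim R=d-4>d-r=k$, so your $\mathfrak{L}$ is not a point of $\Gr(k,n)$ at all. Your assertion that ``the linear relations among the $q_j$ are those of $M_{G,\sigma,u}$'' is false there, because a rank-$r$ matroid with $r>4$ has no realization in $\PP^3$. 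For the same reason, the obstacle you isolate in Step~2 (showing that generic fiber points realize $M_{G,\sigma,u}$ in $\PP^3$ without extra dependencies) cannot be overcome: extra dependencies are forced, and no induction on the outerplanar structure will remove them. As written, your argument is correct only when $r\le 4$.

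The missing idea is that $\mathfrak{L}$ must be spanned by the \emph{local} relations forced by the incidence structure, not by all relations among the $q_j$. These are the relations among the $q_j$ on a single $L_i$, those passing through the intersection points $L_i\cap L_j$, and those inside each black or white region. Equivalently, you should lift the $\PP^3$ configuration to a realization $\hat q_j$ of $M_{G,\sigma,u}$ in $\PP^{r-1}$ of which it is a linear projection, and take the relation space of the lift. In the converse direction, for $\mathfrak{L}\in\Pi_{G,\sigma,u}$ with $\mathfrak{L}\subseteq\ker{\bf M}$, the Gale dual of the contracted $k\times d$ matrix gives points $\hat q_j\in\PP^{r-1}$, and $q_j=A\hat q_j$ for some $4\times r$ matrix $A$. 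Incidences and the coloring $\sigma$ then survive because they are preserved under projection, not because of a realization in $\PP^3$. You would still have to show that the local relations span exactly $k$ dimensions for generic solutions, and that the two constructions are mutually inverse. This includes vertices with $u_i<2$, where the intersection points must enter as auxiliary vectors. This is exactly what the paper's route packages. The white vertices of a vector-relation configuration on $\mathcal{G}_{G,\sigma,u}$ are these local relations, and its black vertices carry the intersection points (Proposition~\ref{prop:lines_VRC}). Then \cite[Theorem~6.8]{VRC}, quoted as Theorem~\ref{th:VRC_Amp}, assembles them into a point of $\Gr(k,{\bf z}^\perp)\cap\Pi_{G,\sigma,u}$.
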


Before we get to the proof of Theorem \ref{thm:positroid}, 
we need to explain the ingredients in the statement.
The {\em Hurwitz-Lam form} was defined in \cite[Section 5]{PS}
for any subvariety $\mathcal{V}$ of dimension~$4k$ in ${\rm Gr}(k,n)$.
For a general point $P \in {\rm Gr}(n-4,n)$, the
Grassmannian ${\rm Gr}(k,P)$ is a subvariety
of codimension $4k$ in ${\rm Gr}(k,n)$. The intersection
${\rm Gr}(k,P) \cap \mathcal{V}$ consists of finitely many 
points. Lemma 5.1 in \cite{PS} refers to this number as the
{\em Hurwitz-Lam degree} of $\mathcal{V}$. It appears as
a coefficient in the Schubert expansion of the
 class $[\mathcal{V}] \in A^*({\rm Gr}(k,n))$.
The Hurwitz-Lam form ${\rm HL}_\mathcal{V}$
is the locus of points $P$ where the intersection is not transverse
\cite[eqn (21)]{PS}.
Theorem \ref{thm:positroid} applies this to
the special case when $\mathcal{V}$ is the positroid variety $\Pi_{G,\sigma,u}$.
The branch locus of the amplituhedron map
$Z : \mathcal{V} \rightarrow {\rm Gr}(k,k+4)$ is obtained from
the Chow-Lam form ${\rm HL}_{\mathcal{V}}$
by substituting twistor coordinates, as explained in \cite[Theorem~5.2]{PS}.

\begin{example}[$\ell=3$]\label{ex:positroid_triangle}
Fix the triangle $G = K_3$ with $u=(3,3,3)$,
$d= {\rm dim}(V_G) = 9$, and $n=18$.
  Here $\sigma$ is either black ({\bf b}) or white ({\bf w}),
indicating whether the three lines are concurrent or coplanar.
We begin with $\sigma = {\bf w}$, so $V_{G,{\bf w},u} =  V_{[3]}^*$.
Then $r = r(G,{\bf w}) = 3$ because the three lines must lie in a plane $\PP^2$.
The rank $3$ matroid $M_{G,{\bf w},u}$ has three non-bases
$123$, $456$, $789$, namely the points on the three internal lines
that lie on the nine external lines. The dual $M_{G,{\bf w},u}^*$ has rank $k=6$,
with three non-bases $456789$, $123789$ and $123456$.
The common realization space of the two matroids has dimension $15$. 
Indeed, the three non-bases impose independent constraints on the
Grassmannian ${\rm Gr}(3,9) \simeq {\rm Gr}(6,9)$ which has dimension $18$.
We obtain the rank $6$ positroid $\Pi_{G,{\bf w},u}$ 
from $M_{G,{\bf w},u}^*$ by simply  duplicating each point.
In matroid language, we create $9$ pairs of parallel elements.
The realization space of $\Pi_{G,{\bf w},u}$
is the positroid variety in ${\rm Gr}(6,18)$. It has
dimension $4k=24 = 15+9$.
We now fix nine general lines in $\PP^3$. These external data are written as
a $4 \times 18$ matrix~${\bf M}$.
The subspace $P = {\rm kernel} ({\bf M})$ is an element in $ {\rm Gr}(14,18)$.
The sub-Grassmannian ${\rm Gr}(4,P)$ has dimension
$48$ in the $72$-dimensional ambient Grassmannian ${\rm Gr}(4,18)$,
and it intersects the positroid variety $\Pi_{G,\sigma,u}$ in eight points.
Indeed, the LS degree is $8$. This is the intersection number of
$\Pi_{G,\sigma,u}$, and hence Hurwitz-Lam degree in \cite[Lemma 5.1]{PS}.
Two of the eight points come together for special matrices ${\bf M}$.
The condition for this is the LS discriminant $\Delta_{G,\sigma,u}$,
which has degree $16$ in each column of ${\bf M}$. 
Theorem \ref{thm:positroid} identifies $\Delta_{G,\sigma,u}$
 with the Hurwitz-Lam form of $\Pi_{G,\sigma,u}$, which has degree $16$ in the Pl\"ucker
coordinates on ${\rm Gr}(14,18)$.

We next consider $\sigma = {\bf b}$, so $V_{G,{\bf b},u} =  V_{[3]}$.
Here, $r = r(G,{\bf b}) = 4$ because the three concurrent lines span $\PP^3$.
Now, $M_{G,{\bf b},u}$ is the transversal matroid of rank $4$ given by a matrix
$$ \begin{footnotesize} \begin{bmatrix}
\star & \star & \star & \star & \star & \star & \star & \star & \star \\
\star & \star & \star & 0 &0 &0 &0 &0 &0 \\
0 & 0 & 0 & \star & \star & \star & 0 &0 &0 \\
0 & 0 & 0 & 0 & 0 & 0 & \star & \star & \star 
\end{bmatrix}. \end{footnotesize} $$
Its dual has rank $k=5$. The common realization space has dimension $11$ in
${\rm Gr}(4,9) \simeq {\rm Gr}(5,9)$. We get the rank $5$ positroid $\Pi_{G,{\bf b},u}$ 
by duplicating each point. The variety 
$\Pi_{G,{\bf b},u}$ lives in ${\rm Gr}(5,18)$ and its
 dimension is $4k=20=11+9$. 
 We again fix nine lines
 in~$\PP^3$, written as a $4 \times 18$ matrix ${\bf M}$,
 and representing a point $P = {\rm kernel} ({\bf M})$ in $ {\rm Gr}(14,18)$.
 The sub-Grassmannian ${\rm Gr}(5,P)$ has dimension
$45$ in the $65$-dimensional Grassmannian ${\rm Gr}(5,18)$,
and it intersects the positroid variety $\Pi_{G,{\bf b},u}$ also in eight points.
The LS degree~$8$ is  the intersection number of $\Pi_{G,{\bf b},u}$.
Two points come together when ${\bf M}$ is
in the LS discriminant $\Delta_{G,{\bf b},u}$. This
has degree $16$ in each column of ${\bf M}$. 
Theorem \ref{thm:positroid} identifies $\Delta_{G,{\bf b},u}$
 with the Hurwitz-Lam form of $\Pi_{G,{\bf b},u}$, which has degree $16$ in the Pl\"ucker
coordinates on ${\rm Gr}(14,18)$.
\end{example}

We used geometric language to describe the passage from line incidences to positroid varieties.
A more precise combinatorial version is needed for the proof of  Theorem \ref{thm:positroid}.
For this, we employ  the \emph{vector-relation configurations} (VRCs) from \cite{AGPR, VRC},
and the representation of positroids by   \emph{plabic graphs} \cite{postnikov}.
  These are {\bf pla}nar {\bf bic}olored graphs with ordered boundary vertices.
In our construction, plabic graphs associated with uniform matroids $U_{k',n'}$ appear naturally as subgraphs. The case of primary interest is $k'=n'-2$, shown in Figure \ref{fig:grassmann_lines}. We depict the plabic graph for $U_{k',n'}$ by a degree-$n'$ vertex $v$ labeled by $k'$, and we refer to $h(v)=k'$ as its \emph{helicity}. In the special cases $U_{1,n'}$ and $U_{n'-1,n'}$, the vertex is drawn white and black, respectively. The resulting plabic graphs are also called \emph{Grassmann graphs} \cite{GrassGraph,postnikov}.

Starting from the graph $G$ on $[\ell]$, we construct
the plabic graph $\mathcal{G}:=\mathcal{G}_{G,\sigma,u}$ 
which represents the positroid $\Pi_{G,\sigma,u}$, as follows.
 On each vertex $i$ of $G$, place a vertex $v_i$ of $\mathcal{G}$.
For $j\in [\ell]$ belonging to the same black region of $\sigma$, connect every vertex $v_j$ to a common black vertex.
For every edge $ij$ of $G$ that does not lie in a black region, connect $v_i$ and $v_j$ by an edge.
For each boundary vertex $i$ of $G$, attach $u_i$ white tripods to $v_i$.
Finally, assign to each vertex $v_i$ the helicity $
h(v_i)={\rm deg}(v_i)-2$. Here the degree is counted after the previous step.

The positroid  $\Pi_\mathcal{G}$ is read off from the plabic graph $\mathcal{G}$ as follows.
 Its matroid bases are the source sets of {\em perfect orientations} of $\mathcal{G}$.
These are orientations of $\mathcal{G}$ such that every internal vertex $v$ has exactly $h(v)$ incoming edges.
The source set consists of those elements $i$ for which the edge adjacent to the boundary vertex $i$ is oriented away from $i$.
Each source set has cardinality $k$, and this is the rank of the positroid.
The positroid variety $\Pi_\mathcal{G}$ lives in the Grassmannian $\Gr(k,n)$.
Figure~\ref{fig:grassmann_graph_K_3} shows the plabic graphs for the positroids 
 in Example~\ref{ex:positroid_triangle}.

 \begin{figure}[htbp]
    \centering
\includegraphics[width=0.9\textwidth]{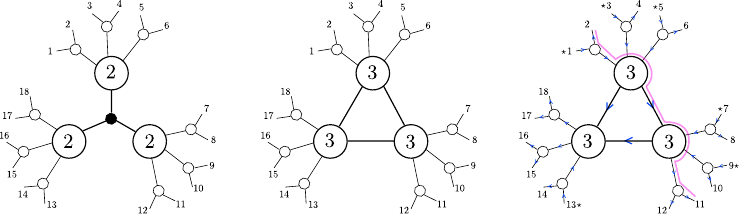}
    \caption{Plabic graphs $\mathcal{G}_{G,\sigma,u}$ for $\sigma={\bf b}$ on the left and
for    $\sigma = {\bf w}$ in the center.
     The perfect orientation for $\mathcal{G}_{G,{\bf w},u}$  on the right has
          the source set $\{1,3,5,7,9,13\}$. This is a basis of $\Pi_\mathcal{G}$. }
    \label{fig:grassmann_graph_K_3}
\end{figure} 

Another convenient way to give a  positroid $\Pi_\mathcal{G}$ is by a permutation $\pi_\mathcal{G}$ 
of the set~$[n]$.
Starting at a boundary vertex $i$ and travelling into $\mathcal{G}$, one turns $h(v)$ times clockwise whenever encountering a vertex $v$.
This path eventually terminates at a boundary vertex $\pi_\mathcal{G}(i)$ of $\mathcal{G}$.
The positroid $\Pi_{G,{\bf w},u}$ in Example~\ref{ex:positroid_triangle} is
thus encoded by the permutation $\pi_{G,{\bf w},u}=(2,11,4,15,6,1,8,17,10,3,12,7,14,5,16,9,18,13)$.
The path $2 \rightarrow 11$ shows $\pi_{G,{\bf w},u}(2)=11$.

With the positroid $\Pi_\mathcal{G}$, we associate a \emph{vector-relation configuration}
\cite[Definition 3.2]{VRC}. 
A VRC is  an assignment of vectors $v_b \in \mathbb{C}^4$ to the black vertices $b$ of $\mathcal{G}$ and coefficients $r_e \in \mathbb{C}^*$ to the edges $e$ such that, for each white vertex $w$, there is a linear relation $
\sum_{e=(wb)} r_e v_b=0$.
We consider VRCs up to \emph{gauge transformations},
by the natural  ${\rm GL}(4)$-action and rescalings that preserve the linear relations.
Let $z_1,\ldots,z_n$ be the vectors assigned to the boundary vertices of $\mathcal{G}$.
Then $
{\bf z}=[z_1|\cdots|z_n] \in \Gr(4,n)$,
and the vectors $v_b$ at the internal vertices determine points in $\PP^3$.
We denote by $\mathcal{C}^{\bf z}(\mathcal{G})$ the set of VRCs of $\Pi_\mathcal{G}$ with fixed boundary ${\bf z}$.

Given an irreducible component $V_{G,\sigma}$ of $V_G$ and $u \in \NN^\ell$, we can construct points in the fiber of its Landau map $\psi_{G,\sigma,u}$ from the VRCs of the positroid $\Pi_{G,\sigma,u}$.
This correspondence will also play an important role later on, in connection with positivity and cluster algebras.

\begin{proposition}\label{prop:lines_VRC}
Let ${\bf z}$ be a $4 \times n$ matrix, and let ${\bf M}_{\bf z}=(M_1,\ldots,M_d)$ be the external lines 
which are spanned by pairs of consecutive columns of ${\bf z}$. Then
$\,
\psi^{-1}_{G,\sigma,u}({\bf M}_{\bf z}) \simeq \mathcal{C}^{\bf z}(\mathcal{G}_{G,\sigma,u}) $.
\end{proposition}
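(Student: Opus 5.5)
We construct mutually inverse maps between the fiber $\psi^{-1}_{G,\sigma,u}({\bf M}_{\bf z})$ and the set $\mathcal{C}^{\bf z}(\mathcal{G}_{G,\sigma,u})$ of VRCs with fixed boundary ${\bf z}$. The construction is local: we read off, vertex by vertex of the plabic graph $\mathcal{G}=\mathcal{G}_{G,\sigma,u}$, which linear relations a VRC imposes.

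\emph{Step 1: local dictionary.}
\begin{itemize}
\item Each white tripod attached to $v_i$ has two boundary legs, carrying consecutive columns $z_{2j-1},z_{2j}$, and one edge to $v_i$. It imposes a relation $r_1 z_{2j-1}+r_2 z_{2j}+r_3 w=0$, where $w$ is the vector on the black side of that edge. Hence $w$ is a point on the external line $M_j=z_{2j-1}z_{2j}$.
\item A helicity-$(\deg(v_i)-2)$ vertex $v_i$ encodes the uniform matroid $U_{n'-2,n'}$. After expanding it into its Grassmann-graph presentation (Figure~\ref{fig:grassmann_lines}), its VRC says exactly that all the vectors on its $n'$ incident edges lie in a common $2$-dimensional subspace. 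We therefore assign to $v_i$ the line $L_i$ they span.
\item An edge $v_iv_j$ of $\mathcal{G}$, for $ij\in G$ outside black regions, forces a common vector on both sides. This means $L_i\cap L_j\neq\emptyset$.
\item A black vertex joined to all $v_j$ of a black region forces a single common point on all those lines. This is the concurrency condition of $\sigma$.
\item White triangles are implemented by the edges $v_iv_j$ together with helicity. Three pairwise meeting lines that are not concurrent must be coplanar, and this is the coplanar component.
\end{itemize}

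\emph{Step 2: from VRCs to the fiber.} Given a VRC, define ${\bf L}=(L_1,\dots,L_\ell)$ by the spans at the $v_i$. By Step 1, ${\bf L}$ lies in $V_{G,\sigma}$ and $\langle L_iM_j\rangle=0$ for every pendant edge. Hence ${\bf L}\in\psi^{-1}_{G,\sigma,u}({\bf M}_{\bf z})$. Gauge transformations rescale vectors and coefficients but do not change the spans, so this map is well defined on gauge classes.

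\emph{Step 3: from the fiber to VRCs.} Given ${\bf L}$ in the fiber, take its incidence points:
\begin{itemize}
\item $L_i\cap M_j$ for the tripods;
\item $L_i\cap L_j$ for the edges;
\item the concurrency point for each black region.
\end{itemize}
Choose representative vectors, then solve for the edge coefficients $r_e$ from the resulting linear dependencies. These coefficients are nonzero and unique up to gauge, provided the points are distinct and in general position along each line.

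\emph{Main obstacle.} The hard part is nondegeneracy in Step 3: we must show the coefficients $r_e$ lie in $\mathbb{C}^*$ and are determined up to gauge. This can fail when incidence points coincide, for instance $L_i\cap M_j=L_i\cap M_{j'}$, or when $L_i$ equals one of the $M_j$. I would handle this by restricting to generic ${\bf z}$. I would invoke strict contraction stability from Theorem~\ref{thm:outerplanar}, which gives $X_G=V_G$, so all lines are distinct, and genericity of ${\bf M}$ then keeps the incidence points distinct.

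With this, both composites are the identity on gauge classes, since each VRC is recovered from its spans. This gives the bijection, and dimension counting ($\dim\Pi=4k$) shows both sides are finite of the same cardinality.
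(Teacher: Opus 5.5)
Your proposal is correct and follows essentially the paper's route. The paper also reads off $L(v_i)$ as the line spanned by the collinear vectors of the $U_{n'-2,n'}$ copy at $v_i$, gets incidence, concurrency and coplanarity from the shared black vertices (and $M_j$ from the tripods), and inverts by placing the intersection points $L(v_i)\cap L(v_j)$ on the black vertices. \textbf{Differences:} you make explicit the genericity that the paper uses tacitly (for example, distinct intersection points for white triangles), and you skip the paper's extra check that the identification is compatible with the ${\rm GL}(4)$ action on ${\bf z}$ and with the ${\rm GL}(2)^d$ action that fixes ${\bf M}_{\bf z}$, by working with a fixed matrix ${\bf z}$.
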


\begin{proof}
We first construct a map from VRCs to line configurations in $V_{G,\sigma,u}$.
   For each vertex $i$ of $G$, there is a corresponding vertex $v_i$ of $\mathcal{G}:=\mathcal{G}_{G,\sigma,u}$.
This vertex determines a copy of the plabic graph $\mathcal{G}_{n'}$ for the uniform matroid $U_{n'-2,n'}$,
where $n'$ is the degree of $v_i$, see Figure~\ref{fig:grassmann_lines}.
In any VRC on $\mathcal{G}_{n'}$, the vectors $w_1,\ldots,w_{n'}$ on the boundary vertices are collinear. Hence they span a line $L(v_i)\subset \PP^3$.
These lines satisfy the incidence relations prescribed by $G_u$ and $\sigma$. Indeed,
if two vertices $v_i$ and $v_j$ form an edge in $\mathcal{G}$, then the lines $L(v_i)$ and $L(v_j)$ are incident.  
They intersect in the point $w_{ij}=L(v_i)\cap L(v_j)$ assigned in the VRC to the black vertex lying on the edge $(ij)$.
If vertices $i,j,s$ of $G$ form a white triangle in $\sigma$, then the lines $L(v_i),L(v_j),L(v_s)$ are coplanar, since their intersection points $w_{ij},w_{js},w_{is}$ are distinct.
If  $i,j,s$ form a black triangle in $\sigma$, then $L(v_i),L(v_j),L(v_s)$ are concurrent, since they share the point $w$ corresponding to the black vertex connected to $v_i,v_j,v_s$ in $\mathcal{G}$.
Finally, let $u_i$ be a white vertex of $\mathcal{G}$ belonging to the tripod attached to the boundary vertices labeled $p_i,p_{i+1}$. Then $L(u_i)=M_i$, since $L(u_i)$ is spanned by the vectors $z_{p_i}$ and $z_{p_{i+1}}$.
Thus every VRC with boundary ${\bf z}$ determines a configuration of lines ${\bf L}=(L(v_1),\ldots,L(v_\ell))$ satisfying the incidence relations prescribed by $G_u$ and $\sigma$, with fixed external lines ${\bf M}_{\bf z}$. Hence ${\bf L}\in\psi^{-1}_{G,\sigma,u}({\bf M}_{\bf z})$.

Conversely, given  ${\bf L}=(L(v_1),\ldots,L(v_\ell))$ in 
$\psi^{-1}_{G,\sigma,u}({\bf M}_{\bf z})$, we can reconstruct the VRC.
For every internal black vertex $b$ lying on an edge  $\{v_i,v_j\}$, 
set $w_b=L(v_i)\cap L(v_j)$. These  points determine the vectors 
assigned to the black vertices. The boundary vectors are fixed by ${\bf z}$, 
and the linear relations at white vertices are then uniquely fixed (up to 
gauge transformations) by the linear relations among these vectors. 
This reconstructs a VRC with boundary~${\bf z}$.

The ${\rm GL}(4)$-action on ${\bf z}$ acts by projective transformations on the  lines ${\bf L}$ in $\PP^3$. Therefore, if ${\bf z}$ and ${\bf z'}$ are related by the ${\rm GL}(4)$-action, then $\psi^{-1}_{G,\sigma,u}({\bf M}_{\bf z})\simeq\psi^{-1}_{G,\sigma,u}({\bf M}_{\bf z'})$.
Conversely, suppose ${\bf z}$ and ${\bf z'}$ satisfy ${\bf M}_{\bf z}={\bf M}_{\bf z'}$, i.e.\ they differ by a ${\rm GL}(2)^d$-action acting independently on each external line. Then $\mathcal{C}^{\bf z}(\mathcal{G})\simeq\mathcal{C}^{\bf z'}(\mathcal{G})$, since every internal black vertex $b$ of $\mathcal{G}$ lies on an edge connecting two vertices $v_i,v_j$. The associated vector $w_b=L(v_i)\cap L(v_j)$ is therefore determined by the lines $L(v_i)$ and $L(v_j)$, which depend only on the external lines $M_i=(z_{p_i}z_{p_{i+1}})=(z'_{p_i}z'_{p_{i+1}})$.
This establishes the asserted identification $\psi^{-1}_{G,\sigma,u}({\bf M}_{\bf z})\simeq\mathcal{C}^{\bf z}(\mathcal{G})$.
\end{proof}

\begin{figure}[htbp]
    \centering
    \includegraphics[width=1.0\textwidth]{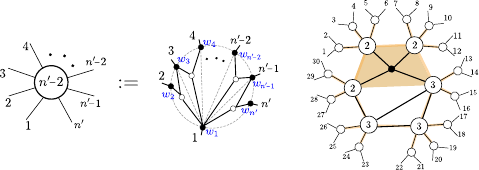}
    \caption{Left: Vertex of a Grassmann graph of degree $n'$ and helicity $h(v)=n'-2$. Center: Plabic graphs for the positroid $U_{n'-2,n'}$. Right: Grassmann graph $\mathcal{G}_{G,\sigma,u}$ in Example \ref{ex:positroid_Fibonacci}, with $\sigma=\{{\bf bbww}\}$.
    The positroid variety $\Pi_{G,\sigma,u}$ in $\Gr(10,30)$ has dimension $40$
    of intersection number $24$. Shown in orange is the bicoloring $\sigma$ of
    the subdivided hexagon $G_u$.}
    \label{fig:grassmann_lines}
\end{figure}

Recent work of Even-Zohar, Parisi, Sherman-Bennett, Tessler and Williams in \cite{VRC}
established the following relation between
 VRCs of positroids and the amplituhedron map.

\begin{theorem}[\cite{VRC}, Theorem 6.8]\label{th:VRC_Amp}
Let $\Pi_\mathcal{G}$ be a positroid variety in $\Gr(k,n)$, and let
${\bf z}$ be any point in the Grassmannian $\Gr(4,n)$.  Then
$\,\mathcal{C}^{\bf z}(\mathcal{G}) \simeq \Gr(k,{\bf z}^\perp) \cap \Pi_\mathcal{G}$, with ${\bf z}^\perp = {\rm kernel}({\bf z})$.
\end{theorem}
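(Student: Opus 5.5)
The plan is to translate both sides into linear algebra on the bipartite incidence data of $\mathcal{G}$. First I would match the weights $r_e$ with Postnikov's edge-weight parametrization of $\Pi_\mathcal{G}$; then I would show that a VRC with boundary ${\bf z}$ exists, and is unique, exactly when the associated $k$-plane lies in ${\bf z}^\perp$.

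Assume $\mathcal{G}$ is reduced and bipartite. Add a black vertex at each boundary vertex if necessary, so that the boundary vectors $z_1,\ldots,z_n$ sit on black vertices. Fix edge coefficients $r=(r_e)$ and form the matrix $K(r)$. Its rows are indexed by white vertices, and its columns by black vertices, split as internal black vertices $I$ and boundary vertices $B$. The entry of $K(r)$ at $(w,b)$ is $r_e$ for $e=(wb)$, and $0$ otherwise. The VRC condition is the matrix equation
\[ K_I(r)\, V_I \,+\, K_B(r)\, {\bf z}^t \,=\, 0 , \]
where $V_I$ has rows $v_b$ for $b\in I$. Define $C(r)$ as the row space of $\{\lambda K_B(r) : \lambda K_I(r) = 0\}$, that is, the left kernel of $K_I$ pushed to the boundary coordinates.

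The first key step is to show two facts: $C(r)$ is a $k$-dimensional subspace, and $r\mapsto C(r)$, taken modulo gauge, is the boundary measurement map onto $\Pi_\mathcal{G}$ (dominant, and birational onto the open positroid cell). I would check this with a perfect orientation of $\mathcal{G}$. Perfect orientations make $K_I$ square after deleting the rows and columns of the source data, so Cramer's rule expresses $\lambda K_B$ through path sums. This recovers Talaska's flow formula for the Plücker coordinates of the boundary measurement matrix, up to gauge. Linear-algebra duality then gives the statement in the form needed here: every point of $\Pi_\mathcal{G}$, not just the generic ones, is the left-kernel image of some choice of relation coefficients. I expect this to be the main obstacle, in two parts. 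One is the rank count $\dim C(r) = k$, which must be matched with the source-set cardinality $h$-count. The other is the surjectivity onto all of $\Pi_\mathcal{G}$ rather than only the top cell. For the latter I would try to argue by degenerating weights to $0$ along removable edges.

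With this identification, the theorem is a Fredholm-alternative argument. Given $r$, a solution $V_I$ exists if and only if $K_B{\bf z}^t$ lies in the column space of $K_I$. This holds if and only if $\lambda K_B {\bf z}^t=0$ for every $\lambda$ in the left kernel of $K_I$, that is, if and only if $C(r)\,{\bf z}^t = 0$, which means $C(r)\subseteq {\bf z}^\perp$. Reducedness of $\mathcal{G}$ should give injectivity of $K_I$ on the relevant subspace, so the internal vectors $V_I$ are then uniquely determined by $r$ and ${\bf z}$.

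Finally I would check that gauge transformations correspond exactly. Rescaling vectors at black vertices and relations at white vertices changes $r$ by the torus gauge, and it preserves $C(r)$. The ${\rm GL}(4)$-action acts on $V_I$ and ${\bf z}$ together and leaves $C(r)$ unchanged. Hence the assignment $\text{VRC}\mapsto C(r)$ descends to a bijection $\mathcal{C}^{\bf z}(\mathcal{G}) \to \Gr(k,{\bf z}^\perp)\cap\Pi_\mathcal{G}$.
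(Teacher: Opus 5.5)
The paper gives no proof of this statement. It quotes it from \cite[Theorem~6.8]{VRC} and applies it only to generic ${\bf z}$, to count the points of a finite fiber. On its own terms, most of your argument is sound. The Fredholm step is correct, and so are the rank claims on the whole torus of admissible weights: the maximal minor of $K$ on the columns $I\cup S_0$ is a single matching monomial, where $S_0$ is the source set of an acyclic perfect orientation (one exists for reduced $\mathcal{G}$). This gives $\dim C(r)=k$, injectivity of $K_I$, and hence uniqueness of $V_I$. The genuine gap is the step you flagged. It is not merely hard but false: not every point of $\Pi_{\mathcal{G}}$ equals $C(r)$ for admissible $r$, and degenerating weights to $0$ cannot repair this.

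A VRC has all coefficients $r_e\in\mathbb{C}^*$, and $C(r)$ depends only on $r$. So the subspaces arising from VRCs on $\mathcal{G}$ form the image of the edge-weight torus under the boundary measurement map. For reduced $\mathcal{G}$ this image is the cluster torus $T_{\mathcal{G}}\subseteq\Pi^{\circ}_{\mathcal{G}}$ on which the face-labelled Pl\"ucker coordinates of $\mathcal{G}$ are nonzero. In general this is a proper open subset of $\Pi_{\mathcal{G}}$. Setting $r_e=0$ on removable edges gives relation configurations on a smaller graph, not elements of $\mathcal{C}^{\bf z}(\mathcal{G})$.

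The missing points genuinely have no VRC. Take the star graph, a white vertex joined to five boundary vertices, so $\Pi_{\mathcal{G}}=\Gr(1,5)$ and $\dim\Pi_{\mathcal{G}}=4k$. Choose ${\bf z}$ whose kernel is spanned by $(1,1,1,1,0)$. Then $\Gr(1,{\bf z}^\perp)\cap\Pi_{\mathcal{G}}$ is one point. A VRC would need a relation $\sum_i r_i z_i=0$ with $r_5\neq 0$, so $\mathcal{C}^{\bf z}(\mathcal{G})=\emptyset$. The same failure occurs inside the open variety. Take the top cell of $\Gr(2,6)$, a reduced graph with face label $13$, and ${\bf z}$ such that ${\bf z}^\perp$ has nonzero cyclic minors but $\Delta_{13}=0$. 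So the statement cannot hold for arbitrary ${\bf z}$.

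What your argument actually proves is $\mathcal{C}^{\bf z}(\mathcal{G})\simeq\Gr(k,{\bf z}^\perp)\cap T_{\mathcal{G}}$ for every ${\bf z}$. To reach the form the paper uses, add a genericity step. Suppose $\dim\Pi_{\mathcal{G}}=4k$. The incidence variety $\{(C,{\bf z}) : C\in\Pi_{\mathcal{G}}\setminus T_{\mathcal{G}},\ C\subseteq\ker{\bf z}\}$ has dimension at most $(4k-1)+4(n-k-4)<4(n-4)=\dim\Gr(4,n)$. So for generic ${\bf z}$ the whole intersection lies in $T_{\mathcal{G}}$. The VRCs with boundary ${\bf z}$ are then exactly intersection-number many.
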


When $\Pi_\mathcal{G}$ has dimension $4k$, the set $\mathcal{C}^{\bf z}(\mathcal{G})$ is finite, and its cardinality equals the intersection number of $\Pi_\mathcal{G}$, which is the degree of the amplituhedron map restricted to $\Pi_\mathcal{G}$.

\begin{proof}[Proof of Theorem \ref{thm:positroid}]
Combining Theorem \ref{th:VRC_Amp} with Proposition \ref{prop:lines_VRC} identifies the Landau fiber $\psi^{-1}_{G,\sigma,u}({\bf M}_{\bf z})$ with the amplituhedron fiber $\Gr(k,{\bf z}^\perp) \cap \Pi_{G,\sigma,u}$. In particular, the degree of the Landau map $\psi_{G,\sigma,u}$ on the irreducible component $V_{G,\sigma,u}$ coincides with the degree of the amplituhedron map on the positroid variety $\Pi_{G,\sigma,u}$. 
From this we get Theorem~\ref{thm:positroid}.
\end{proof}

\begin{example}[Fibonacci]\label{ex:positroid_Fibonacci}
We revisit Corollary \ref{cor:fibonacci} with $\ell=6$, diagonals $26, 35, 36$ and $u=(3,3,2,3,2,2)$. For each 
of the $2^4=16$ bicolorings $\sigma$, we have a positroid variety $\Pi_{G,\sigma,u}$, see Figure \ref{fig:grassmann_lines} for $\sigma=({\bf bbww})$.
  Up to Hodge duality, there are $8$ different bicolorings. The sum of the intersection numbers of 
  the $\Pi_{G,\sigma,u}$ gives  $2 \cdot (16+24+64+48+24+40+32+8)=512=2^6 \cdot 8=2^\ell F_{\ell+1}$.
  This is the LS degree of $V_{G_u}$, as expected from Corollary \ref{cor:fibonacci}.

\end{example}

Theorem \ref{thm:positroid} extends naturally to vector labels $u \in \NN^e$
for values of $e$ other than $d$. For $e = d-1$ we can view
the curve $\psi_{G,\sigma,u}^{-1}({\bf M})$ inside the positroid variety associated
with $G,\sigma,u$, and we get similar positroid models for
NLS geometry, N${}^2$LS geometry and beyond.
We conclude this section with SLS singularities,
so we make this explicit for~$e=d+1$.

The positroid variety $\Pi_{G,\sigma,u}$ is defined exactly as above, but now for $|u| = e = d+1$.
We here consider its Chow-Lam form, as defined in \cite{PS}.
See Example 1.2 and Section 4 in \cite{PS}.

\begin{theorem} \label{thm:positroid2}
The Chow-Lam form of the positroid variety $\,\Pi_{G,\sigma,u}$ equals
the SLS resultant $R_{G,\sigma,u}$ of $G_u$ with bicoloring $\sigma$.
External line configurations ${\bf M} \in {\rm Gr}(2,4)^{d+1}$ that lie in the image of 
its Landau map $\psi_{G,\sigma,u}$ correspond to subspaces
$\mathfrak{L} \in \Pi_{G,\sigma,u}$ such that~$\mathfrak{L} \subseteq {\rm kernel}({\bf M})$.
\end{theorem}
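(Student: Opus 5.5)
The plan is to mirror the proof of Theorem~\ref{thm:positroid}, replacing Hurwitz forms by Chow forms. For $|u| = e = d+1$, I first check that $\Pi_{G,\sigma,u}$ has dimension $4k-1$ in ${\rm Gr}(k,2e)$. Here $k = e - r$ and $n = 2e$, so this is the dimension for which the Chow-Lam form of \cite{PS} is defined. I would do this by dimension counting: the realization space of $M_{G,\sigma,u}$ has dimension $\dim V_{G,\sigma} + |u|$ minus the gauge dimension, and duplicating the $e$ elements adds $e$ parameters. The same count as in Example~\ref{ex:positroid_triangle} gives $\dim \Pi_{G,\sigma,u} = \dim V_{G,\sigma} + 2e - (\text{rank correction}) = 4k-1$. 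The point is that the count depends linearly on $|u|$, so adding one pendant edge drops it by one relative to the case $|u| = d$.

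Next I apply Proposition~\ref{prop:lines_VRC}, whose proof never used $|u| = d$. It identifies $\psi^{-1}_{G,\sigma,u}({\bf M}_{\bf z})$ with $\mathcal{C}^{\bf z}(\mathcal{G}_{G,\sigma,u})$ for every ${\bf z}$. Combining this with Theorem~\ref{th:VRC_Amp} gives
\[
\psi^{-1}_{G,\sigma,u}({\bf M}_{\bf z}) \;\simeq\; \Gr(k,{\bf z}^\perp)\cap \Pi_{G,\sigma,u}.
\]
Thus ${\bf M}_{\bf z}$ lies in the image of $\psi_{G,\sigma,u}$ if and only if some $\mathfrak{L}\in\Pi_{G,\sigma,u}$ satisfies $\mathfrak{L}\subseteq \ker({\bf z})$. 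This is the second assertion of the theorem.

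For the first assertion, recall from \cite[Section 4]{PS} that the Chow-Lam form of a $(4k-1)$-dimensional subvariety $\mathcal{V}\subset\Gr(k,n)$ is the irreducible equation of the hypersurface
\[
\{P\in \Gr(n-4,n) : \Gr(k,P)\cap\mathcal{V}\neq\emptyset\}.
\]
Setting $P=\ker({\bf z})$ and pulling back along ${\bf z}\mapsto {\bf M}_{\bf z}$, the two hypersurfaces coincide set-theoretically with the image of $\psi_{G,\sigma,u}$. That image is cut out by $R_{G,\sigma,u}$, the Chow form of the irreducible $V_{G,\sigma}$ in the sense of \cite{OT}. Both polynomials are irreducible: $R_{G,\sigma,u}$ because $V_{G,\sigma}$ is irreducible (Theorem~\ref{thm:outerplanar}), and the Chow-Lam form because $\Pi_{G,\sigma,u}$ is irreducible. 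Hence they agree up to a constant, provided the pullback is reduced, which I would verify by a degree comparison.

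The main obstacle is exactly this multiplicity and degree comparison, together with the dimension claim. For the degrees, Theorem~\ref{thm:osserman} gives degree $\gamma_{u-e_i}$ in each line at vertex $i$. On the Chow-Lam side, the degree in Plücker coordinates of $\Gr(n-4,n)$ is the intersection number of $\Pi_{G,\sigma,u}$ against a codimension $4k-1$ Schubert class. Geometrically this counts the points of $\Pi_{G,\sigma,u}\cap \Gr(k,P)\cap\{$one extra Schubert condition$\}$. I would identify this count with the LS degree $\gamma_{u-e_i}$ via Theorem~\ref{thm:positroid}, applied to the diagram with the pendant edge at $i$ removed. The relevant positroid for that smaller diagram is a projection/deletion of $\Pi_{G,\sigma,u}$, and I would argue that deletion commutes with the VRC correspondence. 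Matching these degrees, and noting that the generic fiber over the image is a single reduced point (a generic SLS configuration has a unique solution), shows the pullback is reduced. This yields equality of the two forms.
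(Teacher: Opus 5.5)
Your main line is the paper's. The paper gives no separate proof of this theorem. It presents it as the $e=d+1$ analogue of Theorem~\ref{thm:positroid}, whose proof combines Proposition~\ref{prop:lines_VRC} (which indeed never uses $|u|=d$) with Theorem~\ref{th:VRC_Amp}. This identifies $\psi_{G,\sigma,u}^{-1}({\bf M}_{\bf z})$ with $\Gr(k,\ker{\bf z})\cap\Pi_{G,\sigma,u}$, exactly as you do. That settles the second assertion and shows that the two hypersurfaces coincide. (Your explicit dimension count is off by $e$, since it omits one rescaling per element; the correct count is $d+3e-4r=4k-(e-d)$. Your relative argument, that one extra pendant edge adds $3$ to $\dim\Pi_{G,\sigma,u}$ and $4$ to $4k$, is right.)

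The step you single out as the main obstacle would not work as written.

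\textbf{Degrees.} You equate the degree of the Chow--Lam form in the Pl\"ucker coordinates of $\Gr(n-4,n)$ with $\gamma_{u-e_i}$. The former is one total degree, while the latter is the degree in each column spanning a line at vertex $i$. For the pentagon, (\ref{eq:resultant_5_lines}) is a $5\times 5$ determinant in the brackets $\langle M_iM_j\rangle$, so its Pl\"ucker degree is $5$, whereas $\gamma_{(4)}=2$. In general the total degree is $\frac12\sum_i u_i\gamma_{u-e_i}$. Moreover, that total degree counts points of $\Pi_{G,\sigma,u}$ in the union of $\Gr(k,P)$ over a pencil of $P$'s, a condition of codimension $4k-1$. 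It does not count points of $\Gr(k,P)$ cut by one more condition, which is already empty.

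\textbf{Uniqueness.} Your claim that a generic SLS configuration has a unique solution also fails in general. For $G=K_2$ and $u=(5,3)$, Theorem~\ref{thm:osserman} gives degrees $(\gamma_{(4,3)},\gamma_{(5,2)})=(4,0)$. The image is the locus where the five lines at vertex $1$ have a common transversal $L_1$. Over each such point there are two choices of $L_2$, and $R_{G,u}$ is the square of (\ref{eq:resultant_5_lines}).

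\textbf{What suffices.} No degree comparison is needed. On the relevant open sets of $4\times n$ matrices, ${\bf z}\mapsto{\bf M}_{\bf z}$ and ${\bf z}\mapsto\ker{\bf z}$ are principal ${\rm GL}(2)^e$- and ${\rm GL}(4)$-bundles. So pulling back preserves irreducibility of supports and multiplicities. The fiber bijection from Proposition~\ref{prop:lines_VRC} and Theorem~\ref{th:VRC_Amp} shows that the two pulled-back hypersurfaces coincide. It also shows that both incidence correspondences have the same degree over them. Hence $R_{G,\sigma,u}({\bf M}_{\bf z})$ and the Chow--Lam form evaluated at $\ker{\bf z}$ agree up to a scalar. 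Their multiplicities match even in degenerate cases like the one above.
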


\begin{example}[$\ell=1$]
The positroid for $u=(5)$ is denoted $\beta = (2,2,2,2,2)$ 
in \cite[Example 4.7]{PS}. It is the uniform rank $2$ matroid $U_{2,5}$
with each element duplicated. Its Chow-Lam form is displayed in
\cite[Equation (18)]{PS}. This matches (\ref{eq:resultant_5_lines}) in Example~\ref{ex:commontrans}.
\end{example}

\section{Promotion Maps}\label{sec:promotion_maps}

In our recursive approach to LS discriminants in Section \ref{sec:recursive},
   internal lines are replaced by new lines determined by smaller subgraphs via \emph{substitution maps}. By the results of 
   Section~\ref{sec:positroid}, these lines are elements in the fiber of the Landau map of the corresponding subgraph, which can be constructed via the VRCs of the positroid associated to that subgraph.
When interpreted as maps on Grassmannians, these substitution maps coincide with the \emph{promotion maps} introduced in \cite{VRC}. In that work, promotion maps were conjectured to be positive. If true, this would imply that our substitution maps preserve copositivity, i.e.\ they send positive configurations of lines to positive configurations.
We now illustrate this in the simplest case.

\begin{example}[$4$-mass box promotion]\label{ex:4mb_promotion}
 Given four generic lines ${\bf M}^{(1)}=(M_1,M_2,M_3,M_4)$, the fiber $\psi_{K_1,(4)}^{-1}({\bf M}^{(1)})$ consists of two lines
  $L^+({\bf M}^{(1)})$ and $L^-({\bf M}^{(1)})$. These are algebraic functions of ${\bf M}^{(1)}$,
  defined locally on $\Gr(2,4)^4 \setminus \{\Delta_{K_1, (4)}=0\}$.
Given any positive line
configuration ${\bf M}=({\bf M}^{(1)},M_5,M_6,M_7)$, 
the copositivity of the substitution map $\varphi_{\pm}$ in \eqref{eq:four_box_subs_maps} means
that the configurations $(L^\pm({\bf M}^{(1)}),M_5,M_6,M_7)$ are positive.
We now reformulate this condition on Grassmannians. Write $M_i=z_{2i-1} z_{2i}$ and $L^\pm= v^\pm w^\pm$, where the points are defined by $v^\pm=L^\pm\cap M_1$ and $w^\pm=L^\pm\cap M_4$. The lines $L^{\pm}(\mathbf{M}^{(1)})$ are the same as those in~\eqref{eq:Xpm} up to rescaling and relabeling of the lines $\mathbf{M}^{(1)}$. 
Then, for $[z_1|\cdots|z_{14}] \in \Gr_{>0}(4,14)$,
 we have $[v^\pm|w^\pm|z_9|\cdots|z_{14}] \in \Gr_{>0}(4,8)$.
 This is Lemma \ref{lem:4mb_pos}.
The vectors $v^\pm,w^\pm$ we use to represent the line $L^\pm$ 
appear  in the VRC of the positroid $\Pi_{G,\sigma,u}$, see Figure~\ref{fig:promotion_4mb_K_3}. 
\end{example}

The map $\,\tilde{\varphi}:\Gr(4,14)\to\Gr(4,8),\, [z_1|\cdots|z_{14}] \mapsto [v^\pm|w^\pm|z_9|\cdots|z_{14}]\,$
from Example \ref{ex:4mb_promotion} is a \emph{promotion map}.
We recall the general definition of promotion maps from \cite[Section~4.2]{VRC}.

Let $\Pi_\mathcal{G}$ be a positroid variety in $\Gr(k,n)$ of dimension $4k$ and
 intersection number $\gamma>0$. Fix a generic point ${\bf z}\in\Gr(4,n)$ and consider the
 corresponding set $\mathcal{C}^{\bf z}(\mathcal{G})$ 
of VRCs for~$\Pi_\mathcal{G}$.
 By Theorem~\ref{th:VRC_Amp}, this set has $\gamma$ elements,
 corresponding to the $\gamma$ vector-relation configurations with  boundary ${\bf z}$. 
 For a black vertex $b$ of $\mathcal{G}$, let
$w_b^{(1)}({\bf z}),\ldots,w_b^{(\gamma)}({\bf z})$ be
the vectors associated to $b$ in these $\gamma$ configurations.
Now add a disk $D$ (a \emph{blob}) with $n'$ boundary vertices inside one of the faces of $\mathcal{G}$, which we call the \emph{core}. Connect the $j$-th boundary vertex of $D$ to a black vertex $b_j$ of $\mathcal{G}$ so that the resulting graph remains planar. Fix $r\in[\gamma]$. The associated \emph{promotion map}
$\tilde{\varphi}_r:\Gr(4,n)\to\Gr(4,n')$ sends ${\bf z}$ to the matrix $[w_{b_1}^{(r)}({\bf z})|\cdots|w_{b_{n'}}^{(r)}({\bf z})]$
whose $j$-th column is the vector assigned to the black vertex $b_j$ in the $r$-th VRC.

\begin{example}[Triangle]
We consider the promotion map whose core is the positroid $\Pi_{K_3,{\bf w},(3,3,3)}$ 
in Example \ref{ex:positroid_triangle}. Its intersection number is $\gamma=8$.
See Figures~\ref{fig:grassmann_graph_K_3} and \ref{fig:promotion_4mb_K_3}
for the blob attachment.  For a fixed ${\bf z}\in\Gr(4,n)$, there are $8$ VRCs with boundary ${\bf z}$. Let $x^{(r)},p^{(r)},y^{(r)}$ be the vectors appearing in these VRCs as shown in 
Figure~\ref{fig:promotion_4mb_K_3},
with $r\in[8]$. Consider the lines $L_1^{(r)}=x^{(r)}p^{(r)}$ and $L_3^{(r)}=p^{(r)}y^{(r)}$, with $x^{(r)}=L_1^{(r)}\cap (z_1z_2)$ and $y^{(r)}=L_3^{(r)}\cap (z_{17}z_{18})$.
For a fixed $r\in[8]$, the promotion map $\tilde{\varphi}_r:\Gr(4,n)\rightarrow\Gr(4,n-15)$ sends ${\bf z}$ to the point $[x^{(r)}|p^{(r)}|y^{(r)}|z_{19}|\cdots|z_n]$, where $x^{(r)},p^{(r)},y^{(r)}$ are algebraic functions of $z_1,\ldots,z_{18}$.  This is the same matrix discussed at the end of Section \ref{sec:recursive}, whose positivity we verified numerically.
In particular, if $\tilde{\varphi}_r$ is copositive, meaning that it restricts to a map $\tilde{\varphi}_r:\Gr_{>0}(4,n)\rightarrow\Gr_{>0}(4,n-15)$, then the substitution map for $G=K_3$ is copositive. 
\end{example}

Generalizing the above example, the copositivity of substitution maps $\varphi$ on lines 
seen in Section~\ref{sec:recursive} follows from the copositivity of the corresponding promotion maps $\tilde{\varphi}$ on the Grassmannian
in this section. The latter is the subject of the main conjecture of \cite{VRC}.

\begin{conjecture}[\cite{VRC}]\label{conj:promotion_maps}
Let $\tilde{\varphi}:\Gr(4,n)\rightarrow\Gr(4,n')$ be a promotion map. Then $\tilde{\varphi}$ is copositive, i.e.\ it restricts to a map $\tilde{\varphi}:\Gr_{>0}(4,n)\rightarrow\Gr_{>0}(4,n')$.
\end{conjecture}

We now explain how promotion maps arise in the recursion of Section~\ref{sec:recursive}. 
Our setting is that of Proposition~\ref{prop:rec_fibers} where $\mathcal{L}$ is  a planar Landau diagram.
The following construction generalizes the examples where $\mathcal{L}_1$ is the $4$-mass-box and the triangle shown in Figure~\ref{fig:promotion_4mb_K_3}.

Suppose  $V_{\mathcal{L}_1}$ is irreducible (or consider one of its irreducible components) and let $\Pi_{\mathcal{G}_1}$ be the associated positroid variety with intersection number $\gamma$. 
Let ${\bf L}^{(1)}$ be one of the $\gamma$ points in the fiber $\psi_1^{-1}({\bf M}^{(1)})$.
The lines in ${\bf L}^{(1)}$ are spanned by pairs of vectors in a collection ${\bf w}_1$ coming from one of the $\gamma$ VRCs in $\mathcal{C}^{{\bf z}_1}(\mathcal{G}_1)$. The substitution map $\varphi$ sends ${\bf M}=({\bf M}^{(1)},{\bf M}^{(2)})$ to $({\bf L}^{(1)},{\bf M}^{(2)})$. 
On Grassmannians, this corresponds to a map $\tilde{\varphi}$ sending ${\bf z}=({\bf z}_1,{\bf z}_2)$ to $({\bf w}_1,{\bf z}_2)$. 
In this way we obtain $\gamma$ such maps, corresponding to the $\gamma$ VRCs of $\mathcal{C}^{{\bf z}_1}(\mathcal{G}_1)$.
The map $\tilde{\varphi}$ is a promotion map with core $\mathcal{G}_1$ and a blob $D$ attached to black vertices of $\mathcal{G}_1$. 
The attachment is chosen so that if a line $L_1^{(i)}$ is spanned by $w_1^{(p_i)}$ and $w_1^{(p_i+1)}$, then the boundary vertices $p_i$ and $p_i+1$ of $D$ are connected to black vertices adjacent to the vertex $v_i$ of $\mathcal{G}_1$, ensuring that $L(v_i)=L_1^{(i)}$. 
As $\mathcal{L}$ is planar, this attachment can be performed in a planar~way.

\begin{figure}[htbp]
    \centering
\includegraphics[width=1.0\textwidth]{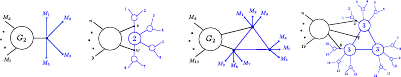}
    \caption{Illustrations of the substitution maps for the $4$-mass box (left) and the triangle (right).
    We display their promotion maps, and some relevant vectors of their VRCs.}
    \label{fig:promotion_4mb_K_3}
\end{figure}

We now turn to {\em rational promotion maps}. This refers to
 Section~\ref{sec:rationality}, where the points in the fibers were given by rational formulas. In particular, the recursive formulas of Section~\ref{sec:recursive} for the LS discriminants yield substitution maps that are rational maps. What can we say about their copositivity properties?
To address this question, we first extend the relations between positroids, VRCs, and line incidence varieties from Section~\ref{sec:positroid} to the rational case.

Let $G^\triangle$ be a rational graph and fix $u$ together with an irreducible component $V^\triangle_{G,\sigma,u}$. We associate to it a positroid $\Pi^\triangle_{G,\sigma,u}$ with plabic graph $\mathcal{G}=\mathcal{G}^\triangle_{G,\sigma,u}$ as follows.
Start from the plabic graph $\mathcal{G}'=\mathcal{G}_{G,\sigma,u}$ corresponding to the positroid variety $\Pi_{G,\sigma,u}$ defined in Section~\ref{sec:positroid}. If the lines $M_i,M_{i+1},L_s=L(v_s)$ are concurrent (that is, if $i,j,s$ form a black triangle in $\sigma$), then we remove the two tripods in $\mathcal{G}'$ attached to $v_s$ and connect $v_s$ directly to the boundary vertex labeled $p_i+1$. If the lines $M_i,M_{i+1},L_s=L(v_s)$ are coplanar (that is, if $i,j,s$ form a white triangle in $\sigma$),
then we merge the two tripods in $\mathcal{G}'$ attached to $v_s$ as shown in Figure~\ref{fig:vrc_lines_3mb}.

\begin{corollary}
The LS degree of the Landau map on the irreducible component $V^\triangle_{G,\sigma,u}$ and the intersection number of the positroid variety $\Pi^\triangle_{G,\sigma,u}$ are both equal to $1$. Moreover, the point in the fiber of the Landau map can be constructed 
rationally from the VRC of $\Pi^\triangle_{G,\sigma,u}$. 
\end{corollary}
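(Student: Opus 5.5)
The plan is to run the argument of Section~\ref{sec:positroid} verbatim with the modified plabic graph $\mathcal{G}=\mathcal{G}^\triangle_{G,\sigma,u}$, and to combine it with the rationality results of Section~\ref{sec:rationality}. The proof splits into three steps: a Landau-side count, a VRC version of Proposition~\ref{prop:lines_VRC}, and a transfer to the positroid side.

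\textbf{Step 1: the Landau side.} Since $G^\triangle$ is rational, each irreducible component $V^\triangle_{G,\sigma,u}$ meets the generic fiber of $\psi^\triangle_{\mathcal{L}}$ in exactly one point. In the cases covered by Proposition~\ref{prop:mld-1-implies-rational}, this is the statement that the relevant coefficient of $[W^\triangle_\sigma]$ equals $1$. So the LS degree on this component is $1$, and the unique point $\mathbf{L}$ is a rational function of $\mathbf{M}$. Concretely, $\mathbf{L}$ is obtained by joins and meets in the Grassmann--Cayley algebra, as in Example~\ref{ex:rat_trian}. This step is immediate from what precedes.

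\textbf{Step 2: the analogue of Proposition~\ref{prop:lines_VRC}.} I would prove $\psi^{-1}_{G,\sigma,u}(\mathbf{M}_{\mathbf z})\simeq \mathcal{C}^{\mathbf z}(\mathcal{G})$, where the fiber is taken on the triangle-degenerate component. The argument follows the proof of Proposition~\ref{prop:lines_VRC}, with one local check at each modified vertex $v_s$, split by color.
\begin{itemize}
\item \emph{Black pendant triangle.} Deleting the two tripods and wiring $v_s$ directly to boundary vertex $p_i+1$ forces the common point $z_{p_i+1}=M_i\cap M_{i+1}$ to lie on $L(v_s)$. This is exactly the concurrency condition.
\item \emph{White pendant triangle.} Merging the two tripods into a single white vertex imposes one linear relation among $z_{p_i},z_{p_i+1},z_{p_i+2}$ and the black vector on $L(v_s)$. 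This forces $L(v_s)$ into the plane $P_i$ spanned by $M_i$ and $M_{i+1}$.
\end{itemize}
The converse direction, reconstructing the VRC from $\mathbf{L}$ by intersecting adjacent lines, goes through unchanged. The gauge arguments also go through unchanged.

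\textbf{Step 3: transfer to the positroid side.} Applying Theorem~\ref{th:VRC_Amp} gives $|\Gr(k,\mathbf z^\perp)\cap\Pi^\triangle_{G,\sigma,u}|=|\mathcal{C}^{\mathbf z}(\mathcal{G})|=1$. The intersection number therefore equals $1$, provided we know $\dim\Pi^\triangle_{G,\sigma,u}=4k$. For rationality, I would argue by the degree of the amplituhedron map: since the map has degree $1$ it is birational, so the unique VRC, namely its black-vertex vectors $w_b=L(v_i)\cap L(v_j)$, depends rationally on $\mathbf z$. The lines $L(v_i)$ are recovered as spans of these vectors.

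\textbf{Main obstacle.} The step I expect to be hardest is verifying that the modified graph $\mathcal{G}$ is still a valid reduced plabic graph with $\dim\Pi^\triangle=4k$ for the appropriate $k$. The rank changes under the modification: removing tripods lowers $n$ and changes source-set sizes. My approach would be to count perfect orientations locally at $v_s$ after readjusting the helicities $h(v_s)$. I would then check that the dimension drop matches the codimension, one per pendant triangle, imposed by $\langle M_iM_{i+1}\rangle=0$. Once dimensions match, finiteness of the fiber together with Step~1 gives the rest.
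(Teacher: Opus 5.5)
Your route is the one implicit in the paper, which gives no separate proof of this corollary. The argument there is: LS degree $1$ by the definition of a rational Landau diagram, the analogue of Proposition~\ref{prop:lines_VRC} for the modified plabic graph, then Theorem~\ref{th:VRC_Amp}, and finally recovering each line as the span $L_i=w_iu_i$ of two VRC vectors.

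\textbf{The gap: the white pendant triangle.} Your local check for this case is wrong as stated. A single linear relation among $z_{p_i},z_{p_i+1},z_{p_i+2}$ and one black vector $w$ on $L(v_s)$ only says $w\in P_i$. This imposes no condition on $L(v_s)$ at all, since every line in $\PP^3$ meets every plane. The gadget fails in both directions.
\begin{itemize}
\item \emph{Forward direction.} With such a gadget $L(v_s)$ loses an incidence condition, so the fiber becomes positive-dimensional.
\item \emph{Converse direction.} On the coplanar component, $L(v_s)\cap P_i$ is the whole line. So $w$ is not determined by $\mathbf L$, and the VRCs over a single fiber point form a positive-dimensional family.
\end{itemize}
Either way the count $|\mathcal{C}^{\mathbf z}(\mathcal G)|=1$ breaks. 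You need two distinct points of $L(v_s)$ in $P_i$, not one.

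\textbf{How to repair it.} The merged gadget must keep two edges at $v_s$, carrying vectors $w_1,w_2$ that satisfy tripod relations with $(z_{p_i},z_{p_i+1})$ and with $(z_{p_i+1},z_{p_i+2})$ respectively. In other words, the two tripods are glued along their common boundary vertex $p_i+1$.
\begin{itemize}
\item Nonvanishing edge weights give $w_1\neq z_{p_i+1}$ and $w_2\neq z_{p_i+1}$. Since $M_i\cap M_{i+1}=\{z_{p_i+1}\}$, this forces $w_1\neq w_2$, hence $L(v_s)=w_1w_2\subset P_i$.
\item Conversely, on the coplanar component the vectors $w_1=L(v_s)\cap M_i$ and $w_2=L(v_s)\cap M_{i+1}$ are well defined.
\item The same nonvanishing excludes the concurrent solution, which would force $w_1=w_2=z_{p_i+1}$. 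So this gadget really isolates the white component.
\end{itemize}
With this correction your Step~2, and hence the rest of the argument, goes through.

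\textbf{The dimension check is not needed.} What you call the main obstacle can be bypassed. Once $|\mathcal{C}^{\mathbf z}(\mathcal G)|=1$ for generic $\mathbf z$, Theorem~\ref{th:VRC_Amp} makes $\Gr(k,\mathbf z^\perp)\cap\Pi^\triangle_{G,\sigma,u}$ a single point. The subvarieties $\Gr(k,\mathbf z^\perp)$ are ${\rm GL}(n)$-translates of one another and have codimension $4k$. Kleiman transversality therefore forces $\dim\Pi^\triangle_{G,\sigma,u}=4k$, with a transverse intersection. No count of perfect orientations is required.
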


Consider the vertex $v_i$ of $\mathcal{G}$ that corresponds to a vertex $i$ of $G$.
If $w_i,u_i$ are two vectors in the VRC assigned to distinct edges incident to $v_i$, then the line $L_i=w_i u_i$ is well defined. The resulting configuration $(L_1,\ldots,L_\ell)$ is in the fiber of the Landau map, as in Proposition~\ref{prop:lines_VRC}.

\begin{figure}[htbp]
    \centering
    \includegraphics[width=1.0\textwidth]{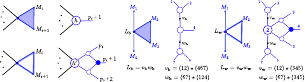}
    \caption{Left: The positroids when the external lines lie on a white or black triangle. Center and right: Line configurations, their positroids, and VRCs for the $3$-mass box.}
    \label{fig:vrc_lines_3mb}
\end{figure}

\begin{example}[$\ell=1$]\label{ex:3mb_vrc}
Consider the \emph{$3$-mass box}. This corresponds to $G^\triangle_{\sigma,u}$ where $G_u$ is the $4$-mass box and $\sigma \in \{{\bf b},{\bf w}\}$. In Figure~\ref{fig:vrc_lines_3mb} we draw the plabic graph $\mathcal{G}:=\mathcal{G}_{G,\sigma,u}^\triangle$, some vectors in the VRC of $\mathcal{C}^{\bf z}(\mathcal{G})$, and the point ${\bf L}_\sigma=(L_\sigma)$ in the fiber of the Landau map for external lines ${\bf M}_{\bf z}$. Here ${\bf z}=[z_1|\cdots|z_7]$ is a fixed $4\times 7$ matrix, the boundary vertices of $\mathcal{G}$ are labeled $1,\ldots,7$, and the external lines are $M_1=(z_1,z_2)$, $M_2=(z_3,z_4)$, $M_3=(z_4,z_5)$, $M_4=(z_6,z_7)$.
\end{example}

Also in the rational regime, substitution maps are promotion maps. However, these maps are more special, as they are rational. 
These rational promotion maps are believed to be copositive by Conjecture~\ref{conj:promotion_maps},
and they are further conjectured to define \emph{cluster quasi-homomorphisms}. In particular, they send cluster variables to (products of) cluster variables. 
    
\section{Cluster Variables}
\label{sec:cluster}

Cluster algebras are remarkable commutative rings, introduced by
Fomin and Zelevinsky~\cite{FZ02} in the study of total positivity and
representation theory. They now appear in many
areas of mathematics, including Teichmüller theory, mirror symmetry,
Poisson geometry, and mathematical physics. See~\cite{FWZ} for an
introduction.
A projective variety $X$ has a \emph{cluster structure} if its
 coordinate ring $\mathbb{C}[X]$ is a cluster
algebra. A \emph{cluster} ${\bf x}=\{x_1,\ldots,x_n\}$ is a collection of
coordinates defining a torus chart on $X$. One passes from
${\bf x}$ to another cluster
${\bf x'}={\bf x}\backslash\{x_i\}\cup\{x'_i\}$ by performing a
\emph{mutation}. This replaces a variable $x_i$ by a new variable $x'_i$
via an \emph{exchange relation}
$x_i x'_i = P_i + Q_i$,
where $P_i$ and $Q_i$ are monomials in 
${\bf x}\backslash\{x_i\}$.
The elements of a cluster are 
\emph{cluster variables}. Starting from an initial cluster and performing
all possible mutations produces the  set of all cluster variables.
This set can be finite or~infinite.

Grassmannians admit cluster structures \cite{Scott}. For example,
$\Gr(2,n)$ has a finite cluster structure. Its
cluster variables are the Plücker coordinates $\langle ij \rangle$ indexed by
arcs of an $n$-gon, and clusters correspond to collections of arcs
forming triangulations of the $n$-gon. Each cluster contains $2n-3$
independent Plücker coordinates, which equals $\dim \Gr(2,n)+1$.
There are Catalan-many clusters.
The cluster structure interacts beautifully with positivity. Each
cluster provides a \emph{positivity test}: 
$\Gr_{>0}(2,n)$ is the locus where the $2n-3$ cluster variables
of any fixed cluster are positive. This  forces all
$\binom{n}{2}$ Plücker coordinates to be positive.

For $k \geq 3$ and
$n$ large enough, the cluster algebra of $\Gr(k,n)$ has
infinite type, and its combinatorics is much richer.
Cluster variables are no longer
only Pl\"ucker coordinates, but they are homogeneous polynomials in
the Pl\"ucker coordinates of arbitrarily high degree. No general
classification of these cluster variables is known.
Nevertheless, clusters still provide a positivity test for
$\Gr_{>0}(k,n)$. Every cluster variable is Grassmann copositive.
For example, 
$\langle2451\rangle\langle3672\rangle-\langle2453\rangle\langle1672\rangle$
is a cluster variable for $\Gr(4,7)$. 
It is positive on $\Gr_{>0}(4,7)$.

Cluster variables  provide an infinite family of Grassmann copositive polynomials.
Positivity can be made manifest by expressing a cluster variable in
terms of a given cluster: it becomes a Laurent
polynomial with positive coefficients. This is the celebrated
\emph{Laurent phenomenon} \cite{FWZ}.
 In Section~\ref{sec:positivity} we also introduced a family of
Grassmann copositive functions, namely the LS
discriminants and SLS resultants, which are multi-graded Hurwitz and Chow
forms \cite{PSS} for line incident varieties \cite{HMPS1}.
It is natural to ask
whether they have any relation to cluster variables. This question is
also motivated by the physics of scattering amplitudes.

The first connection between particle physics and cluster algebras was
made by Golden--Goncharov--Spradlin--Vergu--Volovich~\cite{GGSVV, GGSVV2},
who described singularities of scattering amplitudes of planar
$\mathcal{N}=4$ SYM  using cluster variables for 
$\Gr(4,n)$. Remarkable
\emph{cluster adjacencies} were later discovered by
Drummond--Foster--G\"urdo\u{g}an~\cite{DFG}. For related versions
involving Landau singularities see
in~\cite{GP23}. These advances led to the
cluster bootstrap program~\cite{CHDDDFGvHMP}, which enabled
cutting-edge computations. Nevertheless, a proof or explanation
{\em why}
cluster structures should arise in particle physics has since been beyond~reach.

When amplitudes are rational functions and no integration is needed 
(at tree level), the recent advances  in~\cite{EZLPTSW} 
explain the cluster structure governing the
singularities of scattering amplitudes
in terms of the positive geometry of the amplituhedron.
A crucial tool was the \emph{BCFW recursion} and the introduction
of \emph{BCFW promotion maps} at the level of the
cluster algebra $\mathbb{C}[\Gr(4,n)]$ that mimic the
recursive structures of amplitudes and preserve the cluster
properties. These maps are examples of \emph{cluster
quasi-homomorphisms}~\cite{Fraser}. Given a map
$\tilde{\varphi}: \Gr(4,n) \rightarrow \Gr(4,n')$ with $n' \leq n$, we
can consider its pullback
$\tilde{\varphi}^*: \mathbb{C}[\Gr(4,n')] \rightarrow
\mathbb{C}[\Gr(4,n)]$. If $\tilde{\varphi}$ is a cluster
quasi-homomorphism, then it maps cluster variables in $\Gr(4,n')$ to
Laurent monomials in cluster variables in $\Gr(4,n)$, and it preserves
cluster structures.
Going further,
Even-Zohar {\em et al.}~\cite{VRC} 
  introduced a general
framework conjectured to generate infinite families of cluster
quasi-homomorphisms from plabic graphs.
Their \emph{cluster promotion maps} are exactly our
promotion maps in Section~\ref{sec:promotion_maps} in the special case
when they are rational.

In this work, we introduced Landau analysis on the Grassmannian to understand
the singularities of scattering amplitudes at any loop order. Our
(N)LS discriminants and SLS resultants are Landau
singularities that can arise in amplitudes and Feynman integrals, even
if one is not able to compute such integrals explicitly.
In Section~\ref{sec:positivity} we provided structural results for some
of these singularities, proving their copositivity and conjecturing a
general mechanism for copositivity. In what follows we reveal how
the cluster structures emerge.

If the external data are positive and the Landau diagram is planar, we
expect LS discriminants to factor into cluster variables in the
rational regime. Remarkably, the substitution maps
$\varphi_{r}$ in Theorem~\ref{thm:discr_fact} are expected to
be cluster promotion maps. This yields a vast class of graphs whose LS
discriminants factor into products of cluster variables. We start by
illustrating this phenomenon with two examples in the rational regime, one from Section \ref{sec:rationality}:

\begin{example}[Three-Mass Box]  \label{ex:3massbox}
   Let $\mathcal{L}=G_u \cup H^\triangle_u$ where $G = K_1$ and $u=4$.
    If the external lines attached to $G$ are     $12,23,45,67$ then
     the LS discriminant in (\ref{eqn:rational-box})~equals
    \begin{equation}\label{eq:discr_box}
        \Delta_{K_1,(4)} \,\,=\,\,
         (\langle 2451\rangle \langle 3672\rangle - \langle 2453\rangle \langle 1672\rangle)^2
         \,\,=\,\,\langle 245 | 13 | 267  \rangle^2.
    \end{equation}
       This is the square of a  cluster variable for ${\rm Gr}(4,7)$.
\end{example}

\begin{example}[$\ell=2$] \label{ex:boxin12}
Let $\mathcal{L}=G_u \cup H^\triangle_u$ with $G = K_2$ and $u = (4,3)$. Let $a_1 a_2$, 
$a_3 a_4$, $a_4 a_5$, $a_6 a_7$ be the external lines attached to vertex $1$ and $b_1 b_2$, 
$b_2 b_3$, $b_4 b_5$ those attached to vertex $2$. By Theorem~\ref{thm:discr_fact}, the LS discriminant $\Delta_{K_2,(4,3)}$ can be written in terms of LS discriminants of boxes $\Delta_{K_1,(4)}$. In particular, it is divisible by $\varphi^*_\textbf{b}\Delta_{K_1,(4)} \cdot  \varphi^*_\textbf{w} \Delta_{K_1,(4)}$.
That product equals 
\begin{equation}\label{eq:discr_pentabox_rat}
        \langle b_2 b_4 b_5 | b_1 b_3 | b_2 , (a_4 a_1 a_2) \star (a_4 a_6 a_7) \rangle^2   \cdot   \langle b_2 b_4 b_5 | b_1 b_3 | b_2 , (a_3 a_4 a_5) \star (a_1 a_2) , (a_3 a_4 a_5) \star (a_6 a_7) \rangle^2,
    \end{equation}
    where each factor is $\varphi_\sigma^* \langle b_2 b_4 b_5 | b_1 b_3 | b_2 b_6 b_7 \rangle^2$, and $\varphi_\sigma^*$ substitutes the line $(b_6b_7)$ with $L^{(\sigma)}$ as in \eqref{eq:3_mass_sol} for $\sigma \in \{\mathbf{b},\mathbf{w}\}$. 
    Since the expression in~\eqref{eq:discr_pentabox_rat} has degree $4$ in the Plücker coordinates of each external line, we conclude there is no extraneous factor, and~\eqref{eq:discr_pentabox_rat} equals $\Delta_{K_2,(4,3)}$.
\end{example}

The considerations in Section~\ref{sec:recursive}, together with Example \ref{ex:boxin12}, imply
   that the substitution maps $\varphi_{\mathbf{b}}$ and $\varphi_{\mathbf{w}}$, defined by substituting the lines $L^{(\mathbf{b})}$ and $L^{(\mathbf{w})}$ as in~\eqref{eq:3_mass_sol}, never produce extraneous factors in~\eqref{eq:discr_fact}. 
 The three-mass box is the Landau diagram in Example  \ref{ex:3massbox}.
 
\begin{proposition}\label{prop:fact_3mb}
Let $\mathcal{L}$ be a Landau diagram reducible by the three-mass box $\mathcal{L}_1$. Then
\begin{equation} \label{eq:3mb_rec}
\Delta_\mathcal{L} \,\,=\,\, \varphi_\textbf{b}^*\Delta_{\mathcal{L}_2} \cdot \varphi_\textbf{w}^*\Delta_{\mathcal{L}_2}.
\end{equation}
\end{proposition}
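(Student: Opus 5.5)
The plan is to specialize Theorem~\ref{thm:discr_fact} to the case where $\mathcal{L}_1$ is the three-mass box, and then show that the extraneous factor $\mathcal{E}(\mathbf{M}^{(1)})$ equals one.

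Write $\mathbf{M}^{(1)}=(M_1,M_2,M_3,M_4)$ with $\langle M_1M_2\rangle=0$. Let $p=M_1\star M_2$ and $P=M_1M_2$. By Example~\ref{ex:fact_cluster_4mb_disc}, the fiber $\psi_1^{-1}(\mathbf{M}^{(1)})$ consists of exactly two points, $L^{(\mathbf b)}$ and $L^{(\mathbf w)}$, given by \eqref{eq:3_mass_sol}. These are \emph{rational} in $\mathbf{M}^{(1)}$, so the substitution maps $\varphi_{\mathbf b},\varphi_{\mathbf w}$ are rational maps. Proposition~\ref{prop:rec_fibers} then gives $\psi^{-1}(\mathbf M)$ as the disjoint union of the two fibers of $\psi_2$ over $\varphi_{\mathbf b}(\mathbf M)$ and $\varphi_{\mathbf w}(\mathbf M)$. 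Theorem~\ref{thm:discr_fact} yields
\[
\Delta_{\mathcal L} \,=\, \mathcal E(\mathbf M^{(1)})\cdot \varphi_{\mathbf b}^*\Delta_{\mathcal L_2}\cdot\varphi_{\mathbf w}^*\Delta_{\mathcal L_2}.
\]
Since the two factors are individually polynomial after we fix normalizations, no symmetrization over conjugate roots is needed.

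The key step is to choose representatives of $L^{(\mathbf b)}$ and $L^{(\mathbf w)}$ as Plücker vectors that are polynomial in $\mathbf M^{(1)}$, with the right multidegree, and to check that $\mathcal E$ is a unit, hence a constant. Concretely, I would write $L^{(\mathbf b)}=(pM_3)\star(pM_4)$ and $L^{(\mathbf w)}=(P\star M_3)(P\star M_4)$ in the Grassmann--Cayley algebra. I would then factor out any common content. For example, the expansion of $(pM_3)\star(pM_4)$ in terms of $p=M_1\star M_2$ produces a spurious bracket factor, which I would divide out. This should leave Plücker vectors of degree $(1,1,1,1)$ in $(M_1,\dots,M_4)$, in analogy with the normalization \eqref{eq:4box_extr_fact} for the four-mass box.

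With these normalizations, the right-hand side becomes a polynomial. I would then compare multidegrees. In each external line of $\mathbf M^{(2)}$, the degree of $\Delta_{\mathcal L}$ is the sum of the degrees of the two pullbacks, because $\psi^{-1}$ splits into two pieces. In each line of $\mathbf M^{(1)}$, the degree of the right-hand side is $\deg(\Delta_{\mathcal L_2})$ in the substituted line, times one per factor. I would match this against the degree of $\Delta_{\mathcal L}$ given by Proposition~\ref{prop:bbexpected}, or by the Hurwitz-form degree formula $2(g_{u-e_i}+\gamma_u-1)$. If the degrees agree, then $\mathcal E$, which is rational of degree zero, has no zeros or poles and is constant.

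The main obstacle is proving that no extraneous factor arises in general, rather than only checking it case by case. Two dangers must be excluded: that $\Delta_{\mathcal L}$ picks up a factor supported on $\mathbf M^{(1)}$ alone (such as $\Delta_{\mathcal L_1}$ itself, i.e.\ $\langle L^{(\mathbf b)}L^{(\mathbf w)}\rangle$, which in the rational regime corresponds to the two fibers colliding), and that the pullbacks acquire bracket denominators. I would first try to run the degree bookkeeping exactly as in Example~\ref{ex:boxin12}, using only that $\deg\Delta_{\mathcal L_2}$ in the replaced line is inherited additively. Failing that, I would argue that $\mathcal E$ depends on $\mathbf M^{(1)}$ only. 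Varying $\mathbf M^{(2)}$ generically, a zero of $\mathcal E$ would force fibers to collide for all $\mathbf M^{(2)}$, which happens only on $\langle p P\rangle$-type loci, and these are excluded by the polynomial normalization above. This would show that $\mathcal E$ is constant, and hence equal to $1$ after scaling.
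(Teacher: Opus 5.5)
Your skeleton (Theorem \ref{thm:discr_fact}, then fix representatives and compare degrees) matches the paper's, which keeps the representatives \eqref{eq:3_mass_sol} and does the degree check of Example \ref{ex:boxin12}. However, the concrete steps you use to show $\mathcal E=1$ do not work.

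\emph{You should not renormalize.} Rescaling $L^{(\sigma)}$ by $c_\sigma$ multiplies the right-hand side of \eqref{eq:3mb_rec} by $(c_{\mathbf b}c_{\mathbf w})^{D}$, where $D$ is the degree of $\Delta_{\mathcal L_2}$ in the substituted line. So the proposition is precisely the claim that the Grassmann--Cayley representatives \eqref{eq:3_mass_sol}, as written, give $\mathcal E=1$. Moreover, the normalization you aim for does not exist. Let $x,y$ and $x',y'$ be the points where $L^{(\mathbf b)}$ and $L^{(\mathbf w)}$ meet $M_3,M_4$. Let $h\in{\rm SL}(4)$ act by $a$ on $x,y$ and by $a^{-1}$ on $x',y'$. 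Each of $M_1,M_2$ is spanned by $p\in L^{(\mathbf b)}$ and a point of $L^{(\mathbf w)}\subset P$. Hence $h$ fixes the Pl\"ucker vectors of $M_1,\dots,M_4$, yet it scales $L^{(\mathbf b)}$ by $a^2$ and $L^{(\mathbf w)}$ by $a^{-2}$. Therefore no ${\rm SL}(4)$-equivariant expression in the Pl\"ucker coordinates of $\mathbf M^{(1)}$, polynomial or rational, represents either line, let alone one of degree $(1,1,1,1)$. Only the pair is a function of the lines. Concretely, in the column labels of Example \ref{ex:3massbox} (lines $12,23,45,67$), $L^{(\mathbf b)}$ and $L^{(\mathbf w)}$ have degrees $(0,2,0)$ and $(2,2,2)$ in columns $1,2,3$. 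Anything built from the two incident lines must have degrees of the form $(d_1,d_1+d_2,d_2)$, and only the sum $(2,4,2)$ fits. So the bookkeeping must be done for the product, which has degree $2D$ in each line of $\mathbf M^{(1)}$. In addition, $M_1\star M_2$ is the scalar $\langle M_1M_2\rangle=0$, not the point $p$, and $(pM_3)\star(pM_4)=p\wedge(\cdots)$ has no common bracket to remove.

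\emph{The comparison step also fails as stated.} Proposition \ref{prop:bbexpected}, equivalently $2(g_{u-e_i}+\gamma_u-1)$, is only the expected degree and includes extraneous factors. For the pentabox it gives $(8,4)$ against the true $(4,4)$, and the excess is $\Delta_{K_1,(4)}(A,B,C,D)$, exactly the $\Delta_{\mathcal L_1}$-type factor you want to exclude. The correct target is the restriction of Corollary \ref{cor:rec_discr_4box}.

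Matching multidegrees also does not make a degree-zero rational $\mathcal E$ constant. You need the divisibility of the product by $\Delta_{\mathcal L}$ from the proof of Theorem \ref{thm:discr_fact}; only then does equal multidegree force a constant quotient.

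Your fallback misidentifies the bad locus. Here $p\in P$ always, so $\langle pP\rangle\equiv 0$. The locus where the two $\psi_2$-fibers coincide for every $\mathbf M^{(2)}$ is $L^{(\mathbf b)}=L^{(\mathbf w)}$, which is the zero set of $\langle 245|13|267\rangle$. This locus genuinely lies in the branch locus of $\psi$. It is absent from $\Delta_{\mathcal L}$ only because $\Delta_{\mathcal L}$ is defined as the (restricted) Hurwitz form, not because of any normalization.

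A clean finish is to restrict Corollary \ref{cor:rec_discr_4box} to $\langle M_1M_2\rangle=0$. This sends $L^{\pm}$ to $c_\pm L^{(\sigma)}$, so $\mathcal E=(c_+c_-)^{D}$ with $c_+c_-$ depending only on $\mathbf M^{(1)}$. The single check in Example \ref{ex:boxin12}, where $D=2$, then forces $(c_+c_-)^2$, hence $c_+c_-$ and $\mathcal E$, to be constant for every $\mathcal L_2$.
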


\begin{conjecture}
Fix a rational Landau diagram $\mathcal{L}_1$ 
 and recall the setting of Theorem~\ref{thm:discr_fact}. The points in the fiber 
 $\psi_1^{-1}(\mathbf{M}^{(1)})$ can be expressed rationally  in $\mathbf{M}^{(1)}$, namely
as polynomials in the Grassmann--Cayley algebra, such that the extraneous factor in~\eqref{eq:discr_fact} is equal to one.
\end{conjecture}

\begin{figure}[htbp]
\centering
\includegraphics[width=1.0\textwidth]{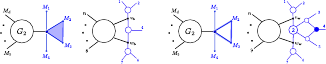}
\caption{Recursion on lines for $\sigma={\bf b}$ and $\sigma = {\bf w}$, with the corresponding promotion map.}
\label{fig:recursion_3mb}
\end{figure}

Returning to cluster algebras, we now define
rational promotion maps between Grassmannians,
as shown in Figure~\ref{fig:recursion_3mb}.
 Let $\sigma \in \{\textbf{b},\textbf{w}\}$ and consider the maps
$\tilde{\varphi}_{\sigma}: \Gr(4,n) \rightarrow \Gr(4,n-5)$
that send $[z_1|z_2|\cdots|z_n]$ to $[v_\sigma|w_\sigma|z_8|\cdots|z_n]$.
For $\sigma=\textbf{b}$ we set
$v_\textbf{b}=(12) \star (467)$ and $w_\textbf{b}=(67) \star (124)$,
while for $\sigma=\textbf{w}$ we define
$v_\textbf{w}=(12) \star (345)$ and $w_\textbf{w}=(67) \star (345)$.
These vectors arise from the VRCs in Example~\ref{ex:3mb_vrc} and Figure \ref{fig:vrc_lines_3mb}.
 They determine the lines
$L^{({\bf w})}=v_{{\bf w}}w_{{\bf w}}$
and
$L^{({\bf b})}=\langle1267\rangle v_{{\bf b}}w_{{\bf b}}$ in~\eqref{eq:3_mass_sol}.
The $\tilde{\varphi}_{\sigma}$ belong  to the general families of rational promotion maps
   in~\cite{VRC}, which are conjectured to be cluster quasi-homomorphisms. If this holds,
   then it provides an inductive mechanism ensuring that the LS discriminant $\Delta_\mathcal{L}$ in~\eqref{eq:3mb_rec} factors into a product of cluster variables. We prove this statement in Theorem \ref{th:cluster_fact_trees}.

\begin{lemma}\label{lem:3mb_prom_cluster}
The maps $\tilde{\varphi}_\textbf{b}$ and $\tilde{\varphi}_\textbf{w}$ are cluster quasi-homomorphisms.
\end{lemma}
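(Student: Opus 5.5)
We propose to prove that $\tilde{\varphi}_{\bf b}$ and $\tilde{\varphi}_{\bf w}$ are cluster quasi-homomorphisms by realizing each of them as a composition of maps already known to be cluster quasi-homomorphisms in the sense of Fraser~\cite{Fraser}. The concrete criterion we plan to verify is the following. There is a seed $\Sigma'$ of $\mathbb{C}[\Gr(4,n-5)]$ and a seed $\Sigma$ of $\mathbb{C}[\Gr(4,n)]$ such that:
\begin{enumerate}[label=(\roman*)]
\item the pullback $\tilde{\varphi}_\sigma^*$ sends every cluster variable of $\Sigma'$ to a Laurent monomial in the cluster variables and frozen variables of $\Sigma$;
\item the exchange matrices are compatible, meaning the exchange ratios $\hat y$ of $\Sigma'$ are pulled back to exchange ratios of corresponding mutable variables of $\Sigma$.
\end{enumerate}
Fraser's result then propagates this along all mutation sequences. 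This shows that cluster variables go to Laurent monomials in cluster variables, up to frozen factors, which is exactly the property required.

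\textbf{Step 1: explicit pullbacks of Pl\"ucker coordinates.} We first compute $\tilde\varphi_\sigma^*$ on Pl\"ucker coordinates using the Grassmann--Cayley expansions. For $\sigma={\bf w}$ we have $v_{\bf w}=(12)\star(345)=z_1\langle 2345\rangle - z_2\langle 1345\rangle$, and similarly $w_{\bf w}=(67)\star(345)$. Then:
\begin{itemize}
\item $\langle v_{\bf w}\, w_{\bf w}\, z_i z_j\rangle$ becomes a chain polynomial of the form $\langle 345\rangle$-type quadratic;
\item $\langle v_{\bf w}\, z_iz_jz_k\rangle$ with $i,j,k\ge 8$ becomes $\langle 12\,(345)\cap(ijk)\rangle$-type quadratics of the form $\langle 12\,|345|\,ijk\rangle$-style expressions.
\end{itemize}
These are all known cluster variables of $\Gr(4,n)$: quadratic ones of the form $\langle a\,b\,(cde)\cap(fgh)\rangle$ with cyclically adjacent arguments. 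The case $\sigma={\bf b}$ is treated the same way, using $v_{\bf b}=(12)\star(467)$ and $w_{\bf b}=(67)\star(124)$. Here we expect the pullbacks to acquire factors such as $\langle 1267\rangle$ and $\langle 1246\rangle\langle 1467\rangle$, which we will record as frozen or mutable Laurent monomial prefactors.

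\textbf{Step 2: choosing seeds.} Next we choose the target seed $\Sigma'$ to be a rectangles (Le-diagram) seed of $\Gr(4,n-5)$ adapted to the columns $v,w,z_8,\dots,z_n$. Its cluster variables are then Pl\"ucker coordinates that are either of the form $\langle vw\,ij\rangle$, of the form $\langle v\,ijk\rangle$, or free of $v,w$. For the source $\Sigma$, we take a seed of $\Gr(4,n)$ obtained from a rectangles seed by a short explicit mutation sequence in the first seven columns, so that it contains all the quadratic cluster variables produced in Step 1. The correct seed can be read off from the BCFW-promotion examples in \cite{EZLPTSW} and \cite[Section 4]{VRC}, since $\tilde\varphi_{\bf w}$ is essentially a BCFW-type promotion with a one-vertex core.

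\textbf{Step 3: exchange-ratio compatibility (main obstacle).} The hard part is to verify the $\hat y$-compatibility, namely that the quivers match up to frozen nodes. We would do this by computer algebra in a small instance first, say $n=11$ or $12$, which is enough to contain all local configurations. Concretely, we check that every mutable $\hat y'$ pulls back to the corresponding $\hat y$ ratio in $\Sigma$, and then argue that larger $n$ only adds columns on which the map acts as the identity. As a fallback, we could instead write $\tilde\varphi_\sigma$ as the composition of a twist or cyclic shift with a boundary-deletion map and a BCFW promotion map already proven to be a quasi-homomorphism in \cite{EZLPTSW}. For ${\bf b}$, we would exploit the Hodge (parity) duality exchanging concurrent and coplanar configurations, reducing it to the ${\bf w}$ case after verifying that this duality respects the cluster structures.
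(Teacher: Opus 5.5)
Your overall plan (verify Fraser's criterion on one pair of seeds and let mutation propagate it) matches the paper's in spirit. However, Step 1 is wrong exactly where the difficulty lies, and your choice of seeds in Step 2 depends on it. Take $\sigma=\mathbf w$ and $i,j\ge 8$, and write $v_{\mathbf w}=\langle1345\rangle z_2-\langle2345\rangle z_1$ and $w_{\mathbf w}=\langle6345\rangle z_7-\langle7345\rangle z_6$. Then
\[
\begin{aligned}
\tilde\varphi_{\mathbf w}^*\langle v\,w\,ij\rangle \,=\,& \langle1345\rangle\langle6345\rangle\langle27ij\rangle-\langle1345\rangle\langle7345\rangle\langle26ij\rangle\\
&-\langle2345\rangle\langle6345\rangle\langle17ij\rangle+\langle2345\rangle\langle7345\rangle\langle16ij\rangle .
\end{aligned}
\]
This has degree $2$ in each of $z_3,z_4,z_5$, so it is cubic in Plücker coordinates, not a quadratic chain polynomial. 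It is also irreducible. As a polynomial in $z_i,z_j$ it is the rank-$4$ quadric $\langle \ell\,z_iz_j\rangle$ with $\ell=v_{\mathbf w}\wedge w_{\mathbf w}$. A factor not involving $z_i,z_j$ could vanish only where $\ell$ degenerates, and that locus has codimension $2$.

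Every seed of $\Gr(4,n-5)$ on the columns $v,w,z_8,\dots,z_n$ contains such a coordinate, namely the frozen $\langle v\,w\,8\,9\rangle$, and your rectangles seed contains many more. So with your target seed you would have to exhibit a single seed of $\Gr(4,n)$ containing all these irreducible cubics as cluster variables. That seed must also be compatible with the quadrics $\langle 345|21|ijk\rangle$ coming from $\langle v\,ijk\rangle$. A seed built by a short mutation sequence to contain ``the quadratic cluster variables of Step~1'' cannot do this, and nothing in your plan supplies it.

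By contrast, for $\sigma=\mathbf b$ the same pullback does factor, as $\pm\langle1267\rangle\langle124|67|4ij\rangle$, because the line $v_{\mathbf b}w_{\mathbf b}$ passes through $z_4$. Parity also does not intertwine $\tilde\varphi_{\mathbf b}$ with $\tilde\varphi_{\mathbf w}$ on the nose, since the adjacent triples defining the dual columns change in the target. So the duality fallback needs more than cluster-compatibility of parity.

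Your other fallback, composing with a boundary-deletion map, is the idea that resolves this, and it is what the paper does. The paper factors $\tilde\varphi_\sigma$ through
\[
\overline\varphi_\sigma:\Gr(4,n)\to\Gr(4,n-3),\qquad [z_1|\cdots|z_n]\mapsto[z_1|v_\sigma|w_\sigma|z_7|z_8|\cdots|z_n],
\]
with the columns labelled $1,2,6,7,8,\dots,n$. It then forgets the columns $z_1,z_7$.

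On $\Gr(4,n-3)$ it uses the seed of Plücker coordinates $\langle126i\rangle$, $\langle12i,i{+}1\rangle$, $\langle1i,i{+}1,i{+}2\rangle$, $\langle1267\rangle$, $\langle2678\rangle$, $\langle i,i{+}1,i{+}2,i{+}3\rangle$. In this seed every coordinate containing both $v$ and $w$ also contains $z_1$ or $z_7$. That column kills one of the two terms of $v_\sigma$ or $w_\sigma$, so every seed variable pulls back to a product of Plücker coordinates and quadratic chain polynomials. For example, $\overline\varphi_{\mathbf w}^*\langle126i\rangle=\langle1345\rangle\langle345|76|i12\rangle$ and $\overline\varphi_{\mathbf w}^*\langle2678\rangle=\langle3457\rangle\langle345|21|678\rangle$.

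The exchange-ratio check is your Step~3, which the paper also only sketches via the techniques of \cite{VRC}. It should be carried out for $\overline\varphi_\sigma$ on this seed. The cubics $\tilde\varphi_{\mathbf w}^*\langle v\,w\,ij\rangle$ then come out as cluster variables automatically, instead of having to be placed in a seed by hand.
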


\begin{proof}
Extending $\tilde{\varphi}_\sigma$, let
$\overline{\varphi}_\sigma: \Gr(4,n) \rightarrow \Gr(4,n-3)$ 
be the map which sends $[z_1|z_2|\cdots|z_n]$ to $[z_1|v_\sigma|w_\sigma|z_7|z_8|\cdots|z_n]$. We label the $n-3$ columns in the image by $1,2,6,7,8,\ldots,n$.
The initial cluster $\Sigma$ for $\Gr(4,n-3)$ has the following $4(n-7)+1$ Plücker coordinates~as cluster variables:
$\{\langle126i\rangle : i\in[8,n]\}$,
$\{\langle12i,i+1\rangle : i\in[7,n-1]\}$,
$\{\langle1i,i+1,i+2\rangle : i\in[6,n-2]\}$,
together with the variables
$\langle1267\rangle$, $\langle2678\rangle$,
and $\{\langle i,i+1,i+2,i+3\rangle : i\in[6,n-3]\}$.

Each of these is mapped by
$\overline{\varphi}_\textbf{b}^*$ to a product of Plücker coordinates; for instance, we have
$\overline{\varphi}_\textbf{b}^*(\langle1267\rangle)
=
\langle1267\rangle\langle1467\rangle\langle1247\rangle$,
$\overline{\varphi}_\textbf{b}^*(\langle126i\rangle)
=
\langle124i\rangle\langle1467\rangle\langle1267\rangle$,
and
$\overline{\varphi}_\textbf{b}^*(\langle2678\rangle)
=
\langle4678\rangle\langle1247\rangle\langle1267\rangle$.
Likewise, $\overline{\varphi}_\textbf{w}^*$ maps each element of $\Sigma$ to a product of cluster variables. For example
$\overline{\varphi}_\textbf{w}^*(\langle1267\rangle)
=
\langle1267\rangle\langle1345\rangle\langle3457\rangle$
and
$\overline{\varphi}_\textbf{w}^*(\langle126i\rangle)
=
\langle345|76|i12\rangle\langle1345\rangle$,
$\overline{\varphi}_\textbf{w}^*(\langle2678\rangle)
=
\langle345|21|678\rangle\langle3457\rangle$.
Here some of the factors are quadratic cluster variables.

Using the techniques of~\cite[Section~8]{VRC}, one verifies that $\overline{\varphi}_\textbf{b}$ and $\overline{\varphi}_\textbf{w}$  are cluster quasi-homomorphisms. In particular, $\overline{\varphi}_\textbf{b}$ is closely related to the \emph{BCFW promotion map} in~\cite[Section~4]{m4tiling}. 
Since $\overline{\varphi}_\sigma^*$ maps cluster variables into products of these, so do the maps $\tilde{\varphi}_\sigma^*$.
\end{proof}

\begin{theorem}\label{th:cluster_fact_trees}
Consider a planar Landau diagram $\mathcal{L} = G_u \cup H^\triangle_u$ 
where $G$ is a tree, as in Theorem \ref {th:real_trees}.
 Then the LS discriminant of $\mathcal{L}$ factorizes into a product of cluster variables.
\end{theorem}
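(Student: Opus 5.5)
We argue by induction on the number $\ell$ of vertices of the tree $G$, following the proof of Theorem~\ref{th:real_trees} but now in the rational regime $\mathcal{L}=G_u\cup H^\triangle_u$. For the base case $\ell=1$ we have $u=(4)$, and the diagram is the three-mass box. Example~\ref{ex:3massbox} gives $\Delta_{K_1,(4)}=\langle 245|13|267\rangle^2$, which is the square of a cluster variable for ${\rm Gr}(4,7)$. Relabeling the columns cyclically covers every placement of the incident pair among the four external lines. We would also record the degenerate leaves that occur in the induction, namely a vertex with $u_i=2$ or $3$ whose remaining neighbours are internal lines. After substitution, these are again three-mass-box type diagrams in which some external lines are replaced by promoted lines.

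For the inductive step, since $G$ is a tree, every $u$ in the multidegree~\eqref{eq:LS_CI} has a vertex $i$ with $u_i=4$. In $\mathcal{L}$ this vertex carries one pendant triangle and two further pendant edges, so the subdiagram $\mathcal{L}_1$ at $i$ is a three-mass box. Moreover, $\mathcal{L}$ is reducible with respect to $\mathcal{L}_1$ in the sense of Proposition~\ref{prop:rec_fibers}, because $|u_i|=4=\dim V_{K_1}$. Proposition~\ref{prop:fact_3mb} then gives
$$\Delta_{\mathcal{L}}\,=\,\varphi_{\mathbf b}^*\Delta_{\mathcal{L}_2}\cdot\varphi_{\mathbf w}^*\Delta_{\mathcal{L}_2},$$
with no extraneous factor. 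Here $\mathcal{L}_2$ is the Landau diagram on the forest $G\setminus\{i\}$. In $\mathcal{L}_2$ the neighbours of $i$ acquire the new external line $L^{(\sigma)}=v_\sigma w_\sigma$ from~\eqref{eq:3_mass_sol}. The discriminant of a forest is the product of the discriminants of its connected components, since the Schubert problems decouple. Each component is a smaller tree diagram, and we need to check that it is again of the form $\mathcal{L}'=G'_{u'}\cup H^\triangle_{u'}$. Because $L^{(\sigma)}$ is spanned by two consecutive columns $v_\sigma,w_\sigma$ of $\tilde\varphi_\sigma({\bf z})$, the configuration is positive in the sense of Section~\ref{sec:positivity} and sits in the cyclic order prescribed by planarity. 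By induction, each $\Delta_{\mathcal{L}'}$ factors into cluster variables of ${\rm Gr}(4,n')$.

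It remains to pull back along $\tilde\varphi_\sigma$. By Lemma~\ref{lem:3mb_prom_cluster}, $\tilde\varphi_{\mathbf b}$ and $\tilde\varphi_{\mathbf w}$ are cluster quasi-homomorphisms, so $\tilde\varphi_\sigma^*$ sends each cluster variable to a Laurent monomial in cluster variables of ${\rm Gr}(4,n)$. Since $\Delta_{\mathcal{L}}$ is a polynomial, and cluster variables are pairwise coprime irreducibles, any negative exponents must cancel in the product. Concretely, they are powers of frozen-type factors such as $\langle1267\rangle$, $\langle1247\rangle$, $\langle1345\rangle$, visible in the proof of Lemma~\ref{lem:3mb_prom_cluster}. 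The outcome is that $\Delta_{\mathcal{L}}$ is a product of cluster variables up to such monomials.

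The main obstacle is controlling these monomial factors. The substitution is written as a map to $\Gr(4,n-5)$ using the specific representatives $v_\sigma,w_\sigma$, whereas $\Delta_{\mathcal{L}_2}$ is a function of lines. Rescaling the representatives changes the pullback by exactly those Plücker monomials. I would handle this by comparing multidegrees. The multidegree of $\Delta_{\mathcal{L}}$ is fixed by Proposition~\ref{prop:fact_3mb} and the degree count (as in Example~\ref{ex:boxin12}). I would then normalize the lines $L^{(\sigma)}$ so that $\varphi_\sigma^*\Delta_{\mathcal{L}_2}$ has the correct degree in each external line. Any leftover frozen monomial then has degree zero and is therefore trivial, or is itself a product of frozen cluster variables, which are cluster variables. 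A second point to verify is that the cluster structure is compatible with the cyclic relabelling required when $i$ is not in position $1$. This follows from the cyclic-shift invariance of the cluster structure on $\Gr(4,n)$.
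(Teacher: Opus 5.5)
Your proposal is correct and follows essentially the same route as the paper: induction on $\ell$ through a vertex with $u_i=4$, the factorization of Proposition~\ref{prop:fact_3mb}, and pullback along the cluster quasi-homomorphisms of Lemma~\ref{lem:3mb_prom_cluster}. As in the paper, the discrepancy between $\varphi_\sigma$ and $\tilde\varphi_\sigma$ is a power of the Pl\"ucker coordinate $\langle 1267\rangle$, which is itself a cluster variable. Your unique-factorization argument already absorbs all such monomial factors, so the multidegree comparison is unnecessary; note also that the claim ``a degree-zero monomial is trivial'' is false in general (for example, cross-ratios of Pl\"ucker coordinates), though your fallback that the leftover is a product of cluster variables is the right conclusion.
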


\begin{proof}
We use induction on $[\ell]$.   The base case is
   Example~\ref{ex:fact_cluster_4mb_disc}:
       $\Delta_{K_1, (4)}(M_1,M_2,M_3,M_4)$ is a cluster variable when $M_1$ and $M_2$ are
       incident. For the inductive step, Proposition~\ref{prop:fact_3mb} expresses the LS discriminant of $\mathcal{L}$ in terms of LS discriminants of smaller diagrams via the substitution maps $\varphi_\sigma$. By Lemma~\ref{lem:3mb_prom_cluster}, the corresponding Grassmannian maps $\tilde{\varphi}_\sigma$ are cluster quasi-homomorphisms. Since $\tilde{\varphi}_\sigma$ and $\varphi_\sigma$ differ only by the multiplicative factor $\langle1267\rangle$, which is itself a cluster variable, it follows that if $\tilde{\varphi}_\sigma$ maps cluster variables to products of cluster variables, then so does $\varphi_\sigma$. This completes the induction, and it proves the theorem.
\end{proof}

The three-mass box recursion in Figure~\ref{fig:recursion_3mb} can be applied whenever the Landau diagram contains a vertex with $u_i=4$ and $H$ contains an edge between two external lines attached to $i$.
We expect Theorem \ref{th:cluster_fact_trees} to generalize to a vast class of graphs  $G$.
The following conjecture is important because it offers
an ab-initio explanation for
 the appearance of cluster structures in the Landau analysis
of singularities arising in planar $\mathcal{N}=4$ SYM scattering amplitudes.

\begin{conjecture}[Cluster Factorization Conjecture]\label{conj:CFC}
Let $\mathcal{L} = G_u \cup H^\triangle_u$ where $G$ 
is an outerplanar graph. Then the LS discriminant of $\mathcal{L}$ factors into a product of cluster variables.
\end{conjecture}

We now turn to Landau diagrams which are not reducible. Let $\mathcal{L}=G_u \cup H^\triangle_u$ with $G=K_3$ and $u=(3,3,3)$.
We write all factors in~\eqref{eq:discr_tr_fact} explicitly, and we argue that each of them is a cluster variable.
Let $a_1a_2$, $a_2a_3$, $a_4a_5$ denote the external lines at vertex $1$, 
and similarly with $b_i$ and $c_i$ for vertices $2$ and $3$. The $15$ vectors giving the lines form a totally positive $4 \times 15$ matrix ${\bf z}$ whose columns are ordered as $a_1 a_2 \ldots a_5 b_1 b_2 \ldots b_5 c_1 c_2 \ldots c_5$. Then, the concurrent discriminant $
\Delta_{\mathcal{L}, \mathbf{b}}$ is a polynomial in the Pl\"uckers of ${\bf z}$ given by the $12$ factors:
\begin{equation}\label{eq:discr_tr_fact_explicit}
    \begin{aligned}
       \Delta_{\mathcal{L}, \mathbf{b}} \,= \, \, \,& \langle a_2 a_4 a_5 | a_1 a_3 | a_2 (b_2 b_4 b_5) \star (c_2 c_4 c_5) \rangle^2 \, 
        \cdot   \langle a_2 a_4 a_5 | a_1 a_3 | a_2 (b_2 b_4 b_5) \star (c_1 c_2 c_3) \rangle^2 \, \\ \, \,\cdot  \,& \langle a_2 a_4 a_5 | a_1 a_3 | a_2 (b_1 b_2 b_3) \star (c_1 c_2 c_3) \rangle^2 \cdot (\text{permutations of $a,b,c$}) \, .
    \end{aligned}
    \end{equation} 
 The mixed LS discriminant $\Delta_{\mathcal{L}, \mathrm{mix}}$ is the product of the following eight factors:
 $$ \begin{small}
    \begin{matrix}
       &   \langle a_2 b_2 c_2 ,  (a_2a_4a_5) \star (b_2 b_4 b_5) \star (c_2 c_4 c_5) \rangle \, 
         \cdot \langle (a_1 a_2 a_3) \star (a_4a_5) , b_2 c_2 , (a_1a_2a_3) \star (b_2 b_4 b_5) \star (c_2 c_4 c_5) \rangle \,       
         \\ & \cdot \langle (a_1 a_2 a_3) \star (a_4a_5), (b_1 b_2 b_3) \star (b_4b_5) ,(c_1 c_2 c_3) \star (c_4c_5) , (a_1a_2a_3) \star (b_1 b_2 b_3) \star (c_1 c_2 c_3) \rangle    \\ 
         & \cdot  \langle (a_1 a_2 a_3) \star (a_4a_5) , (b_1 b_2 b_3) \star (b_4b_5)  ,c_2 , (a_1a_2a_3) \star (b_1 b_2 b_3) \star (c_2 c_4 c_5) \rangle \, 
         \cdot (\text{permutations of $a,b,c$}) \, .
    \end{matrix} \end{small}
$$    
    Both of the products above have degree $16$ in the Plücker coordinates of each external line. We
    verified that each factor of $\Delta_{\mathcal{L},\mathbf{b}}$ and $\Delta_{\mathcal{L},\mathbf{w}}$ is a
    cubic cluster variable for $\Gr(4,15)$. By contrast, the factors in
    $\Delta_{\mathcal{L},\mathrm{mix}}$
     are not cluster variables, since they are not Grassmann copositive.
     They change sign even when the external lines are positive; see  Remark  \ref{rmk:allowfor}.

We conclude the section with
the rational promotion maps associated with \emph{chain-trees} \cite[Definition~5.7]{VRC}.
Let $G$  be a chain of $s$ triangles touching pairwise at a common vertex, with
$u=(3,3,2,3,2,\ldots,2,3,2,3,3)$, together with $H^\triangle_u$ as in Figure~\ref{fig:chain_triangles}. 
 Take all triangles to be black: $\sigma=({\bf b},\ldots,{\bf b})$. When $s=1$ this is the triangle above. It gives rise to the~\emph{spurion promotion} in \cite[Section 8.2]{VRC}.
We would like to use this configuration as a seed for the recursion.
The substitution map $\varphi_\sigma$ sends ${\bf M}=({\bf M}^{(1)},{\bf M}^{(2)})$ to
$(L_1,L_3,\ldots,L_{2s+1},{\bf M}^{(2)})$. 
The map $\tilde{\varphi}_\sigma$ on the Grassmannian sends
$[z_1|\cdots|z_n]$ to
$[z_2|p_1|p_2|\cdots|p_s|z_{r}|z_{r+1}|\cdots|z_n]$,
where $r=8s+6$ and $p_i$ is the intersection point of the three concurrent lines of the $i$-th triangle.
For instance, $p_1=L_1 \cap L_2 \cap L_3$, while
$z_2=M_1 \cap M_2$ and $z_r=M_{4s+4} \cap M_{4s+5}$.
The lines appearing in the substitution are 
$L_1=z_2p_1$, $L_{2a+1}=p_a p_{a+1}$ for $a\in \{1,\ldots,s-1 \}$,
and $L_{2s+1}=p_s z_r$.
The map $\tilde{\varphi}_\sigma$ is precisely the promotion map of Figure~\ref{fig:chain_triangles}, whose core is a \emph{chain-tree} of type
$(3,3,1,3,1,3,\ldots,1,3,1,3,3)$ with $k=s+1$, as defined in~\cite[Definition~5.7]{VRC}.

The map  $\tilde{\varphi}_\sigma$ should be a cluster quasi-homomorphism, 
so $\varphi_\sigma$ sends cluster~variables to products of cluster variables.
This was proved for $s=1$ in \cite[Section~8.2]{VRC} and for
$s=2$ in \cite[Section~8.3]{VRC}.
For $s \geq 3$, it is covered by
 \cite[Conjecture~4.16]{VRC}. These examples suggest that kissing-triangle configurations provide natural
seeds for recursion in Landau analysis and give rise to infinite families
of promotion maps compatible with cluster structures.

\begin{figure}[htbp]
    \centering
    \includegraphics[width=1.0\textwidth]{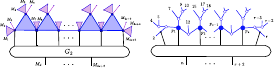}
    \caption{Left: Recursion on lines whose seed is a chain of triangles. Right: The corresponding cluster promotion map whose core is a chain-tree of type $(3,3,1,3,1,3,\ldots,1,3,3)$.}
    \label{fig:chain_triangles}
\end{figure}


\section{Future Directions}
\label{sec:future}

Throughout this paper we have seen that Landau analysis in the Grassmannian allows us to exploit
configurations of lines in 3-space 
\cite{HMPS1} to give concrete proofs of established conjectures in physics and to compute novel examples. 
Notably, we  applied our new geometric approach to prove the Reality Conjecture \ref{conj:reality_conjecture} for trees,
and we  showed that the LS discriminant factors into a product of cluster variables in the rational regime 
(Theorem \ref{th:cluster_fact_trees}). This new perspective on Landau analysis opens many doors for future work which we now describe.

\begin{enumerate}

    \item \textbf{Momentum twistors vs. Lee-Pomeransky.} Fevola, Mizera, and Telen \cite{FMT1, MT} also used methods from algebraic geometry to conduct Landau analysis. Their work uses the Lee-Pomeransky representation of a Feynman integral whereas we use momentum twistors. We presently do not understand the connection between these two approaches 
    at the level of the underlying algebraic geometry. This is an
        important open question.

    \item \textbf{More Landau singularities.} Our work focuses on \textit{leading} and \textit{super-leading} Landau singularities, 
    except for the discussion on the \textit{next-to-leading} case in Section~\ref{sec:nls}. 
    For full Landau analysis,
    one has to consider the branching loci of the maps $\psi_\mathcal{L}$ restricted to each stratum of a (\textit{Whitney}) stratification of the domain \cite{helmer2024landau}. It is therefore desirable to extend our tools and analysis to the case in which the fiber of $\psi_\mathcal{L}$ has dimension $\geq 2$. 
    What should be the analogue to rationality and reality for sub-leading singularities?

    \item \textbf{Beyond outerplanar graphs.} Most of our Landau diagrams came from
        \emph{outerplanar} graphs $G$. The vertices of $G$
                correspond to the loop variables in the Landau diagram. We expect that Conjectures \ref{conj:reality_conjecture} and \ref{conj:CFC} hold for any planar Landau diagram. A first interesting example arises from the \emph{wheel} $W_\ell$ with central vertex $\ell$ and $u \in \NN^\ell$ such that $u_\ell = 0$. Extending \cite[Theorem 4.9]{HMPS1}, we must
          determine the irreducible decomposition of $V_G$ and  compute the LS discriminant $\Delta_{G,\sigma,u}$ for each component $V_\sigma$. For planar graphs which are not outerplanar, we need an analogue to Theorem \ref{thm:outerplanar}. 
          Our problem is to determine the irreducible decomposition of $V_G$ for any planar graph~$G$. 
    
    \item \textbf{Loop amplituhedra and other semialgebraic sets.} 
    The $\ell$-loop amplituhedron~\cite{positive_geom} is a semialgebraic set in ${\rm Gr}(2,4)^\ell$. 
    It is
    believed to be a (weighted) positive geometry whose canonical form is the integrand of scattering amplitudes in planar $\mathcal{N}=4$ SYM. Our incidence varieties $V_G$ are part of the algebraic boundaries of loop amplituhedra. It is interesting to study semialgebraic sets inside ${\rm Gr}(2,4)^\ell$, or our incidence varieties, by considering a fixed sign on the polynomials $\langle L_i L_j \rangle$ for various pairs of indices $i,j$. These are generalizations of loop amplituhedra and elements in their boundary stratification.

\item \textbf{A computer algebra challenge.}
The full expansion of  an LS discriminant or an SLS resultant is very large.
Can it be computed for one irreducible component $V_{K_3,\sigma}$ of
the triangle graph?
Determine the number of Pl\"ucker monomials in
${\bf M}$ appearing in the expansion of $\Delta_{K_3,\sigma, (3,3,3)}$ and in the expansion of
$R_{K_3,\sigma,(3,3,4)}$. For those who prefer theoretical questions,
we reiterate the conjecture stated in \cite[Section 6]{PSS} and after Proposition \ref{prop:bbexpected}:
equality holds in (\ref{eq:discdegree}) if we take extraneous factors into consideration.

    \item \textbf{Discriminants across different components.} 
        The LS discriminant of the triangle has three  factors (Theorem \ref{thm:16168}).
     One of them, the \emph{mixed LS discriminant}, captures when points from different components come together. The factorization of the full LS discriminant $\Delta_{G, u}$ is non-trivial since $V_G$ lives in a product of projective spaces. 
    This requires an extension of \cite[Theorem 4.1]{Stu} to 
    the multigraded setting of \cite{PSS}.
    It is unclear whether the mixed LS discriminants are actually relevant for Landau analysis. 
A priori, they can
     contribute to the singularities of the integral~\eqref{eq:LMintegral}.
           However, we saw in Section \ref{sec:cluster} that $\Delta_{\mathcal{L},\mathrm{mix}}$ is not a product of cluster variables in the rational regime. This seems to be incompatible with the expectation on singularities of $\mathcal{N}=4$ SYM amplitudes. One possible explanation is that  the singularities cancel out in the full amplitude.
           If so, then we might see this by  taking the numerator in~\eqref{eq:LMintegral} from the canonical form of the amplituhedron~\cite{lippstreu2024landau}. 
           It would be interesting    to see if this holds.

    \item \textbf{Refined Landau analysis.} Landau analysis applied to an integral~\eqref{eq:LMintegral} yields the \textit{Landau variety}, which knows all singularities of the integral. However, the numerator $N(\mathbf{L};\mathbf{M})$ in~\eqref{eq:LMintegral} may cancel some potential singularities. The actual set of singularities may be smaller than the full \textit{Landau variety}. In planar $\mathcal{N}=4$ SYM theory, this story is particularly nice, since the full color-ordered amplitude can be written as~\eqref{eq:LMintegral}, where the integrand is the canonical form of the amplituhedron~\cite{amplituhdr}. The numerator of the 
    integrand, called the \textit{adjoint}, 
     was studied recently in~\cite{Ranestad:2024adjoints}. Landau analysis in $\mathcal{N}=4$ SYM can
     therefore be refined using the amplituhedron geometry and 
     its adjoint hypersurface~\cite{chicherin2026geometric, dennen2017landau}. It would be interesting to
      connect our work to the amplituhedron, and providing a refined Landau analysis in the language developed in this paper.
    
    \item \textbf{Reality for the triangle and further seeds.}
By  Conjecture \ref{conj:reality_conjecture},
       we expect  the leading singularities of a planar Landau diagram to be real if the external data are positive. Proving this for a graph $G$ allows the use of $G$ as a seed for the recursive method in Section~\ref{sec:recursive}. In this way, one can prove reality of the fibers, and hence by Theorem~\ref{thm:copositive} positivity of LS discriminants, for large families of Landau diagrams. We applied this to the case where $G$ is the one-vertex graph, therefore proving reality and positivity for Landau diagrams that are trees. It would be nice to prove that the promotion maps associated to (each irreducible component of) the triangle graph $G=K_3$ are copositive. 
       This leads us to explore the real algebraic geometry of the Cayley octad.
       This is a special case of Conjecture~\ref{conj:promotion_maps}, and we verified it 
       computationally. Further interesting seeds are the chain of kissing triangles 
       in Section~\ref{sec:cluster}, or the cycles~$G=C_\ell$.

 \item \textbf{Rational Landau diagrams.} We saw in Section~\ref{sec:cluster} that the factorization of LS discriminants into cluster variables is tightly related to rationality, which we presented in Section~\ref{sec:rationality}. Theorems~\ref{thm:trivalenttree}
 and~\ref{thm:rat_tr_of_l_gon}  identify  two classes of graphs $G$ for which the degeneration $G^\triangle$ yields a rational Landau diagram. On the other hand, such degenerations do not always yield rational graphs, even if $G$ is outerplanar, see Example~\ref{eg:cycles}. It is therefore interesting to determine the class of planar leading Landau diagrams $G_u$, embedded in a disk, for which there exists a degeneration $H$ being a subset of the cycle on the boundary of the disk, such that $G_u \cup H$ is rational. This would shed light on the class of graphs to which Conjecture~\ref{conj:CFC} is expected to apply. Also, it would help in understanding which seeds $G$ are required for building all planar rational Landau diagrams, which in turn provides a first step towards a proof of Conjecture~\ref{conj:CFC}.

    \item \textbf{Further degenerations.} In this paper we focused on degenerations $H$ of the external lines (kinematics) for planar Landau diagrams embedded in a disk, such that $H$ is a subgraph of the cycle at the boundary of the disk. For scattering processes with low number of particles, the relevant Landau diagrams involve degenerations for which several external lines actually coincide. As observed in~\cite{lippstreu2024landau}, we do not expect the associated discriminants to be copositive. In the same reference, however, the authors showed that such non-copositive singularity is cancelled in the amplitude for $\mathcal{N}=4$ SYM. It is therefore an interesting direction
     to extend the analysis of this paper to such degenerations, and understand their relevance to amplitudes in $\mathcal{N}=4$ SYM.

\item \textbf{Cluster compatibility and adjacency.}
A set of cluster variables is \emph{compatible} if they appear in the same cluster. First, one can investigate compatibility 
among the cluster variables appearing in a given LS discriminant. A preliminary analysis suggests that these 
factors are not all compatible.
It would be interesting to learn whether systematic patterns of compatibility or incompatibility emerge.
Second, one can study compatibility between cluster variables arising from different types of Landau singularities. For instance, one may examine pairs with one variable from an LS discriminant and another from an NLS discriminant, as well as pairs from successive orders such as N$^{r}$LS and N$^{r+1}$LS. Understanding these compatibility patterns could shed light on the phenomenon of \emph{cluster adjacency}, namely the observation that cluster variables appearing next to each other in the symbol of scattering amplitudes are compatible~\cite{DFG}.

\item \textbf{Cluster structures on varieties of lines.}
The passage from line incidence varieties in $\Gr(2,4)^d$ to the Grassmannian $\Gr(4,2d)$
was important for realizing substitution maps as promotion maps and, in the rational case, 
for uncovering  cluster structures in Section \ref{sec:cluster}.
However, this involves certain choices and often breaks the $\Gr(2,4)^d$ symmetry. It would therefore be desirable to develop a framework for VRCs and cluster algebras defined directly on $\Gr(2,4)^d$. Moreover, we expect our discriminants and resultants, even when they are not cluster variables, to naturally emerge from the cluster algebra structure. For example, the LS discriminant $\Delta_{K_1,(4)}$ of the $4$-mass box is not itself a cluster variable, but it belongs to the \emph{dual canonical basis} for $\Gr(4,8)$
and arises from limiting rays and stable fixed points of quasi-automorphisms~\cite{ALS,Drummond:2026tropical}.
\end{enumerate}

\bigskip \bigskip

\noindent {\bf Acknowledgement}:
We thank Jacob Bourjaily, Carlos D'Andrea, Dani Kaufman, Zack Greenberg, and Cristian Vergu for helpful communications.
 Our research was supported by
the European Research Council through the synergy grant UNIVERSE+, 101118787.
$\!\!$ \begin{scriptsize}Views~and~opinions expressed
are however those of the authors only and do not necessarily reflect those of the European Union or the 
European
Research Council Executive Agency. Neither the European Union nor the granting authority
can be held responsible for them.
\end{scriptsize}

\bigskip 
\bigskip

\noindent
\footnotesize {\bf Authors' addresses:}
\smallskip

\noindent Ben Hollering, MPI MiS Leipzig, Germany \hfill {\tt benjamin.hollering@mis.mpg.de} \\
\noindent Elia Mazzucchelli, MPI Physics, Garching, Germany \hfill {\tt eliam@mpp.mpg.de} \\
\noindent Matteo Parisi, OIST, Okinawa, Japan \hfill {\tt matteo.parisi@oist.jp} \\
\noindent Bernd Sturmfels, MPI MiS Leipzig, Germany \hfill {\tt bernd@mis.mpg.de} \\

 \end{document}